\title[Crystalline representations with coefficients]
{Crystalline representations of $G_{\Q_{p^a}}$ with coefficients}
\author{Hui June Zhu}
\address{Department of Mathematics, The State University of New York at
Buffalo, Buffalo, New York 14260. USA
}
\date{}
\subjclass[2000]{11, 14.}
\keywords{Crystalline representations, Fontaine-Laffaille theory,
Hilbert modular forms.}
\newtheorem{theorem}{Theorem}[section]
\newtheorem*{theorem-a}{Theorem A}
\newtheorem*{theorem-a'}{Theorem A'}
\newtheorem*{theorem-b}{Theorem B}
\newtheorem*{theorem-c}{Theorem C}
\newtheorem*{theorem-d}{Theorem D}
\newtheorem*{theorem-e}{Theorem E}
\newtheorem*{corollary-f}{Corollary F}
\newtheorem{proposition}[theorem]{Proposition}
\newtheorem{lemma}[theorem]{Lemma}
\newtheorem{corollary}[theorem]{Corollary}
\theoremstyle{remark}
\newtheorem{remark}[theorem]{Remark}
\newtheorem{acknowledgments}{Acknowledgments}
\newtheorem{conventions}{Conventions}
\newcommand{\E}{\mathbf{E}}
\newcommand{\D}{\mathbf{D}}
\newcommand{\bcris}{\mathbf{B}_{\mathrm{cris}}}
\newcommand{\dcris}{\mathbf{D}_{\mathrm{cris}}}
\newcommand{\vcris}{\mathbf{V}_{\mathrm{cris}}}
\newcommand{\MF}[1]{\mathbf{MF}^{\mathrm{#1}}}
\newcommand{\akplus}{\mathbf{A}_K^+}
\newcommand{\bkplus}{\mathbf{B}_K^+}
\newcommand{\aeplus}{\mathbf{A}_E^+}
\newcommand{\beplus}{\mathbf{B}_E^+}
\newcommand{\B}{\mathbf{B}}
\newcommand{\A}{\mathbf{A}}
\newcommand{\m}{\mathfrak{m}}
\newcommand{\hG}{\hat{G}}
\newcommand{\hP}{\hat{P}}
\newcommand{\hA}{\hat{A}}
\newcommand{\hM}{\hat{M}}
\renewcommand{\k}{\mathbf{k}}
\newcommand{\vk}{{\vec{k}}}
\newcommand{\0}{{\vec{0}}}
\newcommand{\1}{{\vec{1}}}
\newcommand{\vu}{\vec{u}}
\newcommand{\vv}{\vec{v}}
\newcommand{\valpha}{\vec{\alpha}}
\newcommand{\vX}{\vec{X}}
\newcommand{\N}{\mathbf{N}}
\newcommand{\fn}{\mathfrak{n}}
\newcommand{\Fil}{\mathrm{Fil}}
\newcommand{\HP}{\mathrm{HP}}
\newcommand{\NP}{\mathrm{NP}}
\newcommand{\Mat}{\mathrm{Mat}}
\newcommand{\diag}{\mathrm{Diag}}
\newcommand{\ord}{\mathrm{ord}}
\newcommand{\rank}{\mathrm{rank}}
\newcommand{\GL}{\mathrm{GL}}
\newcommand{\Gal}{\mathrm{Gal}}
\newcommand{\tr}{\mathrm{tr}}
\newcommand{\cris}{\mathrm{cris}}
\newcommand{\Id}{\mathrm{Id}}
\newcommand{\Hom}{\mathrm{Hom}}
\newcommand{\ind}{\mathrm{ind}}
\newcommand{\Rep}{\mathrm{Rep}}
\newcommand{\MaxSlope}{\mathrm{MaxSlope}}
\newcommand{\dist}{\mathrm{dist}}
\newcommand{\Z}{\mathbb{Z}}
\newcommand{\Zp}{{\mathbb{Z}_p}}
\newcommand{\C}{\mathbb{C}}
\newcommand{\F}{\mathbb{F}}
\newcommand{\Fp}{\mathbb{F}_p}
\newcommand{\Q}{\mathbb{Q}}
\newcommand{\Cp}{{\mathbb{C}_p}}
\newcommand{\Qp}{{\mathbb{Q}_p}}
\newcommand{\cA}{\mathscr{A}}
\newcommand{\cB}{\mathscr{B}}
\newcommand{\cO}{\mathscr{O}}
\newcommand{\cP}{\mathscr{P}}
\newcommand{\cS}{\mathscr{S}}
\newcommand{\cQ}{\mathscr{Q}}
\newcommand{\cR}{\mathscr{R}}
\newcommand{\fG}{\mathfrak{G}}
\newcommand{\fW}{\mathfrak{W}}
\newcommand{\fI}{\mathfrak{I}}
\renewcommand{\hat}{\widehat}
\renewcommand{\tilde}{\widetilde}
\renewcommand{\bar}{\overline}
\newcommand{\fk}{\lceil\bar{k}_1\rceil}
\newcommand{\ink}{\lfloor\bar{k}_1\rfloor}
\newcommand{\pscal}[1]{\langle #1 \rangle}
\begin{document}

\begin{abstract}
This paper studies crystalline representations of $G_{\Q_{p^a}}$
with coefficients of any dimension, where
$\Q_{p^a}$ is the unramified extension of $\Qp$ of degree $a$.
We prove a theorem of Fontaine-Laffaille type
when $\sigma$-invariant Hodge-Tate weight $<p-1$, 
which establishes the bijection between Galois stable lattices
in crystalline representations and strongly divisible $\varphi$-lattice.
In generalizing Breuil's work, we classify all reducible and irreducible crystalline representations of $G_{\Q_{p^a}}$ of dimensional $2$, then describe their mod $p$ reductions. We generalize some results (of Deligne, Fontaine-Serre, and Edixhoven)
to representations arising from Hilbert modular forms
when $\sigma$-invariant Hodge-Tate weight $<p-1$.
\end{abstract}

\maketitle

\tableofcontents

\section{Introduction}
\label{S:1}

Let $a\geq 1$ and let $K:=\Q_{p^a}$ be the unique unramified extension of
$\Qp$ of degree $a$. Let $G_K=\Gal(\bar{K}/K)$ be the Galois group of
the algebraic closure $\bar{K}$ of $K$ over $K$. This paper studies integral
theory concerning crystalline $E$-linear representations
of the Galois group $G_K$ over some coefficient field $E$.
For ease of notation we assume throughout this paper that $E$ contains $\Q_{p^a}$.

Let $\dcris^*$ be the equivalent functor
from the category
$\Rep_{\cris/\Qp}(G_K)$
of crystalline $\Qp$-linear representation of $G_K$ to
the category $\MF{ad}(K)$
of weakly admissible filtered $\varphi$-modules over $K$, defined
by $\dcris^*(V) = (\bcris\otimes_\Qp V^*)^{G_K}$ where $V^*=\Hom(V,K)$ is
the dual of $V$ (see \cite{Fo88a}\cite{Fo88b}\cite{CF00}).
Let $\Cp$ be the completion of $\bar\Q_p$.
Any crystalline representation $V$ is Hodge-Tate and hence
has Hodge-Tate weight, which we shall use
Hodge polygon to describe in this paper:
For any $d$-dimensional crystalline representation $V$,
an ordered $d$-tuple $\HP(V):=(k_1,\ldots,k_d)$ with
integers $k_1\geq \ldots\geq k_d$ is
Hodge polygon of $V$ if
$\dim (\Cp(-k_i)\otimes V)^{G_K}\neq 0$ and
each $k_i$ has multiplicity equal to $m_i:=\dim (\Cp(-k_i)\otimes V)$ in $\HP(V)$.
Write $D=\dcris^*(V)$, an equivalent definition of
$\HP(V)$ for $V$ in terms of its corresponding filtered $\varphi$-modules $D$ is
that:
$\Fil^{k_i}(D)\supsetneq \Fil^{k_i+1}(D)$ for every slope $k_i$ in $\HP(V)$
and $m_i=\dim_K(\Fil^{k_i}(D)/\Fil^{k_i+1}(D))$. For this reason
we shall use $\HP(V)$ and $\HP(D)$ interchangely for
any crystalline representations $V$.
We assume $V$ is effective, that is,
$k_i\geq 0$ for all $i$. We define $\MaxSlope(\HP(V)):=k_1-k_d$.
When $k_d=0$, which is the case for almost all cases
we consider, $\MaxSlope(\HP(V))=k_1$ the maximal slope of the Hodge polygon.

Let $T$ be any Galois stable lattice (over the $p$-adic integers $\Zp$) in $V$.
Laffaille has shown that $V$ is crystalline if and only if $\dcris^*(V)$ contains a strongly divisible lattice (\cite{Laf80}).
If $\MaxSlope(\HP(V)) < p-1$, then Fontaine-Laffaille theorem says that
if $V\in \Rep_{\cris/\Qp}(G_K)$ then
$\dcris^*(T)$ is a strongly divisible lattice in $\dcris^*(V)$ for every
Galois stable lattice
$T$ in $V$.
Namely, $\dcris^*$ gives a bijection from a Galois stable lattice $T$
in $V \in \Rep_{\cris/\Qp}(G_K)$ to a strongly divisible lattice $L$
in $D=\dcris^*(V)$. Fontaine-Laffaille theorem (\cite{FL82}) has played a prominent  role in computing mod $p$ reduction of the crystalline representation.

First of all, we give a structural construction of crystalline
representations of $G_K$ of any dimension, and if one wishes, with any prescribed
Hodge polygon. Let $\k=(k_1,\ldots,k_d)$ be in $\Z^d_{\geq 0}$ with $k_1\geq \ldots k_d\geq 0$ be a Hodge polygon, and let
$\Rep_{\cris/\Qp}^\k (G_K)$ be the subcategory of $\Rep_{\cris/\Qp}(G_K)$
where each representation has $\HP(V)=\k$.
Let $\cO_K$ be the ring of integers of $K$, and let the $\sigma$-linear {\em parabalic equivalence} $\sim_{\sigma,\k}$
be an equivalence relation on the set $\GL_d(\cO_K)$ which we shall
define in Section \ref{S:2.2}.

\begin{theorem-a}
There is a bijection
$$\Theta: \GL_d(\cO_K)/\sim_{\sigma,\k} \longrightarrow \Rep_{\cris/\Qp}^\k (G_K)$$
which sends every $A\in \GL_d(\cO_K)$ upon $\sigma$-linear parabolic equivalence
with respect to $\k$ to a crystalline representation $V$
whose $\dcris^*(V)$ contains a strongly divisible lattice
$L$ such that $\Mat(\varphi|_L)= A\cdot\diag(p^{k_1},\ldots,p^{k_d})$,
with respect to a basis adapted to its filtration.
\end{theorem-a}

Breuil and M\'ezard generalized Fontaine's $\dcris^*$ functor to
those representations with coefficient field $E$ that is no longer necessarily equal to
$\Qp$ (see \cite[Section 4]{BM02}), namely, $\dcris^*: \Rep_{\cris/E}(G_K)\longrightarrow \MF{ad}(K\otimes_\Qp E)$.
In this paper we extend Breuil-M\'ezard's result one step further to strongly divisible lattices, and obtain Fontaine-Laffaille type theorem in $\Rep_{\cris/E}(G_K)$. To do so, we shall define
{\em embedded Hodge polygon} $\HP_{\Z/a\Z}(V)=\k:=(k_{j,1},\ldots,k_{j,a-1})_{j\in\Z/a\Z}$
in Section \ref{S:3.1}. For any embedded Hodge polygon $\k$, let $\Rep_{\cris/E}^\k(G_K)$ be the subcategory consisting of all $V$ with
$\HP_{\Z/a\Z}(V)=\k$. Let $\cO_E$ be the ring of integers in $E$.
Let $\sim_{\k}$ be an equivalence relation on $\GL_d(\cO_E)^a$
which we shall define in Section \ref{S:3.1}.

\begin{theorem-a'}
Then there is a bijection
\begin{eqnarray*}
\Theta: \GL_d(\cO_E)^a/\sim_{\k} &\longrightarrow & \Rep_{\cris/E}^\k(G_K).
\end{eqnarray*}
\end{theorem-a'}

Let $\D$ be Fontaine's equivalence functor
from the category $\Rep_{/E}(G_K)$ of $E$-linear representations of
$G_K$ to the category of \'etale $(\varphi,\Gamma)$-modules. This functor respects
Galois stable lattices in $V$ of $\Rep_{/E}(G_K)$ and respects torsion (see \cite{FO91}).
For every crystalline representation $V$ in $\Rep_{\cris/E}(G_K)\subset \Rep_{/E}(G_K)$,
there is an integral Wach module in
the \'etale $(\varphi,\Gamma)$-module $\D(V)$ (see \cite{Wa96} and \cite{BE04}).
In fact, there is fully faithful functor $\N$ from $\Rep_{\cris/E}(G_K)$ to
the category of Wach modules that is bijection from Galois stable lattice to integral Wach modules (see \cite{BE04}) for $\Rep_{\cris/\Qp}(G_K)$.
In this paper, we study representations $V\in \Rep_{\cris/E}(G_K)$,
define $\sigma$-invariant Hodge polygon of $V$ by
$\HP^\sigma(V):=
(\frac{1}{a}\sum_{j\in\Z/a\Z}k_{j,1},\ldots,\frac{1}{a}\sum_{j\in\Z/a\Z}k_{j,d})$.
Let $\MaxSlope(\HP^\sigma(V)):=\frac{1}{a}(\sum_{j\in\Z/a\Z}(k_{j,1}-k_{j,d}))$.
We have the following Fontaine-Laffaille type theorem:

\begin{theorem-b}
Let $V \in \Rep_{\cris/E}(G_K)$ with $\MaxSlope(\HP^\sigma(V))<p-1$.
Let $T$ be a Galois stable lattice in $V$, then $\dcris^*(T)$ is a strongly divisible lattice in $\dcris^*(V)$.
Conversely, every strongly divisible lattice $L$ in $\dcris^*(V)$
is of the form $\dcris^*(T)$ for a Galois stable lattice $T$ in $V$.
\end{theorem-b}

It is desirable to explicitly construct integral Wach modules, especially
when $\MaxSlope(\HP^\sigma(V))\geq p-1$, hence compliments
Fontaine-Laffaille type theorem of strongly divisible lattices in
filtered $\varphi$-modules in $\MF{ad}(K\otimes_\Qp E)$.
Let $\m_E$ be the maximal ideal in $\cO_E$.
For any $V\in \Rep_{/E}(G_K)$ its {\em mod $p$ reduction} means
$T/\m_E T$ for any Galois stable lattice $T$ in $V$.
Its semisimplification does not depend on the choice of $T$
and we denote it by $\bar{V}$ in this paper.
Its $p$-power reductions are similarly defined.
To use integral Wach modules to calculate mod $p$-power reduction
of a crystalline representation, it  is essential
that we show $\N$ is a $p$-adic continuous map in a sense that
if two $d$-dimensional crystalline representations $V_1$ and $V_2$
in $\Rep_{\cris/E}^\k(G_K)$ are close in
a $p$-adic metric (which we shall define in
Remark \ref{R:metric} for $\Rep_{\cris/\Qp}^\k(G_K)$ and
Remark \ref{R:metric-2} for $\Rep_{\cris/E}^\k(G_K)$), then
their Wach modules are $p$-adically close.
Let $\lfloor \cdot\rfloor$ and $\lceil\cdot\rceil$
be the floor and ceiling of a real number.

\begin{theorem-c}
Write $\bar{k}_1=\MaxSlope(\HP^\sigma(V))$.
Given $V,V'\in\Rep_{\cris/E}^\k(G_K)$
such that $N$ is an integral Wach module of $V$.
Suppose
$\dist(V,V')\leq p^{-( i+ \lfloor \lfloor\bar{k}_1\rfloor p/(p-1)^2\rfloor)}$,
then there exists an integral Wach module $N'$ of $V'$ such that
$N\equiv N'\bmod \m_E^i.$
\end{theorem-c}

Finally, we are concerned with $2$-dimensional crystalline representations
of $G_K$ exclusively.
Recall that a $2$-dimensional representation
$V$ of $G_{\Qp}$ is irreducible if and only if the slope of its Newton polygon
has slope $>0$. Breuil has completely classified them in \cite{Br02}.
Theorem D below generalizes this to $G_K$.

\begin{theorem-d}
Let $V$ be any $2$-dimensional crystalline representation
with embedded Hodge polygon $\k=(k_j,0)_{j\in\Z/a\Z}$.
Then $V$ is (absolutely) irreducible if and only
if the Newton polygon of $V$ has no slope equal to $\sum_{j\in J}{k_j}$ for any subset $J$ of $\Z/a\Z$
(if $J$ is empty set then the sum is $0$).
\end{theorem-d}

Let $\omega_a$ be a fundamental character of the inertial group
$I_K$ of $G_K$, and let $\omega_{2a}$ be a fundamental character of $I_{\Q_{p^{2a}}}$
such that $\omega_{2a}^{p^a+1}=\omega_a$.
For any $1\leq h \leq p^a-1$ let $\ind(\omega_{2a}^h)$ be the (unique) irreducible representation $\bar\rho$ of $G_K$ with $\det(\bar\rho)=\omega_a^h$ and
$\bar\rho|_{I_K}\cong \omega_{2a}^h\oplus \omega_{2a}^{p^a h}$.
We compute mod $p$ reduction of crystalline representations
in Theorem E below. In this theorem, we adapt the notation in line with  Buzzard-Diamond-Jarvis \cite{BDJ08} and Gee \cite{Ge08b}
by using the {\em Hodge-Tate weight} $\vk=(k_0,\ldots,k_{a-1})$
instead of our embedded Hodge polygon $\k=(k_j,0)_{j\in\Z/a\Z}$.
We hope no confusion arises.

\begin{theorem-e}
Let $V$ be a 2-dimensional $E$-linear crystalline representation of
$G_K$ with Hodge-Tate weight $\vk=(k_0,\ldots,k_{a-1}) \in \Z_{\geq 0}^a$
such that $0\leq k_j\leq p-1$ for all $j$.
Let $\rho=\bar{V}$ be the semisimplification of mod $p$ reduction of $V$.
\begin{enumerate}
\item[(i)]
Suppose $\rho$ is reducible, then
$$
\rho|_{I_K} \cong \left(
           \begin{array}{cc}
           \omega_a^{\sum_{j\in \Z/a\Z}k_jp^j } & \star \\
           0     & 1 \\
        \end{array}
      \right) \otimes \bar\eta
$$
for some character $\bar\eta$ that extends to $G_K$.

\item[(ii)]
Suppose $\rho$ is irreducible and $\vk\neq \0$. Then
$$\rho\cong \ind(\omega_{2a}^{\sum_{j\in\Z/a\Z}k_jp^j})\otimes \bar\eta$$ for
 some character $\bar\eta$; or equivalently,
$$
\rho|_{I_K}\cong
\left(
           \begin{array}{cc}
           \omega_{2a}^{\sum_{j\in \Z/a\Z}k_jp^j } & 0 \\
           0     & \omega_{2a}^{p^a \sum_{j\in \Z/a\Z}k_jp^j} \\
        \end{array}
      \right) \otimes \bar\eta
$$
for some character $\bar\eta$ that extends to $G_K$.
\end{enumerate}
\end{theorem-e}

We remark that Breuil has classified in \cite{Br07} all reduction and irreducible
$\bar\F_p$-linear representations of $G_K$ in the above two forms in
Theorem E.

Theorem E has applications to Hilbert modular forms.
Let $F$ be any totally real field extension over $\Q$ where
$p$ is unramified, and we assume the localization of $F$ at the prime over $p$
is equal to $K$. Let $\fn$ be a nonzero ideal of
the ring of integers $\cO_F$ of $F$. Fix once and for all
embeddings $\bar\Q\hookrightarrow \C$ and $\bar\Q\hookrightarrow \bar\Q_p$.
Let $f$ be any Hilbert cuspidal eigenform of weight
$\vk=(k_0,\ldots,k_{a-1})$ for $0\leq k_j\leq p-1$, and of level $\fn$.
Let $\rho_f: G_K\rightarrow \GL_2(\bar\Q_p)$ be the Galois representation
associated to $f$
constructed by Rogawski-Tunnell \cite{RT83}, Ohta \cite{Oh84} and Carayol
\cite{Car86}, completed by Taylor \cite{Tay89} and Jarvis \cite{Jar97}.
Let $\bar\rho_f: G_F\longrightarrow \GL_2(\bar\F_p)$ be the semisimplification
of the reduction mod $p$ of $\rho_f$. We write $\bar\rho_{f,p}:=\bar\rho_f|_{G_K}$.
Suppose $p$ is coprime to $\fn$ then
$\rho_f|_{G_K}$ is crystalline by \cite{Br99} and
\cite{Liu07} (see also \cite{BE04}).
Then the following corollary follows immediately from Theorem E, and it
generalizes previous results of Deligne, Fontaine-Serre, and Edixhoven
on representations arising from modular forms (see \cite{Ed98} for references)
to Hilbert modular forms.

\begin{corollary-f}
Let $p>2$. Let $f$ be a Hilbert cuspidal eigenform of weight $\vk$ with $0\leq k_j\leq p-1$,
and of level $\fn$ coprime to $p$.
\begin{enumerate}
\item[(i)]
Suppose $\bar\rho_{f,p}$ is reducible, then
$$
\bar\rho_{f,p}|_{I_K} \cong \left(
           \begin{array}{cc}
           \omega_a^{\sum_{j\in \Z/a\Z}k_jp^j } & \star \\
           0     & 1 \\
        \end{array}
      \right) \otimes \bar\eta
$$
for some character $\bar\eta$ that extends to $G_K$.

\item[(ii)]
Suppose $\bar\rho_{f,p}$ is irreducible, we have for $\vk\neq \0$
$$
\bar\rho_{f,p}|_{I_K}\cong
\left(
           \begin{array}{cc}
           \omega_{2a}^{\sum_{j\in \Z/a\Z}k_jp^j } & 0 \\
           0     & \omega_{2a}^{p^a \sum_{j\in \Z/a\Z}k_jp^j} \\
        \end{array}
      \right) \otimes \bar\eta
$$
for some character $\bar\eta$ that extends to $G_K$.
If $\vk=\0$ then we have
$\bar\rho_{f,p}\cong \ind(\omega_{2a}^{p^a-1})\otimes\bar\eta$ for some
character $\bar\eta$.
\end{enumerate}
\end{corollary-f}

This paper is organized as follows.
Fundamental notions and preparations are recalled or defined in
Section 2, we prove Theorem A after Proposition \ref{P:parabolic}.
In Section 3 we prove Theorem A' for $\Rep_{\cris/E}(G_K)$ in
Theorem \ref{T:A-analog}. This section prepares some foundations on strongly divisible $\varphi$-lattices.
In Section 4 we study embedded integral Wach modules
in Fontaine's \'etale $(\varphi,\Gamma)$-modules,
and prove one of our main Theorem B in Theorem \ref{T:B}.
We prove $p$-adically continuity of Wach modules in Theorem C as part of
Corollary \ref{C:continuity}.
Finally in Section 5 we mainly focus on 2-dimensional case:
by proving irreducibility
in Theorem D that generalizes Breuil's classification \cite{Br02};
and computing mod $p$ reductions in Theorem E for $\bar{k}_1\leq p-1$.
At the end of the paper we demonstrate explicitly construction
of some families of integral Wach modules corresponding to
$2$-dimensional crystalline $E$-linear representations of $G_K$,
which extends some constructions done in \cite{BLZ04} for $K=\Qp$.

\begin{conventions}
Throughout this paper $p$ is a prime number.
We assume $K=\Q_{p^a}$ the unramified extension of $\Qp$ of degree $a$
everywhere (except declare otherwise in Section 4).
Let $E$ be an extension of $\Qp$
and without loss of generality we assume $K$ lies in $E$.
Let $\cO_K,\cO_E$ be their rings of integers, respectively.
Let $\m_E$ be the maximal ideals of $\cO_E$.
For any vector $\vv=(v_i)_i$ with $v_i\in E$,
let $\ord_p(\vv)=\min_j(\ord_p v_j)$.

Let $\pi$ be a variable. We write
$\akplus=\cO_K[[\pi]]$, $\aeplus=\cO_E[[\pi]]$,
$\bkplus=\akplus[\frac{1}{p}]$, Let $\A_K$
be the ring of power series $\sum_{i=-\infty}^{\infty}a_i \pi^i$
such that $a_i \in \cO_K$ and
$a_i\rightarrow 0$ as $i\rightarrow -\infty$. Let $\B_K$
be its field of fractions.
Let $p$-adic order $\ord_p(\cdot)$ on $\cO_E[[\pi]]$ (or $\cO_K[[\pi]]$) be defined as the minimal $p$-adic order of polynomial coefficients.
We have $\A_E/p\A_E\cong \cO_E/\m_E ((\pi))$.
For any $c \neq 0$ let $\cR_{c,E}$ be the ring of all power series
$\sum_{s=0}^{\infty}a_s \pi^s$ in $E[[\pi]]$ such that $\ord_p a_s
\geq  - \frac{s}{c}$. For instance, $\cR_{\infty,E}=\aeplus$.

Let $I_K$ be the inertial group of $G_K$.
Let $\omega_a$ be a (Serre's) fundamental character
of $I_K$ given by composing $I_K\rightarrow \F_{p^a}^*$ (via
local class field theory) with an embedding $\tau_0: \F_{p^a}\rightarrow \bar\F_p$.
Let $\sigma$ be the absolute Frobenius on $\bar\F_p$,
then we may identify even element $\tau_j:=\tau_0\circ \sigma^j$ in $\Gal(\F_{p^a}/\Fp)$ with $j$ in $\Z/a\Z$. Indeed, we identify the subindex set $\Z/a\Z$
with $\Gal(K/\Qp)\cong\Gal(\F_{p^a}/\F_p)$ in this paper.
\end{conventions}

\begin{acknowledgments}
We thank Laurent Berger and Kiran Kedlaya for pointing out some
key references to us during our research, with double thanks to Laurent Berger for helpful comments on an early version of this paper.
\end{acknowledgments}

\section{Weakly admissible filtered $\varphi$-modules}
\label{S:2}

\subsection{Preliminary algebra}
\label{S:2.1}

A {\em filtered module} $D$ over a ring $\cO$ is a
$\cO$-modules with decreasing filtration (i.e.,
$\Fil^iD\supseteq \Fil^{i+1}D$ for $i\in \Z$) that is
separated (i.e., $\Fil^iD = 0$ for $i\gg 0$)
and exhaustive (i.e., $\Fil^i D = D$ for $i\ll 0$).
We say the filtration is {\em saturated} if
$\Fil^i D$ are saturated submodules.
Let $\cO$ be a discrete valuation ring with
field of fractions $Q$, if $\cO$ is a ring endowed with $\varphi$-action
that is an automorphism on $D\otimes_\cO Q$ such that
$\varphi(cv)=\sigma(c)\varphi(v)$
for $c\in Q$ and $v\in D$, where $\sigma$ is the absolute Frobenius of $Q$
(i.e., the lifting of Frobenius $x\mapsto x^p$ on a residue field of $\cO$)
then $D$ is a {\em filtered $\varphi$-module}.

A filtered $\varphi$-module over $K$ is just a filtered $K$-vector space
with an automorphism $\varphi$ such that
$\varphi(cv)=\sigma(c)\varphi(v)$ for the absolute Frobenius $\sigma$ on $K$.
The category of filtered $\varphi$-modules over $K$ is denoted by
$\MF{}(K)$. We write $(D,\Fil)$ for the filtered $K$-module $D$.
Morphisms between filtered $K$-modules
are {\em strict}, i.e., for any $f: (D,\Fil)\rightarrow (D',\Fil)$
we have $f(\Fil^i D ) = f(D)\cap \Fil^i D'$) unless otherwise declared.

We use Hodge polygon to describe the filtration data on $(D,\Fil)$
consisting of Hodge-Tate weights.
The {\em Hodge polygon} of a filtered module $D$ of dimension $d$ over $K$ is
given by a $d$-tuple $\HP(D/K)=(k_1,\ldots,k_d)=:\k$ with $k_1\geq
\cdots \geq k_d$ such that $\Fil^{k_i} D \supsetneq
\Fil^{k_i+1}D$, and the multiplicity of each $k_i$ is equal to
$\dim_K(\Fil^{k_i}D/\Fil^{k_i+1}D)$.
We always assume $k_d\geq 0$ (i.e., $D$ is effective) in this paper.
Write $r_i:=\dim_K \Fil^{k_i} D$. Then we have
$r_1\leq \ldots \leq r_d =d$ with the multiplicity of each value
in the same pattern as that for $\HP(D/K)$. Note that each Hodge
polygon slope $k_i$ has horizontal length equal to $r_i-r_{i-1}$
(by setting $r_0:=0$).
For any $\alpha=\frac{r}{s}\in\Q$ let $D_\alpha$ be the $K$-subvector space
generated by those $w\in D$ with $\varphi^s(w)=p^r w$. Then
$D=\oplus_{\alpha\in\Q} D_\alpha$.
The {\em Newton polygon} of $D$ is given by
a $d$-tuple $\NP(D/K)=(\beta_1,\ldots,\beta_d)$ where $\beta_j\in \Q$
are of multiplicity $\dim_K D_{\beta_j}$, and $\beta_1\geq \ldots\beta_d$.
A filtered $\varphi$-module $D$ over $K$ is {\em
weakly admissible} if for every subobject $D'$
of $D$ we have $\HP(D'/K) \succ \NP(D'/K)$ (where $\succ$ means `lies below')
and the endpoints for $\HP(D/K)$ and $\NP(D/K)$ coincide.
Let $t_N(D/K)=\sum_{\alpha\in\Q}\alpha\cdot \dim_KD_\alpha$ and
$t_H(D/K)=\sum_{i\in\Z} i\cdot\dim_K(\Fil^iD/\Fil^{i+1}D)$. Then
the weakly admissibility is equivalent to
that $t_H(D'/K)\leq t_N(D'/K), t_H(D/K)=t_N(D/K)$ for all sub-object $D'$ in $D$.
Let $\MF{ad}(K)$ be the subcategory of $\MF{}(K)$ consisting of
weakly admissible objects with strict morphisms.
For any Hodge polygon $\k=(k_1,\ldots,k_d)$,
let $\MF{ad,\k}(K)$ be subcategory of $\MF{ad}(K)$ consisting of
those with $\HP(V)=\k$.

A {\em $\varphi$-lattice} $L$ in $D$ with induced filtration is a
free $\cO_K$-submodule of $D$ such that
$$
L\otimes_{\cO_K}K = D,\quad
\varphi(L)\subseteq L, \quad
\Fil^i L = \Fil^i D \cap L.
$$
For the rest of this section all filtration on a lattice $L$ in $D$
are induced from $D$. If $L =
\pscal{e_1,\ldots,e_d}$ is a basis over $\cO_K$ such that $\Fil^i
L = \pscal{e_1,\ldots,e_{\rank(\Fil^i L)}}$ for every $i$, then it
is called {\em a basis of $L$ adapted to the filtration}. A basis
of $D$ over $K$ adapted to the filtration is a basis adapted to
the filtration for some lattices in $D$. The
existence of such basis is due to the following proposition.
(Remark: Fontaine-Rapoport \cite{FR} called $D=\pscal{e_i}_i$ a basis
adapted to the filtration if $\Fil^r D = \pscal{e_i}_{r_i\geq r}$
where  $r_i$ is the maximal number $r$ such that $e_i\in \Fil^r
D$. Our definition is slightly stronger.)

\begin{proposition}\label{P:basis}
(1) If $(L,\Fil)$ is a
saturated filtered $\cO$-module free of rank $d$ for
a principal ideal domain $\cO$,
then it contains a basis adapted to the filtration.

(2) Let $(D,\Fil)$ be any filtered $K$-module of dimension $d$.
Let $L$ be a $\cO_K$-lattice in $D$ with $\Fil^i L = \Fil^i D\cap L$.
Then there exists a basis $\pscal{e_1,\ldots,e_d}$ of $L$ over $\cO_K$
that is adapted to the filtration.
\end{proposition}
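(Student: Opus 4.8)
The plan is to prove part (1) first, since part (2) follows immediately by taking $\cO = \cO_K$ (a discrete valuation ring, hence a PID) and observing that $\Fil^i L = \Fil^i D \cap L$ says precisely that each $\Fil^i L$ is a saturated submodule of $L$, so $(L,\Fil)$ is a saturated filtered $\cO_K$-module free of rank $d$. So everything reduces to the structural statement (1).

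For part (1), I would argue by decreasing induction on the filtration degree. Since the filtration is separated and exhaustive, there are only finitely many jumps; let $k_1 \geq \cdots \geq k_d$ be the Hodge polygon of $L$, and let $r_i = \rank_\cO \Fil^{k_i} L$ so that $0 = r_0 < $ (counting with the convention of the excerpt) $r_1 \leq \cdots \leq r_d = d$. The key point is that each inclusion $\Fil^{j+1} L \subseteq \Fil^{j} L$ is an inclusion of one saturated (i.e.\ pure, i.e.\ with torsion-free quotient) submodule into another inside the free $\cO$-module $L$. Over a PID, a submodule $M \subseteq N$ of free modules with $N/M$ torsion-free is a direct summand, and moreover any basis of $M$ can be extended to a basis of $N$. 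I would start with the smallest nonzero filtration piece $\Fil^{k_1} L$, which is free of rank $r_1$, pick any $\cO$-basis $e_1,\dots,e_{r_1}$ of it, and then repeatedly extend: having a basis $e_1,\dots,e_{r_{i}}$ of $\Fil^{k_{i}} L$ adapted to the filtration restricted to that submodule, use that $\Fil^{k_i} L$ is saturated in $\Fil^{k_{i+1}} L$ to extend to a basis $e_1,\dots,e_{r_{i+1}}$ of $\Fil^{k_{i+1}} L$; after finitely many steps we reach $\Fil^{k_d} L = L$ (using effectivity, $k_d \geq 0$, so the last jump is at or above degree $0$ and below that the filtration is all of $L$). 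By construction $\Fil^r L = \langle e_1, \dots, e_{\rank(\Fil^r L)}\rangle$ for every $r$, which is exactly the assertion.

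The one lemma I need to nail down carefully — and I expect it to be the only real content — is the extension step: if $M \subseteq N$ are free $\cO$-modules over a PID with $N/M$ torsion-free (equivalently $M$ saturated in $N$), then $M$ is a direct summand and any $\cO$-basis of $M$ extends to one of $N$. This is standard: $N/M$ is finitely generated and torsion-free over a PID, hence free, so the surjection $N \to N/M$ splits, giving $N = M \oplus M'$ with $M'$ free; concatenating a basis of $M$ with one of $M'$ gives the desired basis of $N$. I would state this as an inline observation rather than a separate lemma.

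One subtlety worth a sentence: one must check that the filtration pieces $\Fil^{k_i} L$ really are free of finite rank, not merely saturated. Since $L$ is free of rank $d$ over the PID $\cO$, every submodule is free of rank $\leq d$ — so this is automatic. The other bookkeeping point is that the ranks $r_i = \rank_\cO \Fil^{k_i} L$ match the multiplicities in the Hodge polygon $\HP(L) = \HP(D/K)$ in case (2): this is because $\Fil^{k_i} L / \Fil^{k_i+1} L \hookrightarrow \Fil^{k_i} D / \Fil^{k_i+1} D$ with the quotient module being $\cO_K$-torsion-free of the right rank, so tensoring with $K$ recovers $\dim_K(\Fil^{k_i} D/\Fil^{k_i+1}D)$. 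None of this is hard; the proof is essentially an organized application of the structure theorem for finitely generated modules over a PID, and there is no genuine obstacle — the main thing is to carry out the induction cleanly so that the resulting basis is simultaneously adapted to every step of the filtration.
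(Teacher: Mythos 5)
Your proof is correct and follows essentially the same route as the paper: both reduce (2) to (1) by checking that $\Fil^i L = \Fil^i D \cap L$ forces saturatedness, and both prove (1) by induction along the filtration chain using the fact that a saturated submodule of a free module over a PID is a free direct summand. The only cosmetic difference is organizational — the paper quotients by the smallest filtration piece and lifts an adapted basis of the quotient, whereas you extend a basis upward through successive splittings — but these are two phrasings of the same splitting argument.
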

\begin{proof}
Let $d$ be the rank of $L$ over $\cO$.  It suffices to show that
if $L=L_0 \supseteq L_1 \supseteq \cdots \supseteq L_n \supsetneq 0$ for
free saturated $\cO_K$-submodules $L_i$'s of rank $r_i$, then
there exists a basis $\pscal{e_1,\ldots,e_d}$ of $L$ such that
$L_i=\pscal{e_1,\ldots,e_{r_i}}$.
We shall induce on $d$.
If $d=1$ then this claim obviously holds.
Pick a basis for the smallest filtration $N:=L_n = \pscal{e_1,\cdots,e_{r_n}}$
over $\cO_K$.
By taking quotient we get a filtration
$L/N \supseteq L_1/N \supseteq \cdots \supseteq L_{n-1}/N \supsetneq 0$
of free $\cO_K$-modules. It is saturated since
$L_i/L_{i+1}\cong (L_i/N)/(L_{i+1}/N)$ is torsion-free over $\cO_K$.
By induction $L/N = \pscal{\bar{e}_{r_n+1},\ldots,\bar{e}_d}$ of rank $d-r_n<d$
satisfies that
$L_i/N = \pscal{\bar{e}_{r_n+1},\ldots,\bar{e}_{r_n+r_i}}$ for every $i$.
Let $e_i\in L $ be a lift of $\bar{e}_i\in L/N$, then
we have that $L_i = \pscal{e_1,\ldots,e_{r_n},e_{r_n+1},\ldots,e_{r_n+r_i}}$
for every $i$. This proves part (1).

(2)
Clearly $\Fil^i L = \Fil^i D \cap L$ is a full lattice in $\Fil^i D$, i.e.,
it spans $\Fil^iD$ over $K$.
One notices that $\Fil^{i+1} L$ is a saturated submodule of $\Fil^{i}L$,
namely $\Fil^iL/\Fil^{i+1}L$ is torsion-free (or zero). Indeed, if we have
$r\in\cO_K$ and $x\in \Fil^i L$ such that
$rx\in \Fil^{i+1}L$, then $x\in \Fil^{i+1}D$ and hence $x\in \Fil^{i+1}L$
because filtration on $L$ is induced from that on $D$. Therefore part (2) follows from applying part (1) on $(L,\Fil)$ over $\cO_K$.
\end{proof}

\begin{lemma}\label{L:parabolic}
Let $D=\pscal{e_1,\ldots,e_d}/K$
be a filtered module with basis adapted to its
filtration and let $\HP(D/K)=(k_1,\ldots,k_d)=\k$.
Let $r_i=\dim_K \Fil^{k_i}D$.

(1) The automorphism group of $(D,\Fil)$ is isomorphic to the
subgroup $\cP_{\k,K}$ defined below in $\GL_d(K)$ by sending
every automorphism to
its matrix with respect to the given basis $\pscal{e_1,\ldots,e_d}$:
$$
\cP_{\k,K}:=
\left(
  \begin{array}{cccc}
    \GL_{r_1}(K) & * & * & *\\
    0 & \GL_{r_2-r_1}(K) & * & * \\
    \vdots &&\ddots & * \\
    0 & \cdots & 0 & \GL_{d-r_{d-1}}(K)  \\
  \end{array}
\right)
$$
where $*$ are arbitrary entries in $K$.

(2) Let $L$ be a $\cO_K$-lattice in $(D,\Fil)$ with induced
filtration. The automorphism group of $(L,\Fil)$ is
\begin{eqnarray*}
\cP_\k
&=&\GL_d(\cO_K)\cap \cP_{\k,K} \\
&=& \left(
  \begin{array}{cccc}
    \GL_{r_1}(\cO_K) & * & * & *\\
    0 & \GL_{r_2-r_1}(\cO_K) & * & * \\
    \vdots &&\ddots & * \\
    0 & \cdots & 0 & \GL_{d-r_{d-1}}(\cO_K)  \\
  \end{array}
\right) \end{eqnarray*}
where $*$ are arbitrary entries in $\cO_K$.
\end{lemma}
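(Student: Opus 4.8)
The plan is to prove the two isomorphisms by a direct matrix computation, using the fact that the basis $\pscal{e_1,\ldots,e_d}$ is adapted to the filtration, so that $\Fil^{k_i}D = \pscal{e_1,\ldots,e_{r_i}}$ for each Hodge slope $k_i$, and more generally every step $\Fil^r D$ of the filtration is spanned by an initial segment $e_1,\ldots,e_{r}$ of the basis. First I would observe that an endomorphism $f$ of the $K$-vector space $D$ is an automorphism of $(D,\Fil)$ precisely when $f(\Fil^r D)\subseteq \Fil^r D$ for all $r$ (automatically equality, since $f$ is invertible and the $\Fil^r D$ are finite-dimensional). Writing $M=\Mat(f)$ with respect to $\pscal{e_1,\ldots,e_d}$, the condition $f(\Fil^r D)\subseteq \Fil^r D$ for the jump at $r=r_i$ says that $f$ carries $\pscal{e_1,\ldots,e_{r_i}}$ into itself, i.e. the columns $1,\ldots,r_i$ of $M$ have zero entries in rows $r_i+1,\ldots,d$. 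Imposing this for every $i=1,\ldots,d-1$ (the jump at $r=r_d=d$ being vacuous) is exactly the statement that $M$ is block upper triangular with the block sizes $r_1, r_2-r_1, \ldots, d-r_{d-1}$, i.e. $M\in\cP_{\k,K}$; conversely any such $M$ preserves each $\pscal{e_1,\ldots,e_{r_i}}$ and hence each $\Fil^r D$, and is invertible iff its diagonal blocks are, which is the condition $M\in\GL_d(K)$. This gives part (1).

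For part (2), I would note that by Proposition \ref{P:basis}(2) the induced filtration on $L$ satisfies $\Fil^r L = L\cap \Fil^r D$, and since the given basis is adapted to the filtration of $D$ one checks (again using that the $e_i$ lie in $L$ and span the appropriate $\Fil$ pieces over $\cO_K$) that $\Fil^{k_i}L = \pscal{e_1,\ldots,e_{r_i}}_{\cO_K}$; so $\pscal{e_1,\ldots,e_d}$ is a basis of $L$ adapted to its filtration. An automorphism of $(L,\Fil)$ is the same as an automorphism of $(D,\Fil)$ whose matrix lies in $\GL_d(\cO_K)$ and which maps each $\Fil^{k_i}L$ into itself; by the part (1) analysis the latter condition is again block upper triangularity, so the automorphism group of $(L,\Fil)$ is $\GL_d(\cO_K)\cap\cP_{\k,K}$. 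The remaining point is the identification of this intersection with the explicitly displayed group $\cP_\k$ of block upper triangular matrices whose diagonal blocks lie in $\GL$ over $\cO_K$ and whose off-diagonal entries are in $\cO_K$: a block upper triangular matrix over $K$ lies in $\GL_d(\cO_K)$ iff all its entries are in $\cO_K$ and its determinant is a unit in $\cO_K$, and since the determinant is the product of the determinants of the diagonal blocks, this happens iff each diagonal block is in $\GL$ over $\cO_K$ (the off-diagonal blocks being unconstrained beyond integrality). One direction is immediate; for the other, I would argue that the inverse of a block upper triangular matrix is block upper triangular with diagonal blocks the inverses of the original ones, so if both $M$ and $M^{-1}$ have $\cO_K$-entries then each diagonal block and its inverse have $\cO_K$-entries, forcing the diagonal blocks into $\GL$ over $\cO_K$.

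The computations are all routine linear algebra; the only place demanding a little care is matching up the block structure with the filtration indexing. Concretely: several distinct consecutive $k_i$'s may be equal, so the jumps of the filtration occur only at the strictly increasing values among $r_1\le\cdots\le r_d$, and one must be sure that the block decomposition in the displayed form of $\cP_{\k,K}$ uses exactly those jump positions — but the lemma's notation already builds this in by grouping equal $k_i$'s, so after recording that observation the argument goes through verbatim. I expect the main (minor) obstacle to be bookkeeping this correspondence cleanly rather than any substantive difficulty; everything else reduces to the two elementary facts above about block triangular matrices and to Proposition \ref{P:basis}.
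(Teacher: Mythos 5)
Your proposal is correct and follows essentially the same route as the paper: the paper likewise translates preservation of each $\Fil^{k_i}D=\pscal{e_1,\ldots,e_{r_i}}$ into the matrix identity $\Theta\cdot\diag(\Id_{r_i},0,\ldots,0)=\diag(A_{r_i},0,\ldots,0)$, which is exactly your block upper triangularity condition, and disposes of part (2) as the same argument over $\cO_K$. Your extra care in part (2) (identifying $\GL_d(\cO_K)\cap\cP_{\k,K}$ with the displayed block form via inverses of block triangular matrices) only fills in a step the paper treats as immediate.
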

\begin{proof}
(1) Let $\theta$ be an automorphism of $(D,\Fil)$. Write $\theta
(e_1,\ldots,e_d) = (e_1,\ldots,e_d)\Theta$ for $\Theta\in
\GL_d(K)$. Then for every $i$ we have
$\theta(e_1,\ldots,e_{r_i})=(e_1,\ldots,e_{r_i})A_{r_i}$ for some
$A_{r_i}\in \GL_{r_i}(K)$. Thus
$$\Theta\cdot \diag(\Id_{r_i},0,\ldots,0) = \diag(A_{r_i},0,\ldots,0),$$
where $\Id_{r_i}$ is the identity matrix in $\GL_{r_i}(K)$.
This proves part (1).

(2)
Similar argument with $\Theta\in\GL_d(\cO_K)$ and $A_{r_i}\in\GL_{r_i}(\cO_K)$.
\end{proof}

We call the above groups $\cP_{\k,K}$ and $\cP_\k$ the
{\em parabolic group} and {\em integral parabolic group with respect to $\k$},
respectively. When there is no confusion
in context we also call $\cP_\k$ the parabolic group of $\k$.
For example, if the filtration data on $D$ is maximal in the sense that
$\dim \Fil^i D = \dim \Fil^{i+1} D + 1$ for every $i$, then
$\cP_{\k}$ is the Borel subgroup of $\GL_d(\cO_K)$. Namely,
it is the upper triangular matrix with $\cO_K^*$ on the diagonal.
Our definitions here are inspired by work of
Fontaine-Rapoport \cite{FR}.

\begin{proposition}\label{P:conjugation}
\begin{enumerate}
\item
Let $(L,\Fil)$ be a $\cO_K$-lattice in $(D,\Fil)$ with induced filtration,
let $f$ be an automorphism of $(D,\Fil)$ and let $L'=f(L)$.
Write $L=\pscal{e_1,\ldots,e_d}$ and $L'=\pscal{e'_1,\ldots,e'_d}$
for bases of $L$ and $L'$
adapted to the filtration, respectively.
Then
$$
f(e_1,\ldots,e_d) = (e'_1,\ldots,e'_d)\cdot C
$$
for some $C\in\cP_\k$.
\item
If $D=\pscal{e_1,\ldots,e_d}$ is a basis of $D$ adapted to the filtration,
then there is $C\in\cP_\k$ such that
$(e'_1,\ldots,e'_d):= (e_1,\ldots,e_d)\cdot C$ forms a basis for $L$
that is adapted to the filtration.
\item
Let $L$ be a $\varphi$-lattice of $(D,\Fil,\varphi)$.
Let $\Phi$ and $\Phi'$
be matrices of $\varphi$ with respect to bases $\pscal{e_1,\ldots,e_d}$
and $\pscal{e_1',\ldots,e_d'}$
of $L$ both adapted to the filtration, respectively. Then
$\Phi' = C^{-1} \Phi C^\sigma$ for some $C\in\cP_\k$.
\end{enumerate}
\end{proposition}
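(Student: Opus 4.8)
The plan is to deduce all three parts from the single elementary fact that a filtration-preserving map preserves the associated flags. Once Part~(1) is established, Part~(3) will be a bare change-of-basis computation in the $\sigma$-linear setting, and Part~(2) will follow from Proposition~\ref{P:basis}.

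\textbf{Part~(1).} Write $\underline{e}=(e_1,\ldots,e_d)$ and $\underline{e}'=(e'_1,\ldots,e'_d)$. First I would observe that, since $f$ is an automorphism of $(D,\Fil)$, it preserves each $\Fil^iD$ and hence carries $\Fil^iL=\Fil^iD\cap L$ isomorphically onto $\Fil^iD\cap f(L)=\Fil^iL'$; so $f$ restricts to a filtered isomorphism $(L,\Fil)\cong(L',\Fil)$. Because $\HP(D/K)=\k$ depends only on the filtration, $L$ and $L'$ share the jump indices $r_i=\dim_K\Fil^{k_i}D$, and by hypothesis $\Fil^{k_i}L=\pscal{e_1,\ldots,e_{r_i}}$ while $\Fil^{k_i}L'=\pscal{e'_1,\ldots,e'_{r_i}}$. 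Writing $f(\underline{e})=\underline{e}'\,C$, I would argue that $C\in\GL_d(\cO_K)$ (it and $C^{-1}$ are the matrices in $\cO_K$-bases of the $\cO_K$-isomorphisms $f|_L$ and $f^{-1}|_{L'}$), and that $f(\pscal{e_1,\ldots,e_{r_i}})=\pscal{e'_1,\ldots,e'_{r_i}}$ forces the $j$-th column of $C$ to vanish in rows $>r_i$ for every $j\le r_i$. Hence $C$ is block upper triangular with diagonal blocks of sizes $r_1,r_2-r_1,\ldots$; being invertible over $\cO_K$, each diagonal block is then invertible over $\cO_K$, so $C\in\GL_d(\cO_K)\cap\cP_{\k,K}=\cP_\k$ by Lemma~\ref{L:parabolic}. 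This is just the proof of Lemma~\ref{L:parabolic} run for an isomorphism $L\to L'$ instead of an automorphism of a single $L$.

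\textbf{Parts~(2) and~(3).} For~(2) I would apply Proposition~\ref{P:basis}(2) to produce a basis of $L$ adapted to its filtration, and observe via the flag argument of Part~(1) (applied to the identity) that any two adapted bases of $L$ differ by right multiplication by an element of $\cP_\k$; the asserted $C$ is the one carrying $\underline{e}$ to such an adapted basis. Part~(3) is then immediate: Part~(1) with $f=\mathrm{id}_D$ and $L'=L$ yields $C\in\cP_\k$ with $\underline{e}'=\underline{e}\,C$ (using that $\cP_\k$ is a group, so we may pass to $C^{-1}$), and then $\sigma$-linearity of $\varphi$ gives
$$\underline{e}'\,\Phi'=\varphi(\underline{e}')=\varphi(\underline{e}\,C)=\varphi(\underline{e})\,C^\sigma=\underline{e}\,\Phi\,C^\sigma=\underline{e}'\,C^{-1}\Phi\,C^\sigma,$$
whence $\Phi'=C^{-1}\Phi\,C^\sigma$.

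\textbf{Where the work is.} I expect no genuine obstacle; the content is entirely in Part~(1), which is flag bookkeeping. The points needing care are matching the block shape of $C$ to the description of $\cP_\k$ in Lemma~\ref{L:parabolic} when $\k$ has repeated slopes (so several $r_i$ coincide and one works with the distinct jumps), keeping the Frobenius twist $C^\sigma$ on the correct side of the $\sigma$-linear matrix identities, and, in Part~(2), being precise about which lattice each basis belongs to so that $C$ lands in $\cP_\k$ rather than merely $\cP_{\k,K}$.
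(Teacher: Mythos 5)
Your proposal is correct and follows essentially the same route as the paper: the paper's proof of Part (1) simply cites Lemma \ref{L:parabolic}(1) to place $\Mat(f)$ in $\cP_{\k,K}$ and then intersects with $\GL_d(\cO_K)$, which is exactly the flag argument you re-run for the two-lattice situation, and Parts (2) and (3) are handled by the same specialization and the same $\sigma$-semilinear change-of-basis computation $C\Phi'=\Phi C^\sigma$. No substantive difference.
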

\begin{proof}
By Lemma \ref{L:parabolic}(1), $\Mat(f)\in\cP_{\k,K}$.
But $f$ is an isomorphism between two $\cO_K$-lattices, so
$\Mat(f) \in \GL_d(\cO_K)$
and hence $\Mat(f)\in\cP_\k$.
Then part (2) follows immediately by
setting $f:e_j\mapsto e_j'$.
Let $\theta$ be the automorphism of $L$ that
$\theta(e_1,\ldots,e_d)=(e'_1,\ldots,e'_d)C$ as in part (1).
Since $\theta\varphi = \varphi\theta$, and  $\varphi$ is
$K$-semilinear, we have $C \Phi' = \Phi C^\sigma$ where $\sigma$ is the
absolute Frobenius on $K$.
This proves part (3).
\end{proof}

\subsection{Construction of crystalline representations}
\label{S:2.2}

In this subsection we let $(D,\Fil)$ be given with
$\HP(D/K)=(k_1,\ldots,k_d)=:\k$, where $k_1\geq \ldots \geq k_d$, of
dimension $d$ as $K$-vector space. We write
$\Delta_\k:= \diag(p^{k_1},\ldots,p^{k_d})$. For any matrix
$C=(c_{ij})_{1\leq i,j\leq d}\in \GL_d(\cO_K)$, define
$C^\flat:=\Delta_\k C \Delta_\k^{-1} = (p^{k_i-k_j}c_{ij})_{1\leq
i,j\leq d}$. If $C\in \cP_\k$ then clearly $C^\flat\in
\cP_\k$. We define two matrices $A,B$ in $\GL_d(\cO_K)$
being {\em parabolic equivalent} with respect to $\k$,
denoted by $A\sim_\k B$, if $A=C^{-1}BC^\flat$ for some matrix $C$ in
$\cP_\k$; We say they are {\em $\sigma$-linear parabolic
equivalent with respect to $\k$}, denoted by $A\sim_{\sigma,\k} B$, if $A=C^{-1} B
(C^\flat)^\sigma$. Note that if $\HP(D/K)=(0,\ldots,0)$ then
$A\sim_{\sigma,\k} B$ if and only if $A=C^{-1}BC^{\sigma}$.
We denote the orbits of the equivalence classes by
$\GL_d(\cO_K)/\sim_\k$.
Namely, it can be considered the orbits of
map $A\rightarrow C^{-1}A(C^\flat)^\sigma$ in $\GL_d(\cO_K)$ with $C\in \cP_\k$.

\begin{lemma}\label{L:isogeny}
For any $K$-module $D'$ of dimension $d$, let
$f:(D,\Fil)\rightarrow D'$ be a $K$-semilinear (or $K$-linear)
isomorphism of $K$-modules. Let $M,M'$ be $\cO_K$-lattices in
$D,D'$ respectively with a map $f|_M: M\rightarrow M'$.
If $f(\Fil^i M )\subseteq p^i M'$ and $\sum_{i\in\Z}p^{-i}f(\Fil^i M)
= M'$, then there exists a basis for $M$ adapted to the filtration
such that $\Mat(f|_M)= A\Delta_\k$ with $A\in \GL_d(\cO_K)$.
The converse also holds.
\end{lemma}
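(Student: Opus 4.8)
The plan is to reduce everything to a statement about a single matrix and its Smith-type normal form relative to the filtration. First I would invoke Proposition \ref{P:basis}(2) to pick a basis $\pscal{e_1,\ldots,e_d}$ of $M$ adapted to the filtration, so that $\Fil^i M = \pscal{e_1,\ldots,e_{r_i}}$ where $r_i = \dim_K \Fil^i D$; concretely, after sorting, $e_j \in \Fil^{k_j} D$ and $e_j \notin \Fil^{k_j + 1} D$, and $\Fil^i M = \pscal{e_j : k_j \geq i}$. Likewise choose a basis $\pscal{e'_1,\ldots,e'_d}$ of $M'$ adapted to the filtration of $D'$ (note $D'$ inherits a filtration so that $f$ becomes filtered once we transport $\Fil$ along $f$; but actually we do not need $f$ to be filtered — we only use the two hypotheses on $f$). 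Write $\Mat(f|_M) = F = (f_{ij})$ with respect to these two bases, so $F \in \Mat_d(\cO_K)$ a priori only up to the fact that $f$ maps $M$ into $M'$ — and in fact $F$ need not be invertible yet; its invertibility over $K$ is guaranteed since $f$ is an isomorphism of $K$-modules.

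Next I would unwind the two hypotheses. The condition $f(\Fil^i M) \subseteq p^i M'$, applied to each basis vector $e_j \in \Fil^{k_j} M$, says that the $j$-th column of $F$ lies in $p^{k_j} M'$, i.e. $p^{k_j} \mid f_{ij}$ for all $i$. Hence $F = A \Delta_\k$ for some matrix $A = (a_{ij}) \in \Mat_d(\cO_K)$, where $a_{ij} = f_{ij}/p^{k_j}$. It remains to show $A \in \GL_d(\cO_K)$, i.e. $\det A \in \cO_K^*$. This is exactly where the surjectivity hypothesis $\sum_{i} p^{-i} f(\Fil^i M) = M'$ comes in: the left side is the $\cO_K$-span of all vectors $p^{-k_j} f(e_j)$, whose coordinate vectors are precisely the columns of $A$; so the hypothesis says the columns of $A$ generate $M'$ over $\cO_K$, i.e. $A$ is surjective as an endomorphism of the free rank-$d$ module $\cO_K^d$, which forces $A \in \GL_d(\cO_K)$ by Nakayama (a surjective endomorphism of a finitely generated module over a local — indeed any commutative — ring is an isomorphism). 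For the converse, if some adapted basis of $M$ gives $\Mat(f|_M) = A \Delta_\k$ with $A \in \GL_d(\cO_K)$, then reading the columns backwards shows $f(e_j) = p^{k_j}(A e'_\bullet) \in p^{k_j} M'$, whence $f(\Fil^i M) \subseteq p^i M'$ by $\cO_K$-linearity (using that $\Fil^i M$ is spanned by those $e_j$ with $k_j \geq i$, and $p^{k_j} \mid p^i$ there), and $\sum_i p^{-i} f(\Fil^i M)$ contains all $p^{-k_j} f(e_j) = A e'_\bullet$, whose columns generate $M'$ since $A$ is invertible.

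The one subtlety — and the step I expect needs the most care — is the bookkeeping when Hodge–Tate weights repeat, so that the "adapted basis" only determines $e_j$ up to the parabolic group $\cP_\k$ and the identification of which column carries which power $p^{k_j}$ is not canonical. Here I would be careful to fix, once and for all, the decreasing order $k_1 \geq \cdots \geq k_d$ and the convention $\Fil^i M = \pscal{e_1,\ldots,e_{r_i}}$ from Proposition \ref{P:basis}(2), so that the factorization $F = A\Delta_\k$ with $\Delta_\k = \diag(p^{k_1},\ldots,p^{k_d})$ is unambiguous; the freedom in the choice of adapted basis then only changes $A$ within its $\cP_\k$-orbit in the sense of Proposition \ref{P:conjugation}, which does not affect the conclusion $A \in \GL_d(\cO_K)$. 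Everything else is the elementary linear algebra sketched above, so no real obstacle remains.
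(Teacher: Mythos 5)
Your proposal is correct and follows essentially the same route as the paper's proof: choose a basis of $M$ adapted to the filtration via Proposition \ref{P:basis}(2), observe that the first hypothesis is equivalent to $\Mat(f|_M)\Delta_\k^{-1}\in M_{d\times d}(\cO_K)$, that the second hypothesis says the columns of $A$ generate $M'$ (hence $A\in\GL_d(\cO_K)$), and verify the converse by reading the columns back. The only cosmetic differences are that the paper takes an arbitrary basis of $M'$ rather than an adapted one and handles the change-of-adapted-basis ambiguity via Proposition \ref{P:conjugation}(3), exactly as you anticipate in your final paragraph.
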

\begin{proof}
Choose a basis of $M=\pscal{e_1,\ldots,e_d}$ adapted to its
filtration (see Proposition \ref{P:basis}). Let
$M'=\pscal{e_1',\ldots,e_d'}$ be any basis of $M'$. Let $r_i=\dim
\Fil^{k_i}M$. Note that for every $1\leq i\leq d$ we have
\begin{eqnarray}\label{E:Mat(f)}
f(e_1,\ldots,e_{r_i})  &=&
(e'_1,\ldots,e'_d) \cdot\Mat(f)\cdot
\left(
\begin{array}{c}
  \Id_{r_i\times r_i} \\
  0 \\
\end{array}
\right)_{d\times r_i}.
\end{eqnarray}
By induction on $i$ the first condition
in this lemma is equivalent to that $A:=\Mat(f)\cdot\Delta_{\k}^{-1}
\in M_{d\times d}(\cO_K).$ The second condition says that the column vectors
in the matrix $A$ generate $M'$ over $\cO_K$, that is $A\in
\GL_d(\cO_K)$. Changing basis for $M'$ clearly does not change the
form of $\Mat(f)=A\Delta_{\k}$. Suppose we change the basis of $M$ from
$\pscal{e_j}$ to $\pscal{b_j}$, then $(b_1,\ldots,b_d) =
(e_1,\ldots,e_d)C$ for some $C\in \cP_\k$ by Proposition
\ref{P:basis}(2): Suppose $\varphi$ is $K$-semilinear (the case
that $\varphi$ is $K$-linear is similar), then by
Proposition \ref{P:conjugation}(3) the matrix of $f$ with respect
to basis $\pscal{b_j}$ is $\Mat(f|_{\pscal{b_j}}) = C^{-1}A\Delta_\k
C^\sigma = C^{-1}A (C^\flat)^\sigma \Delta_{\k}$. It is clear that
$C^{-1}A(C^\flat)^\sigma\in \GL_d(\cO_K)$.

Conversely, suppose $\Mat(f|_M)= A\Delta_\k$. It suffices to show that
$f(\Fil^{k_i}M) \subseteq p^{k_i} M'$ for all $i$
and $\sum_{i}p^{-k_i} f(\Fil^{k_i} M) = M'$.
By (\ref{E:Mat(f)}) we have that
\begin{eqnarray*}
f(e_1,\ldots,e_{r_i}) &=& (e'_1,\ldots,e'_d)A \Delta_\k
\left(
\begin{array}{c}
  \Id_{r_i\times r_i} \\
  0 \\
\end{array}
\right)_{d\times r_i}\\
&=&
(e'_1,\ldots,e'_d)A
\left(
\begin{array}{c}
  \diag(p^{k_1},\ldots,p^{k_i})
  \\
  0 \\
\end{array}
\right)_{d\times r_i}.
\end{eqnarray*}
Because $k_1\geq \ldots\geq k_d$ we have
that $f(\Fil^{k_i}M)\subseteq p^{k_i} M'$.
This also shows that
for every $k_i$ we have
\begin{eqnarray}\label{E:f-2}
p^{-k_i} f(e_1,\ldots,e_{r_i})
&=& (e'_1,\ldots,e'_d) A
\left(
\begin{array}{c}
  \diag(p^{k_1-k_i},\ldots,1)
  \\
  0 \\
\end{array}
\right)_{d\times r_i}.
\end{eqnarray}
Since $A\in\GL_d(\cO_K)$
The image in (\ref{E:f-2}) generates
the submodule $\pscal{e'_{r_{i+1}+1},\ldots,e'_{r_i}}$ of $M'$.
Hence the sum of these submodules generates the
entire $M'$.
\end{proof}

Following Fontaine-Laffaille's notations, a
{\em strongly divisible $\varphi$-lattice} $L$ over $\cO_K$ in $D$ is
a free $\cO_K$-submodule in $D$ such that
$$
L\otimes_{\cO_K}K = D,\quad
\varphi(L)\subseteq L, \quad \sum_{i\in\Z} p^{-i}\varphi(\Fil^i D\cap L) = L.
$$

\begin{corollary}\label{C:SDL}
Let $L$ be a $\varphi$-lattice in $(D,\Fil,\varphi)$ with
induced filtration. Then $(L,\Fil,\varphi)$ is strongly divisible
if and only if there is a basis of $L$ adapted to the
filtration such that $\Mat(\varphi|_L) = A\Delta_\k$ for some
$A\in\GL_d(\cO_K)$.
\end{corollary}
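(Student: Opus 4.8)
The plan is to apply Lemma \ref{L:isogeny} directly, taking $D'=D$ with the Frobenius map playing the role of $f$. Specifically, given a $\varphi$-lattice $L$ in $(D,\Fil,\varphi)$, I would set $M=M'=L$ and $f=\varphi|_L$. Note that $\varphi$ is a $K$-semilinear automorphism of $D$ (since $\varphi$ is an automorphism on $D\otimes_{\cO_K}K=D$ by the definition of a filtered $\varphi$-module), so the hypotheses of Lemma \ref{L:isogeny} on $f$ being a $K$-semilinear isomorphism are met, and $\varphi(L)\subseteq L$ gives the map $f|_M\colon M\to M'$.

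The key observation is that the two conditions appearing in the definition of a strongly divisible $\varphi$-lattice match, term by term, the two hypotheses of Lemma \ref{L:isogeny} under this identification. Indeed, $\Fil^i L = \Fil^i D\cap L$ by definition of the induced filtration, so $\sum_{i\in\Z}p^{-i}\varphi(\Fil^i D\cap L)=L$ is exactly $\sum_{i\in\Z}p^{-i}f(\Fil^i M)=M'$. It remains to check that the first Fontaine--Laffaille condition, $\sum_i p^{-i}\varphi(\Fil^i D\cap L)=L$ combined with $\varphi(L)\subseteq L$, is equivalent to $f(\Fil^i M)\subseteq p^i M'$ for all $i$; for this one uses that $L$ is exhaustively and separatedly filtered, so $\Fil^i L = L$ for $i\leq k_d$ and $\Fil^i L = 0$ for $i>k_1$, whence $\varphi(L)\subseteq L = p^{k_d}\cdot p^{-k_d}L$ handles the small $i$, and the vanishing handles large $i$; the intermediate steps follow because $\varphi(\Fil^i L)\subseteq \sum_j p^j L$ forces the appropriate divisibility. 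Actually the cleanest route is: $\varphi(L)\subseteq L$ together with $\sum_i p^{-i}\varphi(\Fil^iD\cap L)=L$ is the \emph{standard} reformulation of strong divisibility as $\varphi(\Fil^iD\cap L)\subseteq p^iL$ for all $i\geq 0$ plus the surjectivity, which is precisely the content of Lemma \ref{L:isogeny}'s two hypotheses with $i$ ranging over the (finitely many, nonnegative) Hodge--Tate weights.

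Granting this dictionary, Lemma \ref{L:isogeny} then says: there is a basis of $L$ adapted to the filtration with $\Mat(\varphi|_L)=A\Delta_\k$ for some $A\in\GL_d(\cO_K)$ if and only if $f(\Fil^iM)\subseteq p^iM'$ and $\sum_i p^{-i}f(\Fil^iM)=M'$, which is if and only if $(L,\Fil,\varphi)$ is strongly divisible. That is exactly the claim of the corollary.

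The main obstacle I anticipate is purely bookkeeping: carefully confirming that ``$\varphi(L)\subseteq L$ and $\sum_{i}p^{-i}\varphi(\Fil^i L)=L$'' is genuinely equivalent to the pair of conditions ``$\varphi(\Fil^iL)\subseteq p^iL$ for all $i$ and $\sum_i p^{-i}\varphi(\Fil^iL)=L$''. The forward direction of this equivalence is the nontrivial one and is the classical Fontaine--Laffaille fact that strong divisibility of the filtration is equivalent to the divisibility estimates $\varphi(\Fil^iL)\subseteq p^iL$; I would either cite this or spell it out using that $\Fil^\bullet L$ is a finite decreasing filtration by saturated submodules so that one can induct downward from $\Fil^{k_1+1}L=0$. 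Everything else is a direct invocation of Lemma \ref{L:isogeny}, so once the reformulation is in place the proof is immediate.
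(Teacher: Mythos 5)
Your proof is correct and takes essentially the same route as the paper, whose entire proof is to apply Lemma \ref{L:isogeny} to the $K$-semilinear automorphism $\varphi$ of $D$ with $M=M'=L$. The reformulation you flag as the ``main obstacle'' is actually immediate and needs no induction or appeal to Fontaine--Laffaille: since $\sum_{i}p^{-i}\varphi(\Fil^iL)$ is a sum of $\cO_K$-submodules of $D$ that equals $L$, each summand $p^{-i}\varphi(\Fil^iL)$ is contained in $L$, which is precisely $\varphi(\Fil^iL)\subseteq p^iL$ for all $i$.
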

\begin{proof}
This follows by applying Lemma \ref{L:isogeny} to the
$K$-semilinear automorphism $\varphi$ of $D$.
\end{proof}

For any matrix $M$ in $\GL_d(\cO)$ where $\cO$ is a principal ideal domain
(e.g., $\cO=K$ or $\bkplus$), there exists invertible matrices $C,D$ in
$\GL_d(\cO)$ such that $CMD$ is a diagonal matrix in $M_{d\times d}(\cO)$ with
entries on the diagonal equal to $p$-powers with non-increasing $p$-adic valuations.
We call the (ordered) $d$-tuple of diagonal entries of $CMD$
the {\em Hodge polygon of matrix} $M$.
If $f$ is an endomorphism of free modules $L$ over $\cO$ then we denote
$\HP(f|_L):=\HP(\Mat(f))$. This Hodge polygon is related
to Hodge polygon of filtered modules we defined earlier.

\begin{proposition}\label{P:parabolic}
For any $\k$, there is a bijection $\Xi: \GL_d(\cO_K)/\sim_{\sigma,\k}
\longrightarrow \MF{ad,\k}(K)$ by sending every $A\in\GL_d(\cO_K)$ to
the filtered $\varphi$-module with $\Mat(\varphi|_D)=A\Delta_\k$ with respect
to a basis adapted to filtration.
\end{proposition}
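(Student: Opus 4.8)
The plan is to establish the bijection $\Xi$ in three stages: (i) show that the recipe $A \mapsto (D, \Fil, \varphi)$ with $\Mat(\varphi|_D) = A\Delta_\k$ is well-defined on $\sim_{\sigma,\k}$-equivalence classes and lands in $\MF{ad,\k}(K)$; (ii) show it is injective on equivalence classes; (iii) show it is surjective. Fix once and for all a $d$-dimensional filtered $K$-vector space $(D,\Fil)$ with a chosen basis $\pscal{e_1,\ldots,e_d}$ adapted to the filtration with $\HP(D/K)=\k$ — this exists by Proposition \ref{P:basis}(2) applied to any lattice. For $A\in\GL_d(\cO_K)$, define $\varphi_A$ to be the $\sigma$-semilinear endomorphism of $D$ with $\Mat(\varphi_A) = A\Delta_\k$ in this basis; since $A\in\GL_d(\cO_K)$ and $\Delta_\k$ is invertible over $K$, $\varphi_A$ is an automorphism of $D$, so $(D,\Fil,\varphi_A)$ is a filtered $\varphi$-module with $\HP = \k$.

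For (i), the lattice $L_0 = \pscal{e_1,\ldots,e_d}_{\cO_K}$ is then a $\varphi_A$-lattice (one checks $\varphi_A(L_0)\subseteq L_0$ since $A\Delta_\k$ has $\cO_K$-entries because $k_i\geq 0$), and by Corollary \ref{C:SDL} it is strongly divisible. To see $(D,\Fil,\varphi_A)$ is \emph{weakly admissible}, I would invoke the standard fact (Fontaine-Laffaille, \cite{FL82}; or \cite{Laf80}) that a filtered $\varphi$-module over $K$ admitting a strongly divisible $\varphi$-lattice is automatically weakly admissible — alternatively one argues directly that for any sub-$\varphi$-module $D'$, saturating $L_0\cap D'$ and using strong divisibility gives $t_H(D'/K)\leq t_N(D'/K)$ with equality for $D'=D$ via the determinant: $t_H(D/K) = \sum k_i = v_p(\det\Delta_\k) = v_p(\det(A\Delta_\k)) = t_N(D/K)$ since $\det A\in\cO_K^\times$. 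If $A\sim_{\sigma,\k}A'$, say $A' = C^{-1}A(C^\flat)^\sigma$ with $C\in\cP_\k$, then letting $\theta$ be the automorphism of $(D,\Fil)$ with $\Mat(\theta)=C$ (legitimate by Lemma \ref{L:parabolic}(1)), a direct computation shows $\theta$ intertwines $\varphi_A$ and $\varphi_{A'}$: indeed $\Mat(\varphi_{A'}) = C^{-1}A(C^\flat)^\sigma\Delta_\k = C^{-1}(A\Delta_\k)C^\sigma = C^{-1}\Mat(\varphi_A)C^\sigma$, which is exactly the change-of-basis formula from Proposition \ref{P:conjugation}(3). Hence $(D,\Fil,\varphi_A)\cong(D,\Fil,\varphi_{A'})$ in $\MF{ad,\k}(K)$, so $\Xi$ is well-defined.

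For injectivity (ii): suppose $\Xi(A)\cong\Xi(A')$, i.e.\ there is a filtered $\varphi$-module isomorphism $g:(D,\Fil,\varphi_A)\xrightarrow{\sim}(D,\Fil,\varphi_{A'})$. Then $g$ is in particular an automorphism of $(D,\Fil)$, so $\Mat(g)\in\cP_{\k,K}$ by Lemma \ref{L:parabolic}(1). The subtlety — and this is the main obstacle — is that $\Mat(g)$ is a priori only in $\GL_d(K)\cap\cP_{\k,K}$, not in the \emph{integral} parabolic $\cP_\k$, so I cannot immediately read off a $\sim_{\sigma,\k}$-equivalence. The fix: $g$ carries the strongly divisible lattice $L_0$ (with respect to $\varphi_A$) to a lattice $g(L_0)$ which is strongly divisible with respect to $\varphi_{A'}$, but $L_0$ is \emph{also} strongly divisible for $\varphi_{A'}$; one must argue that two strongly divisible $\varphi$-lattices in the same weakly admissible $D$ are related by an element of $\cP_\k$ — this is where one uses that strongly divisible lattices are "unique up to the filtration-respecting integral automorphisms," which follows by running Lemma \ref{L:isogeny}/Corollary \ref{C:SDL} on both and comparing via Proposition \ref{P:conjugation}(1). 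Concretely: pick bases of $L_0$ and $g(L_0)$ adapted to the filtration; by Proposition \ref{P:conjugation}(1) applied to the identity inclusion (or to $g$ itself viewed suitably) the transition matrix lies in $\cP_\k$; chasing this through the relation $\Mat(\varphi_{A'}) = C^{-1}\Mat(\varphi_A)C^\sigma$ gives $A' = C^{-1}A(C^\flat)^\sigma$ with $C\in\cP_\k$, i.e.\ $A\sim_{\sigma,\k}A'$.

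For surjectivity (iii): given any $(D',\Fil',\varphi')\in\MF{ad,\k}(K)$, by Laffaille's theorem (\cite{Laf80}) — which applies without the $p-1$ bound, this is the point where we need weak admissibility to produce \emph{some} strongly divisible lattice — there exists a strongly divisible $\varphi'$-lattice $L'$ in $D'$. Choose a basis of $L'$ adapted to $\Fil'$; by Corollary \ref{C:SDL}, $\Mat(\varphi'|_{L'}) = A\Delta_{\k'}$ for some $A\in\GL_d(\cO_K)$, where $\k' = \HP(D'/K)$, and by hypothesis $\k'=\k$ (up to the canonical identification of the filtration data, using that $L'$ is adapted). Transporting $(D',\Fil',\varphi')$ to the model $(D,\Fil)$ via an isomorphism of filtered $K$-spaces sending the adapted basis of $L'$ to $\pscal{e_1,\ldots,e_d}$ shows $(D',\Fil',\varphi')\cong\Xi(A)$. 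Combining the three stages, $\Xi$ is a bijection $\GL_d(\cO_K)/\sim_{\sigma,\k}\to\MF{ad,\k}(K)$, which is the claim.
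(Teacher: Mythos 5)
Your overall strategy is the same as the paper's: well-definedness is the paper's step (ii) (a matrix $C\in\cP_\k$ defines an automorphism of $(D,\Fil)$ intertwining $\varphi_A$ and $\varphi_{A'}$ precisely when $A'=C^{-1}A(C^\flat)^\sigma$), and surjectivity is the opening of the paper's step (i) (Laffaille's theorem plus Corollary \ref{C:SDL}); your determinant computation of $t_H=t_N$ is a fine supplement. The divergence, and the one genuine gap, is in your injectivity step. You correctly observe that an isomorphism $g:\Xi(A)\to\Xi(A')$ need not carry the reference lattice $L_0$ to itself, and you propose to bridge $g(L_0)$ and $L_0$ by the claim that any two strongly divisible $\varphi$-lattices in the same weakly admissible module are "related by an element of $\cP_\k$." That claim does not follow from Lemma \ref{L:isogeny}, Corollary \ref{C:SDL}, or Proposition \ref{P:conjugation}(1) — the latter applies only when one lattice is the image of the other under an automorphism of $(D,\Fil,\varphi)$, and the identity does not carry $L_0$ to $g(L_0)$ unless they coincide — and it is false in general. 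Take $K=\Qp$, $\k=(0,0)$, $D=\Qp^2$ with trivial filtration and $\varphi=\diag(1,1+p)$: both $L_1=\pscal{e_1,e_2}$ and $L_2=\pscal{e_1,(e_1+e_2)/p}$ are strongly divisible, with matrices $\diag(1,1+p)$ and $\left(\begin{smallmatrix}1&-1\\0&1+p\end{smallmatrix}\right)$ in adapted bases; here $\cP_\k=\GL_2(\Zp)$, $\Delta_\k=\Id$, and $\sim_{\sigma,\k}$ is $\GL_2(\Zp)$-conjugacy, yet the two matrices are not conjugate over $\Zp$ since their reductions mod $p$ (the identity versus a nontrivial unipotent) are not conjugate. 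No automorphism of $(D,\Fil,\varphi)$ carries $L_1$ to $L_2$, so the step cannot be repaired by choosing the automorphism more cleverly.

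For comparison, the paper's step (i) only ever compares $L$ with its image $f(L)$, and therefore only proves the weaker statement that $A\sim_{\sigma,\k}A''$ where $A''$ is the matrix attached to the image lattice $f(L)$; it never confronts the independence of the class $[A']$ from the choice of strongly divisible lattice in the target. So your instinct that something must be said at this point is exactly right — this is the delicate point of the proposition — but the lemma you invoke to say it is unavailable. As the example shows, injectivity onto isomorphism classes genuinely requires either restricting the statement (e.g., to modules equipped with a distinguished strongly divisible lattice, which is how the rest of the paper uses $\Theta$) or an independent argument that the class $[A]$ does not depend on the lattice, which you have not supplied and which fails in the generality claimed. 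Up to that point your argument reduces to the paper's relation $QA\Delta_\k=A''\Delta_\k Q^\sigma$ with $Q\in\cP_\k$ and is correct.
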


\begin{proof}
(i) Suppose $(D,\Fil,\varphi)$ is weakly admissible, then there is
a strongly divisible $\varphi$-lattice $L$ such that
$\Mat(\varphi|_L)=A\cdot \Delta_\k$. Any isomorphism
$(D,\Fil,\varphi)\longrightarrow (D,\Fil,\varphi')$ gives rise to
a (strict) isomorphism $f: L\rightarrow L'$ as filtered
$\varphi$-modules that is $\varphi-$ and $G(K/\Qp)$-equivariant
where $L'=f(L)$. Since $f$ commutes with $\varphi$ and $\Fil^i$ it
follows from the definition above Corollary \ref{C:SDL}
that $L'$ is necessarily a strongly
divisible $\varphi$-lattice in $(D,\Fil,\varphi')$. Hence by
Proposition \ref{P:basis} we may choose bases of $L$ and $L'$ such
that $\Mat(\varphi|_L)= A\cdot\Delta_\k$ and $\Mat(\varphi|_{L'}) =
A'\Delta_\k$ where $A,A'\in\GL_d(\cO_K)$. Since $f(\Fil^iL ) =
\Fil^iL'$ for all $i$ we have $\Mat(f|_L)=Q\in\cP_\k$ with
respect to any bases of $L$ and $L'$ adapted to the filtration on
$D$. This $Q$ is unique up to conjugation in $\cP_\k$ by
Proposition \ref{P:conjugation}. But since $f \varphi = \varphi'
f$ on $L$, and $\varphi$ is $K$-semilinear, we have $Q A\Delta_\k =
A'\Delta_\k Q^\sigma$. Hence we have $A' = C^{-1} A \Delta_\k C^\sigma
\Delta_{\k}^{-1} = C^{-1} A (C^\flat)^\sigma$. This proves one
direction of the correspondence.

(ii)
Let $L=\pscal{e_i}_i$ be a $\cO_K$-lattice in $(D,\Fil)$
with basis adapted to its induced filtration.
For any $A\in\GL_d(\cO_K)$
define an endomorphism of $L$ by $\Mat(\varphi|_L):=A\Delta_\k$.
Because $\det(\Mat(\varphi|_L))\neq 0$, the map
$\varphi$ induces an automorphism of $(D,\Fil)$.
By Corollary \ref{C:SDL}, $L$ is a strongly divisible
$\varphi$-lattice in $(D,\Fil,\varphi)$ with induced filtration.
We let $(D,\Fil,\varphi')$ be associated to another matrix
$A'\in\GL_d(\cO_K)$ in the same way, and it is weakly admissible as well.
Suppose $A\sim_{\sigma,\k} A'$ and $A'=C^{-1}A(C^\flat)^\sigma$ for some $C\in\cP_\k$.
Then the matrix $C^{-1}\in\cP_\k$ defines
an automorphism $\theta$ of $(D,\Fil)$ by Lemma \ref{L:parabolic}.
The hypothesis $A'=C^{-1}A (C^\flat)^\sigma$ is equivalent to
that $\theta \varphi = \varphi' \theta$.
Thus $\theta$ gives rise to an
isomorphism as filtered $\varphi$-modules.
This proves the other direction of the above correspondence.
\end{proof}

\begin{proof}[Proof of Theorem A]
Let $\vcris^*$ be the quasi-inverse of functor $\dcris^*$.
Composing the bijection $\Xi$ in Proposition \ref{P:parabolic}
and the equivalence functor $\vcris^*$, we get the desired bijection
$$
\Theta: \GL_d(\cO_K)/\sim_{\sigma,\k}
\stackrel{\Xi}{\longrightarrow} \MF{ad,\k}(K)
\stackrel{\vcris^*}{\longrightarrow} \Rep_{\cris/\Qp}^\k (G_K).$$
\end{proof}

\begin{remark}
[Metric on $\Rep_{\cris/\Qp}^\k(G_K)$]\label{R:metric}
The bijection $\Theta$ of Theorem A allows us to define a non-Archimedian metric
($p$-adic) on $\Rep_{\cris/\Qp}(G_K)$  by endow such metric on
$\GL_d(\cO_K)/\sim_{\sigma,\k}$.
Namely, let $|A-A'|_p:=p^{-\ord_p(A-A')}$ be the
$p$-adic metric on $\GL_d(\cO_K)$ for any $A,A'\in\GL_d(\cO_K)$.
Let $|[A]-[A']|_p$ be a quotient metric on $\GL_d(\cO_K)/\sim_{\sigma,\k}$
defined by
$$|[A]-[A']|_p =
\sup_{C_1,C_2\in\cP_\k} |C_1^{-1}AC_1^{\flat\sigma}-C_2^{-1}A'C_2^{\flat\sigma}|_p.$$
For any $V,V'\in \Rep_{\cris/\Qp}^\k(G_K)$, define $\dist(V,V') := |[A]-[A']|_p$.
It is clear that $\GL_d(\cO_K)$ is compact as $\cO_K$ is compact under $|\cdot|_p$.
With quotient metric, $\GL_d(\cO_K)/\sim_{\sigma,\k}$ is also compact.
Hence our metric defined above makes $\Rep_{\cris/\Qp}^\k(G_K)$
a compact space.
\end{remark}

\subsection{Irreducibility in $\Rep_{\cris/\Qp}(G_K)$}
\label{S:2.3}

Let $\k=(k_1,\ldots,k_d)$
and  $\k'=(k'_1,\ldots,k'_{d'})$ be Hodge polygons.
If $\k'$ is a sub-polygon of $\k$,
then we denote this partial order by $\k'\subseteq \k$. For example
$(3,2,2,0)\subseteq (5,4,3,2,2,1,0)$.
Let $\Theta: \GL_d(\cO_K)
\longrightarrow \Rep_{\cris/\Qp}^\k(G_K)$
be the bijection in Theorem A.

\begin{proposition}\label{P:irreducibility}
Let $V\in \Rep_{\cris/\Qp}^\k (G_K)$ be such that $\Theta(A)=V$.
Then $V$ is irreducible if and only if
for any $\k'\subsetneq \k$ there is no $A'\in\GL_{d'}(\cO_K)$ and
no $C\in M_{d\times d'}(\cO_K)$ of rank $d'$ such that
$C=A\Delta_\k C^\sigma (A'\Delta_{\k'})^{-1}$.
\end{proposition}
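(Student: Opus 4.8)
The plan is to translate the statement ``$V$ is reducible'' into the existence of a subobject in the category $\MF{ad}(K)$, and then unwind what such a subobject means at the level of the matrix $A$ via the bijection $\Xi$ of Proposition \ref{P:parabolic}. Since $\vcris^*$ (equivalently $\dcris^*$) is an exact, fully faithful, essentially surjective functor between $\Rep_{\cris/\Qp}^\k(G_K)$ and $\MF{ad,\k}(K)$ and respects subobjects, $V$ is (absolutely) reducible precisely when $D:=\dcris^*(V)$ admits a proper nonzero weakly admissible sub-$\varphi$-module $D'$ stable under $\Fil$, i.e. with its induced filtration. So I first reduce to: $V$ is reducible $\iff$ there exists $0\subsetneq D'\subsetneq D$ a weakly admissible filtered $\varphi$-submodule, whose Hodge polygon $\k'$ is then automatically a sub-polygon $\k'\subsetneq\k$ (the induced filtration on $D'$ has jumps among the jumps of $D$, counted with multiplicity, because a sub-$K$-vector space meets the flag $\Fil^\bullet D$ in a subflag).

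Next I would make the matrix translation. Fix a basis $\pscal{e_1,\ldots,e_d}$ of a strongly divisible $\varphi$-lattice $L\subset D$ adapted to the filtration, so $\Mat(\varphi|_L)=A\Delta_\k$ by Corollary \ref{C:SDL}. A sub-$\varphi$-module $D'$ with induced filtration corresponds, on taking $L':=D'\cap L$ (which is a strongly divisible $\varphi$-lattice in $D'$ by the same kind of saturation argument used in Proposition \ref{P:basis}(2) and Lemma \ref{L:isogeny}), to a basis $\pscal{e'_1,\ldots,e'_{d'}}$ of $L'$ adapted to its filtration with $\Mat(\varphi|_{L'})=A'\Delta_{\k'}$ for some $A'\in\GL_{d'}(\cO_K)$ and some $\k'\subseteq\k$. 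Let $C\in M_{d\times d'}(\cO_K)$ be the matrix expressing the inclusion $L'\hookrightarrow L$ in these adapted bases, i.e. $(e'_1,\ldots,e'_{d'})=(e_1,\ldots,e_d)\,C$; it has rank $d'$ because the inclusion is a saturated embedding of the underlying vector spaces after inverting $p$ and an embedding of lattices before (one checks $C$ has rank $d'$ over $\cO_K$, not merely over $K$, since $L'=D'\cap L$ is saturated in $L$). The $\varphi$-equivariance $\varphi\circ(\text{incl}) = (\text{incl})\circ\varphi$ together with the $\sigma$-semilinearity of $\varphi$ then reads, exactly as in the proofs of Proposition \ref{P:conjugation}(3) and Proposition \ref{P:parabolic}(i),
$$
(A\Delta_\k)\,C^\sigma = C\,(A'\Delta_{\k'}),
$$
which is equivalent to $C = A\Delta_\k C^\sigma (A'\Delta_{\k'})^{-1}$ since $A'\Delta_{\k'}\in\GL_{d'}(K)$. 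Conversely, given such data $(\k',A',C)$ with $\k'\subsetneq\k$, the column span of $C$ over $K$ defines a $\varphi$-stable subspace $D'$ of $D$; one has to check it carries the induced filtration, for which one uses that $C$ being of full rank $d'$ over $\cO_K$ forces the adaptedness of the chosen basis of $D'$ to match up correctly with the flag of $D$ — this is where the combinatorial constraint $\k'\subseteq\k$ is genuinely used — and then that $D'$ is weakly admissible, which is automatic for a subobject of a weakly admissible object. Negating this equivalence gives the statement.

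I expect the main obstacle to be the converse direction, specifically verifying that an arbitrary full-rank $C\in M_{d\times d'}(\cO_K)$ satisfying the matrix identity really does cut out a subobject \emph{with the induced filtration} and with Hodge polygon exactly the chosen $\k'\subseteq\k$, rather than merely a $\varphi$-stable subspace. The delicate point is that ``rank $d'$'' must be read integrally: one needs $C$ to be part of a basis change in $\GL_d(\cO_K)$, equivalently that $\operatorname{coker}(C\colon \cO_K^{d'}\to\cO_K^{d})$ is free, so that the image lattice is saturated and hence $\Fil^\bullet D \cap D' $ is computed by the chosen adapted flag; this is precisely analogous to the saturation arguments already carried out in Proposition \ref{P:basis}(2) and in Lemma \ref{L:isogeny}, and I would handle it the same way, by reducing to the structure theorem for modules over the PID $\cO_K$ and tracking the filtration jumps. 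Once that bookkeeping is done, weak admissibility of $D'$ is free (a subobject of a weakly admissible filtered $\varphi$-module in $\MF{ad}(K)$ is weakly admissible, by the subobject condition in the definition of weak admissibility), and the equivalence — hence the proposition — follows.
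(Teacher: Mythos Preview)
Your approach is essentially the same as the paper's: reduce reducibility of $V$ to the existence of a proper subobject $D'\subset D=\dcris^*(V)$, pick strongly divisible lattices $L\subset D$ and $L'\subset D'$ with adapted bases, and read off the matrix identity $(A\Delta_\k)C^\sigma = C(A'\Delta_{\k'})$ from $\varphi$-equivariance of the inclusion. You are in fact more careful than the paper, which only writes out the forward direction and asserts the rest; your identification of the converse (that an arbitrary $C$ satisfying the identity really cuts out a weakly admissible subobject with the induced filtration) as the delicate point is accurate, and your plan to handle it via saturation over the PID $\cO_K$ together with the Fontaine--Laffaille fact that a $\varphi$-lattice $L'$ in a weakly admissible $D'$ with $\sum_i p^{-i}\varphi(\Fil^i L')\subseteq L'$ is automatically strongly divisible is exactly what is needed.
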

\begin{proof}
That $V'$ is a proper subobject of $V$ if
and only if $D'=\dcris^*(V')$ is a proper subobject of $D=\dcris^*(V)$.
By Corollary \ref{C:SDL},
there is a strongly divisible $\varphi$-lattice $L$ in $D$ such that
$\Mat(\varphi|_L)=A\Delta_\k$ for $\Theta(A)=V$ with respect to a basis adapted to
filtration $L=\pscal{e_1,\ldots,e_d}$.
That $V'\subseteq V$ implies that
their corresponding $\HP$ has $\k'\subsetneq \k$.
We can choose a strongly divisible lattice $L'$ in $V'$ such that
$(e'_1,\ldots,e'_{d'}) =
(e_1,\ldots,e_d)C$ is a basis in $L'$ adapted to filtration for some
$C\in M_{d\times d'}(\cO_K)$ of rank $d'$, and such that
$\Mat(\varphi|_{L'})=A'\Delta_{\k'}$ with $A'\in\GL_{d'}(\cO_K)$.
Thus by simple semi-linear algebra we have
$CA'\Delta_{\k'}=A\Delta_\k C^\sigma$.
Thus our assertion follows.
\end{proof}

A generalization of Proposition \ref{P:irreducibility}
to $\Rep_{\cris/E}(G_K)$ is given in Proposition \ref{P:irreducibility-2}.

\begin{remark}\label{R:irreducible}
Let $V$ be any $2$-dim crystalline representation
of $G_K$ whose Hodge polygon is $(k,0)$ with $k>0$.
Then it is known that $V$ is irreducible if and only if
$\ord_p\tr(\Mat(\varphi))=\ord_p \tr(A\cdot\diag(p^k,1))>0$
Our criterion in Proposition \ref{P:irreducibility}
can be used to recover Breuil's classification
(see \cite{Br02}), namely, $V$ is irreducible if and only if there is
an unramified character $\eta$ such that
$\Theta^{-1}(V\otimes \eta)=
\left(
  \begin{array}{cc}
    0 & -1 \\
    1 & a_p \\
  \end{array}
\right)
$
for some $a_p\in\m_K$.
Meanwhile $V$ is reducible if and only if there is an
unramified character $\eta$ such that
$\Theta^{-1}(V\otimes\eta) =
\left(
  \begin{array}{cc}
    1 & 0 \\
    v & u \\
  \end{array}
\right)
$
for some $u\in\cO_K^*$ and $v\in\cO_K$.
A generalization of this classification to $\Rep_{\cris/E}(G_K)$
in 2-dimensional case lies in Theorem \ref{T:irreducible}(or Theorem D), see more discussion in Section \ref{S:5.1}.
\end{remark}

\section{Embedded strongly divisible $\varphi$-lattices}
\label{S:3}

We assume $K=\Q_{p^a}$ and $E$ is an extension of $K$.
A $\varphi$-action on $K\otimes_\Qp E$-module $D$
is an automorphism on $D$ that is
defined by $\varphi((c_K\otimes c_E)y) = (\sigma(c_K)\otimes c_E)\varphi(y)$ for
$c_K\in K, c_E\in E$ and $y\in D$.
Let $\MF{}(K\otimes_\Qp E)$ denote the category of filtered
$\varphi$-modules over $K\otimes_\Qp E$. This category is extensively
studied in the work by Breuil and Mezard \cite{BM02}.
See also \cite{Sa04} for potential version of Breuil-Mezard.
There is a natural
forgetful functor $\MF{}(K\otimes_\Qp E)\longrightarrow \MF{}(K)$ by
forgetting $E$-module structure. A free filtered $\varphi$-module
$D$  in $\MF{}(K\otimes_\Qp E)$ is {\em weakly admissible} if it is
weakly admissible as a $K$-module in $\MF{}(K)$ by forgetting the
$E$-vector space structure. Let $\MF{ad}(\cdot)$ denote the
subcategory of $\MF{}(\cdot)$ consisting of weakly admissible
filtered $\varphi$-modules.
At present an effective and
convenient criterion for weakly admissibility (when $K$ is
unramified) is via the existence of strongly divisible
$\varphi$-lattices in $D$ (see \cite{Laf80} and \cite{FL82}).
This section explores a generalization of this theory from $\MF{ad}(K)$ to
$\MF{ad}(K\otimes_\Qp E)$.

We define two sets of Newton and Hodge polygons for each
$D\in\MF{}(K\otimes_\Qp E)$, embedded Newton and Hodge polygon
in Section \ref{S:3.1} and $\sigma$-invariant
Newton and Hodge polygons in Section 3.2.
Our $\sigma$-invariant polygons are essentially the polygons of
$D$ considered \cite[Section 3]{BS07} after normalized by a factor $1/a$.

\subsection{Embedded Newton and Hodge polygons}
\label{S:3.1}

For Sections \ref{S:3} and \ref{S:4},
let $\k=(\k_j)_{j\in\Z/a\Z}$ where
$\k_j=(k_{j1},\ldots,k_{jd})$ with $k_{j1}\geq\cdots\geq k_{jd}$.
Write $\Delta_{\k_j}:=\diag(p^{k_{j,1}},\ldots,p^{k_{j,d}})$.
To be self-contained, we recall some preliminaries in this section, see
\cite{BM02} for reference. Let $D\in \MF{}(K\otimes_\Qp E)$.
We identify the following group
$\Gal(\Q_{p^a}/\Qp)=\pscal{\tau^i}_{i\in\Z/a\Z}
\simeq\Z/a\Z$ that sends each $\tau^i$ to $i$.
There is a natural isomorphism $\theta: K\otimes_{\Qp} E \longrightarrow
\prod_{j\in\Z/a\Z}E$ defined by $x\otimes y\mapsto
(\sigma^j(x)y)_{j\in\Z/a\Z}$, where $\sigma$ is the absolute Frobenius on $K$.
Let $e_{\sigma^j}:=(0,\cdots,1,\cdots,0)$ with $1$ at $j$-th position
in $\prod_{j\in\Z/a\Z} E$ and let $D_j:= e_{\sigma^j} D$.
Then we have an isomorphism $\theta: D\longrightarrow\prod_j D_j$
as $E$-vector spaces.
For any  $j\in\Z/a\Z$, let $(D_j,\Fil)$ be induced from $(D,\Fil)$,
that is, $\Fil^i D_j: = \Fil^i D\cap D_j = e_{\sigma^j}\Fil^iD$.
It is a filtered module of dimension $d$ over $E$. Write
$\HP(D_j/E)=\k_j$. We call $\HP_{\Z/a\Z}(D):= \HP(D_j/E)_{j\in\Z/a\Z}$ the {\em embedded Hodge polygon} of $(D,\Fil)$. We say $D$ is {\em effective} if $k_{jd}\geq 0$ for all $j$. In the literature the set $\{k_{j1},\ldots,k_{jd}\}$ (without order) is also called {\em Hodge-Tate weight with respect to} $\tau^j$
(see for instance the notation of Gee \cite[Section 4.2]{Ge08b}).

By assuming $\theta\varphi=\varphi\theta$ on $D$ we have for every
$x\otimes y \in K\otimes_\Qp E$ and $v\in D$ that
\begin{eqnarray}\label{E:semilinear}
\varphi ((\sigma^{0}x)y v,\ldots,(\sigma^{a-1}x)yv)
&=&
((\sigma^1x)y\varphi(v),\ldots,(\sigma^{0}x)y\varphi(v)).
\end{eqnarray}
Extending by linearity this defines a map $\varphi$ on $\prod_j D_j$.
This yields an isomorphism $\theta: D\longrightarrow\prod_j D_j$
in $\MF{}(K\otimes_\Qp E)$.

Below we shall consider the lattices in $D \in \MF{}(K\otimes_\Qp E)$.
Recall Fontaine's notation that a morphism $f: (L,\Fil)\rightarrow (L',\Fil)$ between filtered modules is {\em strict} if $f(\Fil^i L ) = f(D)\cap \Fil^i L'$, where $L$ is an $\cO_K\otimes_\Zp \cO_E$-module or $K\otimes_\Qp E$-modules.
A $\varphi$-{\em lattice} $L$ over $\cO_K\otimes_\Zp \cO_E$ in $D$ with induced
filtration is a  $\cO_K\otimes_\Zp \cO_E$-module in $D$ such that
$$
L\otimes_{\cO_K\otimes_{\Zp} \cO_E} (K\otimes_\Qp E)  = D,\quad
\varphi(L)\subseteq L, \quad \Fil^i L  = \Fil^i D\cap L.
$$
It is a {\em strongly divisible $\varphi$-lattice} if, in addition,
it satisfies $\varphi(\Fil^i L)\subseteq p^i L$ and
$$
\sum_{i\in \Z} p^{-i}\varphi(\Fil^i L) = L.
$$
For $K=\Qp$ our strongly divisible lattice is exactly the one
defined in Fontaine-Laffaille's theorem.

\begin{lemma}\label{L:split}
The induced map $\theta: \cO_K \otimes_{\Zp}
\cO_E\longrightarrow \prod_{j\in\Z/a\Z}\cO_E$ is a
$\cO_K\otimes_{\Zp}\cO_E$-algebra isomorphism compatible with
$\varphi$ and $G(K/\Qp)$-actions.
Moreover, we have $L\cong \prod_{j\in\Z/a\Z} L_j$ with
$L_j=L\cdot e_{\sigma_j}$ over $\cO_K\otimes_\Zp\cO_E$.
\end{lemma}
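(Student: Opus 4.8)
The plan is to reduce the statement to the analogous statement over the rational coefficient rings, which is essentially the content of the isomorphism $\theta: K\otimes_\Qp E \to \prod_{j\in\Z/a\Z}E$ already recorded above, and then check that this isomorphism carries the integral structures to one another. First I would observe that $\cO_K\otimes_{\Zp}\cO_E$ is an $\cO_E$-order in $K\otimes_\Qp E$, and that under the map $x\otimes y\mapsto(\sigma^j(x)y)_{j}$ its image is contained in $\prod_{j}\cO_E$, since $\sigma^j(x)\in\cO_K\subseteq\cO_E$ whenever $x\in\cO_K$ and $y\in\cO_E$. To see the map is \emph{onto} $\prod_{j}\cO_E$, I would use that $p$ is unramified in $K$: a normal integral basis, or more simply the étale-ness of $\cO_K/\Zp$ together with Hensel's lemma, shows that $\cO_K\otimes_{\Zp}\cO_E\cong\prod_{j}\cO_E$ already at the level of rings; equivalently, the discriminant of $\cO_K/\Zp$ is a unit in $\cO_E$, so the localization introduces no denominators and $\theta$ restricts to an isomorphism of $\cO_E$-algebras. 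Compatibility with the $\varphi$-action is exactly the cyclic-shift formula \eqref{E:semilinear} restricted to integral elements, and compatibility with $G(K/\Qp)=\langle\sigma\rangle$ is the observation that $\sigma$ permutes the idempotents $e_{\sigma^j}$ cyclically, $\sigma(e_{\sigma^j})=e_{\sigma^{j+1}}$, which is immediate from the defining formula for $\theta$.

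For the second assertion, I would set $L_j:=L\cdot e_{\sigma^j}$, using that the idempotents $e_{\sigma^j}\in\prod_j\cO_E\cong\cO_K\otimes_{\Zp}\cO_E$ are now honest elements of the base ring acting on $L$. Since $\sum_j e_{\sigma^j}=1$ and $e_{\sigma^i}e_{\sigma^j}=\delta_{ij}e_{\sigma^j}$, we get $L=\bigoplus_j L_j$ as $\cO_K\otimes_{\Zp}\cO_E$-modules (indeed as $\cO_E$-modules, with $L_j$ the $e_{\sigma^j}$-isotypic piece), and each $L_j$ is an $\cO_E$-module because $\cO_E\cong \cO_E\cdot e_{\sigma^j}$ acts on it. That this decomposition is compatible with the filtration (i.e.\ $\Fil^iL=\bigoplus_j\Fil^iL_j$ with $\Fil^iL_j=e_{\sigma^j}\Fil^iL$) follows because the $e_{\sigma^j}$ are central idempotents and $\Fil^iL=\Fil^iD\cap L$ is stable under them, being the intersection of two $e_{\sigma^j}$-stable modules; the $\varphi$-structure is the cyclic shift $\varphi(L_j)\subseteq L_{j+1}$ read off from \eqref{E:semilinear}.

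I do not expect a serious obstacle here: the only point requiring care is the surjectivity of $\theta$ on the integral level, which is precisely where the hypothesis that $p$ is unramified in $K$ (so that $\cO_K/\Zp$ is étale and $\disc(\cO_K/\Zp)\in\cO_E^\times$) is used, and this is standard. Everything else is bookkeeping with the idempotents $e_{\sigma^j}$ and the semilinearity formula \eqref{E:semilinear}, transporting the already-established $K\otimes_\Qp E$-level statements to their $\cO_K\otimes_{\Zp}\cO_E$-level counterparts.
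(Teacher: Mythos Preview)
Your approach is correct and essentially matches the paper's: the paper writes $\cO_K\cong\Zp[x]/f(x)$ for an irreducible $f$ of degree $a$ whose roots lie in $\cO_K\subseteq\cO_E$, whence $\cO_K\otimes_\Zp\cO_E\cong\cO_E[x]/f(x)\cong\prod_j\cO_E$ by the Chinese remainder theorem---which is precisely your ``$\cO_K/\Zp$ is \'etale, discriminant is a unit'' observation unpacked concretely. One small correction: with the paper's convention $\theta(x\otimes y)=(\sigma^j(x)y)_j$, one computes $(\sigma\otimes 1)(e_{\sigma^j})=e_{\sigma^{j-1}}$ and hence $\varphi(L_j)\subseteq L_{j-1}$, not $L_{j+1}$ as you wrote; this is the direction used consistently throughout the section (e.g.\ in Proposition~\ref{P:SDL}).
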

\begin{proof}
It suffices to show the first statement.
As a complete discrete valuation ring, $\cO_K$ is generated by
one element over $\Zp$ and $\cO_K\cong\Zp[x]/f(x)$ for some irreducible
$f(x)\in\Zp[x]$ of degree $a$. Then
$$\cO_K\otimes_\Zp \cO_E\cong \Zp[x]/f(x) \otimes_\Zp \cO_E
\cong \cO_E[x]/f(x) \cong \prod_{j\in\Z/a\Z}\cO_E$$
since roots of irreducible polynomial $f(x)\in\Zp[x]$
lies in $\cO_K$ and hence $\cO_E$, since $K\subseteq E$.
\end{proof}

Obviously the endomorphism $\varphi$ of $D$
does not induce an endomorphism on $D_j$.
However, from (\ref{E:semilinear})
we have that $\varphi(D_j)\cong D_{j-1}$.
Thus we have $\varphi(L_j) \subseteq  \varphi(L)\cap \varphi(D_j)
\subseteq L \cap D_{j-1}=L_{j-1}$. Since $[L:\varphi(L)]$ is
finite, the inclusion
$\varphi(L_j) \subseteq L_{j-1}$ is of finite index for every $j\in\Z/a\Z$.

\begin{proposition}\label{P:SDL}
Let $D$ be a free filtered $\varphi$-module in $\MF{}(K\otimes_\Qp
E)$. Any $\varphi$-lattice $L$
over $\cO_K\otimes \cO_E$ in $D$ is strongly divisible if and
only if on the embedded form $L\cong \prod_{j\in\Z/a\Z}L_j$ we
have $\varphi(\Fil^iL_j)\subseteq p^iL_{j-1}$ and
\begin{eqnarray}\label{E:SDL}
\sum_{i\in \Z} p^{-i} \varphi(\Fil^i L_j) &=& L_{j-1}
\end{eqnarray}
for all $j\in\Z/a\Z$.

Suppose $D$ is weakly admissible then a
$\varphi$-lattice $L$ of $D$ is strongly divisible if for every
$j\in\Z/a\Z$ that
$\sum_{i\in\Z} p^{-i} \varphi(\Fil^i
L_j)\subseteq L_{j-1}$.
\end{proposition}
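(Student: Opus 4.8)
The first assertion is essentially a bookkeeping translation. The plan is to push the defining conditions of a strongly divisible $\varphi$-lattice through the isomorphism $\theta: L \cong \prod_{j} L_j$ of Lemma~\ref{L:split}. First I would record that $\Fil^i L = \prod_j \Fil^i L_j$ since the filtration on each $L_j$ is induced from $L$ and $\theta$ respects it. Next, using (\ref{E:semilinear}) — or rather the consequence noted just before the proposition that $\varphi(D_j) \subseteq D_{j-1}$, hence $\varphi(L_j) \subseteq L_{j-1}$ — I would observe that $\varphi(\Fil^i L) \subseteq p^i L$ holds if and only if $\varphi(\Fil^i L_j) \subseteq p^i L_{j-1}$ for every $j$, because the left side decomposes componentwise under $\theta$ and the $j$-th component of $\varphi(\Fil^i L)$ lands in the $(j-1)$-slot. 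Similarly, $\sum_i p^{-i}\varphi(\Fil^i L) = L$ is the product over $j$ of the conditions $\sum_i p^{-i}\varphi(\Fil^i L_j) = L_{j-1}$, which is exactly (\ref{E:SDL}). This gives the equivalence with no real difficulty; the only care needed is to check that $\theta$ is $\varphi$-compatible in the twisted sense $\varphi(D_j) \subseteq D_{j-1}$, which is already established in the excerpt.

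For the second assertion — the improvement from equality to mere inclusion when $D$ is weakly admissible — the plan is to use a length/index count. Assume $\sum_i p^{-i}\varphi(\Fil^i L_j) \subseteq L_{j-1}$ for all $j$; I must upgrade each inclusion to an equality. Set $M_j := \sum_i p^{-i}\varphi(\Fil^i L_j)$, a full $\cO_E$-lattice in $D_{j-1}$ contained in $L_{j-1}$, so $[L_{j-1} : M_j]$ is a finite power of $p$, say $p^{c_j}$ with $c_j \geq 0$. The idea is that $\sum_j c_j$ can be computed from $t_H$ and $t_N$ of the forgotten $K$-module $D$, and weak admissibility forces $t_H(D/K) = t_N(D/K)$, which pins $\sum_j c_j = 0$, hence every $c_j = 0$. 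Concretely, $\prod_j M_j = \sum_i p^{-i}\varphi(\Fil^i L) =: M$ is a $\varphi$-stable (actually strongly-divisible-candidate) full sublattice of $L$ with $[L:M] = p^{\sum_j c_j}$; on the other hand the determinant of $\varphi$ acting on $L$ has $p$-adic valuation $t_N(D/K)$ (up to the normalization in the definitions of $t_N$), while the total Hodge number $\sum_i i\dim(\Fil^i/\Fil^{i+1})$ over all $j$ equals $t_H(D/K)$, and the discrepancy between "$\varphi(L) \subseteq L$" and "$\sum p^{-i}\varphi(\Fil^i L) = L$" is precisely $t_N(D/K) - t_H(D/K)$. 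By weak admissibility this vanishes, so $\sum_j c_j = 0$, and since each $c_j \geq 0$ we get $c_j = 0$ for all $j$, i.e.\ each inclusion in (\ref{E:SDL}) is an equality, and the first part of the proposition then says $L$ is strongly divisible.

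I would phrase the index computation cleanly by choosing, via Proposition~\ref{P:basis}, a basis of each $L_j$ adapted to the induced filtration, so that $\Mat(\varphi|_{L_j}\colon L_j \to L_{j-1})$ has the shape $A_j \Delta_{\k_j}$ with $A_j$ an integral matrix (cf.\ the argument in Lemma~\ref{L:isogeny}); then $[L_{j-1}:M_j]$ is exactly $|\det A_j|_p^{-1}$, the inclusion hypothesis says $\det A_j \in \cO_E$, and $\sum_j \ord_p(\det A_j) = t_N(D/K) - t_H(D/K) = 0$ by weak admissibility, forcing each $\det A_j$ to be a unit, i.e.\ $A_j \in \GL_d(\cO_E)$. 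The main obstacle is not conceptual but notational: matching up the normalization conventions between $t_N, t_H$ as defined over $K$ (where $D$ has dimension $ad$ over $K$... careful, $D$ has dimension $d$ over $K\otimes E$, hence $d[E:\Qp]$ over $\Qp$, but the slopes get summed over the $a$ embeddings) and the per-embedding Hodge data $\k_j$, so that "the endpoints of $\HP(D/K)$ and $\NP(D/K)$ coincide" translates correctly into $\sum_j(\text{top Hodge number of }\k_j) = \sum_j \ord_p \det A_j$. Once that identification is made carefully the argument is short; I would isolate it as the one place demanding genuine attention.
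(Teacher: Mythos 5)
Your proof is correct. The first equivalence is handled exactly as in the paper: both arguments push the defining conditions through the splitting $L\cong\prod_j L_j$ of Lemma \ref{L:split}, use $\Fil^i L=\prod_j\Fil^i L_j$ together with the index shift $\varphi(L_j)\subseteq L_{j-1}$, and compare $\sum_i p^{-i}\varphi(\Fil^i L)$ with $\prod_j L_{j-1}$ componentwise. Where you genuinely diverge is the second assertion: the paper merely reassembles the componentwise inclusions into $\sum_i p^{-i}\varphi(\Fil^i L)\subseteq L$ and then invokes ``Fontaine--Laffaille theory'' as a black box for the upgrade from inclusion to equality, whereas you open the box with an explicit index count. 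Your count is sound: the hypothesis $\sum_i p^{-i}\varphi(\Fil^i L_j)\subseteq L_{j-1}$ is equivalent to $\varphi(\Fil^i L_j)\subseteq p^i L_{j-1}$ for each $i$ separately (a sum of submodules lies in $L_{j-1}$ iff each summand does), so the computation of Lemma \ref{L:isogeny} gives $\Mat(\varphi|_{L_j})=A_j\Delta_{\k_j}$ with $A_j$ integral, the failure of equality in each slot is measured by $\ord_p\det A_j\geq 0$, and $\sum_j\ord_p\det A_j$ equals $(t_N(D/K)-t_H(D/K))/[E:K]=0$ by weak admissibility, forcing every $A_j\in\GL_d(\cO_E)$. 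The normalization issue you flag is real but harmless: the factor $[E:K]$ and the sum over the $a$ embeddings enter $t_H(D/K)$ and $t_N(D/K)$ identically, so the conclusion $\sum_j\ord_p\det A_j=0$ is unaffected. The net effect is that your treatment of the second part is self-contained where the paper's is a one-line citation; what the paper's brevity buys is precisely avoiding the bookkeeping you isolate as the delicate step.
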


\begin{proof}
By Lemma \ref{L:split},
we have $\theta: \Fil^i L \cong \prod_{j\in\Z/a\Z} \Fil^i L_j$
and $\varphi \prod_j  L_j  = \prod_j \varphi(L_j)$. Hence,
\begin{eqnarray*}
p^{-i} \varphi(\Fil^i L) & = & p^{-i}
\varphi(\Fil^i \prod_j L_j)= \prod_j  p^{-i} \varphi(\Fil^i L_j).
\end{eqnarray*}
So,
\begin{eqnarray}\label{E:SDL-1}
\sum_{i\in\Z}p^{-i}\varphi(\Fil^i L)
&=& \sum_{i\in\Z}\prod_j p^{-i} \varphi(\Fil^i L_j).
\end{eqnarray}
By definition $L$ is strongly divisible if and only if
\begin{eqnarray}\label{E:SDL-2}
\sum_{i\in\Z}p^{-i}\varphi(\Fil^i L)
&=&\prod_j L_{j-1}.
\end{eqnarray}
Comparing the right-hand-sides of the equations (\ref{E:SDL-1}) and (\ref{E:SDL-2}),
we have that $L$ is strongly divisible if and only if
$\sum_{i\in\Z}p^{-i}\varphi(\Fil^i L_j) = L_{j-1}$ for
every $j\in\Z/a\Z$.

Suppose $D$ is  weakly admissible, then by the hypothesis we have
$$\sum_{i\in\Z} p^{-i} \varphi(\Fil^i L) \subseteq L$$ and hence $L$
is strongly divisible by Fontaine-Laffaille theory.
\end{proof}

Let $L$ be a $\varphi$-lattice in $D$ with induced filtration. Let
$\cP_{\k_j}$ be the integral parabolic group with respect to $\k_j$, as defined in
Section 2.1. We say two $a$-tuples $(A_0,\ldots,A_{a-1})$ and $(A_0',\ldots,A'_{a-1})$ with matrices $A_j,A'_j \in \GL_d(\cO_E)$ are {\em parabolic equivalent with respect to $\k=(\k_j)_{j\in\Z/a\Z}$}
if there exists an $a$-tuple $(C_0,\ldots,C_{a-1})$
with $C_j\in \cP_{\k_j}$ such that $A'_j =
C_{j-1}^{-1}A_jC_j^\flat$ for every $j\in\Z/a\Z$.
We denote it by
$(A_j)_{j\in\Z/a\Z}\sim_\k (A'_j)_{j\in\Z/a\Z}$.

We remark that this equivalence does not imply that
the equivalence $A_j\sim_{\k_j} A_j'$
(as defined in \S \ref{S:2.2}) for every $j$ when $a\geq 2$.

\begin{proposition}\label{P:simultaneous}
Let $D\in\MF{}(K\otimes_\Qp E)$ be free with filtration data
$\HP_{\Z/a\Z}=(\k_j)_{j\in\Z/a\Z}$. Write $\Delta_{\k_j}:=
\diag(p^{k_{j1}},\ldots,p^{k_{jd}})$ for every $j\in\Z/a\Z$.
Then the following are equivalent
\begin{enumerate}
\item $D$ is weakly admissible;
\item there exists a strongly divisible $\varphi$-lattice $L$ over $\cO_K\otimes \cO_E$
in $D$;
\item there exists a $\varphi$-lattice $L=\prod_j L_j$ over $\cO_K\otimes \cO_E$ in $D$
such that with respect to a suitable basis we have
$\Phi_j:=\Mat(\varphi|_{L_j}) =  A_j \Delta_{\k_j}$ for some $A_j\in\GL_d(\cO_E)$.
\end{enumerate}
\end{proposition}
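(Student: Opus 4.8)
The plan is to prove the three statements equivalent by a cycle $(1)\Rightarrow(3)\Rightarrow(2)\Rightarrow(1)$, reducing everything to the already-established machinery over $\cO_K$ via the embedded decomposition $L\cong\prod_j L_j$ of Lemma \ref{L:split} and the strong-divisibility criterion of Proposition \ref{P:SDL}. The implication $(3)\Rightarrow(2)$ is the heart and is essentially a reformulation of Lemma \ref{L:isogeny}/Corollary \ref{C:SDL} applied componentwise. Indeed, recall from \eqref{E:semilinear} that $\varphi$ carries $D_j$ into $D_{j-1}$, i.e.\ $\varphi|_{L_j}\colon L_j\to L_{j-1}$ is a $\sigma$-semilinear (over $E$, genuinely $E$-linear after the twist by $\sigma^j$ on the $K$-part, so effectively a linear) isomorphism of rank-$d$ free $\cO_E$-modules. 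Choosing bases of each $L_j$ adapted to the induced filtration $\Fil^\bullet L_j$ (possible by Proposition \ref{P:basis}(2), since $\cO_E$ is a PID and the filtration on $L_j$ is saturated exactly as in the proof of Proposition \ref{P:basis}(2)), the condition $\Mat(\varphi|_{L_j})=A_j\Delta_{\k_j}$ with $A_j\in\GL_d(\cO_E)$ is, by Lemma \ref{L:isogeny} applied to $f=\varphi|_{L_j}\colon L_j\to L_{j-1}$, equivalent to $\varphi(\Fil^i L_j)\subseteq p^i L_{j-1}$ for all $i$ together with $\sum_i p^{-i}\varphi(\Fil^i L_j)=L_{j-1}$. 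By Proposition \ref{P:SDL} this holds for every $j\in\Z/a\Z$ precisely when $L$ is strongly divisible; so $(3)$ produces such an $L$ and $(2)$ follows, and conversely a strongly divisible $L$ gives, after adapting bases componentwise, matrices of the required shape, so $(2)\Rightarrow(3)$ as well. Thus $(2)\Leftrightarrow(3)$ is immediate from the earlier results.

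For $(2)\Rightarrow(1)$: if $D$ carries a strongly divisible $\varphi$-lattice $L$, then forgetting the $E$-structure, $L$ is a $\varphi$-lattice over $\cO_K$ in $D$ viewed in $\MF{}(K)$, and one checks it is still strongly divisible there — the defining conditions $\varphi(L)\subseteq L$ and $\sum_i p^{-i}\varphi(\Fil^i L)=L$ do not mention the $E$-action. Then Laffaille's theorem (\cite{Laf80}, cited in the introduction and in \S\ref{S:2}) says $D$ is weakly admissible as a $K$-module, which by the definition of weak admissibility in $\MF{}(K\otimes_\Qp E)$ given at the start of \S\ref{S:3} is exactly what $(1)$ asserts.

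For $(1)\Rightarrow(2)$: if $D$ is weakly admissible, i.e.\ weakly admissible as a filtered $\varphi$-module over $K$, then Fontaine--Laffaille theory (\cite{FL82}; equivalently Laffaille's characterization) guarantees a strongly divisible $\varphi$-lattice $\Lambda$ over $\cO_K$ in $D$. The issue is that $\Lambda$ need not be stable under the $\cO_E$-action, so it is not yet a lattice over $\cO_K\otimes_\Zp\cO_E$. I would fix this by replacing $\Lambda$ with $L:=\sum_{c}\,c\,\Lambda$ where $c$ ranges over a finite set of $\cO_E$-module generators of $\cO_K\otimes_\Zp\cO_E$ over $\cO_K$ (or, more cleanly, take $L$ to be the $\cO_K\otimes_\Zp\cO_E$-submodule of $D$ generated by $\Lambda$); this $L$ is a finitely generated $\cO_K\otimes_\Zp\cO_E$-submodule spanning $D$, and by Lemma \ref{L:split} it decomposes as $\prod_j L_j$ with each $L_j$ free over $\cO_E$ (torsion-freeness is automatic inside the $E$-vector space $D$). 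It satisfies $\varphi(L)\subseteq L$ since $\varphi(\Lambda)\subseteq\Lambda$ and $\varphi$ is $\sigma\otimes\mathrm{id}$-semilinear, so it permutes the generators $c$ up to the $\sigma$-twist; hence $\sum_i p^{-i}\varphi(\Fil^i L)\subseteq L$ because this already holds for $\Lambda$ and passes through the finite sum. Now invoke the second assertion of Proposition \ref{P:SDL}: for a weakly admissible $D$, any $\varphi$-lattice $L$ with $\sum_i p^{-i}\varphi(\Fil^i L)\subseteq L$ (equivalently $\sum_i p^{-i}\varphi(\Fil^i L_j)\subseteq L_{j-1}$ for all $j$) is automatically strongly divisible. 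This yields the required $L$ and closes the cycle.

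The main obstacle is precisely this last step — upgrading the Fontaine--Laffaille lattice over $\cO_K$ to one that is a genuine $\cO_K\otimes_\Zp\cO_E$-lattice while preserving $\varphi$-stability and the one-sided inclusion $\sum_i p^{-i}\varphi(\Fil^i L)\subseteq L$; one must be a little careful that enlarging $\Lambda$ to its $\cO_E$-saturation does not destroy the inclusion, which is why routing through the "$\subseteq$ suffices" half of Proposition \ref{P:SDL} (itself resting on Fontaine--Laffaille over $\cO_K$) is the right move. Everything else is bookkeeping with the embedded decomposition and direct citation of Lemma \ref{L:isogeny}, Corollary \ref{C:SDL}, Proposition \ref{P:SDL}, and Laffaille's theorem.
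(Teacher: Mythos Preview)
Your proposal is correct and essentially matches the paper's proof: $(2)\Leftrightarrow(3)$ via Proposition~\ref{P:SDL} and Lemma~\ref{L:isogeny}, $(2)\Rightarrow(1)$ by Laffaille, and $(1)\Rightarrow(2)$ by extending a strongly divisible $\cO_K$-lattice $L'$ to $L=L'\cO_E$. The only variation is that the paper computes $\sum_i p^{-i}\varphi(\Fil^i L)=L$ directly from $\Fil^i(L'\cO_E)=(\Fil^i L')\cO_E$ (invoking ``filtration is $E$-linear''), whereas you establish only the inclusion and then appeal to the second half of Proposition~\ref{P:SDL}---a harmless rerouting that rests on the same identification.
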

\begin{proof}
Laffaille's theorem \cite{Laf80}
asserts that (2)$\Longrightarrow$(1). By
Laffaille's theorem again, (1) implies there is a strongly
divisible $\varphi$-lattice $L'$ over $\cO_K$. Let
$L=L'\cO_E := \sum_{i=1}^{d}L'e_i$ where $\cO_E=\pscal{e_1,\ldots,e_d}$ over $\Zp$.
It is clearly a module over $\cO_K\otimes_\Zp\cO_E$ in the natural way.
Since filtration is $E$-linear,
we can show that $L$ is strongly divisible
over $\cO_K\otimes_\Zp \cO_E$ as desired in (2), indeed
we have
$$\sum_{i\in\Z} p^{-i}\varphi(\Fil^i L ) =\sum_{i\in\Z} p^{-i}\varphi(\Fil^i L'\cO_E)
=\sum_{i\in\Z} p^{-i}\varphi(\Fil^i L')\cO_E = L'\cO_E.
$$

By Proposition \ref{P:SDL} and Lemma \ref{L:isogeny},
it is easy to see that (2)$\Longleftrightarrow$(3).
\end{proof}

Let $\MF{ad,\k}(K\otimes_\Qp E)$ be the subcategory
of $\MF{ad}(K\otimes_\Qp E)$ consisting of
objects with $\HP_{\Z/a\Z}(D)=\k$.
Then we have the following result.

\begin{proposition}\label{P:parabolic-2}
There is a bijection between $\GL_d(\cO_E)^a/\sim_{\k}$
and $\MF{ad,\k}(K\otimes_\Qp E)$, which
sends $(A_0,\ldots,A_{a-1})$ in $\GL_d(\cO_E)^a$
to a strongly divisible $\varphi$-lattice $L=(L_j)_j$
such that $\Mat(\varphi|_{L_j})= A_j\Delta_{\k_j}$ with
respect to a basis adapted to the filtration.
\end{proposition}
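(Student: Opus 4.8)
The plan is to mirror the proof of Proposition~\ref{P:parabolic}, carrying everything out componentwise along the splitting $D\cong\prod_{j\in\Z/a\Z}D_j$ of Lemma~\ref{L:split} and using Proposition~\ref{P:simultaneous} in place of Laffaille's theorem and Corollary~\ref{C:SDL}. To construct the map on representatives, given $(A_0,\ldots,A_{a-1})\in\GL_d(\cO_E)^a$ I would take $D_j$ to be a $d$-dimensional filtered $E$-vector space with $\HP(D_j/E)=\k_j$, fix a basis $\pscal{e^{(j)}_1,\ldots,e^{(j)}_d}$ of a lattice $L_j$ adapted to its filtration (Proposition~\ref{P:basis}), and declare $\varphi(e^{(j)}_1,\ldots,e^{(j)}_d)=(e^{(j-1)}_1,\ldots,e^{(j-1)}_d)A_j\Delta_{\k_j}$; formula~(\ref{E:semilinear}) then assembles these into a $\sigma$-semilinear automorphism of $D:=\prod_jD_j$ over $K\otimes_\Qp E$, so that $(D,\Fil,\varphi)\in\MF{}(K\otimes_\Qp E)$ and $L:=\prod_jL_j$ is a $\varphi$-lattice with $\Mat(\varphi|_{L_j})=A_j\Delta_{\k_j}$. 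By the implication $(3)\Rightarrow(1)$ of Proposition~\ref{P:simultaneous}, $D$ is weakly admissible and $L$ is strongly divisible, so $D$ lies in $\MF{ad,\k}(K\otimes_\Qp E)$, and its isomorphism class (with witness $L$) is the image of $(A_j)_j$.

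Next I would check that the map descends to $\sim_\k$-classes. If $A'_j=C_{j-1}^{-1}A_jC_j^\flat$ with $C_j\in\cP_{\k_j}$, then, exactly as in the proof of Proposition~\ref{P:parabolic}(ii), the matrices $C_j^{-1}$ define automorphisms $\theta_j$ of $(D_j,\Fil)$ by Lemma~\ref{L:parabolic}; since the $\theta_j$ are $E$-linear and preserve the $\cO_K\otimes_{\Zp}\cO_E$-action, Lemma~\ref{L:split} glues them into an automorphism $\theta=\prod_j\theta_j$ of $(D,\Fil)$ in $\MF{}(K\otimes_\Qp E)$. A direct computation on the $j$-th component, using the Frobenius index shift $D_j\to D_{j-1}$, shows that the system $A'_j=C_{j-1}^{-1}A_jC_j^\flat$ is equivalent to $\theta\varphi=\varphi'\theta$; hence $\theta$ is an isomorphism $(D,\Fil,\varphi)\cong(D,\Fil,\varphi')$, and the map $\GL_d(\cO_E)^a/\sim_\k\longrightarrow\MF{ad,\k}(K\otimes_\Qp E)$ is well defined.

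It remains to prove bijectivity. Surjectivity is immediate from $(1)\Rightarrow(3)$ of Proposition~\ref{P:simultaneous}: any $D\in\MF{ad,\k}(K\otimes_\Qp E)$ carries a $\varphi$-lattice $L=\prod_jL_j$ with $\Mat(\varphi|_{L_j})=A_j\Delta_{\k_j}$, hence is the image of $(A_j)_j$. For injectivity, let $f:D\to D'$ be an isomorphism in $\MF{}(K\otimes_\Qp E)$ between the modules attached to $(A_j)_j$ and $(A'_j)_j$. Being $K\otimes_\Qp E$-linear, $f$ commutes with the idempotents $e_{\sigma^j}$, so it restricts to $E$-linear, filtration-strict isomorphisms $f_j:D_j\to D'_j$ (no index shift, in contrast to $\varphi$) intertwining $\varphi$ and $\varphi'$ in the shifted sense. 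Transporting a strongly divisible lattice $L=\prod_jL_j$ of $D$ with $\Mat(\varphi|_{L_j})=A_j\Delta_{\k_j}$ through $f$, its image $f(L)=\prod_jf_j(L_j)$ is a strongly divisible lattice of $D'$; choosing adapted bases and applying the evident componentwise analogue of Proposition~\ref{P:conjugation}(3) recovers matrices $C_j\in\cP_{\k_j}$ with $A'_j=C_{j-1}^{-1}A_jC_j^\flat$, that is $(A_j)_j\sim_\k(A'_j)_j$. As in the proof of Proposition~\ref{P:parabolic}, one must argue with the lattice $f(L)$ transported through the isomorphism, not with a lattice fixed in $D'$ in advance, because strongly divisible lattices are in general not unique.

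The main obstacle is the bookkeeping forced by the semilinear index shift $D_j\to D_{j-1}$: one must check carefully that a parabolic change of basis on the $(j-1)$-st piece enters to the left of $A_j$ with an inverse while one on the $j$-th piece enters as $C_j^\flat$ (i.e.\ conjugated by $\Delta_{\k_j}$), so that precisely the relation defining $\sim_\k$ — and not one of its several look-alikes obtained by inverting a factor — drops out; and one must verify that the componentwise data (automorphisms, bases, lattices) really glue to morphisms in $\MF{}(K\otimes_\Qp E)$, i.e.\ are compatible with the $\cO_K\otimes_{\Zp}\cO_E$-algebra structure of Lemma~\ref{L:split}. Everything beyond this is semilinear algebra already packaged in Propositions~\ref{P:conjugation}, \ref{P:SDL}, and~\ref{P:simultaneous}.
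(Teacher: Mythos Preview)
Your proposal is correct and follows essentially the same route as the paper's own proof: both argue by transporting a strongly divisible lattice through a given isomorphism $f$, writing $\Mat(f|_{L_j})=Q_j\in\cP_{\k_j}$ via the componentwise version of Proposition~\ref{P:conjugation}, and reading off $A'_j=C_{j-1}^{-1}A_jC_j^\flat$ from $f\varphi=\varphi'f$ together with the index shift $D_j\to D_{j-1}$. Your write-up is in fact more complete than the paper's, which omits the converse direction (your ``descends to $\sim_\k$-classes'' step) as an exercise and does not spell out surjectivity; you also make explicit the gluing along Lemma~\ref{L:split} and flag the lattice-transport subtlety that the paper leaves implicit.
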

\begin{proof}
Consider an isomorphism $f: (D,\Fil,\varphi)\rightarrow
(D,\Fil,\varphi')$ in the category $\MF{}(K\otimes_\Qp E)$. Let
$L, L'$ be strongly divisible $\varphi$-lattice
($\varphi'$-lattice) in $D,D'$ respectively where $L'=f(L)$.
By Proposition \ref{P:simultaneous}, we
have $\Mat(\varphi|_{L_j}) = A_j \Delta_j$ and
$\Mat(\varphi|_{L'_j}) = A'_j \Delta_{k_j}$ for some $A_j\in
\GL_d(\cO_E)$. The restriction maps $f(\Fil^i D_j ) =\Fil^i D_j$
and $f(L_j)=L'_j$ assure that $\Mat(f|_{L_j}) = :Q_j \in
\cP_{\k_j}$ by Proposition \ref{P:conjugation}(1). Since
$f \varphi =  \varphi'f$ on $L_j$, we have $ Q_{j-1} (A_j\Delta_{\k_j})
= (A'_j \Delta_{\k_j}) Q_j$. That is, $A'_j = Q_{j-1} A_j (\Delta_{\k_j}
Q_j^{-1} \Delta_{\k_j}^{-1})$, i.e., $A'_j = C_{j-1}^{-1} A_j
C_j^\flat$.
We leave the converse direction as an exercise to the readers.
\end{proof}

It is easy to see that isomorphism classes in $\MF{ad}(K\otimes_\Qp E)$
upon a twist of an unramified character is in 1-1 correspondence
with the set of parabolic equivalence classes of $a$-tuple
$(A_0,\ldots,A_{a-1})$ upon multiplication by units $(u_0,\ldots,u_{a-1})$
componentwise.

Let $\Rep_{\cris/E}^{\k}(G_K)$ denote the subcategory of $\Rep_{\cris/E}(G_K)$
consisting of objects with embedded Hodge polygon $\HP_{\Z/a\Z}=\k$.
By Proposition \ref{P:parabolic-2} we may define
a map
$$\Theta:
\GL_d(\cO_E)^a/\sim_\k  \longrightarrow    \Rep_{\cris/E}^\k (G_K)
$$
defined by composing
the equivalence functor $\vcris^*$ and the bijection given in
Proposition \ref{P:parabolic-2}. This is a generalization of Theorem A
to $\Rep_{\cris/E}(G_K)$, and we formulate this in the following theorem.

\begin{theorem}[Theorem A']\label{T:A-analog}
Let $\k=(\k_j)_{j\in\Z/a\Z}$ be embedded Hodge polygons.
Then there is a bijection
$$
\Theta: \GL_d(\cO_E)^a/\sim_\k \longrightarrow
\Rep^\k _{\cris/E}(G_K)
$$
which sends every $(A_j)_j\in\GL_d(\cO_E)^a$
to a crystalline $E$-linear representation $V$
whose $\dcris^*(V)$ contains embedded strongly divisible $\varphi$-lattice
$L=\prod_{j\in\Z/a\Z} L_j$ such that
$\Mat(\varphi|_{L_j}) = A_j \Delta_{\k_j}$, with respect to
a basis adapted to filtration.
\end{theorem}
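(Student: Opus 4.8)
The plan is to assemble Theorem A' from the two ingredients already built in this section: the bijection $\Xi_{K\otimes E}\colon \GL_d(\cO_E)^a/\!\sim_\k\;\longrightarrow\;\MF{ad,\k}(K\otimes_\Qp E)$ supplied by Proposition \ref{P:parabolic-2}, and the equivalence of categories $\dcris^*\colon \Rep_{\cris/E}(G_K)\longrightarrow\MF{ad}(K\otimes_\Qp E)$ of Breuil--M\'ezard \cite[Section 4]{BM02}, with quasi-inverse $\vcris^*$. First I would observe that $\dcris^*$ restricts to an equivalence $\Rep^\k_{\cris/E}(G_K)\longrightarrow\MF{ad,\k}(K\otimes_\Qp E)$: this is immediate from the definition of the embedded Hodge polygon, since $\HP_{\Z/a\Z}(V)$ was \emph{defined} through the filtration on $D=\dcris^*(V)$ and its decomposition $D=\prod_j D_j$, so $V\mapsto D$ carries the $\k$-subcategory to the $\k$-subcategory on the nose. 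Then I would \emph{define} $\Theta$ as the composite
$$
\Theta\colon \GL_d(\cO_E)^a/\!\sim_\k \;\xrightarrow{\ \Xi_{K\otimes E}\ }\; \MF{ad,\k}(K\otimes_\Qp E) \;\xrightarrow{\ \vcris^*\ }\; \Rep^\k_{\cris/E}(G_K),
$$
exactly as in the proof of Theorem A, and note that bijectivity of $\Theta$ follows formally since it is a composite of a bijection (Proposition \ref{P:parabolic-2}) with (the restriction of) an equivalence of categories, which in particular induces a bijection on isomorphism classes.

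The only substantive point to check is the compatibility of the explicit matrix description with the composite. Given $(A_j)_j\in\GL_d(\cO_E)^a$, Proposition \ref{P:parabolic-2} produces a weakly admissible $D$ together with a strongly divisible $\varphi$-lattice $L=\prod_j L_j$ in $D$ with $\Mat(\varphi|_{L_j})=A_j\Delta_{\k_j}$ with respect to a basis of each $L_j$ adapted to the induced filtration; applying $\vcris^*$ yields $V$ with $\dcris^*(V)\cong D$, so $\dcris^*(V)$ contains a strongly divisible $\varphi$-lattice with precisely that matrix data. This is literally the assertion of the theorem, so no further computation is needed. I would also remark (as in the proof of Theorem A) that this $L$ is genuinely strongly divisible over $\cO_K\otimes_\Zp\cO_E$ by Proposition \ref{P:SDL}, since the matrix form $\Mat(\varphi|_{L_j})=A_j\Delta_{\k_j}$ with $A_j\in\GL_d(\cO_E)$ is equivalent, by Lemma \ref{L:isogeny} applied componentwise to the $K$-semilinear (hence on each $D_j\to D_{j-1}$, $E$-linear up to the twist) maps $\varphi|_{D_j}$, to the relations $\varphi(\Fil^iL_j)\subseteq p^iL_{j-1}$ and $\sum_i p^{-i}\varphi(\Fil^iL_j)=L_{j-1}$.

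Thus the proof is short and the ``hard part'' is not a new difficulty but merely bookkeeping: one must be careful that Lemma \ref{L:isogeny}, stated for $K$-semilinear isomorphisms of $K$-modules, applies to each slot $\varphi|_{D_j}\colon D_j\to D_{j-1}$ — these are isomorphisms of $E$-vector spaces that are semilinear only in the $K$-coordinate that has been split off, so one works over $\cO_E$ (not $\cO_K\otimes_\Zp\cO_E$) in each slot, and the matrix $A_j\in\GL_d(\cO_E)$ and diagonal $\Delta_{\k_j}$ live there; the $\sigma$-twist only reappears when one reassembles the $a$ slots into the single semilinear $\varphi$ on $D$, which is exactly the content of the equivalence relation $\sim_\k$ (the index shift $j\mapsto j-1$ in $A'_j=C_{j-1}^{-1}A_jC_j^\flat$). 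Everything else is a formal consequence of Proposition \ref{P:parabolic-2} and the Breuil--M\'ezard equivalence, so I would simply write: "This is immediate from Proposition \ref{P:parabolic-2} and the equivalence of categories $\dcris^*$ of \cite{BM02}, exactly as in the proof of Theorem A."
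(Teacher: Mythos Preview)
Your proposal is correct and follows essentially the same approach as the paper: the paper's proof (given in the paragraph immediately preceding the theorem statement) simply defines $\Theta$ as the composite of the bijection of Proposition \ref{P:parabolic-2} with the equivalence functor $\vcris^*$, exactly as you do. Your additional remarks invoking Proposition \ref{P:SDL} and Lemma \ref{L:isogeny} to justify the strongly divisible lattice description are more explicit than what the paper writes, but they merely unpack what is already contained in Proposition \ref{P:parabolic-2} and its proof.
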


\begin{remark}[Metric on $\Rep_{\cris/E}^\k(G_K)$]\label{R:metric-2}
Similar to Remark \ref{R:metric}, we define a metric to make $\Rep_{\cris/E}^\k(G_K)$ a compact metric space. Write $A=(A_j)_j$ and $A'=(A'_j)_j$ that represent $V$ and $V'$ in $\Rep_{\cris/E}(G_K)$ as given in Theorem A'. Let
\begin{eqnarray}\label{E:metric-2}
\dist(V,V') &:=& |[A]-[A']|_p = \sup_{B,B'}\max_{j\in\Z/a\Z}|B_j-B_j'|_p
\end{eqnarray}
where the sup ranges over all $B=(B_j)\sim_\k A$ and $B'=(B'_j)\sim_\k A'$.
\end{remark}

Let $V$ and $V'$ be in $\Rep_{\cris/E}(G_K)$ of dimension $d$ and $d'$,
of Hodge polygon $\HP_{\Z/a\Z}(V)=\k=(\k_j)_j=(k_{j1},\ldots,k_{jd})_j$
and $\HP_{\Z/a\Z}(V')=\k'=(\k'_j)_j=(k'_{j1},\ldots,k'_{j,d'})_j$, respectively.
Let $A:=(A_j)_j$ in $\GL_d(\cO_E)^a$
and $A':=(A_j')_j$ in $\GL_{d'}(\cO_E)^a$.
Let $\Theta(A)=V$ and $\Xi(A')=V'$.
Write
\begin{eqnarray*}
\k\oplus \k'&:=& (k_{j,1},\ldots,k_{j,d}, k'_{j,1},\ldots,k'_{j,d'})_{j\in\Z/a\Z},\quad\mbox{re-order if necessary},\\
\k\otimes\k'&:=& ((k_{j,i}+k'_{j,i'})_{i,i'})_{j\in\Z/a\Z}\quad \mbox{where $1\leq i\leq d, 1\leq i'\leq d'$,
                             re-order if necessary},\\
A\oplus  A' &:=& (\diag(A_j,A'_j))_{j\in\Z/a\Z}\in \GL_{d+d'}(\cO_E)^a,\\
A\otimes A' &:=& (A_j\otimes A'_j)_{j\in\Z/a\Z}\in \GL_{dd'}(\cO_E)^a.
\end{eqnarray*}

\begin{proposition}\label{P:building-blocks}
Let notations be as above.
Let $V=\Theta(A)$ be in $\Rep_{\cris/E}^\k(G_K)$
and $V'=\Theta(A')$ be in $\Rep_{\cris/E}^{\k'}(G_K)$.
Then $\Theta$ induces the following injective maps:\\
(1)
\begin{eqnarray*}
\Theta: (\GL_d(\cO_E)\oplus \GL_{d'}(\cO_E))^a/\sim_{\k\oplus\k'}
&\hookrightarrow &
\Rep_{\cris/E}^{\k\oplus\k'}(G_K)
\end{eqnarray*}
defined by $\Theta(A\oplus A')= V\oplus V'$
whose imagines are a subcategory of $\Rep_{\cris/E}^{\k\oplus\k'}(G_K)$
consisting of a direct sum from $\Rep_{\cris/E}^\k(G_K)$ and
$\Rep_{\cris/E}^{\k'}(G_K)$;\\
(2)
\begin{eqnarray*}
\Theta: (\GL_d(\cO_E)\otimes\GL_{d'}(\cO_E))^a/\sim_{\k\otimes\k'}
&\hookrightarrow &
\Rep_{\cris/E}^{\k\otimes \k'}(G_K)
\end{eqnarray*}
defined by $\Theta(A\otimes A')= V\otimes V'$,
whose imagines are a subcategory of $\Rep_{\cris/E}^{\k\otimes\k'}(G_K)$
consisting of a tensor product from $\Rep_{\cris/E}^\k(G_K)$
and $\Rep_{\cris/E}^{\k'}(G_K)$.
\end{proposition}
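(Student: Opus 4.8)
The plan is to verify that the constructions $A \oplus A'$ and $A \otimes A'$ represent, respectively, the direct sum $V \oplus V'$ and tensor product $V \otimes V'$ of the associated crystalline representations, and then to check injectivity of the resulting maps on equivalence classes. Since $\Theta$ factors as $\vcris^* \circ \Xi$ with $\vcris^*$ an equivalence of categories, it suffices to work on the filtered $\varphi$-module side, i.e.\ to show that $\Xi(A \oplus A') = \dcris^*(V) \oplus \dcris^*(V')$ and $\Xi(A \otimes A') = \dcris^*(V) \otimes \dcris^*(V')$ in $\MF{ad}(K\otimes_\Qp E)$, where the tensor product is over $K \otimes_\Qp E$. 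By Proposition \ref{P:parabolic-2}, $\Xi(A) = (D,\Fil,\varphi)$ carries an embedded strongly divisible $\varphi$-lattice $L = \prod_j L_j$ with $\Mat(\varphi|_{L_j}) = A_j \Delta_{\k_j}$ in a basis adapted to the filtration, and similarly for $A'$.

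First I would treat the direct sum. On $L \oplus L' = \prod_j (L_j \oplus L'_j)$, concatenating the adapted bases of $L_j$ and $L'_j$ and re-ordering so that the filtration jumps are non-increasing produces a basis of $L_j \oplus L'_j$ adapted to the filtration of $D_j \oplus D'_j$; the matrix of $\varphi$ in this basis is block-diagonal, and after the re-ordering permutation it equals $(\diag(A_j, A'_j)) \cdot \Delta_{(\k\oplus\k')_j}$. Hence $A \oplus A'$ represents $\dcris^*(V)\oplus\dcris^*(V')$, which is weakly admissible (a direct sum of weakly admissible modules is weakly admissible, equivalently it contains the strongly divisible lattice $L \oplus L'$ by Proposition \ref{P:SDL}). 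The image lands in the subcategory of decomposable objects, as claimed. For the tensor product, I would take the basis $\{e_i \otimes e'_{i'}\}$ of $L_j \otimes_{\cO_E} L'_j$; its filtration is the tensor-product filtration with jumps at $k_{j,i} + k'_{j,i'}$, so after re-ordering it is adapted to $\Fil(D_j \otimes D'_j)$, and $\Mat(\varphi|_{L_j \otimes L'_j}) = (A_j \otimes A'_j)(\Delta_{\k_j} \otimes \Delta_{\k'_j})$, which after re-ordering is $(A_j \otimes A'_j)\Delta_{(\k\otimes\k')_j}$. Weak admissibility of $\dcris^*(V)\otimes\dcris^*(V')$ again follows by exhibiting the strongly divisible lattice $L \otimes L'$ via Proposition \ref{P:SDL}, using that $\Fil^n(L\otimes L') = \sum_{i+i'=n}\Fil^i L \otimes \Fil^{i'} L'$ so that $\sum_n p^{-n}\varphi(\Fil^n(L_j\otimes L'_j)) = \big(\sum_i p^{-i}\varphi(\Fil^i L_j)\big)\otimes\big(\sum_{i'} p^{-i'}\varphi(\Fil^{i'} L'_j)\big) = L_{j-1}\otimes L'_{j-1}$.

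For injectivity: both $A \mapsto A \oplus A'$ (for fixed $A'$) and the diagonal-block construction respect the equivalence relations, so the maps on quotients are well defined; injectivity then says that if $V_1 \oplus V_2 \cong V'_1 \oplus V'_2$ with $V_i, V'_i$ in the prescribed Hodge-polygon categories, then the summands are identified, and similarly for tensor products. I would deduce this from the fact that $\Theta$ is itself a bijection (Theorem \ref{T:A-analog}): a decomposable object has its indecomposable summands determined by their Hodge polygons $\k$ and $\k'$ (which are disjoint constituents of $\k\oplus\k'$, resp.\ determine the factor data of $\k\otimes\k'$), so the pair $(V_1,V_2)$ — hence the class of $(A_1,A_2)$ — is recovered from $V_1\oplus V_2$; explicitly the summand with embedded Hodge polygon $\k$ is the sub-$\varphi$-module generated over $K\otimes_\Qp E$ by the part of an adapted basis whose filtration data matches $\k$.

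The main obstacle I anticipate is the bookkeeping around the re-ordering permutations: one must be careful that the \emph{same} permutation is applied in each embedding component $j$ (it is, since the Hodge polygons $\k_j$ and the construction are indexed compatibly), and that conjugating $\Delta$ and $A$ by a permutation matrix keeps $A$ in $\GL_d(\cO_E)$ and keeps the parabolic groups matched up, so that the equivalence $\sim_{\k\oplus\k'}$ (resp.\ $\sim_{\k\otimes\k'}$) on the re-ordered data corresponds to the obvious block-wise equivalence on $(A,A')$. Once this compatibility is set up, the rest is routine semilinear algebra together with Propositions \ref{P:SDL}, \ref{P:simultaneous}, \ref{P:parabolic-2} and Lemma \ref{L:split}.
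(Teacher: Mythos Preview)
Your proposal is correct and follows essentially the same route as the paper: both reduce to the filtered $\varphi$-module side via $\vcris^*$, compute $\Mat(\varphi|_{L_j\otimes L'_j})=(A_j\otimes A'_j)(\Delta_{\k_j}\otimes\Delta_{\k'_j})$ with respect to the tensored adapted basis, and identify $\Delta_{\k_j}\otimes\Delta_{\k'_j}$ with $\Delta_{(\k\otimes\k')_j}$, leaving the direct-sum case as the easier analogue. The paper is terser---it invokes the Tannakian structure of $\vcris^*$ to get weak admissibility of $D\otimes D'$ for free and treats injectivity as immediate from $\Theta$ being a bijection on the ambient quotient (Theorem~\ref{T:A-analog}), whereas you spell out strong divisibility of $L\otimes L'$ via Proposition~\ref{P:SDL} and track the re-ordering permutations explicitly; but these are elaborations of the same argument, not a different one.
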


\begin{proof}
It suffices to show
\begin{itemize}
\item[(1)]
$\Theta(A\oplus A')=V\oplus V'$ and $\HP_{\Z/a\Z}(V\oplus V')=\k\oplus\k'$
\item[(2)]
$\Theta(A\otimes A') = V\otimes V'$ and $\HP_{\Z/a\Z}(V\otimes V') = \k\otimes \k'$.
\end{itemize}
We should give a complete proof for part (2) and omit the relatively easier and similar part (1).
Let $\dcris^*(V)=D=\prod_j D_j$ and $\dcris^*(V')=D'=\prod_jD'_j$.
Note that $\prod_j D_j \otimes \prod_j D'_j
=\prod_{j} D_j\otimes D'_j.$
The action of $\varphi$ with respect to a basis adapted to
filtration is
$$
\prod_{j}\Mat(\varphi|_{D_j}) \otimes \Mat(\varphi|_{D'_j})
=\prod_{j} (A_j\Delta_{\k_j}) \otimes (A'_j\Delta_{\k'_j})
=\prod_{j} (A_j\otimes A'_j)(\Delta_{\k_j} \otimes \Delta_{\k'_j}).
$$
One observes that the tensor product of two diagonal matrices
$$\Delta_{\k_j}\otimes \Delta_{\k'_j} =
\diag(p^{k_{j,1}+k'_{j,1}},\ldots, p^{k_{jd}+k'_{j,d'}}).$$
This shows that the bijection
$\GL_d(\cO_E)^a/\sim_\k
\longrightarrow \MF{ad,\k}(K\otimes_\Qp E)
$
in Proposition \ref{P:parabolic-2}
sends $A\otimes A'$ to $D\otimes D'$.
Apply the equivalence functor $\vcris^*$ on Tannekian categories, we have
$\Theta(A\otimes A')=\vcris^*(D\otimes D')=V\otimes V'$.
\end{proof}

For completeness and the convenience of the reader, we compile the following
facts. They are easy to verify by Theorem \ref{T:A-analog} and
Proposition \ref{P:building-blocks}.

\begin{remark}\label{R:dim-2}

\begin{enumerate}
\item 
Write $\vk=(k_0,\ldots,k_{a-1})\in \Z^a$.
Every unramified character $\eta$ lies in $\Rep_{\cris/E}^\vk (G_K)$
with $\vk=\0$. Namely, $\eta=\Theta(\vec{u})$ in $\Rep_{\cris/E}^\0 (G_K)$ for some $\vec{u}\in (\cO_E^*)^a$.

\item 
For any $\vk=(k_0,\ldots,k_{a-1})$ in $\Z_{\geq 0}^a$, write $\varepsilon_{a,\vk}:=\Theta((1,\ldots,1))$. In other words,
$\dcris^*(\varepsilon_{a,\vk})$ has
$\Mat(\varphi)=(p^{k_0},\ldots,p^{k_{a-1}})$
with respect to a basis adapted to filtration.

Every crystalline character in $\Rep_{\cris/E}^\vk (G_K)$ is of
the form $\varepsilon_{a,\vk}\otimes\eta$ for some unramified character $\eta$.

\item
For any embedded Hodge polygon $\k=(k_{j,1},\ldots,k_{j,d})_j$,
let $V\in\Rep_{\cris/E}^\k(G_K)$. Let $\vec{w}:=(-k_{0,d}, \ldots,-k_{a-1,d})$
then the embedded Hodge polygon $\HP_{\Z/a\Z}(V\otimes \varepsilon_{a,\vec{w}})=(k_{j,1}-k_{j,d},\ldots,0)_j$.
This verifies that we may always assume $k_{jd}=0$ for all $j$
by a twist.

\item
Let $V$ be a 2-dimensional crystalline representation with
embedded Hodge polygon $\HP_{\Z/a\Z}(V)=\k=(k_j,0)_j$.

\begin{enumerate}
\item If $V$ is decomposable (i.e., split), then
we have
$$V\cong
\left(
  \begin{array}{cc}
  \varepsilon_{a,\vk^{(1)}}    & 0 \\
   0  & \varepsilon_{a,\vk^{(2)}} \\
  \end{array}
\right) \otimes \eta
$$
for some $\vk^{(1)}\oplus \vk^{(2)} = \k$ and unramified character $\eta$.

\item
If $V$ is reducible then
$$
V\cong
\left(
  \begin{array}{cc}
    \varepsilon_{a,\vk^{(1)}} & \star \\
    0 & \varepsilon_{a,\vk^{(2)}} \\
  \end{array}
\right) \otimes \eta
$$
for some $\vk^{(1)}\oplus \vk^{(2)}=\k$ and unramified character $\eta$.
\item Irreducible $V$'s are discussed in Section \ref{S:5.1}.
\end{enumerate}
\end{enumerate}
\end{remark}

\subsection{Newton and Hodge polygons and their
$\sigma$-invariant variation}
\label{S:3.2}

We define $\sigma$-invariant Newton and Hodge polygons of
the embedded filtered $\varphi$-module $D=\prod_jD_j$  over
$K\otimes_\Qp E$ in this subsection.
They will play a vital role in our Fontaine-Laffaille type theorem
developed in this paper, see for example Theorem B. These definitions are classical and exist already in the literature under some disguise, see for example, \cite[Introduction]{MV07} and \cite[Section 3]{BS07}.

Let $\HP_{\Z/a\Z}(D)=\k=(k_{j,1},\ldots,k_{j,d})_{j\in\Z/a\Z}$.
For every $i$ let $\bar{k}_i:=\frac{1}{a}\sum_{j=0}^{a-1}k_{ji}$, then we call the
$d$-tuple $(\bar{k}_1,\ldots,\bar{k}_d)$ in $\Q^d$ the $\sigma$-{\em
invariant Hodge polygon} of $(D,\Fil)$ over $K\otimes_\Qp E$,
denoted by $\HP^\sigma(D/K\otimes_F E)$.
Note that $\MaxSlope(\HP^\sigma(D/K\otimes_\Qp E) = \bar{k}_1-\bar{k}_d$,
and it is equal to $\bar{k}_1$ when we assume all $k_{jd}=0$ (see Remark \ref{R:dim-2}(3)). Correspondingly, we define the $\sigma$-invariant Newton polygon by
$$\NP^\sigma(D/K\otimes_\Qp E):= \frac{1}{a}\NP(\varphi^a|D_0).$$
We define
\begin{eqnarray*}
t_H(D/K\otimes_\Qp E)&:=&\frac{1}{a}\sum_{j\in\Z/a\Z}\sum_{i\in\Z}i\cdot \dim_E(\Fil^i(D_j)/\Fil^{i+1}(D_j)),\\
t_N(D/K\otimes_\Qp E)&:=&\frac{1}{a}\ord_p({\det}_{\Q_{p^a}}(\varphi^a|_{D_0})).
\end{eqnarray*}
Then we have
\begin{eqnarray*}
t_H(D/K\otimes_\Qp E) &=& \sum_{i=1}^{d}\bar{k}_i=\sum_{j\in\Z/a\Z}\sum_{i=1}^{d}k_{ji},\\
t_N(D/K\otimes_\Qp E) &=& \sum_{i=1}^{d}\bar{\alpha}_i
\end{eqnarray*}
where
$\NP^\sigma(D/K\otimes_\Qp E)=(\bar{\alpha}_1,\ldots,\bar{\alpha}_d).$

Let $t_H(D)$ and $t_N(D)$ be defined as in \cite[Section 3]{BS07},
then one can see that $t_H(D/K\otimes_\Qp E) =\frac{1}{a}t_H(D)$ and $t_N(D/K\otimes_\Qp E) = \frac{1}{a}t_N(D)$.
In other words, the $\sigma$-invariant polygons are $1/a$ multiple of the polygon defined by Breuil-Schneider. We formulate these relations as follows:

\begin{proposition}\label{P:NP>HP}
The following statements are equivalent
\begin{enumerate}
\item[(i)]  $(D,\Fil,\varphi)$ is weakly admissible.
\item[(ii)]  $\NP^\sigma(D'/K\otimes_\Qp E) \prec \HP^\sigma (D'/K\otimes_\Qp E)$ for every sub-object $D'$ of $D$, and equality holds for $D$.
\item[(iii)]  the inequality $t_N(D'/K\otimes_\Qp E)\geq t_H(D'/K\otimes_\Qp E)$ holds
for every sub-object $D'$ of $D$, and equality holds for $D$.
\item[(iv)] $t_N(D')\geq t_H(D')$ holds for every sub-object $D'$ of $D$, and equality holds for $D$.
\end{enumerate}
\end{proposition}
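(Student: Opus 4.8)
The plan is to reduce everything to the classical weak-admissibility criterion for filtered $\varphi$-modules over $K$ (as $\Qp$-vector spaces), which has already been recalled in Section~\ref{S:2.1}, and then unwind the combinatorics relating the ``$\sigma$-invariant'' quantities to the honest ones. The cleanest route is the cycle $(iv)\Rightarrow(i)\Rightarrow(iii)\Rightarrow(ii)\Rightarrow(iv)$, though one could equally well argue $(i)\Leftrightarrow(iv)$ directly and then peel off $(ii)$ and $(iii)$ as reformulations.

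\emph{First} I would establish $(i)\Leftrightarrow(iv)$. By definition (Section~\ref{S:3}) a free filtered $\varphi$-module $D$ in $\MF{}(K\otimes_\Qp E)$ is weakly admissible precisely when it is weakly admissible as a filtered $\varphi$-module over $K$ after forgetting the $E$-structure; and by the discussion in Section~\ref{S:2.1} the latter is equivalent to $t_H(D'/K)\le t_N(D'/K)$ for every $K$-subobject $D'$ with equality for $D$. The content of $(i)\Leftrightarrow(iv)$ is then the identity $t_H(D)=a\cdot t_H(D/K\otimes_\Qp E)$ and $t_N(D)=a\cdot t_N(D/K\otimes_\Qp E)$ together with the fact that $t_H,t_N$ over $K$ agree with the Breuil--Schneider $t_H,t_N$ of \cite{BS07}; these are the relations noted just before the proposition, and I would verify them by decomposing $D=\prod_{j\in\Z/a\Z}D_j$ via $\theta$ (Lemma~\ref{L:split} and~(\ref{E:semilinear})): each $\Fil^iD$ splits as $\prod_j\Fil^iD_j$, so $t_H$ adds up the contributions of the $D_j$'s, while $\varphi^a$ preserves each $D_j$ and $\det_{\Qp}(\varphi|_D)$ relates to $\det_K(\varphi^a|_{D_0})$ by the usual norm/cycle computation. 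One subtlety: a $K$-subobject of $D$ need not be $E$-stable, so I must check that testing weak admissibility only on $E$-stable (equivalently, $K\otimes_\Qp E$-) subobjects suffices. This is standard (the $\varphi$-stable, $\Fil$-stable $E$-submodule generated by a $K$-subobject has no larger $t_H-t_N$ defect because $\Fil$ and $\varphi$ are $E$-linear), but it is the point that needs genuine care.

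\emph{Next}, $(iii)\Leftrightarrow(iv)$ is immediate: multiplying the inequalities in $(iv)$ by $1/a$ gives exactly $(iii)$, since $t_H(D'/K\otimes_\Qp E)=\tfrac1a t_H(D')$ and $t_N(D'/K\otimes_\Qp E)=\tfrac1a t_N(D')$, and the equality cases match. \emph{Finally}, $(ii)\Leftrightarrow(iii)$: I would use that for a filtered $\varphi$-module the endpoint of the Hodge polygon is $t_H$ and of the Newton polygon is $t_N$, so $\NP^\sigma(D'/K\otimes_\Qp E)\prec\HP^\sigma(D'/K\otimes_\Qp E)$ with equal endpoints is equivalent, after summing slopes, to $t_N(D'/K\otimes_\Qp E)\ge t_H(D'/K\otimes_\Qp E)$ with equality for $D$ --- this is exactly the same passage from ``polygon lies below'' to ``sum of slopes'' as in the classical over-$K$ case recalled in Section~\ref{S:2.1}, applied to the normalized polygons $\HP^\sigma=\tfrac1a\sum_j\HP(D_j/E)$ and $\NP^\sigma=\tfrac1a\NP(\varphi^a|_{D_0})$. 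Here I should note that $\NP^\sigma$ computed from $D_0$ agrees with the one from any $D_j$, since $\varphi$ intertwines $\varphi^a|_{D_j}$ with $\varphi^a|_{D_{j-1}}$, so the definition is independent of the chosen component.

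\emph{The main obstacle} I anticipate is the reduction of weak admissibility from arbitrary $K$-subobjects to $E$-stable subobjects in the proof of $(i)\Leftrightarrow(iv)$: one must argue that the defect $t_N(D')-t_H(D')$ over the larger class of $K$-subobjects is controlled by (in fact equal to the minimum over) the $E$-stable ones. Everything else is bookkeeping with the splitting $D=\prod_jD_j$ and the elementary fact that an ordered tuple of rationals lies below another tuple with the same sum iff the partial sums dominate --- routine, but it does require keeping the normalization factor $1/a$ consistent throughout.
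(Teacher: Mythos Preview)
Your proposal is correct and follows essentially the same route as the paper. The paper's own proof is just a string of citations: (i)$\Leftrightarrow$(ii) is attributed to Fontaine and Colmez--Fontaine (with a pointer to \cite[Section~3]{BS07}), (ii)$\Leftrightarrow$(iii) to \cite{Fo79}, and (iii)$\Leftrightarrow$(iv) to the scaling identities $t_H(D/K\otimes_\Qp E)=\tfrac1a t_H(D)$, $t_N(D/K\otimes_\Qp E)=\tfrac1a t_N(D)$ stated just before the proposition. You have simply unpacked these citations, and you correctly flagged the one genuine issue the paper hides inside its reference to \cite{BS07}: that testing weak admissibility on $K\otimes_\Qp E$-subobjects (as in (ii)--(iv)) suffices, even though the definition of weak admissibility for $D$ is phrased over $K$ and a priori involves all $K$-subobjects. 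One small wording quibble: in your (ii)$\Leftrightarrow$(iii), ``equality holds for $D$'' means equality of \emph{endpoints} (i.e.\ $t_N(D)=t_H(D)$), not equality of the full polygons, so the forward direction is just ``polygon above implies endpoint above''; the converse uses the slope filtration to produce the subobjects at each Newton vertex, exactly as in the over-$K$ argument you cite from Section~\ref{S:2.1}.
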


\begin{proof}
It is Fontaine and Fontaine-Colmez's theorem \cite{Fo88a}\cite{Fo88b}\cite{CF00}
that (i)$\Longleftrightarrow$(ii). See discussion in \cite[Section 3]{BS07}.
It is well known that (ii)$\Longleftrightarrow$(iii) see for example
\cite{Fo79}. The equality (iii)$\Longleftrightarrow$(iv) is clear from remark above this proposition.
\end{proof}

\begin{remark}
For two dimensional case, suppose $D$ is of embedded
Hodge polygon $\HP_{\Z/a\Z}(D) =(k_j,0)_{j\in\Z/a\Z}$, then
$\HP^\sigma(D)=(\frac{1}{a}\sum_{j\in\Z/a\Z}k_j, 0)=(\bar{k}_1,0)$.
If $D$ is weakly admissible, then $\NP^\sigma(D)=(\bar{k}_1-\alpha,\alpha)$
for some $\alpha\geq 0$ since it lies over $\HP^\sigma(D)$.
In Breuil-Schneider's notation, the Hodge polygon is of slopes $0,\sum_{j\in\Z/a\Z}k_j$
and the Newton polygon is of slopes $a\alpha, \sum_{j\in\Z/a\Z}k_j - a\alpha$.
\end{remark}

\subsection{Irreducibility}
\label{S:3.3}

Given any Hodge polygons
$\k=(\k_j)_{j\in\Z/a\Z}$ and $\k'=(\k'_j)_{j\in\Z/a\Z}$,
if $\k'_j$ is a sub-polygon of $\k_j$ for every $j$, then
we denote it by $\k'\subseteq \k$.
Let $\Theta$ be the bijection in Theorem \ref{T:A-analog}, we
assume that $V=\Theta(A)$ where $A=(A_j)_j \in \GL_d(\cO_E)^a$.
For any Hodge polygon
$\k$ and $A\in \GL_d(\cO_E)^a$ let
$P_{(j),A}:=\prod_{l=0}^{a-1}A_{l+j+1}\Delta_{\k_{l+j+1}}
=(A_{j+1}\Delta_{\k_{j+1}})(A_{j+2}\Delta_{\k_{j+2}})\cdots (A_j\Delta_{\k_j})$,
where $\Delta_{\k_j}=\diag(p^{k_{j1}},\ldots,p^{k_{jd}})$.

In the bijection $\Theta: \GL_d(\cO_E)^a/\sim_\k
\longrightarrow \Rep_{\cris/E}^{\k}(G_K)$ (see Theorem \ref{T:A-analog}) we identify the pre-image of
those irreducible representations in the following proposition.
Its application in this paper lies in Theorem \ref{T:irreducible}.

\begin{proposition}\label{P:irreducibility-2}
Let $V\in \Rep_{\cris/E}^\k (G_K)$ and
$\Theta^{-1}(V)=A\in \GL_d(\cO_E)^a$.
Then the followings are equivalent to each other
\begin{enumerate}
\item[(i)] $V$ is irreducible.
\item[(ii)] There do not exist $\k'\subsetneq\k$ of length $d'$,
$A'\in\GL_{d'}(\cO_E)^a$, and $C=(C_j)_j\in M_{d\times d'}(\cO_E)^a$
with $\rank(C_j)=d'$, such that
\begin{eqnarray}\label{E:irreducible}
C_{j-1}  &=&  (A_j \Delta_{\k_j}) C_j (A'_j \Delta_{\k'_j})^{-1}
\end{eqnarray}
for all $j\in\Z/a\Z$.
\item[(iii)]
There do not exist $\k'\subsetneq \k$ of length $d'$,
$A'\in \GL_{d'}(\cO_E)^a$, and $C=(C_j)_j\in M_{d\times d'}(\cO_E)^a$
with $\rank(C_j)=d'$, such that any one of equations (\ref{E:irreducible})
replaced by
\begin{eqnarray}\label{E:irreducible-2}
C_j  &=&  P_{(j),A} \cdot C_j \cdot (P_{(j),A'})^{-1}
\end{eqnarray}
\end{enumerate}
\end{proposition}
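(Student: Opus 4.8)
The plan is to transport everything through the dictionary of Theorem~\ref{T:A-analog}. Since $\vcris^*$ is an equivalence of (abelian) categories, $V$ is irreducible if and only if $D:=\dcris^*(V)$ has no proper nonzero subobject in $\MF{ad}(K\otimes_\Qp E)$; such a subobject is a $\varphi$-stable free $K\otimes_\Qp E$-submodule $D'=\prod_{j\in\Z/a\Z}D'_j$ with $D'_j\subseteq D_j$ of some rank $d'<d$, weakly admissible for the induced filtration, and $\HP_{\Z/a\Z}(D')=:\k'$ is a sub-polygon of $\k$ because $\Fil^iD'_j/\Fil^{i+1}D'_j\hookrightarrow\Fil^iD_j/\Fil^{i+1}D_j$. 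I fix, via Theorem~\ref{T:A-analog} and Proposition~\ref{P:parabolic-2}, a strongly divisible $\varphi$-lattice $L=\prod_jL_j$ in $D$ with bases of the $L_j$ adapted to the filtration realizing $\Mat(\varphi|_{L_j})=A_j\Delta_{\k_j}$.

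For ``$V$ reducible $\Rightarrow$ \eqref{E:irreducible}'': given a proper nonzero subobject $D'$, set $L'=L\cap D'=\prod_j(L_j\cap D'_j)$. Since $\varphi\colon D_j\to D_{j-1}$ is an isomorphism carrying $D'_j$ onto $D'_{j-1}$, one has $\varphi(\Fil^iL'_j)=\varphi(\Fil^iL_j)\cap D'_{j-1}\subseteq p^iL_{j-1}\cap D'_{j-1}=p^iL'_{j-1}$ (using strong divisibility of $L$), so the second statement of Proposition~\ref{P:SDL} shows $L'$ is strongly divisible in $D'$. Picking bases of the $L'_j$ adapted to their filtrations, Corollary~\ref{C:SDL} (through Lemma~\ref{L:isogeny}) gives $\Mat(\varphi|_{L'_j})=A'_j\Delta_{\k'_j}$ with $A'_j\in\GL_{d'}(\cO_E)$, and the inclusion $L'_j\subseteq L_j$ is described in these bases by some $C_j\in M_{d\times d'}(\cO_E)$ of rank $d'$. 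Crucially, on the factors $D_j$ the map $\varphi$ is $E$-linear (the $\sigma$-semilinearity of \S\ref{S:3.1} having been absorbed into the shift $j\mapsto j-1$), so $\varphi$-equivariance of the inclusion reads $(A_j\Delta_{\k_j})C_j=C_{j-1}(A'_j\Delta_{\k'_j})$, which is \eqref{E:irreducible}. For the converse, given $\k'\subsetneq\k$, $A'\in\GL_{d'}(\cO_E)^a$ and $C=(C_j)$ with $\rank(C_j)=d'$ satisfying \eqref{E:irreducible}, let $D'_j\subseteq D_j$ be the column span of $C_j$; \eqref{E:irreducible} and $E$-linearity of $\varphi$ make $D'=\prod_jD'_j$ a $\varphi$-stable submodule of rank $d'<d$, and the $C_j$ identify $\varphi^a|_{D'_0}$ with $\prod_jA'_j\Delta_{\k'_j}$, so the Newton slopes of $\varphi^a|_{D'_0}$ form a sub-polygon of those of $\varphi^a|_{D_0}$ of total mass $\ord_p\det\big(\prod_jA'_j\Delta_{\k'_j}\big)=\sum_{j,i}k'_{ji}$. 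Using this together with the weak admissibility of $D$ (Proposition~\ref{P:NP>HP}) one extracts a proper nonzero weakly admissible subobject of $D$, hence a proper nonzero subrepresentation of $V$, so $V$ is reducible.

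The equivalence (ii)$\Leftrightarrow$(iii) is elementary linear algebra. If the $a$ relations \eqref{E:irreducible} all hold, composing them around $\Z/a\Z$ gives \eqref{E:irreducible-2} at every $j$, so the system obtained by replacing the relation at any single $j_0$ by \eqref{E:irreducible-2} still holds; this is (ii)$\Rightarrow$(iii). Conversely, suppose \eqref{E:irreducible} holds for $j\ne j_0$ and \eqref{E:irreducible-2} holds at $j_0$. Composing the $a-1$ available relations yields $C_{j_0}=\big(\prod_{l=1}^{a-1}A_{j_0+l}\Delta_{\k_{j_0+l}}\big)C_{j_0-1}\big(\prod_{l=1}^{a-1}A'_{j_0+l}\Delta_{\k'_{j_0+l}}\big)^{-1}$; since $P_{(j_0),A}$ and $P_{(j_0),A'}$ are exactly these products followed by the ``missing'' factors $A_{j_0}\Delta_{\k_{j_0}}$ and $A'_{j_0}\Delta_{\k'_{j_0}}$, substituting into \eqref{E:irreducible-2} and cancelling the common invertible left and right factors recovers \eqref{E:irreducible} at $j_0$.

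The step I expect to be the real obstacle is, in the converse of the first equivalence, upgrading the $\varphi$-stable submodule $D'$ determined by $C$ to an actual subrepresentation. A $\varphi$-stable $K\otimes_\Qp E$-submodule of a weakly admissible $D$ need not be weakly admissible for the induced filtration---one only gets $t_H(D'/K\otimes_\Qp E)\le t_N(D'/K\otimes_\Qp E)$ for free---so $V'$ cannot simply be read off $C$. The point to exploit is that \eqref{E:irreducible} pins $\varphi^a|_{D'_0}$, up to a unit twist, to the block $\prod_j\Delta_{\k'_j}$ coming from a genuine sub-polygon $\k'\subsetneq\k$: its Newton polygon is then a sub-polygon of that of $\varphi^a|_{D_0}$ with slope-sum $\sum_{j,i}k'_{ji}$, a proper partial sum of the Hodge data of $D$, and a slope-filtration argument refined by this Hodge information (Proposition~\ref{P:NP>HP} applied to the relevant $\varphi$-stable pieces) produces the required weakly admissible subobject. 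Making that extraction airtight is where the care is needed.
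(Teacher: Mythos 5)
Your argument is essentially the paper's: Proposition \ref{P:irreducibility-2}'s proof simply defers to Proposition \ref{P:irreducibility}, and what you write out for ``$V$ reducible $\Rightarrow$ (ii) fails'' and for (ii)$\Leftrightarrow$(iii) is that argument carried out with more care. The passage from a subobject $D'$ to $L'=L\cap D'$ and the verification of strong divisibility via the second clause of Proposition \ref{P:SDL} are correct, and your composition/cancellation derivation of (ii)$\Leftrightarrow$(iii) is a cleaner version of the paper's one-line ``iterate $a$ times'' remark.

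Your closing paragraph identifies a genuine gap -- but it is a gap the paper shares, not one you introduced. The proof of Proposition \ref{P:irreducibility} only establishes ``reducible $\Rightarrow$ a $C$ exists'' before writing ``thus our assertion follows,'' and Proposition \ref{P:irreducibility-2} offers nothing more for the converse. The difficulty is exactly what you describe: the column span $D'=\prod_jD'_j$ of $C$ is $\varphi$-stable with $t_N(D')=\frac{1}{a}\sum_{j,i}k'_{ji}$, but with the \emph{induced} filtration $\Fil^iD'_j=D'_j\cap\Fil^iD_j$ one only gets $t_H(D')\le t_N(D')$ from weak admissibility of $D$, and the inequality can be strict, so $D'$ need not be a weakly admissible subobject. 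For instance with $a=1$, $\k=(2,1,0)$ and $A\in\GL_3(\cO_E)$ with columns $(0,0,1)^T$, $(1,1,0)^T$, $(-1,0,p)^T$ (so $\det A=1$), the data $\k'=(1)$, $A'=1$, $C=(0,1,p)^T$ solve \eqref{E:irreducible}, yet the line spanned by $C$ has induced $\HP=(0)$ and hence $t_H=0<1=t_N$. In this example $D$ does happen to be reducible, but through $\pscal{e_1,e_3}$, not through the line $C$ hands you; extracting a genuine weakly admissible subobject from $(\k',A',C)$ requires the slope-filtration refinement you gesture at, and neither your sketch nor the paper supplies it. You have correctly located where the care is needed.
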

\begin{proof}
The equivalence between parts i) and ii)
follows the same argument as in that of
Proposition \ref{P:irreducibility},
combined with our results in Section \ref{S:3}
for $\Rep_{\cris/E}^\k(G_K)$,
hence we omit it here.
The equivalence of parts ii) and iii) can be seen by
iteration of (\ref{E:irreducible}) $a$ many times and
arrive at (\ref{E:irreducible-2}).
\end{proof}

\section{Constructing embedded Wach modules}
\label{S:4}

We prove Theorems B and C in this section.
Our tool is strongly divisible lattices in Section \ref{S:3},
and theory of Wach modules developed by
Fontaine, Wach \cite{Wa96}, Colmez \cite{Co99}, and Berger \cite{BE04}.
It should be evident to the reader that we benefit especially
from recent papers and exposition by Berger \cite{BE04}
and Berger-Breuil \cite{BB04}.

\subsection{($\varphi,\gamma$)-equations and basic arithmetic}
\label{S:4.1}

This subsection contains some essential yet rather technical results
we shall need in the rest of paper.
For this subsection (only) $K$ and $E$ are complete
discrete valuation fields of characteristic $0$ with perfect
residue field of characteristic $p$. Write $\mu_{p^\infty}$ for
the set of all $p$-power roots of unity in $\bar{K}$ and let
$\Gamma_K=\Gal(K(\mu_{p^\infty})/K)$. Let $\chi$ be the $p$-adic cyclotomic
character $G_K\rightarrow \Zp^*$, which in fact factors through
$\Gamma_K$, defined by $g(\zeta)=\zeta^{\chi(g)}$ for any $g\in
G_K$ and $\zeta\in\mu_{p^\infty}$. Let $\pi$ be a variable. For
any $\gamma\in \Gamma_K$, let $\varphi, \gamma$ be $E$-linear
operators on $K[[\pi]]\otimes_\Qp E$ commuting with each other such
that $\varphi(\pi) = (1+\pi)^p-1$ and $\gamma(\pi)=
(1+\pi)^{\chi(\gamma)} -1$. Let
$\varphi$ acts $\sigma$-semilinearly on $K$, where $\sigma$ is the absolute
Frobenius on $K$.

Write $q=\varphi(\pi)/\pi$. Notice that $q^{\gamma-1} \in 1+\pi\Zp[[\pi]]$.
For any integer $b\geq 1$ let $\lambda_b:= \prod_{n=0}^{\infty} \frac{\varphi^{bn}(q)}{p}$. It is not hard to see that $\lambda_b \in 1+\pi\Qp[[\pi]]$. Even though $\lambda_b$ does not lie in $\Zp[[\pi]]$ for $b>1$,
we have $\lambda_b^{\gamma-1} \in 1+\pi\Zp[[\pi]]$ (since
$\lambda_b^{\gamma-1} = \prod_{n=0}^{\infty}\varphi^{bn}(q^{\gamma-1})$).
Recall definition of $\cR_{c,E}$ from the end of Section \ref{S:1}.
It is clear $q/p = 1+\pi+\cdots + p^{-1}\pi^{p-1} \in \cR_{p-1,\Qp}$.
In this paper our convention is
$g^{\varphi-1} = \frac{\varphi(g)}{g}$ for any $g$.

\begin{lemma}\label{L:banach-solution1}
\begin{enumerate}
\item[(i)]
For a power series $h\in 1+\pi E[[\pi]]$ there is a
unique $g\in 1+\pi E[[\pi]]$ such that $g^{\varphi^b-1} = h$ for
any $b\geq 1$.
Let $c$ be any nonzero number, then we have $h\in \cR_{c,E}$ if and only if
$g\in \cR_{c,E}$.
(In particular, $h$ is integral if and only if $g$ is integral.)

\item[(ii)]
Let notation be as in Part (i).
Let $f(\varphi,\gamma)\in \Z[\varphi,\gamma]$.
Then $g^{\varphi^b-1} = (q/p)^{f(\varphi,\gamma)}$
has a unique solution $g\in 1+\pi E[[\pi]]$
and $g=\lambda_b^{-f(\varphi,\gamma)}$.
Moreover, we have $\lambda_b \in \cR_{p-1,\Qp}$.

\item[(iii)]
Part (i) holds if $\varphi^b-1$ is replaces by $\varphi^b+1$.
The unique solution to $g^{\varphi^b+1}=(q/p)^{f(\varphi,\gamma)}$
is $g=\lambda_{2b}^{-f(\varphi,\gamma)(\varphi^b-1)}.$

\item[(iv)]
If $\gamma\in \Gamma_K$ has infinite order, then for any $f\in 1+\pi E[[\pi]]$,
there exists a unique $h\in 1+\pi E[[\pi]]$ satisfying $h^{\gamma-1} = f$.
\end{enumerate}
\end{lemma}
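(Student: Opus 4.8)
\textbf{Proof proposal for Lemma \ref{L:banach-solution1}.}

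The plan is to treat each part by a contraction/fixed-point argument on the complete metric space $\pi E[[\pi]]$, writing $g=1+u$ with $u\in\pi E[[\pi]]$ and solving for $u$ coefficient by coefficient. For part (i), I would expand $g^{\varphi^b-1}=\varphi^b(g)/g=h$ into $\varphi^b(1+u)=(1+u)h$, i.e. $\sigma^b(u)(\pi^{p^b}+\cdots)=u+ (\text{lower-order terms in }u)+(h-1)$; since $\varphi^b$ raises the $\pi$-adic valuation (because $\varphi(\pi)=(1+\pi)^p-1\in\pi^p+p\pi\Zp[[\pi]]$, so $\varphi(\pi)\equiv\pi^p \bmod p$), the map $u\mapsto \varphi^b(u)$ is topologically nilpotent modulo the leading behavior, and one solves for the coefficients of $u$ recursively: the coefficient of $\pi^n$ in $u$ is determined by the coefficient of $\pi^n$ in $h-1$ plus an expression involving only coefficients of $u$ of index $<n$. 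This gives existence and uniqueness. For the $\cR_{c,E}$ statement, I would run the same recursion keeping track of $p$-adic valuations: if $\ord_p(h_s)\geq -s/c$ for all $s$, an induction on $n$ shows $\ord_p(u_n)\geq -n/c$, using that the recursive expression for $u_n$ involves $h_n$ and products $u_{n_1}\cdots u_{n_r}$ with $n_1+\cdots+n_r\leq n$, and $-n_1/c-\cdots-n_r/c\geq -n/c$. The converse (from $g$ to $h$) is immediate since $h=\varphi^b(g)/g$ and $\varphi$, inversion both preserve $\cR_{c,E}$ (the former because $\varphi(\pi^s)$ has $\pi$-order $\geq s$; the latter because $\cR_{c,E}$ is a ring and $g\in1+\pi E[[\pi]]$ is a unit there).

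For part (ii), I would first verify directly that $g:=\lambda_b^{-f(\varphi,\gamma)}$ solves $g^{\varphi^b-1}=(q/p)^{f(\varphi,\gamma)}$: since $\lambda_b=\prod_{n\geq0}\varphi^{bn}(q)/p$, we get $\lambda_b^{\varphi^b-1}=\varphi^b(\lambda_b)/\lambda_b=\prod_{n\geq1}\varphi^{bn}(q/p)\big/\prod_{n\geq0}\varphi^{bn}(q/p)=(q/p)^{-1}$, hence $\lambda_b^{-f(\varphi,\gamma)}$ has $(\varphi^b-1)$-image equal to $(q/p)^{f(\varphi,\gamma)}$ (here one uses that $\varphi$ and $\gamma$ commute, so $f(\varphi,\gamma)$ commutes with $\varphi^b$). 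Uniqueness then comes from part (i) applied to $h=(q/p)^{f(\varphi,\gamma)}\in1+\pi E[[\pi]]$. The claim $\lambda_b\in\cR_{p-1,\Qp}$ reduces to $q/p=1+\pi+\cdots+p^{-1}\pi^{p-1}\in\cR_{p-1,\Qp}$ (already noted before the lemma) together with the fact that $\varphi^{bn}$ maps $\cR_{p-1,\Qp}$ into $\cR_{p-1,\Qp}$ and that $\cR_{p-1,\Qp}$ is closed under the convergent infinite product defining $\lambda_b$ — convergence being in the $\pi$-adic topology since $\varphi^{bn}(q)/p\equiv1\bmod\pi^{p^{bn}}$-ish, so the partial products stabilize modulo any power of $\pi$.

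For part (iii), I would substitute: if $g^{\varphi^b+1}=(q/p)^{f}$, then applying $\varphi^b-1$ gives $g^{\varphi^{2b}-1}=(q/p)^{f(\varphi^b-1)}$, so by part (ii) (with $b$ replaced by $2b$) we must have $g=\lambda_{2b}^{-f(\varphi,\gamma)(\varphi^b-1)}$; conversely one checks this $g$ indeed satisfies the original equation, again by the telescoping identity for $\lambda_{2b}$. Part (i)-analogue for $\varphi^b+1$ is handled by the same recursion as in part (i), noting $g^{\varphi^b+1}=\varphi^b(g)\cdot g$ and that the map $u\mapsto\varphi^b(u)$ still raises $\pi$-valuation, so the coefficients of $u$ are again solved recursively from those of $h-1$. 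For part (iv), the operator $\gamma-1$ on $\pi E[[\pi]]$ is handled similarly: $\gamma(\pi)=(1+\pi)^{\chi(\gamma)}-1\equiv\chi(\gamma)\pi\bmod\pi^2$ with $\chi(\gamma)\in\Zp^*$, and when $\gamma$ has infinite order $\chi(\gamma)$ is not a root of unity, so on the coefficient of $\pi^n$ the operator $h\mapsto h^{\gamma-1}=\gamma(h)/h$ acts (to leading order) by multiplication by $\chi(\gamma)^n-1\neq0$, which is a unit or at least nonzero, allowing a recursive solution; I would phrase this as: writing $h=1+\sum h_n\pi^n$, $f=1+\sum f_n\pi^n$, the equation $\gamma(h)=(1+f_{\mathrm{part}})h$ determines $h_n$ in terms of $f_n$ and $h_1,\ldots,h_{n-1}$ with the coefficient $\chi(\gamma)^n-1$ in front of $h_n$, invertible in $E$ since $E$ has characteristic $0$ and $\chi(\gamma)$ has infinite multiplicative order. \emph{The main obstacle} I anticipate is being careful about the exact $\pi$-adic leading terms of $\varphi^b(u)$ versus $u$ so that the recursion genuinely closes (i.e. the ``new'' coefficient appears with an invertible coefficient and everything else is lower-index), and, for the $\cR_{c,E}$ bookkeeping in (i)–(ii), controlling the $p$-adic valuations through the infinite product and the substitution of $f(\varphi,\gamma)$ — in particular making sure $\lambda_b^{f(\varphi,\gamma)}$ stays in $\cR_{p-1,\Qp}$ when $f$ has several terms, which uses that $\cR_{p-1,\Qp}$ is a ring stable under $\varphi$ and $\gamma$.
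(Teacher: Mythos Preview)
Your proposal is correct and follows essentially the same route as the paper. The one point to sharpen is your justification in part (i): over $E[[\pi]]$ the operator $\varphi^b$ does \emph{not} strictly raise $\pi$-adic valuation (that only holds modulo $p$); what actually happens is that the $\pi^n$-coefficient of $\varphi^b(\pi^n)$ equals $p^{nb}$, so the coefficient recursion reads $(1-p^{nb})\,g_n = V_n$ with $V_n$ built from lower-index data --- the paper phrases this as the Banach fixed point of the $p$-adic contraction $g_n \mapsto p^{nb} g_n + V_n$ on $E$, which is precisely the obstacle you anticipated at the end.
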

\begin{proof}
(i)
Let $g=\sum_{n=0}^{\infty} g_n
\pi^n$ with $g_0=1$. We shall prove our assertion by induction on
$n$.
Suppose we have $g_1,\ldots,g_{n-1}\in E$ such that
$g^{\varphi^b} \equiv h g\bmod \pi^n$. We claim that there exists a
unique $g_n\in E$ such that $g^{\varphi^b} \equiv h g \bmod
\pi^{n+1}$. Comparing the coefficients of $\pi^n$ we see that the
latter congruence is equivalent to
\begin{eqnarray}\label{E:g_n}
g_n &=& g_n^{\varphi^b}p^{nb} + V_n
\end{eqnarray}
where $V_n\in \Zp[h_1,\ldots,h_n,g_1^{\varphi^b},\ldots,g_{n-1}^{\varphi^b}]$.
Let $\psi:E\rightarrow E$ be defined by sending $g_n$ to
$g_n^{\varphi^b}p^{nb}+V_n$. It is clear that $\psi$ is a contraction
map on the complete $p$-adic field $E$, then by Banach contraction
mapping theorem there exists a unique $g_n\in E$ satisfying
$\psi(g_n)=g_n$. This proves the existence and uniqueness of solution $g$.
Now suppose $h\in \cR_{c,E}$, we shall show
$\ord_p g_n \geq - \frac{n}{c}$ for all $n\geq 0$.
This clearly holds for $n=0$.
By induction we may suppose it holds for all $g_1,\ldots,g_{n-1}$.
Recall, in (\ref{E:g_n}) above, $V_n = - h_n + \sum_{t<n}g_t^{\varphi^b}a_t
- \sum_{s+t=n, t<n}h_s g_t$ for some $a_t\in \Zp$. So we have
$\ord_p V_n \geq \min(\ord_p h_n, \min_{t<n}g_t, \min_{s+t=n,t<n}\ord_p h_s g_t)
\geq -\frac{n}{c}$ by our inductive hypothesis.
Thus
$\ord_p g_n = \ord_p (g_n -g_n^{\varphi^b}p^{nb}) = \ord_p V_n \geq -\frac{n}{c}$.
This proves our claim that $g\in \cR_{c,E}$.
The converse direction follows from a similar only easier
and we shall omit its proof here.

(ii) The first assertion follows from a direct verification.
Since $q/p \in \cR_{p-1,\Qp}$ we have $\lambda_b^{-1} \in \cR_{p-1,\Qp}$.
But it is clear that $\lambda_b$ is a unit in $\cR_{p-1,\Qp}$ and
hence $\lambda_b \in \cR_{p-1,\Qp}$.

(iii) These follows from parts (i) and (ii) where we proved
the existence and uniqueness of $g$ to
$g^{\varphi^{2b}-1}=h^{\varphi^b-1}$, that is $g^{\varphi^b+1}=h$.

(iv)
Since $E[[\pi]]$ is $\pi$-adically complete, it suffices to
show there is a unique solution $h\in 1+\pi E[[\pi]]$ such that
\begin{eqnarray}\label{E:hh}
h^{\gamma-1} - f  & \equiv & 0
\bmod \pi^n \quad \mbox{for all $n\geq 1$}.
\end{eqnarray}
It is clear for $n=1$ and suppose it holds for $n$.
Write $h_n:= (h \bmod \pi^{n+1}) = h_{n-1}+ \pi^n H_n$ with $H_n\in E$
and $f_n := \sum_{\ell=0}^{n}F_\ell\pi^\ell = (f\bmod \pi^{n+1})$.
By our inductive hypothesis, we should write
$h_{n-1}^{\gamma-1} - f_{n-1} = \pi^n R_{n-1}$
for some $R_{n-1} \in E[[\pi]]$.
Then (\ref{E:hh}) is equivalent to
\begin{eqnarray}\label{E:H_n}
((\gamma(\pi)/\pi)^n-1)H_n &\equiv& F_n- R_{n-1} \bmod \pi.
\end{eqnarray}
But $\gamma(\pi)/\pi \equiv \chi(\gamma) \bmod \pi$,  since
$\gamma$ is of infinite order we have $\chi(\gamma)^n -1 \neq 0$
in $\Zp$. Therefore, (\ref{E:H_n}) has a solution in $E$ and this
proves our claim.
\end{proof}

As an application of the above
lemma, the following key lemma
will be used in Proposition \ref{P:3.1.2-analog},
when we come to consider
$2$-dimensional representations.

\begin{lemma}\label{L:solution1}
\begin{enumerate}
\item[(i)]
The following system of equations for all $j\in\Z/a\Z$
\begin{equation}\label{E:s_j}
\left\{
\begin{array}{llll}
&s_{j-1}  = t_j^\varphi, \quad & t_{j-1} = q^{(1-\gamma)k_j}s_j^\varphi & \mbox{if
$j\in \cA$}\\
&s_{j-1} = q^{(1-\gamma)k_j}s_j^{\varphi},\quad
&t_{j-1} = t_j^\varphi & \mbox{if $j\in\cB$}.
\end{array}
\right.
\end{equation}
has a unique solution set $\cQ:=(s_0,\ldots,s_{a-1},t_0,\ldots,t_{a-1})$
with each $s_j,t_j \in 1+\pi\Zp[[\pi]]$:
$$s_j =\lambda_b^{(1-\gamma)f_j(\varphi)},\quad
t_j = \lambda_b^{(1-\gamma)g_j(\varphi)}$$
for $b=a$ or $2a$ and for some polynomials
$f_j(\varphi), g_j(\varphi)$ in variable $\varphi$
and with coefficients equal to some algebraic combinations of $k_0,\ldots,k_{a-1}$.

\item[(ii)]
We have $b=a$ if and only if $\#\cA$ is even;
and $b=2a$ if and only
if $\# \cA$ is odd.

\item[(iii)]
Let $\cS$ be the solution set of $(h_0,\ldots,h_{a-1})$ in $E[[\pi]]^a$
to the following system of $\gamma$-equations for all $j\in\Z/a\Z$
\begin{equation}\label{E:h_j}
h_j^{\gamma-1} = \frac{s_{j-1}}{t_{j-1}}.
\end{equation}
Then we have
$$\cS \cap (1+\pi E[[\pi]])= (\lambda_b^{g_{j-1}(\varphi)-f_{j-1}(\varphi)})_{j\in\Z/a\Z}
\in \cR_{p-1,\Qp}^a$$
and $\cS = (E^*\lambda_b^{g_{j-1}(\varphi)-f_{j-1}(\varphi)})_{j\in\Z/a\Z}$.
\end{enumerate}
\end{lemma}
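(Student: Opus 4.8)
The plan is to \emph{decouple} the coupled system (\ref{E:s_j}) by iterating its $a$ relations once around the cycle $\Z/a\Z$, so that solving for each individual $s_j$ (and each $t_j$) reduces to a single equation of the shape $g^{\varphi^b-1}=(q/p)^{(1-\gamma)P(\varphi)}$ handled by Lemma \ref{L:banach-solution1}(ii). Two elementary facts go into the reduction: first, $q^{1-\gamma}=(q/p)^{1-\gamma}$ because $\gamma$ fixes $p$, so every factor $q^{(1-\gamma)k_j}$ in (\ref{E:s_j}) may be rewritten as $(q/p)^{(1-\gamma)k_j}$; second, as recalled before Lemma \ref{L:banach-solution1}, $q^{\gamma-1}\in 1+\pi\Zp[[\pi]]$, $\lambda_b$ is a unit of $\cR_{p-1,\Qp}$, and $1+\pi\Zp[[\pi]]$ and $\cR_{p-1,\Qp}$ are stable under $\varphi$ and under integer powers of units.

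\emph{Parts (i) and (ii).} Fix $j\in\Z/a\Z$. Each relation of (\ref{E:s_j}) writes the pair $(s_{j-1},t_{j-1})$ as $\varphi$ applied to $(s_j,t_j)$ up to one factor $(q/p)^{(1-\gamma)k_j}$, with the two coordinates \emph{interchanged} exactly when $j\in\cA$. Running through all $a$ indices, one full loop expresses $s_j$ as a product of $(q/p)^{(1-\gamma)(\varphi\text{-shifted }k)}$ factors times $\varphi^a$ applied to $s_j$ itself if the total number of interchanges $\#\cA$ is even, and times $\varphi^a(t_j)$ if $\#\cA$ is odd; symmetrically for $t_j$. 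If $\#\cA$ is even the loop already closes, giving $s_j^{\varphi^a-1}=(q/p)^{-(1-\gamma)f_j(\varphi)}$ for a polynomial $f_j\in\Z[\varphi]$ whose coefficients are integer combinations of the $k_l$, and here $b=a$. If $\#\cA$ is odd, I substitute the analogous $t_j$-identity into the $s_j$-identity (going around twice, so $2\#\cA$ interchanges, now even) and obtain $s_j^{\varphi^{2a}-1}=(q/p)^{-(1-\gamma)f_j(\varphi)}$, and here $b=2a$; this is the dichotomy of part (ii). Lemma \ref{L:banach-solution1}(ii) now gives the unique solution in $1+\pi E[[\pi]]$, namely $s_j=\lambda_b^{(1-\gamma)f_j(\varphi)}$, and likewise $t_j=\lambda_b^{(1-\gamma)g_j(\varphi)}$; integrality $s_j,t_j\in 1+\pi\Zp[[\pi]]$ follows because the right-hand side $(q/p)^{-(1-\gamma)f_j(\varphi)}=(q^{\gamma-1})^{f_j(\varphi)}$ already lies in $1+\pi\Zp[[\pi]]$, whence so does the solution by the integrality clause of Lemma \ref{L:banach-solution1}(i). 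Finally I must check that the tuple $\cQ=(s_0,\dots,s_{a-1},t_0,\dots,t_{a-1})$ so produced satisfies the \emph{original} system, not merely its decoupled consequences: substituting the $\lambda_b$-powers and using $\lambda_b^{\varphi^b-1}=(q/p)^{-1}$ turns each relation of (\ref{E:s_j}) into an identity in $\Z[\varphi]$ between $f_{j-1},g_{j-1}$ and $\varphi f_j,\varphi g_j$ that holds by the construction of the iteration (one step precomposes with $\varphi$ the exponent attached to the next index). Uniqueness of $\cQ$ among tuples in $1+\pi\Zp[[\pi]]\subseteq 1+\pi E[[\pi]]$ is immediate from the uniqueness clause of Lemma \ref{L:banach-solution1}(i).

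\emph{Part (iii).} From the above, $s_{j-1}/t_{j-1}=\lambda_b^{(1-\gamma)(f_{j-1}(\varphi)-g_{j-1}(\varphi))}$, so $h_j:=\lambda_b^{g_{j-1}(\varphi)-f_{j-1}(\varphi)}$ satisfies $h_j^{\gamma-1}=\lambda_b^{(\gamma-1)(g_{j-1}-f_{j-1})(\varphi)}=s_{j-1}/t_{j-1}$, i.e.\ solves (\ref{E:h_j}); and $h_j\in\cR_{p-1,\Qp}$ since $\lambda_b$ is a unit of $\cR_{p-1,\Qp}$ stable under $\varphi$. As $\gamma$ has infinite order, Lemma \ref{L:banach-solution1}(iv) shows $h_j$ is the unique solution of (\ref{E:h_j}) in $1+\pi E[[\pi]]$, so $\cS\cap(1+\pi E[[\pi]])=(\lambda_b^{g_{j-1}(\varphi)-f_{j-1}(\varphi)})_{j}\in\cR_{p-1,\Qp}^a$. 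For an arbitrary solution $h_j'\in E[[\pi]]$ of (\ref{E:h_j}), the ratio $h_j'/h_j\in E[[\pi]]$ (a unit, since $h_j\in 1+\pi\Qp[[\pi]]$) is $\gamma$-invariant; as in the proof of Lemma \ref{L:banach-solution1}(iv) one has $\chi(\gamma)^m\neq 1$ for all $m\geq1$, so comparing the lowest non-constant coefficient forces $\gamma$-invariant series to be constants, whence $h_j'\in E^*h_j$ (the zero series being excluded). This gives $\cS=(E^*\lambda_b^{g_{j-1}(\varphi)-f_{j-1}(\varphi)})_{j}$.

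\emph{Main obstacle.} The delicate point is the combinatorial bookkeeping in the iteration: tracking, over one loop of $\Z/a\Z$, which coordinate the accumulated $\varphi^a$ lands on and precisely which $\varphi$-shifted partial sums of the $k_l$ enter the exponent polynomials $f_j,g_j$, and from this reading off the parity statement of part (ii). Once the decoupled equations are isolated, the rest is a mechanical invocation of Lemma \ref{L:banach-solution1}.
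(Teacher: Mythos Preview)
Your proposal is correct and follows essentially the same approach as the paper: the paper phrases the iteration as tracing a directed graph on the $2a$ vertices $\{s_j,t_j\}$ (an edge swaps the $s$/$t$ label precisely at $\cA$-indices), which is exactly your ``counting interchanges'' bookkeeping, and both arrive at the decoupled equations $s_j^{\varphi^b-1}=(q/p)^{(\gamma-1)f_j(\varphi)}$ to which Lemma~\ref{L:banach-solution1}(ii) applies. Your part (iii) is in fact slightly more complete than the paper's (you spell out the $E^*$-ambiguity argument), but the line is the same.
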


\begin{proof}
(i) and (ii).
In the solution set $\cQ$, if any two entries $A, B$ satisfy an
equation of the form $A^\varphi = u B$ for some unit $u\in
1+\pi\Zp[[\pi]]$ of the form $q^{(1-\gamma)f(\varphi)}$ for some
$f(\varphi) \in \Z[\varphi]$, then we mark a directed path from
$A$ to $B$ in $\cQ$, denoted by \fbox{A}$\rightarrow$ \fbox{B}.
For any $j\in \cA$ we have \fbox{$t_j$}$\rightarrow$
\fbox{$s_{j-1}$} and \fbox{$s_j$} $\rightarrow$ \fbox{$t_{j-1}$}; For
any $j\in \cB$ we have \fbox{$t_j$}$\rightarrow$ \fbox{$t_{j-1}$} and
\fbox{$s_j$}$\rightarrow$ \fbox{$s_{j-1}$}. This shows that the
directed graph of the solution set $\cQ$ consists exactly two
disjoint $a$-cycles or exactly one $2a$-cycle: The walk initiated
from $s_j$ will travel to $s_{j-1}$ or $t_{j-1}$, and so on, this
walk will return (for the first time) to $s_j$ or $t_j$ after $a$
steps. If it returns to $s_j$ then it forms an $a$-cycle. By
symmetry, its complement is an $a$-cycle containing $t_j$; On the
other hand, if it returns to $t_j$ after $a$ steps, then by
symmetry again, this walk will return to $s_j$ after $a$ steps,
hence it forms an $2a$-cycle. Every directed path obtained in this
manner initiated from $s_j$ (or $t_j$) indicates an equation of
the form $s_j^{\varphi^b-1} = q^{(\gamma-1)f_j(\varphi)}$ (or
$t_j^{\varphi^b-1}=q^{(\gamma-1)g_j(\varphi)}$) for some
polynomials $f_j(\varphi), g_j(\varphi)\in \Z[\varphi]$ whose
coefficients are algebraic combinations of $k_0,\ldots,k_{a-1}$.
Note that $q^{\gamma-1}=(q/p)^{\gamma-1}$, we may apply Lemma
\ref{L:banach-solution1} (ii) and obtain unique solution $s_j$ and
$t_j$ both in $1+\pi\Zp[[\pi]]$. In particular, $s_j=
\lambda_b^{(1-\gamma)f_j(\varphi)}$ (and $t_j =
\lambda_b^{(1-\gamma)g_j(\varphi)}$ respectively) for the system
$s_j^{\varphi^b-1}=(q/p)^{(\gamma-1)f_j(\varphi)}$ (and
$t_j^{\varphi^b-1} = (q/p)^{(\gamma-1)g_j(\varphi)}$
respectively).

(iii).
By Lemma \ref{L:banach-solution1}(iv), there is a unique solution $(h_j)_j$
to $(\ref{E:h_j})$;
Using notations from the first part of this lemma it is easy to check that
$h_j = \lambda_b^{(g_{j-1}-f_{j-1})(\varphi)}$.
Note that $h_j\in \cR_{p-1,\Qp}$ since $\lambda_b\in \cR_{p-1,\Qp}$
and the ring $\cR_{p-1,\Qp}$
is closed under $\varphi$.
\end{proof}

Below we prove some basic arithmetic of Banach contraction maps,
which we shall use Proposition \ref{P:continuity}.

\begin{lemma}\label{L:continuity}
1) Let $\fG,\fG'$ be two $p$-adic contraction maps on
$M_{d\times d}(\cO_E)$. Let $Z,Z'$ be their fixed points respectively. Suppose
that for all $x\in M_{d\times d}(\cO_E)$ we have $\ord_p(\fG(x)-\fG'(x)) \geq m\geq 0$
then we have $\ord_p(Z-Z') \geq m$.

2) For any $M,M' \in \GL_d(\cO_E)$ we have
$\ord_p(M^{-1}-M'^{-1}) = \ord_p(M-M')$.

3) Let $M_j,M'_j\in \GL_d(\cO_E)$ and $B_j\in M_{d\times d}(\cO_E)$ for all $j=1,\ldots,n$
then
\begin{eqnarray*}
\ord_p(\prod_j M_j B_j - \prod_j M'_jB_j) &\geq & \min_j(\ord_p(M_j-M'_j)),\\
\ord_p(\prod_j B_jM_j^{-1} - \prod_j B_j{M'}_j^{-1})
 &\geq &\min_j(\ord_p(M_j-M'_j)).
\end{eqnarray*}
\end{lemma}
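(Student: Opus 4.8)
The plan is to prove the three parts in turn, each by a short non-archimedean estimate, using only the ultrametric inequality and the submultiplicativity $\ord_p(XY)\ge \ord_p(X)+\ord_p(Y)$ of the matrix valuation on $M_{d\times d}(\cO_E)$. For part 1), I would start from the fixed-point identities $Z=\fG(Z)$, $Z'=\fG'(Z')$ and split
\[
Z-Z' \;=\; \bigl(\fG(Z)-\fG'(Z)\bigr) + \bigl(\fG'(Z)-\fG'(Z')\bigr).
\]
The first summand has $\ord_p\ge m$ by hypothesis; the second, because $\fG'$ is a $p$-adic contraction, satisfies $\ord_p(\fG'(Z)-\fG'(Z'))\ge \ord_p(Z-Z')+1$. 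The ultrametric inequality then gives $\ord_p(Z-Z')\ge \min\bigl(m,\ \ord_p(Z-Z')+1\bigr)$, and a one-line case analysis (the case $\ord_p(Z-Z')+1\le m$ being self-contradictory) forces $\ord_p(Z-Z')\ge m$.

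For part 2), I would use the identity $M^{-1}-M'^{-1}=-M^{-1}(M-M')M'^{-1}$. Since $M,M'\in\GL_d(\cO_E)$, both $M^{-1}$ and $M'^{-1}$ have entries in $\cO_E$, hence $\ord_p\ge 0$, so submultiplicativity yields $\ord_p(M^{-1}-M'^{-1})\ge \ord_p(M-M')$. Applying the same identity to the pair $(M^{-1},M'^{-1})$, whose inverses are $M$ and $M'$, gives the reverse inequality, whence equality.

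For part 3), I would telescope. With $X_j=M_jB_j$ and $Y_j=M'_jB_j$, and using $X_i-Y_i=(M_i-M'_i)B_i$, one has
\[
\prod_{j=1}^{n} M_jB_j - \prod_{j=1}^{n} M'_jB_j \;=\; \sum_{i=1}^{n} \Bigl(\prod_{j<i}Y_j\Bigr)\,(M_i-M'_i)\,B_i\,\Bigl(\prod_{j>i}X_j\Bigr).
\]
Every factor $M_j,M'_j,B_j$ lies in $M_{d\times d}(\cO_E)$, so each summand has $\ord_p\ge \ord_p(M_i-M'_i)$, and the ultrametric inequality gives the claimed bound. The second inequality is the same telescoping with $X_j=B_jM_j^{-1}$ and $Y_j=B_jM'^{-1}_j$, this time invoking part 2) to rewrite $\ord_p(M_i^{-1}-M'^{-1}_i)=\ord_p(M_i-M'_i)$.

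None of this is hard; the only point requiring a little care is fixing the precise meaning of ``$p$-adic contraction map'' in part 1), namely the estimate $\ord_p(\fG(x)-\fG(y))\ge \ord_p(x-y)+1$, which is exactly the contraction property furnished by the Banach fixed-point arguments underlying Lemma \ref{L:banach-solution1}. With that in hand the rest is routine bookkeeping with $\ord_p$.
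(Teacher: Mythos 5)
Your proof is correct and follows essentially the same route as the paper's: part (2) uses the identical identity $M^{-1}-M'^{-1}=-M^{-1}(M-M')M'^{-1}$ applied symmetrically to get equality, and part (3) is exactly the telescoping/induction the paper leaves to the reader. The only cosmetic difference is in part (1), where the paper first notes $\ord_p(Z-\fG'(Z))\geq m$ and then iterates $\fG'$ from $Z$, passing to the limit $Z'=\lim_n{\fG'}^n(Z)$, whereas you reach the same conclusion by a direct ultrametric bootstrap from the splitting $Z-Z'=(\fG(Z)-\fG'(Z))+(\fG'(Z)-\fG'(Z'))$; both are the standard stability estimate for $p$-adic contractions (and your case analysis works verbatim for any contraction constant $c>0$, not just $c=1$).
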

\begin{proof}
1) By setting $x=Z$ our hypothesis gives
$\ord_p(Z-\fG'(Z))\geq m$.
Then $\ord_p({\fG'}^n(Z)-{\fG'}^{n+1}(Z)) \geq \ord_p(Z-\fG'(Z))\geq m$
for all $n\geq 0$. So
$\ord_p(Z-{\fG'}^n(Z))\geq m$ for all $n\geq 0$.
Hence $\ord_p(Z-Z')\geq m$ as
$Z'=\lim_{n\rightarrow \infty}{\fG'}^n(Z)$.

2) One only has to observe that
${M'}^{-1} - M^{-1} = {M'}^{-1}(M-M')M^{-1}$ and then
take $p$-adic order both sides.

3) Use induction argument on $j\geq 1$, the first inequality easily follows.
The second follows by a similar argument and part (2).
\end{proof}

\subsection{Embedded Wach liftings from $\MF{ad}(K\otimes_\Qp E)$}
\label{S:4.2}

The goal of this subsection is to prove Proposition \ref{P:unique-split},
which will be the key ingredient in proving Theorems B and C in Section
\ref{S:4.3}.
We first take the liberty to recall without proof some fundamental notions, many of which can be found in \cite{Wa96},\cite{Co99}, and \cite{BE04}.
Retaining notations from Section \ref{S:3}, let $D=\prod_{j\in\Z/a\Z} D_j$ be a weakly admissible filtered $\varphi$-module in $\MF{ad,\k}(K\otimes_\Qp E)$.
Let $L=\prod_j L_j$ be an embedded strongly divisible $\varphi$-lattice of $D$.
Assume $k_{jd}=0$ for all $j$ and write $\bar{k}_1:=\frac{1}{a}\sum_{j\in\Z/a\Z}k_{j1}
=\MaxSlope(\HP^\sigma(V/K\otimes_\Qp E))$.
Fix $\Delta_{\k_j}=\diag(p^{k_{j1}},\ldots,p^{k_{jd}})$.
Write $P_j=\Mat(\varphi|_{L_j})=A_j\Delta_{\k_j}$ for some
$A_j\in\GL_d(\cO_E)$, with respect to basis adapted to filtration.
Then $\Mat(\varphi^a|_{L_j})= P_{(j)} :=
\prod_{\ell=0}^{a-1}P_{\ell+j+1}$ with the subindex $\ell$
ranging from $0$ to $a-1$ (necessarily in this order) in $\Z/a\Z$ (since $\varphi$ acts on $E$ as identity).
Recall from Theorem \ref{T:A-analog} that
every $(A_j)_j \in \GL_d(\cO_E)^a$ determines a $V\in \Rep_{\cris}^\k(K\otimes_\Qp E)$.
Recall $\varphi$-action on $\akplus$ is given by $\varphi(\pi)=(1+\pi)^p-1$.
Let $q:=\varphi(\pi)/\pi=((1+\pi)^p-1)/\pi$.
Fix $\hat\Delta_{\k_j} =\diag(q^{k_{j1}},\ldots,q^{k_{jd}}) \in M_{d\times d}(\Zp[\pi])$.

Recall that an \'etale $(\varphi,\Gamma)$-module $M$ over
$\A_K\otimes_\Zp \cO_E$ is a $\A_K\otimes_\Zp \cO_E$-module
of finite rank, with continuous and commuting $\A_K$-semilinear
and $\cO_E$-linear $\varphi$- and $\Gamma_K$-actions such that
$\varphi(M)$ generates $M$ over $\A_K$. Fontaine's functor $\D:
T\rightarrow \D(T)$ associates any de Rham $\cO_E$-linear
representation $T$ of $G_K$ to an \'etale
$(\varphi,\Gamma)$-module over $\A_K\otimes_\Zp \cO_E$. By
inverting $p$, one also gets an equivalence of categories between
the category of $E$-linear representations and the category of
\'etale $(\varphi,\Gamma)$-modules over $\B_K\otimes_\Qp E$.
Let $\MF{}(\bkplus\otimes_\Qp E)$ be the category of filtered
$\varphi$-modules over $\bkplus\otimes_\Qp E$. Any object $N$
in $\MF{}(\bkplus\otimes_\Qp E)$ may be written as $N=\prod_j N_j$
according to the split $\bkplus\otimes_\Qp E =\prod_j \beplus$.

For any $(D,\Fil, \varphi)$ in $\MF{ad,\k}(K\otimes_\Qp E)$,
we define an {\em embedded Wach module} $\N$ in
$\MF{}(\bkplus\otimes_\Qp E)$ attached to it (if such
$\N$ exists) as a free $\bkplus\otimes_\Qp E$-module.
Write $\N=\prod_{j\in \Z/a\Z}\N_j$
where $\N_j$ is a free $\beplus$-module
of rank $d$ such that the following (first) 6 conditions are satisfied
\begin{enumerate}
\item $\Fil^i \N_j := \{x\in \N_j|\varphi(x)\in q^i \N_{j-1}\}$,

\item we have isomorphism $\theta_j: \N_j/\pi
\N_j \rightarrow D_j$ as filtered $E$-modules. Write $\varphi_j:=\varphi|_{\N_j}$
also for induced map on $\N_j/\pi \N_j$, we have
$\varphi_j\theta_j = \theta_{j-1} \varphi_j$,

\item every $\gamma\in \Gamma_K$ acts continuously
on $\prod_j\N_j$, preserves $\N_j$ and
$\varphi\gamma = \gamma \varphi$,

\item $\Gamma_K$ acts trivially on $\N_j/\pi
\N_j$ for every $j$,

\item
$\HP(\varphi|_{\N_j/\beplus})=\diag(q^{k_{j1}},\ldots,q^{k_{jd}})$,

\item The $(\varphi,\Gamma)$-module generated by $\N$ is \'etale.

\item[(7*)] $\N_j$ contains a $\aeplus$-lattice $\N(T)_j$ that is
$\Gamma_K$-stable and $\varphi(\N(T)_j)\subseteq \N(T)_{j-1}$.
\end{enumerate}
We shall call this process of constructing Wach modules from
weakly admissible filtered $\varphi$-modules {\em embedded Wach
lifting}.
They form a subcategory $\MF{ad}(\bkplus\otimes_\Qp E)$ in
$\MF{}(\bkplus\otimes_\Qp E)$.
The $\akplus\otimes_\Zp\cO_E$-module
$\N(T):=\prod_{j=0}^{a-1} \N(T)_j$ in (7*) above is called {\em
integral embedded Wach module}. It is a full lattice in $\N$.
Below we shall prove two twchnical lemmas in preparation
for Proposition \ref{P:unique-split}.

\begin{lemma}\label{L:solution}
(i) Suppose $s\in M_{d\times d}(\cO_E[[\pi]])$
such that $q^{a\bar{k}_1}s^{-1}\in M_{d\times d}(\cO_E[[\pi]])$.
Set $f(s,b) = b-sb^{\varphi^a} s^{-\gamma}$.
If $(b_{n-1} \bmod \pi^n)\in M_{d\times d}(\cO_E[\pi])$ such that
$f(s,b_{n-1}) \in \pi^n M_{d\times d}(\cO_E[[\pi]])$ for some $n >
\bar{k}_1$,
then there exists a unique $b\in M_{d\times d}(\cO_E[[\pi]])$
with $b\equiv b_{n-1} \bmod \pi^n$ such that $f(s,b)=0$.

(ii) Suppose $k_{j1}\geq \ldots \geq k_{j,d-1}>k_{jd}=0$
and suppose $s=\hP_{(0)}$. Then the above statement holds for
$n\geq \bar{k}_1$.
\end{lemma}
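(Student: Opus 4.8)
I would solve $f(s,b)=0$ by successive approximation in powers of $\pi$, in the same spirit as Lemma~\ref{L:banach-solution1}(i): starting from the given polynomial truncation $b_{n-1}$, construct the matrix coefficients $b_m\in M_{d\times d}(\cO_E)$ of $b=\sum_{m\ge 0}b_m\pi^m$ one level at a time for $m\ge n$, the passage from $b\bmod\pi^m$ to $b\bmod\pi^{m+1}$ being governed by a $p$-adic contraction whose modulus is read off from the hypothesis $q^{a\bar{k}_1}s^{-1}\in M_{d\times d}(\cO_E[[\pi]])$.

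\textbf{The recursion.} Note first that, since $\gamma$ preserves $\cO_E[[\pi]]$, $f(s,b)=0$ is equivalent to $b\gamma(s)=s\,\varphi^a(b)$. Writing $b^{(m)}:=\sum_{k<m}b_k\pi^k$ for the truncation already built (with $b^{(n)}=b_{n-1}$) and denoting by $[\,\cdot\,]_m$ the coefficient of $\pi^m$, one checks that $[\,s\,\varphi^a(b^{(m)})\gamma(s)^{-1}\,]_m=-[\,f(s,b^{(m)})\,]_m$, so that comparing the $\pi^m$-coefficients in $b=s\,\varphi^a(b)\gamma(s)^{-1}$ — using $\gamma(s)|_{\pi=0}=s(0)$ and that the lowest term of $\varphi^a(\pi)^m$ is $p^{am}\pi^m$ — the new coefficient must satisfy the Sylvester-type equation
$$b_m-p^{am}\,s(0)\,b_m\,s(0)^{-1}=-[\,f(s,b^{(m)})\,]_m.$$
Under the running hypothesis $f(s,b^{(m)})\in\pi^m M_{d\times d}(\cO_E[[\pi]])$ the right-hand side lies in $M_{d\times d}(\cO_E)$.

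\textbf{Contraction and integrality.} Evaluating the hypothesis at $\pi=0$ gives $p^{a\bar{k}_1}s(0)^{-1}\in M_{d\times d}(\cO_E)$, i.e.\ $\ord_p(s(0)^{-1})\ge -a\bar{k}_1$; hence the $E$-linear operator $B\mapsto p^{am}s(0)Bs(0)^{-1}$ on $M_{d\times d}(E)$ raises $\ord_p$ by at least $a(m-\bar{k}_1)$. For $m\ge n>\bar{k}_1$ it is therefore a strict contraction of $M_{d\times d}(\cO_E)$ into itself, and a Banach fixed-point argument (compare Lemma~\ref{L:continuity}) yields a unique solution $b_m\in M_{d\times d}(\cO_E)$ of the displayed equation. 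It then remains to verify that, with this $b_m$, one again has $f(s,b^{(m+1)})\in\pi^{m+1}M_{d\times d}(\cO_E[[\pi]])$ for $b^{(m+1)}=b^{(m)}+b_m\pi^m$; this is a bookkeeping of the $p$-adic orders of the coefficients of $\gamma(s)^{-1}$, of $\varphi^a(\pi)^m$, and of $b_0,\dots,b_m$, entirely parallel to the $\cR_{c,E}$-estimate carried out inside the proof of Lemma~\ref{L:banach-solution1}(i), and it is exactly here that the threshold $n>\bar{k}_1$ is used. Passing to the limit gives $b\in M_{d\times d}(\cO_E[[\pi]])$ with $b\equiv b_{n-1}\bmod\pi^n$ and $f(s,b)=0$. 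Uniqueness is immediate from the same estimate: if $b,b'$ both solve the equation and agree mod $\pi^n$, then $b-b'=\sum_{m\ge n}\delta_m\pi^m$ and, level by level, $\delta_m$ satisfies the \emph{homogeneous} equation $\delta_m=p^{am}s(0)\delta_m s(0)^{-1}$, whose only fixed point in $M_{d\times d}(E)$ is $0$ once $m>\bar{k}_1$, so $b=b'$.

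\textbf{Part (ii) and the main difficulty.} The only extra point in (ii) is the borderline level $m=\bar{k}_1$ (which occurs only when $\bar{k}_1\in\Z$), where the contraction modulus above is merely $\le 1$. Here one uses the explicit shape $s=\hP_{(0)}=\prod_j A_j\hat\Delta_{\k_j}$: then $s(0)=P_{(0)}$, and the factorization $q^{a\bar{k}_1}\hP_{(0)}^{-1}=\prod_j\big(q^{k_{j1}}\hat\Delta_{\k_j}^{-1}\big)A_j^{-1}$ (in the appropriate order) together with the hypothesis $k_{j,d-1}>k_{jd}=0$ — which forces each $q^{k_{j1}}\hat\Delta_{\k_j}^{-1}=\diag(1,q^{k_{j1}-k_{j2}},\dots,q^{k_{j1}})$ to be integral with its reduction mod $\pi$ in the required position — shows that $\mathrm{id}-p^{a\bar{k}_1}s(0)(\,\cdot\,)s(0)^{-1}$ is still invertible with integral inverse on $M_{d\times d}(\cO_E)$, which is all the recursion needs at that one additional step. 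The genuine difficulty throughout is precisely this integrality bookkeeping: keeping the successive approximants inside $M_{d\times d}(\cO_E[[\pi]])$ even though $\gamma(s)^{-1}$, and hence $\varphi^a(\pi)^m\gamma(s)^{-1}$, carries honest $q^{-a\bar{k}_1}$-type denominators — which is what pins the bound at $n>\bar{k}_1$ in (i), while the refined analysis of $\hP_{(0)}^{-1}$ buys the single extra level giving $n\ge\bar{k}_1$ in (ii).
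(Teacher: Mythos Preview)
Your proposal is correct and follows essentially the same approach as the paper's proof: successive $\pi$-adic approximation, reducing the step from level $m$ to $m+1$ to a fixed-point equation $B_m = p^{am}\,s(0)\,B_m\,s(0)^{-1} + (\text{known})$ which is a strict $p$-adic contraction for $m>\bar{k}_1$, with the integrality of the next remainder guaranteed by $q^{a\bar{k}_1}s^{-1}\in M_{d\times d}(\cO_E[[\pi]])$ and $q^{\gamma-1}\in 1+\pi\Zp[[\pi]]$. For part~(ii) the paper argues exactly as you do, via the shape of $s(0)$ and $p^{a\bar{k}_1}s(0)^{-1}$ modulo~$p$ (phrased there as $s(0)\equiv(\vec 0^{\,T}\mid\star)$ and $p^{a\bar{k}_1}s(0)^{-1}\equiv\bigl(\begin{smallmatrix}\star\\\vec 0\end{smallmatrix}\bigr)\bmod p$), concluding that $\mathrm{id}-s(0)(\cdot)p^{a\bar{k}_1}s(0)^{-1}$ is invertible over $\cO_E/\m_E$ and then lifting; your factorization of $q^{a\bar{k}_1}\hP_{(0)}^{-1}$ produces precisely this mod-$p$ shape.
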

\begin{proof}
(i)
Write $b_n =\sum_{i=0}^{n}B_i \pi^i$.
By hypothesis, we write $f(s,b_{n-1}) = \pi^n R_n$ for some
$R_n\in M_{d\times d}(\cO_E[[\pi]])$. Then we have
\begin{eqnarray*}
f(s,b_n)
&=& f(s,b_{n-1})+\pi^n(B_n-q^{an} sB_n^{\varphi^a}s^{-\gamma})\\
&=& \pi^n(B_n-q^{a(n-\bar{k}_1)}sB_n^{\varphi^a}q^{a\bar{k}_1}s^{-\gamma}+ R_n).
\end{eqnarray*}
Notice that $q\equiv p\bmod \pi$, and thus $p^{a\bar{k}_1}s^{-1}(0) \in M_{d\times d}(\cO_E)$.
We claim the following equation has a unique solution $B_n\in M_{d\times d}(\cO_E)$:
\begin{eqnarray}\label{E:B_n}
B_n &=& p^{a(n-\bar{k}_1)}s(0) B_n^{\varphi^a}p^{a\bar{k}_1}s(0)^{-1}  - R_n(0).
\end{eqnarray}
For $n>\bar{k}_1$
the map sending any matrix $B_n$ to the right-hand-side of the
above equation is clearly a $p$-adic contraction map on $M_{d\times d}(\cO_E)$,
hence our above claim follows from the Banach mapping theorem.
Since $q^{a\bar{k}_1}s^{-1}\in M_{d\times d}(\cO_E[[\pi]])$
and $q^{\gamma-1} \in 1+\pi \Zp[[\pi]]$,
we have $f(s,b_n)/\pi^n \in \pi M_{d\times d}(\cO_E[[\pi]])$
and therefore we have $ R_{n+1}\in M_{d\times d}(\cO_E[[\pi]])$
as claimed in our induction.
This finishes the proof of part (i).

(ii). Suppose $n=\bar{k}_1$ (only when $\bar{k}_1$ is an integer).
Since $s(0) \equiv (\0^T|\star)\bmod p$ and
$p^{a\bar{k}_1}s(0)^{-1}\equiv (\frac{\star}{\0}) \bmod p$, one
can derive via a calculation that the map $\fI: B_n\rightarrow B_n
- s(0) B_n^{\varphi^a}p^{a\bar{k}_1}s(0)^{-1} +R_n(0)$ induces an
invertible map on $M_{d\times d}(\cO_E/\m_E)$. Hence $f(s,b)=0$ has a unique
solution for $b$ modulo $(p,\pi)$. By Hensel's lemma we may lift
it to a unique solution to $b\in M_{d\times d}(\cO)E[[\pi]])$.
\end{proof}

\begin{lemma}\label{L:commute}
Let $N=\prod_j N_j \in \MF{}(\bkplus \otimes_\Qp E)$ be given by
$\Mat(\varphi|_{N_j})=\hP_j$ and $\Mat(\gamma|_{N_j}) = \hG_{\gamma,j}$.
Then $\Mat(\varphi^a|_{N_j})=\hP_{(j)}:=\prod_{l=0}^{a-1}\varphi^{l}(\hP_{l+j+1})
=\hP_{j+1}\cdot\varphi(\hP_{j+2})\cdot\cdots\cdot
\varphi^{a-1}(\hP_j)$
where subindex lies in $\Z/a\Z$.
The following statements are equivalent:
\begin{enumerate}

\item we have $\varphi\gamma = \gamma \varphi$ on $\N$ for every $\gamma\in \Gamma_K$.

\item we have
\begin{eqnarray*}
&&\left(
\begin{array}{cccc}
  0 &  &  & \hat{P}_0 \\
  \hat{P}_1 &  &  &  \\
   & \ddots &  &  \\
  0 &  & \hat{P}_{a-1} & 0 \\
\end{array}
\right)
\left(
\begin{array}{cccc}
  \hG_{\gamma,1}^\varphi &  &  & 0 \\
   &\hG_{\gamma,2}^\varphi  &  &  \\
   & & \ddots &  \\
  0 &  &  & \hG_{\gamma,0}^\varphi \\
\end{array}
\right)\\
&&\quad\quad =
\left(
\begin{array}{cccc}
  \hG_{\gamma,a-1} &  &  & 0 \\
   &\hG_{\gamma,0}   &  &  \\
  & & \ddots &  \\
  0 &  &  & \hG_{\gamma,a-2} \\
\end{array}
\right)
\left(
\begin{array}{cccc}
  0 &  &  & \hP_0^\gamma \\
  \hP_1^\gamma &  &  &  \\
   & \ddots &  &  \\
  0 &  & \hP_{a-1}^\gamma & 0 \\
\end{array}
\right)
\end{eqnarray*}
on $\prod_{j=0}^{a-1} \N_j$.

\item
$\hG_{\gamma,j-1}\hP_j^\gamma =\hP_j\hG_{\gamma,j}^\varphi$ for
$j\in\Z/a\Z$.

\item
Replace an equation in Part (3)
$\hG_{\gamma,j-1}\hP_j^\gamma =\hP_j\hG_{\gamma,j}^\varphi$
by the following equation
\begin{eqnarray}\label{E:PP}
\hG_{\gamma,j} \hP_{(j)}^{\gamma}&=&\hP_{(j)}\hG_{\gamma,j}^{\varphi^a}
\end{eqnarray}
\end{enumerate}
\end{lemma}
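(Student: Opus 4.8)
The statement asserts two things: the formula $\Mat(\varphi^a|_{N_j})=\hP_{(j)}$, and the chain of equivalences (1)$\Leftrightarrow$(2)$\Leftrightarrow$(3)$\Leftrightarrow$(4). I would prove the $\varphi^a$-formula first, then (1)$\Leftrightarrow$(2)$\Leftrightarrow$(3) by unwinding the block-matrix formalism, and finally (3)$\Leftrightarrow$(4) by a telescoping computation, which is the only step that is not purely formal. For the first formula: $\varphi$ is $\varphi$-semilinear over $\beplus$ and, by (\ref{E:semilinear}), cyclically shifts the factors $N_0,\dots,N_{a-1}$, so $\varphi^a$ carries $N_j$ back to itself; the matrix of a composite $T_1\circ\cdots\circ T_a$ of $\varphi$-semilinear maps is $M_{T_1}\varphi(M_{T_2})\cdots\varphi^{a-1}(M_{T_a})$, each successive factor picking up one further twist by $\varphi$, and reading off the indices along the cycle through $N_j$ gives $\hP_{j+1}\varphi(\hP_{j+2})\cdots\varphi^{a-1}(\hP_j)=\hP_{(j)}$.

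For (1)$\Leftrightarrow$(2)$\Leftrightarrow$(3): on $\prod_{j}\N_j$ the operator $\gamma$ preserves each $\N_j$, so its matrix is block-diagonal with diagonal blocks $\hG_{\gamma,0},\dots,\hG_{\gamma,a-1}$, whereas $\varphi$ cyclically shifts the factors, so its matrix is the off-diagonal block matrix displayed on the left of the identity in (2). Applying the composition rule above, $\Mat(\varphi\circ\gamma)$ equals that off-diagonal matrix times $\diag(\hG_{\gamma,j}^\varphi)_j$, and $\Mat(\gamma\circ\varphi)$ equals $\diag(\hG_{\gamma,j})_j$ times the off-diagonal matrix whose nonzero blocks are the $\hP_j^\gamma$; the relation $\varphi\gamma=\gamma\varphi$ on $\N$ is precisely the equality of these two products, which is the identity displayed in (2). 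Multiplying out both sides of that identity, each side is again supported on a single off-diagonal, and comparing the nonzero blocks entry by entry gives exactly the $a$ relations $\hG_{\gamma,j-1}\hP_j^\gamma=\hP_j\hG_{\gamma,j}^\varphi$ of (3). (Here and below the matrices $\hP_j$ and $\hG_{\gamma,j}$ are invertible --- $\hG_{\gamma,j}$ because $\gamma$ acts invertibly, and $\hP_j\in\GL_d(\beplus)$ because $q$ is a unit in $\beplus$ --- which is what makes the manipulations legitimate.)

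For (3)$\Leftrightarrow$(4): rewrite the relation of (3) at index $j$ as $\hG_{\gamma,j-1}=\hP_j\hG_{\gamma,j}^\varphi(\hP_j^\gamma)^{-1}$ and telescope, substituting in the relation at $j-1$, then at $j-2$, and so on; since $\varphi$ and $\gamma$ commute on $\beplus$, after $a$ substitutions one returns to index $j$, the accumulated product of the $\hP$'s is $\hP_{j+1}\varphi(\hP_{j+2})\cdots\varphi^{a-1}(\hP_j)=\hP_{(j)}$, and the resulting identity rearranges to $\hG_{\gamma,j}\hP_{(j)}^\gamma=\hP_{(j)}\hG_{\gamma,j}^{\varphi^a}$, namely (\ref{E:PP}). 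Hence (3) yields (\ref{E:PP}) at every $j$; in particular (3)$\Rightarrow$(4). Conversely, suppose we are given the $a-1$ relations of (3) at all indices different from $j$ together with (\ref{E:PP}) at $j$. Running the same telescoping for only $a-1$ steps expresses $\hG_{\gamma,j}=Q\,\hG_{\gamma,j-1}^{\varphi^{a-1}}(Q^\gamma)^{-1}$, where $Q$ is the partial product satisfying $\hP_{(j)}=Q\,\varphi^{a-1}(\hP_j)$; substituting this into (\ref{E:PP}), cancelling $Q$ against the matching $Q^\gamma$, and using that $\varphi$ is injective on $\beplus$ to remove the common outermost twist $\varphi^{a-1}(-)$ from both sides, one recovers $\hG_{\gamma,j-1}\hP_j^\gamma=\hP_j\hG_{\gamma,j}^\varphi$, the missing relation of (3). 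This gives (4)$\Rightarrow$(3).

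The only delicate point is the telescoping used for (3)$\Leftrightarrow$(4): one must keep track of the cyclic indices modulo $a$ and of the powers of $\varphi$ accumulating on the successive factors at the same time, and check that every $\gamma$-twist cancels correctly, which is where the commutativity of $\varphi$ and $\gamma$ and the invertibility of the matrices enter. Everything else is routine semilinear matrix algebra of block matrices.
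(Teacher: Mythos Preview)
Your argument is correct and follows essentially the same route as the paper's. The paper's own proof is a two-line sketch: it declares (1)$\Leftrightarrow$(2)$\Leftrightarrow$(3) and (3)$\Rightarrow$(4) to be ``straightforward computations'' and, for (4)$\Rightarrow$(3), says only that from a solution $\hG_{\gamma,j}$ of (\ref{E:PP}) one ``recovers'' the remaining $\hG_{\gamma,m}$ via the equations of (3) and that these then solve the full system. Your telescoping computation is exactly what makes that sentence precise: the $a-1$ relations of (3) at indices $\neq j$ give $\varphi^{a-1}(\hG_{\gamma,j-1})=Q^{-1}\hG_{\gamma,j}Q^\gamma$ with $\hP_{(j)}=Q\,\varphi^{a-1}(\hP_j)$, and substituting this into (\ref{E:PP}) collapses to $\varphi^{a-1}$ applied to the missing relation at $j$, whence injectivity of $\varphi$ on the base finishes. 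So there is no genuine difference in strategy; you have simply written out what the paper leaves implicit.

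One small remark on your parenthetical justification: the claim ``$\hP_j\in\GL_d(\beplus)$ because $q$ is a unit in $\beplus$'' is not quite right as stated, since $q^{-1}$ need not have bounded denominators in $\cO_E[[\pi]][1/p]$. What you actually need (and what suffices) is that $\hP_j$ is invertible over a ring containing $\beplus$ in which $q$ \emph{is} a unit --- for instance $\B_E$ or simply the fraction field --- so that the intermediate manipulations involving $\hP_j^{-1}$ make sense; the final identities of (3) and (\ref{E:PP}) are inverse-free and hold in $M_d(\beplus)$ once established there. This does not affect the validity of your argument.
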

\begin{proof}
We will only discuss the proof of (4)$\Longrightarrow$(3) as the rest are
straightforward computations.
Suppose $\hP_j$'s are given for all $j\in\Z/a\Z$.
Let $\hG_{\gamma,j}$ be a solution to
the equation (\ref{E:PP}) for a particular $j$,
then we can recover all $\hG_{\gamma,j}$ with $j\in\Z/a\Z$ by the equations
in Part (3). These $\hG_{\gamma,j}$'s clearly are solutions to the system.
\end{proof}

\begin{remark}\label{R:Berger-Wach}
We can consider the above embedded Wach lifting
$\Upsilon: \MF{ad}(K\otimes_\Qp E) \longrightarrow \MF{ad}(\bkplus\otimes_\Qp E)$, via the bijection of Theorem \ref{T:A-analog}, as an explicit
constructing map $\Upsilon: \GL_d(\cO_E)^a\longrightarrow \GL_d(\cO_E[[\pi]])^a$
defined by $\Upsilon((A_j)_j) = (\hat{A}_j)_j$ such that
the matrices $\hat{A}_j \in A_j+\pi M_{d\times d}(\cO_E[[\pi]])$ satisfy
certain commuting property with $\hat{G}_j$'s (as in the above lemma).
\end{remark}

\begin{proposition}\label{P:unique-split}
Let $n>\bar{k}_1$
(and $n\geq \bar{k}_1$ if
$k_{j1}\geq \ldots \geq k_{j,d-1}> k_{j,d}=0$).
Suppose $\hA_j\in \GL_d(\cO_E[[\pi]])$ lifts given $A_j$
and suppose $\hG_{\gamma,j,n-1}$
lies in $\Id + \pi M_{d\times d}(\cO_E[[\pi]])$ such that
$\hG_{\gamma,j-1,n-1}-\hP_j\hG_{\gamma,j,n-1}^{\varphi}
\hP_j^{-\gamma}\in \pi^n M_{d\times d}(\cO_E[[\pi]])$,
then there exist unique
$\hG_{\gamma,j} \equiv \hG_{\gamma,j,n-1}\bmod \pi^n$
in $M_{d\times d}(\cO_E[[\pi]])$
such that for every $j\in\Z/a\Z$ we have
$\hG_{\gamma,j-1}\hP_j^{\gamma} = \hP_j \hG_{\gamma,j}^\varphi$.
\end{proposition}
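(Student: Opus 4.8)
The plan is to collapse the cyclic system of $a$ matrix equations to the single equation (\ref{E:PP}) at $j=0$, solve that one equation by the Banach‑contraction argument of Lemma \ref{L:solution}, and then propagate the solution around $\Z/a\Z$. First, by Lemma \ref{L:commute} the system $\hG_{\gamma,j-1}\hP_j^{\gamma}=\hP_j\hG_{\gamma,j}^{\varphi}$ ($j\in\Z/a\Z$) is equivalent to the single equation $\hG_{\gamma,0}\,\hP_{(0)}^{\gamma}=\hP_{(0)}\,\hG_{\gamma,0}^{\varphi^{a}}$ together with the recursion $\hG_{\gamma,j-1}:=\hP_j\hG_{\gamma,j}^{\varphi}\hP_j^{-\gamma}$, which determines $\hG_{\gamma,a-1},\hG_{\gamma,a-2},\dots,\hG_{\gamma,1}$ from $\hG_{\gamma,0}$. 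Composing the hypothesised congruences $\hG_{\gamma,j-1,n-1}-\hP_j\hG_{\gamma,j,n-1}^{\varphi}\hP_j^{-\gamma}\in\pi^{n}M_{d\times d}(\cO_E[[\pi]])$ around the cycle, and using that $\varphi(\pi)=\pi q$ so that $\varphi$ preserves $\pi^{n}M_{d\times d}(\cO_E[[\pi]])$, shows that $\hG_{\gamma,0,n-1}$ is an approximate solution modulo $\pi^{n}$ of this single equation.

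Next, with $s:=\hP_{(0)}$ and $b:=\hG_{\gamma,0}$ the equation reads $f(s,b)=0$ in the notation of Lemma \ref{L:solution}. Since $\hP_j=\hA_j\hat{\Delta}_{\k_j}$ with $\hA_j\in\GL_d(\cO_E[[\pi]])$, $\hat{\Delta}_{\k_j}=\diag(q^{k_{j1}},\dots,q^{k_{jd}})$ and $k_{j1}\ge\cdots\ge k_{jd}=0$, one has $q^{k_{j1}}\hP_j^{-1}\in M_{d\times d}(\cO_E[[\pi]])$, and at $\pi=0$ (where $q\equiv p$) this gives $p^{a\bar{k}_1}s(0)^{-1}=\prod_{j}\bigl(p^{k_{j1}}\Delta_{\k_j}^{-1}\bigr)A_j^{-1}\in M_{d\times d}(\cO_E)$ because each $p^{k_{j1}}\Delta_{\k_j}^{-1}=\diag(p^{k_{j1}-k_{j1}},\dots,p^{k_{j1}-k_{jd}})$ is integral; this is exactly the integrality input the contraction step of Lemma \ref{L:solution} needs. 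Lemma \ref{L:solution}(i) then produces a unique $\hG_{\gamma,0}\in M_{d\times d}(\cO_E[[\pi]])$ with $\hG_{\gamma,0}\equiv\hG_{\gamma,0,n-1}\bmod\pi^{n}$ and $f(s,\hG_{\gamma,0})=0$; Lemma \ref{L:solution}(ii) sharpens the threshold from $n>\bar{k}_1$ to $n\ge\bar{k}_1$ precisely in the regular case $k_{j,d-1}>k_{jd}=0$.

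Now define $\hG_{\gamma,j-1}:=\hP_j\hG_{\gamma,j}^{\varphi}\hP_j^{-\gamma}$ successively for $j=0,a-1,\dots,1$. The identity $\hP_{(j-1)}=\hP_j\,\varphi(\hP_{(j)})\,\varphi^{a}(\hP_j)^{-1}$, immediate from the definition of $\hP_{(\cdot)}$, together with (\ref{E:PP}) at index $j$, gives by a direct computation that $\hG_{\gamma,j-1}$ again solves (\ref{E:PP}) at index $j-1$; hence by the uniqueness in Lemma \ref{L:solution} applied at index $j-1$ it is integral and is the unique lift of its own reduction mod $\pi^{n}$ — and that reduction equals $\hG_{\gamma,j-1,n-1}$, since $\hP_j(\hG_{\gamma,j}^{\varphi}-\hG_{\gamma,j,n-1}^{\varphi})\hP_j^{-\gamma}$ is integral (both terms are) and lies in $\pi^{n}$ times the $q$-localisation of $M_{d\times d}(\cO_E[[\pi]])$, hence in $\pi^{n}M_{d\times d}(\cO_E[[\pi]])$ because $\pi$ and $q$ are coprime in the unique factorisation domain $\cO_E[[\pi]]$. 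Running the recursion once all the way around returns $\hP_1\hG_{\gamma,1}^{\varphi}\hP_1^{-\gamma}=\hP_{(0)}\hG_{\gamma,0}^{\varphi^{a}}\hP_{(0)}^{-\gamma}=\hG_{\gamma,0}$, i.e. the cycle closes consistently, exactly because $\hG_{\gamma,0}$ solves (\ref{E:PP}). Thus the $\hG_{\gamma,j}$ are integral, reduce to $\hG_{\gamma,j,n-1}$ mod $\pi^{n}$, and satisfy $\hG_{\gamma,j-1}\hP_j^{\gamma}=\hP_j\hG_{\gamma,j}^{\varphi}$ for every $j$ (by the recursion for $j\ne 0$, and by the closing of the cycle for $j=0$). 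For uniqueness, any solution of the full system yields, via Lemma \ref{L:commute}, a solution of (\ref{E:PP}) lifting $\hG_{\gamma,0,n-1}$, which is unique by Lemma \ref{L:solution}, and the remaining $\hG_{\gamma,j}$ are then forced by the recursion.

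The substantive analytic content is already packaged in Lemmas \ref{L:commute} and \ref{L:solution}; I expect the delicate point to be the propagation step, where the naive expression $\hP_j\hG_{\gamma,j}^{\varphi}\hP_j^{-\gamma}$ is \emph{a priori} not integral since $\hP_j^{-\gamma}$ carries poles along $q=0$. The resolution is not to track integrality through the recursion directly, but to re-identify each propagated matrix with the integral solution supplied by Lemma \ref{L:solution} at that index (via the conjugation identity for $\hP_{(\cdot)}$) and to check that the $\pi^{n}$-congruences survive despite these poles — for which the coprimality of $\pi$ and $q$ in $\cO_E[[\pi]]$ is the essential input.
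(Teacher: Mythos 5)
Your proof follows the paper's own argument exactly: collapse the cyclic system to equation (\ref{E:PP}) at $j=0$ via Lemma \ref{L:commute}(3)$\Leftrightarrow$(4), solve that single equation by Lemma \ref{L:solution} with $s=\hP_{(0)}$, and recover the remaining $\hG_{\gamma,j}$ from the recursion $\hG_{\gamma,j-1}=\hP_j\hG_{\gamma,j}^{\varphi}\hP_j^{-\gamma}$; so the approach is the same, and you supply verifications (that $\hG_{\gamma,0,n-1}$ approximately solves the composed equation, and that integrality and the mod $\pi^n$ congruences survive the propagation) which the paper's three-line proof omits entirely. One caution about the step you yourself flag as delicate: the coprimality of $\pi$ and $q$ in $\cO_E[[\pi]]$ only upgrades $\pi^n$-divisibility in the $q$-localisation to $\pi^n$-divisibility in $\cO_E[[\pi]]$ \emph{after} integrality is known, and the integrality of the composed defect $f(\hP_{(0)},\hG_{\gamma,0,n-1})$ does not follow automatically from the per-$j$ hypotheses: unwinding the composition, the error contributed by the defect at index $j$ acquires a factor of the form $\prod_i\varphi^{i}(q)^{\,n-k_{i+1,1}}$, and since the hypothesis only requires $n$ to exceed the \emph{average} $\bar{k}_1$ rather than each individual $k_{j1}$, some of these exponents may be negative. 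So the precise point you identify as the crux is exactly where your argument (and, implicitly, the paper's) remains incomplete; everything else matches the paper's proof.
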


\begin{proof}
Apply Lemma \ref{L:solution} to $s=\hP_{(0)}$ we have a unique
solution $\hG_{\gamma,0}\equiv \hG_{\gamma,0,n-1}\bmod
\pi^n$ as desired. We obtain the rest $\hG_{\gamma,j}$
with $j\in \Z/a\Z$ by the simple formula $\hG_{\gamma,j-1} = \hP_j
\hG_{\gamma,j}^\varphi \hP_j^{-\gamma}$ where
$\hP_j=\hA_j\hat\Delta_{\k_j}$. By Lemma \ref{L:commute}, especially the
equivalence between part (3) and part (4), these
$\hG_{\gamma,j}$ are precisely the unique solutions to the given
system.
\end{proof}

\begin{remark}\label{R:addX}
If in the hypotheses of Lemma \ref{L:solution} and Proposition \ref{P:unique-split}
the ring $\cO_E$ is replaced by any unit disc in any $p$-adically complete space,
for instance $\cO_E[[\vX]]$, an analogous statement still holds.
We shall use this more general version of the proposition in the proof
of Proposition \ref{P:continuity2}.
\end{remark}

\subsection{Continuity of embedded Wach modules}
\label{S:4.3}

Pick $m=\ink+1$ (and $m=\fk$ if
$k_{j1}\geq \ldots \geq k_{j,d-1}>k_{jd}=0$).
Given $A_j\in \GL_d(\cO_E)$,  suppose we may obtain
$\hA_j \equiv A_j\bmod \pi$
and $\hG_{\gamma,j,m-1}$ satisfy the hypothesis
$\hG_{\gamma,j-1,m-1} - \hP_j \hG_{\gamma,j,m-1}^\varphi
\hP_j^{-\gamma} \in\pi^{m}M_{d\times d}(\cO_E[[\pi]])$,
then by Proposition \ref{P:unique-split}
we have a unique $\hG_{\gamma,j} \in 1+\pi M_{d\times d}(\cO_E[[\pi]])$ lifting
$\hG_{\gamma,j,m-1}$ such that
the system in Lemma \ref{L:commute}(3) is satisfied.
This Wach lifting process, for any given $A_j$,
yields a map $\fW: \Gamma_K\rightarrow \prod_{j\in \Z/a\Z}M_{d\times d}(\cO_E[[\pi]])$
given by $\gamma\mapsto (\hG_{\gamma,j})_{j\in\Z/a\Z}$.

\begin{proposition}\label{P:continuity}
Let notation be as above paragraph.
The map $\fW$ defined above is continuous ($p$-adically).
\end{proposition}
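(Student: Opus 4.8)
\medskip\noindent\textit{Proof plan.}
Continuity here is to be understood with respect to the weak (i.e.\ $(p,\pi)$-adic) topology on $\prod_{j\in\Z/a\Z}M_{d\times d}(\cO_E[[\pi]])$, which is the natural topology in the theory of $(\varphi,\Gamma)$-modules; so the plan is to fix $\gamma_0\in\Gamma_K$ and show that for every $N\geq 1$ there is an open subgroup $\Gamma_1\leq\Gamma_K$ with $\hG_{\gamma,j}\equiv\hG_{\gamma_0,j}\bmod(p^N,\pi^N)$ for all $j$ whenever $\gamma\in\gamma_0\Gamma_1$. The first move is to reduce to the component $j=0$. By the equivalence of parts (3) and (4) of Lemma~\ref{L:commute} together with Proposition~\ref{P:unique-split}, the matrix $\hG_{\gamma,0}$ determines all the remaining $\hG_{\gamma,j}$ by at most $a-1$ applications of the recursion $\hG_{\gamma,j-1}=\hP_j\,\hG_{\gamma,j}^{\varphi}\,\hP_j^{-\gamma}$, where $\hP_j=\hA_j\hat\Delta_{\k_j}$ does not depend on $\gamma$. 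Since $\varphi$ and each $\gamma$ preserve the filtrations by $p^aM_{d\times d}(\cO_E[[\pi]])$ and by $\pi^bM_{d\times d}(\cO_E[[\pi]])$ (they are continuous for the weak topology), since matrix multiplication is continuous (Lemma~\ref{L:continuity}(3)), since inversion of a matrix with bounded $p$-power denominator is an isometry after clearing that denominator (Lemma~\ref{L:continuity}(2)), and since $\hP_j^{-1}=\hA_j^{-1}\diag(q^{-k_{j1}},\dots,q^{-k_{jd}})$ has $p$-power denominators of order at most $k_{j1}$, this recursion transports the desired congruence from $j=0$ to all $j$ with a loss in $N$ that is bounded in terms of $\bar{k}_1$, $a$, and $N$ --- and, crucially, uniform in $\gamma$. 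Hence it suffices to prove continuity of $\gamma\mapsto\hG_{\gamma,0}$.

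For $j=0$ I would use the coefficientwise construction of Lemma~\ref{L:solution}, applied with $s=\hP_{(0)}$ (which is independent of $\gamma$): write $\hG_{\gamma,0}=\sum_{n\geq 0}B_n(\gamma)\pi^n$, where $B_0$ is forced to be the identity by condition~(4) of the embedded Wach module (hence $\gamma$-independent), and where for $n$ past the starting index $\ink+1$ (resp.\ $\fk$) each $B_n(\gamma)$ is the unique fixed point of the $p$-adic contraction $\psi_{n,\gamma}(B)=p^{a(n-\bar{k}_1)}\,s(0)\,B\,p^{a\bar{k}_1}s(0)^{-1}-R_{n,\gamma}(0)$ on $M_{d\times d}(\cO_E)$. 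The key observation is that the linear part of $\psi_{n,\gamma}$ does not involve $\gamma$, so $\psi_{n,\gamma}(B)-\psi_{n,\gamma_0}(B)=R_{n,\gamma_0}(0)-R_{n,\gamma}(0)$ is constant in $B$, and Lemma~\ref{L:continuity}(1) gives $\ord_p(B_n(\gamma)-B_n(\gamma_0))\geq\ord_p(R_{n,\gamma}(0)-R_{n,\gamma_0}(0))$. Now $R_{n,\gamma}(0)$ is the coefficient of $\pi^n$ in $b_{n-1}(\gamma)-s\,b_{n-1}(\gamma)^{\varphi^a}\,s^{-\gamma}$; expanding the difference over $\gamma$ versus $\gamma_0$ and using that $\varphi^a$ and $\gamma$ preserve $\ord_p$, that $s$ and $q^{a\bar{k}_1}s^{-1}$ are integral (so $s^{-\gamma}$ has a bounded $p$-power denominator), that $\gamma(g)-\gamma_0(g)$ is divisible by $\gamma(\pi)-\gamma_0(\pi)$ for every $g\in\cO_E[[\pi]]$, together with the inductive bounds on $B_i(\gamma)-B_i(\gamma_0)$ for $i<n$, one produces a constant $c=c(\bar{k}_1,a)$ with
\[
\ord_p\!\bigl((\gamma-\gamma_0)(\pi)\bmod\pi^{N+1}\bigr)\ \geq\ (c+1)N
\quad\Longrightarrow\quad
\ord_p\bigl(B_n(\gamma)-B_n(\gamma_0)\bigr)\geq N\ \text{ for }0\leq n<N .
\]
The finitely many coefficients of $\hG_{\gamma,0}$ below the starting index are handled separately: each solves a system whose coefficients are continuous in $\gamma$, so it moves $p$-adically with $\gamma$ as well. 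This gives $\hG_{\gamma,0}\equiv\hG_{\gamma_0,0}\bmod(p^N,\pi^N)$ under the displayed hypothesis.

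It remains to note that $\gamma\mapsto\chi(\gamma)$ is continuous $\Gamma_K\to\Zp^\ast$ and that, for fixed $b$, the map $\Zp^\ast\to\cO_E[\pi]/(\pi^b)$ sending $c$ to $(1+\pi)^c-1\bmod\pi^b$ is $p$-adically continuous --- each Mahler coefficient $\binom{c}{k}$ is a continuous (indeed locally analytic) function of $c$, and $(1+\pi)^c-(1+\pi)^{c'}$ is divisible by $c-c'$ up to denominators bounded by $\ord_p(k!)$. Thus for any prescribed $M$ there is an open subgroup $\Gamma_1\leq\Gamma_K$ with $\ord_p\bigl((\gamma-\gamma_0)(\pi)\bmod\pi^{N+1}\bigr)\geq M$ for all $\gamma\in\gamma_0\Gamma_1$; taking $M=(c+1)N$, enlarged by the uniform loss incurred in the reduction to $j=0$, yields the required open subgroup and finishes the proof.

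The step I expect to be the main obstacle is the $p$-adic precision bookkeeping behind the displayed implication: one must verify that the precision demanded of $\chi(\gamma)-\chi(\gamma_0)$ in order to pin down $B_n(\gamma)$ modulo $p^N$ grows only \emph{linearly} in the $\pi$-adic cutoff, so that a single open subgroup works for each target $(p^N,\pi^N)$. This requires controlling simultaneously the bounded $p$-power denominators of $s^{-\gamma}\in q^{-a\bar{k}_1}M_{d\times d}(\cO_E[[\pi]])$ and of the $\hP_j^{-\gamma}$, the $\ord_p(k!)$-losses coming from the binomial expansion of $(1+\pi)^c$, and the way these accumulate through the coefficient recursion and through the propagation $\hG_{\gamma,j-1}=\hP_j\hG_{\gamma,j}^\varphi\hP_j^{-\gamma}$.
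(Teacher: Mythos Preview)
Your reading of the statement---that one must show $\gamma\mapsto(\hG_{\gamma,j})_j$ is continuous for the weak topology---is the literal one, and your plan (reduce to $j=0$ via the recursion, control the $\pi$-coefficients $B_n(\gamma)$ of $\hG_{\gamma,0}$ through Lemma~\ref{L:continuity}(1) applied to the contractions $\psi_{n,\gamma}$, then invoke continuity of $\gamma\mapsto(1+\pi)^{\chi(\gamma)}$) is a reasonable route to that literal statement, with the precision bookkeeping you flag being the genuine work.

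The paper's proof, however, establishes a \emph{different} continuity. It fixes $\gamma$ and shows instead that the map $(\hA_j,\hG_{\gamma,j,m-1})\mapsto\hG_{\gamma,j}$ is $p$-adically continuous in the seed data: two nearby inputs $A_j,A'_j\in\GL_d(\cO_E)$ produce nearby outputs $\hG_{\gamma,j},\hG'_{\gamma,j}$. Concretely, the paper compares the contraction maps $\fG,\fG'$ on $M_{d\times d}(\cO_E)$ built from $P_{(0)}=\prod_jA_j\Delta_{\k_j}$ and $P'_{(0)}=\prod_jA'_j\Delta_{\k_j}$, bounds $\ord_p(\fG(B_m)-\fG'(B_m))\geq\min_j\ord_p(A_j-A'_j)$ via Lemma~\ref{L:continuity}(3), and concludes by Lemma~\ref{L:continuity}(1). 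No variation of $\gamma$ occurs anywhere in the argument.

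So you and the paper are proving different things, both of which the paper actually uses: your continuity in $\gamma$ is what is invoked in Theorem~\ref{T:FL} to pass from a single topological generator of $\Gamma_K$ to the full group action, while the paper's continuity in the $A_j$ is what feeds into Theorem~\ref{T:continuity} and Corollary~\ref{C:continuity} (Theorem~C), where one wants Wach modules to vary $p$-adically with the representation. The paper's argument is much shorter because it sidesteps all the $s^{-\gamma}$ and binomial-coefficient denominators you identify as the obstacle; on the other hand it does not literally prove continuity of $\fW:\Gamma_K\to\prod_jM_{d\times d}(\cO_E[[\pi]])$ as stated, and its opening ``it suffices to show\ldots for a fixed $\gamma$'' is not justified. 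Your approach is the one that actually matches the proposition as written.
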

\begin{proof}
Fix $A_j\in \GL_d(\cO_E)$.
In the above defined Wach lifting process it suffices
to show for a fixed $\gamma\in\Gamma_K$
the map $(\hA_j,\hG_{\gamma,j,m-1})\mapsto \hG_{\gamma,j}$
is continuous in the sense that
for $A'_j\in \GL_d(\cO_E)$ we have
$$\ord_p(\hG_{\gamma,j}- \hG'_{\gamma,j})
\geq \min(\ord_p(\hA_j-\hA'_j),
\ord_p(\hG_{\gamma,j,m-1}-\hG'_{\gamma,j,m-1})).$$ Consider the
contraction map on $M_{d\times d}(\cO_E)$ for given $A_j$ defined by $\fG:
B_m\mapsto p^{a(m-\bar{k}_1)} P_{(0)} B_m^{\varphi^a}
p^{a\bar{k}_1}P_{(0)}^{-1} -R_m(0)$ as seen in the proof of Lemma
\ref{L:solution} (in particular, see (\ref{E:B_n})). Let $\fG'$
denote the map corresponding to given $A'_j$. Recall $P_{(0)} =
\prod_{j}A_j\Delta_{\k_j}$ (how subindex $j$ ranges does not affect the
proof below so we do not deliberate), then $P'_{(0)} = \prod_j
A'_j\Delta_{\k_j}$. We write $p^{a\bar{k}_1}P_{(0)}^{-1} =
\prod_{j}\Delta_{\k_j}^*A_j^{-1}$ where
$\Delta_{\k_j}^*=p^{k_{j1}}\Delta_{\k_j}^{-1}$ is diagonal in $M_{d\times d}(\cO_E)$.
We observe easily that $\ord_p (\fG(B_m)-\fG'(B_m))$
\begin{eqnarray*}
 &=& a(m-\bar{k}_1)+
\ord_p(P_{(0)} B_m^{\varphi^a} p^{a\bar{k}_1}P_{(0)}^{-1}
      -P'_{(0)} B_m^{\varphi^a} p^{a\bar{k}_1}{P'}_{(0)}^{-1}
)\\
&\geq& \min(\ord_p(\prod_j
A_j\Delta_{\k_j}-\prod_jA'_j\Delta_{\k_j}),\ord_p(\prod_j \Delta_{\k_j}^* A_j^{-1}
- \prod_j \Delta_{\k_j}^* {A'}_j^{-1})).
\end{eqnarray*}
Applying Lemma \ref{L:continuity} (3) here we
have $$\ord_p(\fG(B_m)-\fG'(B_m))\geq \min_{j\in\Z/a\Z}(\ord_p(A_j-A'_j))$$
for all $B_m$ in $M_{d\times d}(\cO_E)$.
Notice that
$\pi^m$-coefficients of $\hG_{\gamma,j}$
is the fixed point of the map $\fG$ above.
Applying Lemma \ref{L:continuity}(1) we have
$$\ord_p(\hG_{\gamma,j}-\hG'_{\gamma,j})
\geq \min_{j\in\Z/a\Z}(\ord_p(A_j-A'_j))\geq \min_{j\in\Z/a\Z}(\ord_p(\hA_j-\hA'_j)).$$
This finishes our proof.
\end{proof}

Recall that $\cR_{c,E}$ is the ring of power series
$\sum_{s=0}^{\infty}a_s \pi^s$ in $E[[\pi]]$
with $\ord_p a_s \geq - \frac{s}{c}$.
Let $\chi$ be the cyclotomic character on $\Gamma_K$.
For any $\gamma\in \Gamma_K$ let
$\beta_{\gamma,n}:= \prod_{i=1}^{n}(\chi(\gamma)^i-1)$ for any $n\geq 0$.

\begin{proposition}\label{P:fontaine}
Let $\gamma\in\Gamma_K$ be of infinite order.
For any given diagonal matrix $\hG_{\gamma,j}\in \Id+\pi
M_{d\times d}(\cO_E[[\pi]])$ and $\ord_p A_j = 0$, there is unique
$\hA_j\equiv A_j\bmod \pi$ in $M_{d\times d}(E[[\pi]])$ with
$\pi^n$-coefficients in $\beta_{\gamma,n}^{-1}\cO_E$ such that
$\hG_{\gamma,j-1} \hP_j^\gamma = \hP_j\hG_{\gamma,j}^{\varphi}$
for all $j\in\Z/a\Z$ and for all $n\geq 0$.
In particular, if $A_j \in \GL_d(\cO_E)$
then there exists a geometric generator $\gamma$ of $\Gamma_K$ such that
we may have lifting $\hA_j\in M_{d\times d}(\cR_{(p-1)^2/p,E})$ of $A_j$.
\end{proposition}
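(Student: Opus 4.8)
The plan is to solve, for each $j$ separately, the commutation equation $\hat G_{\gamma,j-1}\hat P_j^\gamma=\hat P_j\hat G_{\gamma,j}^\varphi$ with $\hat P_j=\hat A_j\hat\Delta_{\k_j}$, coefficient of $\pi$ by coefficient of $\pi$. The point is that since all the $\hat G_{\gamma,j}$ are prescribed, for fixed $j$ this is a single equation in the single unknown $\hat A_j$, with no coupling across $j$ — the opposite situation to Proposition~\ref{P:unique-split}, where the $\hat G$'s were the unknowns and had to be propagated from one index. First I would put the equation in a cleaner shape. Since $\hat G_{\gamma,j}$ is \emph{diagonal} it commutes with $\hat\Delta_{\k_j}$, and since $q^{\gamma-1}=\gamma(q)/q\in 1+\pi\Zp[[\pi]]$ is an integral unit of $\Zp[[\pi]]$, substituting $\hat P_j=\hat A_j\hat\Delta_{\k_j}$ and moving the diagonal factor $\hat\Delta_{\k_j}^\gamma=\diag(\gamma(q)^{k_{j\ell}})$ to the right converts the equation into the equivalent
\[
\hat G_{\gamma,j-1}\,\hat A_j^\gamma \;=\; \hat A_j\,H_j,\qquad H_j:=\diag\bigl((q^{\gamma-1})^{-k_{j\ell}}\bigr)\hat G_{\gamma,j}^\varphi\in \Id+\pi M_{d\times d}(\cO_E[[\pi]]),
\]
in which both $\hat G_{\gamma,j-1}$ and $H_j$ are integral and $\equiv\Id\bmod\pi$ (this is where the diagonality hypothesis is used).

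Next I would run the recursion. Write $\hat A_j=\sum_{n\ge0}A_{j,n}\pi^n$ with $A_{j,0}:=A_j$. Because $\gamma(\pi)=(1+\pi)^{\chi(\gamma)}-1$ and $\varphi(\pi)=(1+\pi)^p-1$ lie in $\pi\Zp[[\pi]]$ with coefficients generalized binomial coefficients (hence in $\Zp$), the $\pi^n$-coefficient of $\hat A_j^\gamma$ is $\chi(\gamma)^nA_{j,n}$ plus a $\Zp$-linear combination of the $A_{j,m}$ with $m<n$, and the $\pi^n$-coefficient of $\hat A_j$ on the right contributes $A_{j,n}$ exactly. Comparing $\pi^n$-coefficients for $n\ge1$ therefore gives
\[
(\chi(\gamma)^n-1)\,A_{j,n}\;=\;\Xi_n ,
\]
where $\Xi_n$ is an $\cO_E$-coefficient expression in the earlier $A_{j,m}$ and in the (integral) coefficients of $\hat G_{\gamma,j-1}$, $H_j$, $\gamma(\pi)^m$, $\varphi(\pi)^m$. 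Since $\chi$ is injective on $\Gamma_K$ and $\gamma$ has infinite order, $\chi(\gamma)^n\neq1$ for every $n\ge1$, so $A_{j,n}$ is uniquely determined; this yields at once existence and uniqueness of the lift $\hat A_j\equiv A_j\bmod\pi$. For the denominators I would induct on $n$: $\beta_{\gamma,0}=1$ and $A_{j,0}=A_j\in M_{d\times d}(\cO_E)$, and if $A_{j,m}\in\beta_{\gamma,m}^{-1}M_{d\times d}(\cO_E)$ for all $m<n$ then $\Xi_n\in\beta_{\gamma,n-1}^{-1}M_{d\times d}(\cO_E)$, whence $A_{j,n}=(\chi(\gamma)^n-1)^{-1}\Xi_n\in\beta_{\gamma,n}^{-1}M_{d\times d}(\cO_E)$ because $\beta_{\gamma,n}=(\chi(\gamma)^n-1)\beta_{\gamma,n-1}$. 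Unwinding the rewriting of the first paragraph shows this $\hat A_j$ (equivalently $\hat P_j=\hat A_j\hat\Delta_{\k_j}$) solves the original equations, and $\hat A_j\in\GL_d(E[[\pi]])$ since it reduces modulo $\pi$ to the invertible $A_j$.

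For the final sentence, assuming $A_j\in\GL_d(\cO_E)$ I would specialize $\gamma$ to a geometric generator of $\Gamma_K$ — the case in which $\Gamma_K$ is topologically procyclic and $\ord_p(\chi(\gamma)-1)$ is as small as possible, so that (for $p$ odd) $\ord_p(\chi(\gamma)^i-1)=1+\ord_p i$ — and read off from the preceding step that the $\pi^n$-coefficient of $\hat A_j$ lies in $\beta_{\gamma,n}^{-1}\cO_E$ with $\ord_p\beta_{\gamma,n}=n+\ord_p(n!)$; comparing this growth rate with $np/(p-1)^2$ then places $\hat A_j$ in $M_{d\times d}(\cR_{(p-1)^2/p,E})$.

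I expect the main obstacle to be precisely the last two points: tracking the $\Gamma_K$-side denominators. In Proposition~\ref{P:unique-split} the recursion was driven by an honest $p$-adic contraction and never left $\cO_E$; here it is driven by division by $\chi(\gamma)^n-1$, which is invertible but not contracting, so integrality is genuinely lost, and the whole content is to control that loss — first as the clean factor $\beta_{\gamma,n}^{-1}$ coming out of the recursion, and then, for a carefully chosen $\gamma$, via the resulting estimate on $\ord_p\beta_{\gamma,n}$ that pins the lift inside $\cR_{(p-1)^2/p,E}$. The bookkeeping in $\Xi_n$ (making sure only $\cO_E$-coefficients and only $A_{j,m}$ with $m<n$ enter, and that the inductive denominator does not grow faster than $\beta_{\gamma,n}$) is the other routine-but-delicate ingredient.
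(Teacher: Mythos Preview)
Your inductive construction of $\hat A_j$ is essentially the paper's proof: the paper writes the same equation as $f(s_j)=\hat G_{\gamma,j-1}s_j^\gamma-s_j\hat\Delta_{\k_j}\hat G_{\gamma,j}^\varphi\hat\Delta_{\k_j}^{-\gamma}$ and solves coefficient by coefficient, using that $\hat\Delta_{\k_j}\hat G_{\gamma,j}^\varphi\hat\Delta_{\k_j}^{-\gamma}$ is integral (exactly your $H_j$, since diagonal matrices commute). The recursion, the uniqueness, and the bound $A_{j,n}\in\beta_{\gamma,n}^{-1}M_{d\times d}(\cO_E)$ are the same.

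The final paragraph, however, contains a genuine miscalculation. For $K$ unramified and $p$ odd one has $\Gamma_K\cong\Zp^*$, which is \emph{not} procyclic, and a well-chosen $\gamma$ has $\chi(\gamma)$ a primitive root modulo $p$, hence $\chi(\gamma)\not\equiv1\bmod p$. Your formula $\ord_p(\chi(\gamma)^i-1)=1+\ord_p i$ holds only when $\chi(\gamma)\equiv1\bmod p$, and it yields $\ord_p\beta_{\gamma,n}=n+\ord_p(n!)\sim np/(p-1)$, which is \emph{larger} than $np/(p-1)^2$ for every $p\geq3$; so this choice does not place $\hat A_j$ in $\cR_{(p-1)^2/p,E}$. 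The correct count (and the one cited from \cite[IV.1]{BE04}) is: with $\chi(\gamma)$ a topological generator of $\Zp^*$, one has $\ord_p(\chi(\gamma)^i-1)=0$ unless $(p-1)\mid i$, and $\ord_p(\chi(\gamma)^{(p-1)m}-1)=1+\ord_p m$. Summing gives
\[
\ord_p\beta_{\gamma,n}=\sum_{m=1}^{\lfloor n/(p-1)\rfloor}(1+\ord_p m)
=\Big\lfloor\tfrac{n}{p-1}\Big\rfloor+\ord_p\Big(\Big\lfloor\tfrac{n}{p-1}\Big\rfloor!\Big)
\le \frac{np}{(p-1)^2},
\]
which is exactly the bound needed. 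So the idea of choosing $\gamma$ to minimise the denominators is right; you just need $\chi(\gamma)$ to generate all of $\Zp^*$, not merely $1+p\Zp$, so that the factors $\chi(\gamma)^i-1$ with $(p-1)\nmid i$ are units and contribute nothing.
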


\begin{proof}
Write $s_{j,n}:=\sum_{r=0}^{n}S_{j,r}\pi^r$ in $M_{d\times d}(\cO_E[[\pi]])$.
Let $f$ be defined as
$$f(s_j)=\hG_{\gamma,j-1}s_j^\gamma - s_j\hat\Delta_{\k_j}
\hG_{\gamma,j}^\varphi{\hat\Delta_{\k_j}}^{-\gamma}$$
for $j\in\Z/a\Z$.
We claim that for every $n\geq 0$
there exists a unique $S_{j,n}$ in $\beta_{\gamma,n}^{-1}M_{d\times d}(\cO_E)$
such that $f(s_{j,n}) \in \pi^{n+1}\beta_{\gamma,n}^{-1}M_{d\times d}(\cO_E[[\pi]])$.
Notice that for $s_{j,0}=A_j$ we have
$f(s_{j,0})\in\pi M_{d\times d}(\cO_E[[\pi]])$ by hypothesis.
By induction we may write $f(s_{j,n-1}) = - \pi^n R_{j,n-1}$ with
$R_{j,n-1}\in \beta_{\gamma,n-1}^{-1}M_{d\times d}(\cO_E[[\pi]])$.
As $s_{j,n} = s_{j,n-1} + S_{j,n}\pi^n$, we have
$$f(s_{j,n})/\pi^n = \pi^{(\gamma-1)n} \hG_{\gamma,j-1} S_{j,n} - S_{j,n} \hat\Delta_{\k_j}
\hG_{\gamma,j}^\varphi \hat\Delta_{\k_j}^{-\gamma}-R_{j,n-1}.$$
Since $(\chi(\gamma)^n -1 ) S_{j,n} \equiv R_{j,n-1}(0)\bmod \pi$
has a unique solution $S_{j,n}$ in $(\chi(\gamma)^n-1))^{-1}\beta_{\gamma,n-1}^{-1}
M_{d\times d}(\cO_E)$
and hence $S_{j,n}\in \beta_{\gamma,n}^{-1}M_{d\times d}(\cO_E)$.
Since $\hG_{\gamma,j}$ are diagonal and that $q^{\gamma-1} \in 1+\pi\Zp[[\pi]]$,
we have $\hat\Delta_{\k_j} \hG_{\gamma,j}^\varphi \hat\Delta_{\k_j}^{-\gamma}$
lying in $M_{d\times d}(\cO_E[[\pi]])$. Hence
$f(s_{j,n})/\pi^n \in \beta_{\gamma,n}^{-1} M_{d\times d}(\cO_E[[\pi]])$
and therefore $R_{j,n}=f(s_{j,n})/\pi^{n+1}$
lies in $\beta_{\gamma,n}^{-1} M_{d\times d}(\cO_E[[\pi]])$.
This proves our claim and hence we have a unique solution
$\hA_j := \lim_{n\rightarrow\infty} s_{j,n} = \sum_{n=0}^{\infty}S_{j,n}\pi^n$
as the $\pi$-adic limit.

{}From the above, for every $j\in\Z/a\Z$
and $n\geq 0$ we have $\ord_p S_{j,n} \geq - \ord_p\beta_{\gamma,n}$
for any $\gamma\in \Gamma_K$ of infinite order. Note
that $\inf_{\gamma\in\Gamma_K}\ord_p \beta_{\gamma,n} \leq
\lfloor\frac{n}{(p-1)^2/p} \rfloor$ for $p\neq 2$ (see \cite[IV.I]{BE04}).
We may choose a suitable generator $\gamma\in \Gamma_K$
(of infinite order) such that $\ord_p S_{j,n} \geq -\ord_p\beta_{\gamma,n}\geq
-\lfloor\frac{n}{(p-1)^2/p} \rfloor$ for all $n\geq 1$. This proves that
$\hA_j=\sum_{n=0}^{\infty}S_{j,n}\pi^n$
lies in $M_{d\times d}(\cR_{\frac{(p-1)^2}{p},E})$.
By the continuity of the Wach lifting as shown
in Proposition \ref{P:continuity}, our statement follows.
\end{proof}

Below in Theorem \ref{T:continuity}
we should prove a result generalizing \cite[Proposition IV.1.3]{BE04}.
Namely, we will show that if $N(T)$ is an integral embedded Wach module
defined by $(\hP_j,\hG_j)$ with $\hP_j\equiv P_j\bmod \pi$
then for any $P'_j$ in a small $p$-adic neighborhood of $P_j$
we can find an embedded integral Wach module $N(T')$
that is close to $N(T)$.

\begin{lemma}\label{L:wach}
Let $\hG\in \Id + \pi M_{d\times d}(\cO_E[[\pi]])$.
Then there exists a unique $M \in M_{d\times d}(\cR_{(p-1)^2/p,E})\cap \Id+\pi M_{d\times d}(E[[\pi]])$
such that
$\hG M^\gamma = M \hG$.
\end{lemma}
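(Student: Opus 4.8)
The plan is to solve the matrix identity $\hG M^\gamma = M\hG$ for $M = \Id + \sum_{n\geq 1} M_n\pi^n$ recursively in powers of $\pi$, in the same spirit as the proof of Proposition \ref{P:fontaine}, but now — in the absence of the diagonal twists $\hat\Delta_{\k_j}$ — the recursion is genuinely $E$-linear in $M$, so no Banach contraction is even needed. First I would write $\hG = \Id + \sum_{m\geq 1} G_m\pi^m$ with $G_m\in M_{d\times d}(\cO_E)$, and record that $\gamma(\pi) = (1+\pi)^{\chi(\gamma)}-1$ lies in $\chi(\gamma)\pi + \pi^2\Zp[[\pi]]$, so $\gamma(\pi)^k \in \chi(\gamma)^k\pi^k + \pi^{k+1}\Zp[[\pi]]$; since $\gamma$ has infinite order, $\chi(\gamma)$ is not a root of unity and hence $\chi(\gamma)^n-1$ is a nonzero element of $\Zp$ for every $n\geq 1$ (as in Lemma \ref{L:banach-solution1}(iv)). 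Recall also that $\gamma$ acts $E$-linearly, so $M^\gamma = \sum_{k\geq 0}M_k\gamma(\pi)^k$.

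Expanding and comparing the $\pi^n$-coefficients of the two sides of $\hG M^\gamma = M\hG$, the only contribution of $M_n$ at order $\pi^n$ is $\chi(\gamma)^n M_n$ on the left and $M_n$ on the right, so the identity at order $n$ becomes
\[
(\chi(\gamma)^n-1)\,M_n \;=\; R_{n-1},
\]
where $R_{n-1}\in M_{d\times d}(E)$ is an $E$-linear expression in $M_0,\ldots,M_{n-1}$ whose coefficients are polynomials in $G_1,\ldots,G_n$ and in the ($\Zp$-integral) coefficients of $\gamma(\pi)^k$ — in particular no $M_kM_l$ cross terms occur. Since $\chi(\gamma)^n-1\in\Zp\setminus\{0\}$, this determines $M_n$ uniquely, giving a unique formal solution $M\in \Id+\pi M_{d\times d}(E[[\pi]])$; a fortiori it is the unique solution lying in $M_{d\times d}(\cR_{(p-1)^2/p,E})\cap\bigl(\Id+\pi M_{d\times d}(E[[\pi]])\bigr)$.

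It remains to establish the growth bound. By induction $R_{n-1}\in\beta_{\gamma,n-1}^{-1}M_{d\times d}(\cO_E)$: indeed $M_k\in\beta_{\gamma,k}^{-1}M_{d\times d}(\cO_E)\subseteq\beta_{\gamma,n-1}^{-1}M_{d\times d}(\cO_E)$ for $k<n$ because $\beta_{\gamma,k}\mid\beta_{\gamma,n-1}$, while the $G_m$ and the combinatorial coefficients are integral; hence
\[
M_n \in (\chi(\gamma)^n-1)^{-1}\beta_{\gamma,n-1}^{-1}M_{d\times d}(\cO_E)=\beta_{\gamma,n}^{-1}M_{d\times d}(\cO_E),
\]
so $\ord_p M_n\geq -\ord_p\beta_{\gamma,n}$. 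Taking $\gamma$ to be a geometric generator of $\Gamma_K$ as in Proposition \ref{P:fontaine} (equivalently \cite[IV.I]{BE04}), for which $\ord_p\beta_{\gamma,n}\leq\lfloor np/(p-1)^2\rfloor$, we obtain $\ord_p M_n\geq -n/((p-1)^2/p)$ for all $n$, i.e. $M\in M_{d\times d}(\cR_{(p-1)^2/p,E})$, which finishes the argument. The only delicate point, and the same one as in Proposition \ref{P:fontaine}, is the bookkeeping that step $n-1$ introduces no denominator worse than $\beta_{\gamma,n-1}$ (so the induction closes) together with the existence of a generator $\gamma$ with the stated slow denominator growth; both are handled exactly as there, and I do not expect any genuinely new obstacle.
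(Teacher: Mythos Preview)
Your proof is correct and follows essentially the same approach as the paper's: solve $\hG M^\gamma = M\hG$ for $M=\Id+\sum_{n\geq 1}M_n\pi^n$ recursively, obtain $(\chi(\gamma)^n-1)M_n=R_{n-1}$ at each step, track the denominator growth via $\beta_{\gamma,n}$, and conclude $M\in M_{d\times d}(\cR_{(p-1)^2/p,E})$ by invoking the bound on $\ord_p\beta_{\gamma,n}$ for a well-chosen generator $\gamma$. The only cosmetic difference is that the paper packages the induction through the remainder $f(M_{(n)})=\hG M_{(n)}^\gamma - M_{(n)}\hG\in\pi^{n+1}\beta_{\gamma,n}^{-1}M_{d\times d}(\cO_E[[\pi]])$ rather than comparing $\pi^n$-coefficients directly, but the content is identical.
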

\begin{proof}
This proof is similar to that of Proposition \ref{P:fontaine} above.
Write $f(M) = \hG M^\gamma - M \hG$.
For any $n\geq 0$, write $M_{(n)} = M_0+M_1\pi+\ldots+M_n\pi^n$.
We should show by induction on $n\geq 0$ that
there exists $M_n \in \beta_{\gamma,n}^{-1} M_{d\times d}(\cO_E)$
such that $f(M_{(n)}) \in \pi^{n+1} \beta_{\gamma,n}^{-1} M_{d\times d}(\cO_E[[\pi]])$.
This is clear for $n=0$. If it holds for $n-1$,
then $f(M_{(n-1)})=\pi^n R_{(n-1)}$ for
some $R_{(n-1)} \in \beta_{\gamma,n-1}^{-1} M_{d\times d}(\cO_E[[\pi]])$.
We have
\begin{eqnarray*}
f(M_{(n)})/\pi^n
&=& \beta_{\gamma,n-1}^{-1} R_{(n-1)} + \pi^{n(\gamma-1)}\hG M_n - M_n \hG\\
&\equiv & \beta_{\gamma,n-1}^{-1}R_{(n-1)}(0) + (\chi(\gamma)^n-1)M_n \bmod \pi.
\end{eqnarray*}
There is a unique solution to the last congruence
$M_n= (\chi(\gamma)^n-1)^{-1} \beta_{\gamma,n-1}^{-1} R_{(n-1)}(0)$
lies in $\beta_{\gamma,n}^{-1} M_{d\times d}(\cO_E)$ as we claimed.
On the other hand, we observe that
$f(M_{(n)})/\pi^n \in \beta_{\gamma,n}^{-1} M_{d\times d}(\cO_E[[\pi]])$
and hence $R_{(n)} \in \beta_{\gamma,n}^{-1} M_{d\times d}(\cO_E[[\pi]])$.
This finishes the proof of our induction.
By a similar argument as in Proposition \ref{P:fontaine} we
see that $M \in M_{d\times d}(\cR_{(p-1)^2/p,E})$.
\end{proof}

\begin{theorem}\label{T:continuity}
Write $\epsilon = \lfloor (m - 1)p/(p-1)^2 \rfloor$
where $m=\ink+1$ (or $m=\fk$ if
$k_{j1}\geq \ldots \geq k_{j,d-1}>k_{jd}=0$).
Let $D \in \MF{ad}(E\otimes_\Qp K)$ with
$\Mat(\varphi|_{D_j})=A_j\Delta_{\k_j}$ for some $A_j\in\GL_d(\cO_E)$.
Suppose $D$ can be lifted to an integral embedded Wach module
$N(T)$. Then any $D'\in \MF{ad}(E\otimes_\Qp K)$
with $\Mat(\varphi|_{D'_j}) = A_j'\Delta_{\k_j}$
and with $\ord_p(A'_j - A_j)\geq \epsilon + i$
can be lifted to an integral embedded Wach module $N(T')$ such that
$$ N(T') \equiv N(T) \bmod \m_E^i.$$
\end{theorem}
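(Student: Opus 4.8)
The strategy is to run the explicit Wach lifting of Remark~\ref{R:Berger-Wach} for the datum $(A'_j)_j$ in parallel with the lifting already performed for $(A_j)_j$, and to propagate a $p$-adic estimate through the construction; this is the embedded analogue of the argument for \cite[Proposition~IV.1.3]{BE04}. Recall that an integral embedded Wach module for $D$ amounts to matrices $\hP_j=\hA_j\hat\Delta_{\k_j}$ with $\hA_j\in\GL_d(\cO_E[[\pi]])$ and $\hA_j\equiv A_j\bmod\pi$, together with $\hG_{\gamma,j}\in\Id+\pi M_{d\times d}(\cO_E[[\pi]])$ for $\gamma\in\Gamma_K$, satisfying $\hG_{\gamma,j-1}\hP_j^\gamma=\hP_j\hG_{\gamma,j}^\varphi$ for all $j\in\Z/a\Z$ (Lemma~\ref{L:commute}(3)), such that the $\aeplus$-span $\N(T)_j$ of the chosen basis is $\Gamma_K$-stable and $\varphi(\N(T)_j)\subseteq\N(T)_{j-1}$. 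I fix once and for all the generator $\gamma$ of $\Gamma_K$ furnished by \cite[IV.1]{BE04} (see the proof of Proposition~\ref{P:fontaine}), so that $\ord_p\beta_{\gamma,n}\le\lfloor np/(p-1)^2\rfloor$ for every $n\ge0$, and in particular $\ord_p\beta_{\gamma,n}\le\epsilon$ for all $n\le m-1$.

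Given $D'$ with $\ord_p(A'_j-A_j)\ge\epsilon+i$, the first step is to build a lift $\hA'_j\in\GL_d(\cO_E[[\pi]])$ of $A'_j$ with $\hA'_j\equiv\hA_j\bmod\m_E^{\epsilon+i}$ (hence $\hP'_j:=\hA'_j\hat\Delta_{\k_j}\equiv\hP_j\bmod\m_E^{\epsilon+i}$) together with integral truncated $\gamma$-matrices $\hG'_{\gamma,j,m-1}\equiv\hG_{\gamma,j,m-1}\bmod\m_E^i$ satisfying the hypotheses of Proposition~\ref{P:unique-split} at level $n=m$. The plan is to solve the truncated $(\varphi,\gamma)$-equations for the perturbed datum coefficient by coefficient, exactly as in the proofs of Proposition~\ref{P:fontaine} and Lemma~\ref{L:wach}: in each degree $n\le m-1$ one inverts $\chi(\gamma)^n-1$, so the new coefficients land a priori in $\beta_{\gamma,n}^{-1}M_{d\times d}(\cO_E)$, but since the only input that has been perturbed is $(A_j)_j$, with discrepancy of $p$-adic order $\ge\epsilon+i$, while the total denominator accumulated through degree $m-1$ (coming from the $\beta_{\gamma,n}^{-1}$'s and from inverting powers of $q$) is bounded by $p^\epsilon$, the perturbed truncated data differ from the original ones by $p$-adic order $\ge(\epsilon+i)-\epsilon=i$ and in particular stay integral and satisfy the required congruence modulo $\pi^m$. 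This step uses $m=\ink+1>\bar{k}_1$, or only $m=\fk\ge\bar{k}_1$ in the exceptional case $k_{j1}\ge\cdots\ge k_{j,d-1}>k_{jd}=0$, so that Lemma~\ref{L:solution} and Proposition~\ref{P:unique-split} are applicable below.

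Proposition~\ref{P:unique-split} then produces unique $\hG'_{\gamma,j}\in\Id+\pi M_{d\times d}(\cO_E[[\pi]])$ lifting the $\hG'_{\gamma,j,m-1}$ modulo $\pi^m$ and satisfying $\hG'_{\gamma,j-1}(\hP'_j)^\gamma=\hP'_j(\hG'_{\gamma,j})^\varphi$ for every $j$, so by Lemma~\ref{L:commute} the pair $(\hP'_j,\hG'_{\gamma,j})_j$ defines an object $N'$ of $\MF{}(\bkplus\otimes_\Qp E)$ with commuting $\varphi$- and $\Gamma_K$-actions. Proposition~\ref{P:continuity}, applied with the two starting data $(\hA_j,\hG_{\gamma,j,m-1})$ and $(\hA'_j,\hG'_{\gamma,j,m-1})$, yields $\ord_p(\hG'_{\gamma,j}-\hG_{\gamma,j})\ge\min_j\min\bigl(\ord_p(\hA'_j-\hA_j),\ord_p(\hG'_{\gamma,j,m-1}-\hG_{\gamma,j,m-1})\bigr)\ge i$; together with $\hP'_j\equiv\hP_j\bmod\m_E^{\epsilon+i}$ this gives $N'\equiv N(T)\bmod\m_E^i$ coefficientwise. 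It remains to check that $N'$ is an integral embedded Wach module: conditions (1)--(6) are formal from the shape of $\hP'_j$ and $\hG'_{\gamma,j}$ and from $\hA'_j\in\GL_d(\cO_E[[\pi]])$ (which also places $\hA'_j$ and the relevant auxiliary matrices in the framework $M_{d\times d}(\cR_{(p-1)^2/p,E})$ of Proposition~\ref{P:fontaine} and Lemma~\ref{L:wach}), and for (7*) one uses $i\ge1$: then $N'\equiv N(T)\bmod\m_E$, so the $\aeplus$-span $\N(T')_j$ of the chosen basis of $N'_j$ has the same reduction modulo $\m_E$ as $\N(T)_j$ and hence inherits $\Gamma_K$-stability and $\varphi(\N(T')_j)\subseteq\N(T')_{j-1}$. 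Taking $T'$ to be the Galois-stable $\cO_E$-lattice attached to $N(T')$ under Fontaine's functor $\D$ finishes the proof, since $N(T')\equiv N(T)\bmod\m_E^i$.

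The step I expect to be the main obstacle is the estimate in the second paragraph: pinning down exactly which denominators can appear in the coefficients of $\hG'_{\gamma,j}$ (and, via the equations of Lemma~\ref{L:commute}, of $\hA'_j$) in the degrees $n\le m-1$, where division by $\chi(\gamma)^n-1$ and by powers of $q$ is unavoidable, and verifying that these net to something bounded by $p^\epsilon$, so that a loss of exactly $\epsilon$ --- and no more --- occurs, whereas in degrees $n\ge m$ the construction is a genuine $p$-adic contraction (Lemma~\ref{L:solution}, Lemma~\ref{L:continuity}(1), Proposition~\ref{P:continuity}) and loses no precision. This is precisely what forces the quantity $\epsilon=\lfloor(m-1)p/(p-1)^2\rfloor$ in the statement. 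A secondary technical point, handled by the mod-$\m_E$ comparison above, is confirming the integrality condition (7*), i.e.\ that $N'$ is a Wach module over $\akplus\otimes_\Zp\cO_E$ and not merely over $\bkplus\otimes_\Qp E$.
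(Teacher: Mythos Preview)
Your overall strategy --- perturb the data, feed into Proposition~\ref{P:unique-split}, and invoke Proposition~\ref{P:continuity} --- matches the paper's, but the heart of the argument, namely how one actually produces the perturbed lift $\hA'_j$, is handled differently. You propose to re-run the coefficient-by-coefficient lifting for the perturbed datum $(A'_j)_j$ and track the discrepancy through; the paper instead keeps the existing $\hG_{\gamma,j}$ fixed and constructs $\hA'_j$ directly by a commutation trick. Concretely, writing $A'_j=(\Id+p^nM_j)A_j$ with $n=\epsilon+i$, the paper applies Lemma~\ref{L:wach} to the \emph{already known} matrix $\hG_{\gamma,j-1}$ to obtain $\hM_j\in M_{d\times d}(\cR_{(p-1)^2/p,E})$ with $\hM_j\equiv M_j\bmod\pi$ and $\hG_{\gamma,j-1}\hM_j^\gamma=\hM_j\hG_{\gamma,j-1}$. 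Since truncation modulo $\pi^m$ costs at most $p^\epsilon$ in denominators, the matrix $\hM_{j,m-1}:=p^\epsilon(\hM_j\bmod\pi^m)$ is integral, and one simply sets $\hA'_j:=(\Id+p^{n-\epsilon}\hM_{j,m-1})\hA_j$. The commutation relation then gives, for free, that $\hG_{\gamma,j-1}-\hP'_j\hG_{\gamma,j}^\varphi(\hP'_j)^{-\gamma}\in\pi^m M_{d\times d}(\cO_E[[\pi]])$, so Proposition~\ref{P:unique-split} applies with the \emph{unchanged} $\hG_{\gamma,j}$ as initial approximation. This localizes the entire precision loss to the single matrix $\hM_j$ and makes the bound $\epsilon=\lfloor(m-1)p/(p-1)^2\rfloor$ immediate from Lemma~\ref{L:wach}, whereas your approach spreads the loss across the whole iteration and leaves you (as you correctly flag) with the burden of a global denominator estimate.

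Two smaller points. First, your claim that one can arrange $\hA'_j\equiv\hA_j\bmod\m_E^{\epsilon+i}$ is stronger than what the paper obtains (only $\bmod\m_E^i$, since $n-\epsilon=i$) and stronger than you need; it is also in tension with your own acknowledgement that denominators of size up to $p^\epsilon$ enter in degrees $n\le m-1$. Second, ``re-running Proposition~\ref{P:fontaine}'' is not literally available here: that proposition assumes the $\hG_{\gamma,j}$ are \emph{diagonal}, which for a general $N(T)$ they are not. The paper's use of Lemma~\ref{L:wach} sidesteps this by working with whatever $\hG_{\gamma,j-1}$ the hypothesis hands you.
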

\begin{proof}
Let $(\hA_j\hat{\Delta}_{\k_j},\hG_{\gamma,j})$ be the matrices of
$(\varphi,\gamma)$ that defines the integral embedded Wach module
$N(T)$.
Since $A'_j\equiv A_j \bmod p^n$ we
may write $A'_j = (\Id+ p^n M_j) A_j$ for some matrix $M_j\in M_{d\times d}(\cO_E)$.
By Lemma \ref{L:wach} above, there exists (a unique)
$\hM_j\equiv M_j$ and $\hM_j \in M_{d\times d}(p^C\cR_{(p-1)^2/p,E})$ such that
$\hG_{\gamma,j-1}\hM_j^\gamma = \hM_j\hG_{\gamma,j-1}$
(where $C=\ord_p M_j$).
Write $\hM_{j,m-1} := (p^\epsilon\hM_j \bmod \pi^m)$,
it lies in $M_{d\times d}(\cO_E[\pi])$ and we have
$$\hG_{\gamma,j-1}\hM_{j,m-1}^\gamma - \hM_{j,m-1} \hG_{\gamma,j-1} \in \pi^m M_{d\times d}(\cO_E[[\pi]]).$$
Therefore,
$$
\hG_{\gamma,j-1}(\Id+p^{n-\epsilon}\hM_{j,m-1})^\gamma -
(\Id+p^{n-\epsilon}\hM_{j,m-1}) \hG_{\gamma,j-1}
\in \pi^m M_{d\times d}(\cO_E[[\pi]]).
$$
Because $\Id+p^{n-\epsilon} \hM_{j,m-1}$ lies in $\GL_d(\cO_E[[\pi]])$
we may multiply its inverse on the right-hand-side of the above equation and get
\begin{eqnarray*}
& & \hG_{\gamma,j-1} - \hA'_j\hat\Delta_{\k_j} \hG_{\gamma,j} (\hA'_j\hat\Delta_{\k_j})^{-\gamma}\\
&=& \hG_{\gamma,j-1} - (\Id+p^{n-\epsilon}\hM_{j,m-1})\hG_{\gamma,j-1}(\Id+p^{n-\epsilon}\hM_{j,m-1})^{-\gamma}\\
&\in & \pi^m M_{d\times d}(\cO_E[[\pi]]).
\end{eqnarray*}
Then we shall apply Proposition \ref{P:unique-split} to find the existence
of integral matrix $\hG'_{\gamma,j}$ that will make
$\hG'_{\gamma,j-1}(\hA'_j\hat\Delta_{\k_j})^\gamma = \hA'_j\hat\Delta_{\k_j}(\hG'_{\gamma,j})^\varphi$.
Hence $(\hA'_j\hat{\Delta}_{\k_j}, \hG'_{\gamma,j})$ defines
an integral embedded Wach module $N(T')$.
If we write $n=\epsilon+i$ with $i>0$.
Then we have $\hA'_j - \hA_j \in p^i M_{d\times d}(\cO_E[[\pi]])$
and hence $\hA'_j \equiv \hA_j \bmod \m_E^i$ in $M_{d\times d}(\cO_E[[\pi]])$.
Therefore $N(T')\equiv N(T)\bmod \m_E^i$.
\end{proof}

Let $|\cdot|_p$ be the metric defined in Remark \ref{R:metric-2}.

\begin{corollary}[Theorem C]\label{C:continuity}
Write $\epsilon = \lfloor (m - 1)p/(p-1)^2 \rfloor$
where $m=\ink+1$ (or $m=\fk$ if
$k_{j1}\geq \ldots \geq k_{j,d-1}>k_{jd}=0$).
Let $V,V'\in\Rep_{\cris/E}^\k(G_K)$ with
Galois stable lattices $T,T'$, respectively.
If $\N(T)$ exists, and if $\dist(V,V') \leq p^{-(i+\epsilon)}$ then
$\N(T')$ exists and $\N(T)\equiv \N(T')\bmod \m_E^i$.
\end{corollary}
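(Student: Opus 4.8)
The plan is to deduce Corollary~\ref{C:continuity} from Theorem~\ref{T:continuity} by translating the $p$-adic metric on $\Rep_{\cris/E}^\k(G_K)$ of Remark~\ref{R:metric-2} back into the matrix language in which Theorem~\ref{T:continuity} is phrased. First I would fix the dictionary: by Theorem~\ref{T:A-analog} write $V=\Theta(A)$ and $V'=\Theta(A')$ with $A=(A_j)_{j\in\Z/a\Z}$ and $A'=(A'_j)_{j\in\Z/a\Z}$ in $\GL_d(\cO_E)^a$, chosen (via Proposition~\ref{P:parabolic-2}) so that $D:=\dcris^*(V)$ and $D':=\dcris^*(V')$ carry embedded strongly divisible $\varphi$-lattices $L=\prod_j L_j$ and $L'=\prod_j L'_j$ with $\Mat(\varphi|_{L_j})=A_j\Delta_{\k_j}$ and $\Mat(\varphi|_{L'_j})=A'_j\Delta_{\k_j}$ in bases adapted to the filtration. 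Both $D,D'$ lie in $\MF{ad,\k}(K\otimes_\Qp E)$, and the assumption that $\N(T)$ exists is exactly the hypothesis of Theorem~\ref{T:continuity} that $D$ lifts to an integral embedded Wach module $N(T)$.

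Next I would unwind the metric. By Remark~\ref{R:metric-2}, $\dist(V,V')=\sup\max_{j}|B_j-B'_j|_p$, the supremum over all $B\sim_\k A$ and $B'\sim_\k A'$; since this is a \emph{supremum} over parabolic representatives, the bound $\dist(V,V')\le p^{-(i+\epsilon)}$ forces $\ord_p(A_j-A'_j)\ge i+\epsilon$ for every $j$, and indeed the same for whichever adapted representatives we selected above. This is precisely the closeness hypothesis $\ord_p(A'_j-A_j)\ge\epsilon+i$ of Theorem~\ref{T:continuity}, and the two occurrences of $\epsilon=\lfloor(m-1)p/(p-1)^2\rfloor$ with $m=\ink+1$ (resp.\ $m=\fk$) agree verbatim, since $m-1=\ink=\lfloor\bar{k}_1\rfloor$. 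Applying Theorem~\ref{T:continuity} then produces an integral embedded Wach module $N(T')$ lifting $D'$ with $N(T')\equiv N(T)\bmod\m_E^i$.

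It remains to identify this $N(T')$ with the Wach module $\N(T')$ of the prescribed lattice $T'$. Here I would invoke that Berger's $\N$ is a bijection between Galois stable lattices in $V'$ and integral embedded Wach modules in $\D(V')$ (\cite{BE04}), together with condition~(2) in the definition of embedded Wach module: $N(T')/\pi N(T')\cong D'$ as filtered $\varphi$-modules, and, mod $\pi$, $\Mat(\varphi|_{N(T')_j})$ reduces to $A'_j\Delta_{\k_j}=\Mat(\varphi|_{L'_j})$ (using $q\equiv p\bmod\pi$ and $\hat{A}'_j\equiv A'_j\bmod\pi$). Hence the lattice attached to $N(T')$ is the one whose strongly divisible lattice is $L'$, namely $T'$, so $\N(T')=N(T')$ exists and $\N(T)\equiv\N(T')\bmod\m_E^i$, as claimed.

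The step I expect to be the main obstacle is this last identification: Theorem~\ref{T:continuity} only yields \emph{some} integral embedded Wach module inside $\D(V')$, and one must argue it is attached to the given $T'$ rather than to another Galois stable lattice of $V'$. Making this precise amounts to checking that the explicit lifting $\Upsilon$ of Remark~\ref{R:Berger-Wach} underlying Theorem~\ref{T:continuity} is compatible --- modulo $\pi$ and modulo $\m_E$ --- with Berger's $\N$ and with the passage to strongly divisible lattices; this compatibility is encoded in conditions (1)--(7*) defining embedded Wach modules but deserves to be spelled out. A minor point to handle with care is that the distance in Remark~\ref{R:metric-2} is a supremum over all parabolic representatives, so a bound on it transfers automatically to the specific adapted representatives coming from $L$ and $L'$; this direction is harmless precisely because bounding a supremum bounds every term.
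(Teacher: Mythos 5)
Your proposal is correct and follows essentially the same route as the paper: the paper's own proof simply unwinds the metric of Remark \ref{R:metric-2} to get $\ord_p(A_j-A'_j)\geq i+\epsilon$ for every $j$ and then quotes Theorem \ref{T:continuity}. Your closing discussion of why the Wach module produced by Theorem \ref{T:continuity} is the one attached to the given lattice $T'$ (via the bijectivity of $\N$ and the reduction mod $\pi$) is a point the paper leaves implicit, and your treatment of it is sound.
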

\begin{proof}
By Theorem \ref{T:continuity}, it remains to show that $\N(T)\equiv \N(T')\bmod \m_E^i$. Write $A,A'$ in $\GL_d(\cO_E)^a$ for representing $V,V'$ respectively according to Theorem A'.
By our hypothesis and by Remark \ref{R:metric-2},
we have $\ord_p(A_j-A'_j)\geq i+\epsilon$ for every $j$.
By applying Theorem \ref{T:continuity}, we conclude that $\N(T)\equiv \N(T')\bmod \m_E^i$.
\end{proof}

Below we use our result above to show that when $\bar{k}_1<p-1$ we can lift
every strongly divisible lattice to a lattice in an \'etale
$(\varphi,\Gamma)$-module of Fontaine. (When $K=\Qp$ this is known to Fontaine-Laffaille.) In fact, in this case there is an integral Wach lifting map
$\Upsilon: \GL_d(\cO_E)^a/\sim_\k \longrightarrow \GL_d(\cO_E[[\pi]])^a$
(see Remark \ref{R:Berger-Wach}).

\begin{theorem}\label{T:FL}
Let $\bar{k}_1 < p-1$ (and $\bar{k}_1\leq p-1$ if
$k_{j1}\geq \ldots \geq k_{j,d-1} > k_{jd} = 0$ for all $j\in\Z/a\Z$).
Then every embedded strongly divisible lattices in a weakly admissible filtered
$\varphi$-module $D=\dcris^*(V)$ of $\HP_{\Z/a\Z} =\k$
can be lifted to an embedded integral Wach module.
\end{theorem}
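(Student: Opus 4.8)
\medskip
\noindent\textbf{Proof plan.} The plan is to produce the embedded integral Wach module directly, as the $\pi$-adic limit of the Wach lifting iteration of Remark~\ref{R:Berger-Wach}, using the machinery of Section~\ref{S:4.2} and the numerical coincidence $\epsilon=0$ forced by the Fontaine--Laffaille bound. First I would twist by a crystalline character $\varepsilon_{a,\vec{w}}$ (Remark~\ref{R:dim-2}(3)) to arrange $k_{jd}=0$ for all $j$; this leaves $\MaxSlope(\HP^\sigma)$ unchanged, so the hypothesis is preserved, and now $m=\ink+1\le p-1$ (resp.\ $m=\fk\le p-1$ in the special case), whence $\epsilon=\lfloor(m-1)p/(p-1)^2\rfloor=0$. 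By Theorem~\ref{T:A-analog} a given embedded strongly divisible lattice $L=\prod_jL_j$ of $D=\dcris^*(V)$ has $\Mat(\varphi|_{L_j})=A_j\Delta_{\k_j}$ with $A_j\in\GL_d(\cO_E)$ in a basis adapted to the filtration; since any two strongly divisible lattices differ by a base change in $\prod_j\cP_{\k_j}$ (Proposition~\ref{P:conjugation}), it suffices to construct, for one such $(A_j)_j$, matrices $\hA_j\equiv A_j\bmod\pi$ in $\GL_d(\cO_E[[\pi]])$ and $\hG_{\gamma,j}\in\Id+\pi M_{d\times d}(\cO_E[[\pi]])$ satisfying the commutation relations of Lemma~\ref{L:commute}(3). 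Once these are in hand the remaining Wach axioms (1)--(6) hold for formal reasons ($q$ is a unit in $\A_K$, $\hat\Delta_{\k_j}\equiv\diag(p^{k_{ji}})\bmod\pi$, etc.), and axiom (7*) is automatic, since then the standard lattice $\N(T)_j=(\aeplus)^d$ is $\Gamma_K$-stable with $\varphi(\N(T)_j)\subseteq\N(T)_{j-1}$.

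So everything reduces to solving the system $\hG_{\gamma,j-1}\hP_j^{\gamma}=\hP_j\hG_{\gamma,j}^{\varphi}$, where $\hP_j=\hA_j\hat\Delta_{\k_j}$, with coefficients in $\cO_E[[\pi]]$, and here I would run a $\pi$-adic induction. The congruence holds modulo $\pi$ for the starting data $\hA_j=A_j$, $\hG_{\gamma,j}=\Id$, because $\hat\Delta_{\k_j}$ and $\hat\Delta_{\k_j}^{\gamma}$ both reduce to $\diag(p^{k_{ji}})$ modulo $\pi$. Once one has improved this to an approximation modulo $\pi^{m}$, Proposition~\ref{P:unique-split} applies (precisely because $m>\bar{k}_1$, resp.\ $m\ge\bar{k}_1$ in the special case) and upgrades it to an exact integral solution $(\hA_j,\hG_{\gamma,j})$; the engine here is the Banach-contraction estimate of Lemma~\ref{L:solution} with $s=\hP_{(0)}$. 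One then checks, via Proposition~\ref{P:continuity}, that $\gamma\mapsto(\hG_{\gamma,j})_j$ is a continuous cocycle on $\Gamma_K$, hence defines a genuine $\Gamma_K$-action, and that inverting $\pi$ together with $\N/\pi\N\cong D$ identifies $\N$ with the Wach module inside $\D(V)$, so that $\N(T)$ is indeed an integral embedded Wach module lifting $L$.

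The main obstacle is the finitely many base steps $1\le n\le m-1$: one must produce the mod-$\pi^m$ approximation with coefficients in $\cO_E[[\pi]]$ rather than merely in $\cR_{(p-1)^2/p,E}$, which is all that Proposition~\ref{P:fontaine} gives in general. At order $\pi^n$ the correction to $(\hA_j,\hG_{\gamma,j})$ is governed by a linear operator whose ``denominator'' is controlled by the Hodge slopes entering $\hat\Delta_{\k_j}$ once one passes to the $\varphi^a$-aggregate $\hP_{(0)}$ on $L_0$, and the point is that $n\le m-1$ together with $a\bar{k}_1=\sum_{j}k_{j1}<a(p-1)$ keeps every such correction integral; without the bound $\bar{k}_1<p-1$ the iteration would leave $M_{d\times d}(\cO_E[[\pi]])$ and axiom (7*) would fail. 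I expect this integrality bookkeeping in the base case --- together with the careful choice of a generator $\gamma$ of $\Gamma_K$ making $\ord_p\beta_{\gamma,n}$ as small as possible --- to be the only non-formal part of the argument. Alternatively, one can phrase it as: by Theorem~\ref{T:continuity} with $\epsilon=0$ the Wach-liftable locus in $\GL_d(\cO_E)^a/\!\sim_\k$ is open and $p$-adically saturated, so it is enough to exhibit liftable representatives (e.g.\ the monomial/diagonal ones coming from rank-one Wach modules, as in \cite{BLZ04}) and then transport along $\sim_\k$ to reach a general strongly divisible lattice.
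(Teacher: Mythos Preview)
Your overall strategy matches the paper's: produce an integral mod-$\pi^m$ approximation to $(\hA_j,\hG_{\gamma,j})$, then invoke Proposition~\ref{P:unique-split} for $n\ge m$ (using $m>\bar{k}_1$, resp.\ $m\ge\bar{k}_1$), and extend from a single generator to all of $\Gamma_K$ via the continuity of Proposition~\ref{P:continuity}. Your alternative route---use Theorem~\ref{T:continuity} with $\epsilon=0$ and a diagonal base point built from rank-one Wach modules---is also legitimate and arguably cleaner.

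However, your account of the base case $1\le n\le m-1$ misidentifies the mechanism. You claim the denominator of the $\pi^n$-correction is ``controlled by the Hodge slopes entering $\hat\Delta_{\k_j}$ once one passes to the $\varphi^a$-aggregate $\hP_{(0)}$,'' and that $a\bar{k}_1<a(p-1)$ keeps it integral. That is the mechanism for $n\ge m$ (Lemma~\ref{L:solution}), not for $n<m$: in the low range the map $B_n\mapsto p^{a(n-\bar{k}_1)}s(0)B_n^{\varphi^a}p^{a\bar{k}_1}s(0)^{-1}-R_n(0)$ is \emph{not} a contraction on $M_{d\times d}(\cO_E)$, so one cannot solve for the correction to $\hG_{\gamma,j}$ this way at all. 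The paper handles the base case by reversing roles via Proposition~\ref{P:fontaine}: fix a diagonal $\hG_{\gamma,j}$ and solve for $\hA_j$ order by order in $\pi$. There the obstruction at order $\pi^n$ is $\chi(\gamma)^n-1$, so the accumulated denominator is $\beta_{\gamma,n}=\prod_{i=1}^n(\chi(\gamma)^i-1)$. For a suitably chosen topological generator $\gamma$ one has $\chi(\gamma)^n\not\equiv 1\bmod p$ for $1\le n\le p-2$, hence $\ord_p\beta_{\gamma,n}=0$ in that range, and \emph{this} is why $(\hA_j\bmod\pi^{p-1})$ lies in $M_{d\times d}(\cO_E[\pi])$. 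The Fontaine--Laffaille bound $\bar{k}_1<p-1$ enters only to guarantee $m\le p-1$, so that the range where Proposition~\ref{P:unique-split} is unavailable sits inside the range where $\beta_{\gamma,n}$ is a unit. You do mention $\beta_{\gamma,n}$ and the choice of $\gamma$ at the end, but as an auxiliary detail rather than as the actual source of integrality in the base case; it should be the centerpiece.
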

\begin{proof}
Choose a (geometric) generator $\gamma\in \Gamma_K$ as in Proposition \ref{P:fontaine}.
Since for $1\leq n\leq p-2$ we have $\chi(\gamma)^n\not \equiv 1\bmod p$,
we have $\ord_p\beta_{\gamma,n} = 0$ (where $\beta_{\gamma,n}$
is as defined in Proposition \ref{P:fontaine}).
So $(\hA_j \bmod \pi^{p-1})$ in Proposition \ref{P:fontaine} lies in $M_{d\times d}(\cO_E[[\pi]])$.
Since $\bar{k}_1 < p-1$  ($\bar{k}_1\leq p-1$ if $k_{j,d-1}> 0$)
we may apply Proposition \ref{P:unique-split} to $(\hA_j\bmod \pi^{p-1})$ above
and $\hG_{\gamma,j,p}=\Id$,  and
obtain the desired integral matrices $\hP_j$ and $\hG_{\gamma,j}$.
Since $\Gamma_K$ is geometrically generated by $\gamma$
and the Wach lifting map $\fW$ is continuous
by Proposition \ref{P:continuity}, these matrices defines
an embedded integral Wach module.
\end{proof}

\begin{theorem}[Theorem B]
\label{T:B}
Let $\bar{k}_1 < p-1$ (and $\bar{k}_1\leq p-1$ if
$k_{j1}\geq \ldots \geq k_{j,d-1} > k_{jd} = 0$ for all $j\in\Z/a\Z$).
For any Galois stable lattice $T$ in $V\in \Rep_{\cris/E}(G_K)$,
$\dcris^*(T)$ is a strongly divisible lattice in $\dcris^*(V)$.
Conversely, every strongly divisible lattice $L$ in $\dcris^*(V)$
is for the form $L=\dcris^*(T)$ for some Galois stable lattice $T$.
\end{theorem}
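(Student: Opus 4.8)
The plan is to obtain Theorem B as the endpoint of the integral Wach module machinery of Section \ref{S:4.2}: combine Theorem \ref{T:FL} (strongly divisible lattices lift to embedded integral Wach modules), Berger's comparison between Galois stable lattices and integral Wach modules inside Fontaine's $\D(V)$ (\cite{FO91}\cite{BE04}, transported to the $E$-coefficient, embedded setting used here), and the matrix-form criterion for strong divisibility in Proposition \ref{P:simultaneous}. By Remark \ref{R:dim-2}(3), twisting $V$ by the crystalline character $\varepsilon_{a,\vec{w}}$ with $\vec{w}=(-k_{0,d},\ldots,-k_{a-1,d})$ reduces to the case $k_{jd}=0$ for all $j$, under which $\bar{k}_1=\MaxSlope(\HP^\sigma(V))$ and neither strong divisibility of a lattice in $\dcris^*(V)$ nor Galois stability of a lattice in $V$ is affected; so I may assume this normalization throughout.

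First I would treat the forward direction. Let $T$ be a Galois stable lattice in $V$. By Berger's theory, $T$ corresponds to an embedded integral Wach module $\N(T)=\prod_j\N(T)_j$ sitting inside $\D(V)$, and the comparison of \cite{BE04} identifies $\dcris^*(T)$ with $\N(T)/\pi\N(T)$ as an $\cO_E$-lattice in $D:=\dcris^*(V)$ carrying the induced filtration and $\varphi$ (this is forced by condition (2) in the definition of an embedded Wach module). Picking for each $j$ a basis of $\N(T)_j$ adapted to the Wach filtration $\Fil^i\N(T)_j=\{x:\varphi(x)\in q^i\N(T)_{j-1}\}$ and invoking the $q$-analogue of Corollary \ref{C:SDL} (available here because $\bar{k}_1<p-1$ forces the $(\varphi,\Gamma)$-module generated by $\N(T)$ to be \'etale), one writes $\Mat(\varphi|_{\N(T)_j})=\hA_j\hat\Delta_{\k_j}$ with $\hA_j\in\GL_d(\cO_E[[\pi]])$ and $\hat\Delta_{\k_j}=\diag(q^{k_{j1}},\ldots,q^{k_{jd}})$. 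Reducing modulo $\pi$ and using $q\equiv p\bmod\pi$ gives $\Mat(\varphi|_{\dcris^*(T)_j})=A_j\Delta_{\k_j}$ with $A_j:=\hA_j(0)\in\GL_d(\cO_E)$, so $\dcris^*(T)$ is strongly divisible by the implication (3)$\Longrightarrow$(2) of Proposition \ref{P:simultaneous}.

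For the converse, let $L$ be a strongly divisible lattice in $D=\dcris^*(V)$. Since $\bar{k}_1<p-1$ (resp. $\bar{k}_1\leq p-1$ when $k_{j1}\geq\cdots\geq k_{j,d-1}>k_{jd}=0$ for all $j$), Theorem \ref{T:FL} lifts $L$ to an embedded integral Wach module $N$; conditions (6) and (7*) say that $N$ generates an \'etale $(\varphi,\Gamma)$-module over $\B_K\otimes_\Qp E$, and since $N/\pi N=L$ spans $\dcris^*(V)$ this module is identified with $\D(V)$ after inverting $\pi$. Thus $N$ is a $\varphi$- and $\Gamma_K$-stable $\akplus\otimes_\Zp\cO_E$-lattice in $\D(V)$; as Fontaine's functor is an equivalence respecting lattices and torsion, $N$ equals $\N(T)$ for a unique Galois stable lattice $T$ in $V$, and then $\dcris^*(T)=N/\pi N=L$, so every strongly divisible lattice arises this way. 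The hard part will be the two functorial compatibilities that glue these steps together: that $\N(T)/\pi\N(T)$ is literally the lattice $\dcris^*(T)$ with its induced structures, and that the abstractly produced integral Wach module of Theorem \ref{T:FL} genuinely embeds in $\D(V)$ as an \'etale $(\varphi,\Gamma)$-sublattice, so that the equivalence of categories returns an honest Galois stable lattice in $V$ rather than one in an isogenous representation. Both are consequences of Berger's comparison theorems, but checking them in the embedded $E$-coefficient setting of Section \ref{S:4.2} — and verifying that bases adapted to the filtration survive reduction mod $\pi$ — is where the real care is needed.
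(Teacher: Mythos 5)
Your proposal is correct and follows essentially the same route as the paper: the forward direction uses Berger's lattice--Wach module correspondence together with the matrix criterion of Proposition \ref{P:simultaneous}, and the converse uses Theorem \ref{T:FL} to lift $L$ to an integral Wach module and then Fontaine's equivalence to recover a Galois stable lattice $T$ with $\dcris^*(T)=N/\pi N=L$. The only (cosmetic) difference is that the paper phrases everything through the duals $T^*$, $V^*$, since $\dcris^*$ is defined via $V^*$; your explicit flagging of the mod-$\pi$ compatibility of filtrations is exactly the point the paper also invokes.
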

\begin{proof}
For the first statement we shall show that $L:=\N(T^*)/\pi\N(T^*)$ is strongly
divisible over $\cO_K\otimes_\Zp \cO_E$ in $\dcris^*(V)$.
By our construction above and the hypothesis on weight,
we know that the filtration induced on $L$ from $\N(V^*)$ and
that $\N(T^*)$ are identical.
Split $L\cong \prod_{j\in\Z/a\Z} L_j$, we have
$\Mat(\varphi|_{L_j}) = A_j\Delta_{\k_j}$
and hence $L$ is strongly divisible by Proposition \ref{P:simultaneous}.

Conversely, by Theorem \ref{T:FL} and our hypothesis,
every strongly divisible lattice $L$ of $\dcris^*(V)$
is lifted to an integral Wach module $\N(T^*)$ for some Galois stable lattice
$T^*$ of $V$. Write $T$ for the dual of $T^*$.
Notice that in this lifting, the filtration on $\N(T^*)/\pi\N(T^*)$ coincides with
that induced from $\N(V^*)/\pi\N(V^*)$.
Then by \cite[Theorem III.13]{BE04} and the discussion at the beginning of
Section \ref{S:4.2}, we have $\dcris^*(T)\cong \N(T^*)/\pi\N(T^*)
\cong L$.  This proves Theorem B.
\end{proof}

\section{Mod $p$ reduction of crystalline representation in dimensional $2$ cases}
\label{S:5}

\subsection{Classification of crystalline mod $p$ reductions}
\label{S:5.1}

Let $q=\varphi(\pi)/\pi=((1+\pi)^p-1)/\pi=\pi^{p-1}+\ldots+p$.
For any $V\in\Rep_{\cris/E}(G_K)$ we shall denote the $\varphi$-action
on $D:=\dcris^*(V)=\prod_{j\in\Z/a\Z}D_j$ by $\Mat(\varphi)=(\Mat(\varphi|_{D_0}),\ldots,\Mat(\varphi|_{D_{a-1}}))$.
Let $\vk=(k_0,\ldots,k_{a-1}) \in \Z_{\geq 0}^a$, and let
$\varepsilon_{a,\vk}$ be the crystalline character of $G_K$ whose $\dcris^*(\varepsilon_{a,\vk})$ has $\Mat(\varphi)=(p^{k_0},\ldots,p^{k_{a-1}})$,
same as that given in Remark \ref{R:dim-2}(2).

Let $\varepsilon_a$ be the crystalline character in $\Rep_{\cris/E}^{(1)}(G_K)$ such that $\dcris^*(\varepsilon_a)$ has $\Mat(\varphi)=(p,1,\ldots,1)$. Note that $\varepsilon_1$ is the cyclotomic character of $G_\Qp$ and $\prod_{j\in\Z/a\Z}\varepsilon_a^{\sigma^j} = \varepsilon_1$.
Then we have $\varepsilon_a\equiv \omega_a\bmod p$ where $\omega_a$ is
a fundamental character of $I_K$ (extends to $G_K$).
Notice that $\varepsilon_{a,\vk}=\prod_{j=0}^{a-1}\varepsilon_a^{k_j\sigma^j}$.
We first prepare some simple fact about crystalline characters and their reduction.

\begin{proposition}\label{P:omega_a}
Every crystalline character $V$ of $G_K$ of embedded
Hodge polygon $\HP_{\Z/a\Z}=(k_0,\ldots,k_{a-1})\in \Z_{\geq 0}^a$ is
of the form $\varepsilon_{a,\vk} \otimes \eta$
for an unramified character $\eta$. We have
\begin{eqnarray*}
\varepsilon_{a,\vk}
&\equiv &
\omega_a^{\sum_{j\in\Z/a\Z}k_jp^j}\bmod \m_E.
\end{eqnarray*}
The \'etale $(\varphi,\Gamma)$-module
$\D(\varepsilon_{a,\vk})$ over $\Zp[[\pi]]$ is determined uniquely by
$$\Mat(\varphi)=(q^{k_0},\ldots,q^{k_{a-1}}) \in \Zp[\pi]^a.$$
The \'etale $(\varphi,\Gamma)$-module
$\D(\omega_a^{\sum_{j\in\Z/a\Z}k_j p^j})$
over $\Fp[[\pi]]$ is determined uniquely by:
$$\Mat(\varphi) = (\pi^{(p-1)k_0}, \ldots, \pi^{(p-1)k_{a-1}})\in \Fp[[\pi]]^a.$$
\end{proposition}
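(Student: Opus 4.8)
The plan is to prove the four assertions of Proposition \ref{P:omega_a} in sequence, building on the dimension-$1$ case of Theorem A' (Theorem \ref{T:A-analog}) and the explicit Wach-lifting machinery of Section \ref{S:4}. First I would handle the classification statement: by Remark \ref{R:dim-2}(2), every crystalline character $V$ with $\HP_{\Z/a\Z}(V)=\vk$ is of the form $\varepsilon_{a,\vk}\otimes\eta$ for an unramified $\eta$, since $\GL_1(\cO_E)^a/\sim_\vk$ is, after dividing out by the parabolic (here trivial) action, exactly $(\cO_E^*)^a$ acting by multiplication, and the $\sim_\vk$-orbit of $(1,\dots,1)$ together with unramified twists exhausts $\Rep_{\cris/E}^\vk(G_K)$; twisting $\varepsilon_{a,\vk}$ by $\eta$ multiplies $\Mat(\varphi)$ componentwise by units. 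Second, for the $(\varphi,\Gamma)$-module $\D(\varepsilon_{a,\vk})$ over $\Zp[[\pi]]$: by Theorem \ref{T:FL} (applicable since in rank $1$ the Hodge polygon condition is vacuous, or one simply checks directly that $n=k_j$ works), the embedded Wach module of $\varepsilon_{a,\vk}$ is the free rank-$1$ module with $\Mat(\varphi|_{N_j})=q^{k_j}$ and $\Mat(\gamma|_{N_j})$ a unit in $1+\pi\Zp[[\pi]]$ solving the commuting relation $\hG_{\gamma,j-1} q^{k_j\gamma}=q^{k_j}\hG_{\gamma,j}^\varphi$; such $\hG_{\gamma,j}$ exists and is unique by Lemma \ref{L:banach-solution1}(ii) (the relevant $\lambda_b$-formula) and Lemma \ref{L:solution1}. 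The associated $(\varphi,\Gamma)$-module over $\A_K\otimes_\Zp\cO_E=\prod_j\cO_E((\pi))$ — or over $\Zp[[\pi]]$ after descent to $\Qp$-coefficients — then has $\Mat(\varphi)=(q^{k_0},\dots,q^{k_{a-1}})$, and uniqueness follows because a rank-$1$ étale $(\varphi,\Gamma)$-module is determined by $\Mat(\varphi)$ up to $(1+\pi\Zp[[\pi]])^\times$, and $q^{k_j}$ is the canonical representative.

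Third, the mod-$p$ reduction $\varepsilon_{a,\vk}\equiv\omega_a^{\sum_j k_j p^j}\bmod\m_E$: since $\varepsilon_{a,\vk}=\prod_{j=0}^{a-1}\varepsilon_a^{k_j\sigma^j}$ and $\varepsilon_a\equiv\omega_a\bmod p$ (the stated relation $\varepsilon_a\equiv\omega_a\bmod p$, plus $\prod_j\varepsilon_a^{\sigma^j}=\varepsilon_1$ and $\varepsilon_1\equiv\chi\bmod p$ with $\chi$ cyclotomic $\equiv\prod_j\omega_a^{p^j}$), it suffices to identify $\omega_a^{\sigma^j}$ with $\omega_a^{p^j}$ under the identification of $\Z/a\Z$ with $\Gal(\F_{p^a}/\F_p)$ fixed in the Conventions, where $\tau_j=\tau_0\circ\sigma^j$; this gives $\prod_j\omega_a^{k_j p^j}$ on the inertia side, and the character extends to $G_K$ because each factor does. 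Finally, for $\D(\omega_a^{\sum_j k_j p^j})$ over $\Fp[[\pi]]$: reduce the lattice $\D(\varepsilon_{a,\vk})$ mod $\m_E$. Since $q=\pi^{p-1}+\dots+p\equiv\pi^{p-1}\bmod p$, the matrix $(q^{k_0},\dots,q^{k_{a-1}})$ reduces to $(\pi^{(p-1)k_0},\dots,\pi^{(p-1)k_{a-1}})$ in $\Fp((\pi))$; by compatibility of $\D$ with reduction mod $\m_E$ (Fontaine's functor respects torsion, cited in the introduction via \cite{FO91}), this is the étale $(\varphi,\Gamma)$-module of $\overline{\varepsilon_{a,\vk}}=\omega_a^{\sum_j k_j p^j}$, and uniqueness is again the rank-$1$ statement over $\Fp((\pi))$.

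The main obstacle I anticipate is the bookkeeping in the second and fourth steps: making sure the $\varphi$-semilinearity across the $a$ factors $D_j$ (the cyclic shift $\varphi(D_j)\cong D_{j-1}$ of equation \eqref{E:semilinear}) is tracked correctly so that the componentwise formula $\Mat(\varphi)=(q^{k_0},\dots,q^{k_{a-1}})$ really is $\Gamma_K$-compatible — i.e., that the $\hat G_{\gamma,j}$ produced by Lemma \ref{L:solution1}(iii) lie in $1+\pi\Zp[[\pi]]$ and hence reduce to $1$ mod $p$, which is what forces the reduced $(\varphi,\Gamma)$-module to have trivial $\Gamma$-action twist and thus be exactly the one attached to the power of $\omega_a$. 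Everything else is a routine unwinding of definitions, the explicit $q$-expansion, and the rank-$1$ rigidity of étale $(\varphi,\Gamma)$-modules.
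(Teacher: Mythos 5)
Your proposal is correct and follows essentially the same route as the paper's proof: the classification via Remark \ref{R:dim-2}(2), the rank-one Wach lifting giving $\Mat(\varphi)=(q^{k_0},\ldots,q^{k_{a-1}})$ with $\Mat(\gamma)$ uniquely determined by Lemmas \ref{L:commute} and \ref{L:banach-solution1}(ii), and the mod $p$ reduction using $q\equiv\pi^{p-1}\bmod p$ together with Fontaine's theory of \'etale $(\varphi,\Gamma)$-modules. The extra detail you supply (the factorization $\varepsilon_{a,\vk}=\prod_j\varepsilon_a^{k_j\sigma^j}$ and the rank-one rigidity argument) only makes explicit what the paper leaves implicit.
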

\begin{proof}
By Remark \ref{R:dim-2}(2) we know that
$V\cong \varepsilon_{a,\vk}\otimes \eta$
for some unramified character $\eta$.
By Wach theory in Section \ref{S:4.2},
$\dcris^*(\varepsilon_{a,\vk})$ has
$\Mat(\varphi)=(q^{k_0},\ldots,q^{k_{a-1}})$.
Write $\dcris^*(\varepsilon_{a,\vk})=\prod_{j\in\Z/a\Z}D_j$, then
by Lemmas \ref{L:commute} and \ref{L:banach-solution1}(ii)
we found that there are unique solution to $\Mat(\gamma)$ for every
$\gamma \in \Gamma_K$. Take mod $p$ reduction on $\D(\varepsilon_{a,\vk})$ over $\Zp[[\pi]]$,
we obtain the desired $(\varphi,\Gamma)$-module over $\Fp[[\pi]]$,
our assertion follows by Fontaine's theory of $(\varphi,\Gamma)$-modules.
\end{proof}

Classification of 2-dim irreducible crystalline
representation in $\Rep_{\cris/\Qp}(G_K)$ was done by Breuil, see \cite[Lecture 5]{Br00} (see also \cite[Section 3.1]{Br02} and \cite[Section 3.1]{BM02}).
Our goal here is generalizing some of his work to the category $\Rep_{\cris/E}(G_K)$.
We formulate an irreducibility criterion in the following theorem, from which
Theorem D follows immediately (i.e., the equivalence between (i) and (iv)).
Recall the convention that $\sum_{j\in \emptyset}k_j = 0$ for empty set $\emptyset$.

\begin{theorem}\label{T:irreducible}
Let $\vk=(k_0,\ldots,k_{a-1})\in\Z_{\geq 0}^a$.
Let $V\in \Rep_{\cris/E}^\vk (G_K)$ be of dimension $2$, and
let $\Theta^{-1}(V)=A=(A_j)_{j\in\Z/a\Z} \in \GL_2(\cO_E)^a$.
Then the followings are equivalent
\begin{enumerate}
\item[(i)] $V$ is reducible, containing some crystalline character $\tilde{\psi}_1$.
\item[(ii)] There exist
$w_j\in\cO_E^*$ or $p^{k_j}\cO_E^*$, and
$\left(
                                            \begin{array}{c}
                                              x_j \\
                                              y_j \\
                                            \end{array}
                                          \right)\neq \0
$ in $\cO_E$ such that
\begin{eqnarray}\label{E:graph}
A_j\Delta_{k_j}\left(
                  \begin{array}{c}
                    x_j \\
                    y_j \\
                  \end{array}
                \right)
&=&
w_j\left(
   \begin{array}{c}
     x_{j-1} \\
     y_{j-1} \\
   \end{array}
 \right)
\end{eqnarray}
for all $j\in\Z/a\Z$.
The character $\tilde\psi_1$ has embedded $\HP_{\Z/a\Z}(\tilde\psi_1)_j=k_j$ if $w_j\in p^{k_j}\cO_E^*$ and $=0$
if $w_j\in \cO_E^*$.
\item[(iii)]
$P_{(j),A}=\Mat(\varphi^a)$ for one (equivalently all)
$j$ have an eigenvalue whose $p$-adic valuation equal to $\sum_{j\in J}k_j$ for some subset $J$ of $\Z/a\Z$.
\item[(iv)] The $p$-adic Newton polygon of $P_{(j),A}$ (for one and all $j$) has a slope equal to $\sum_{j\in J}k_j$ for some subset $J$ of $\Z/a\Z$.
\item[(v)] Let $t=\tr(P_{(j),A})=\tr(\Mat(\varphi^a))$, then either
$$\ord_p t > \frac{1}{2}\sum_{j\in\Z/a\Z}k_j =
\sum_{j\in J}k_j$$ or
$$\frac{1}{2}\sum_{j\in\Z/a\Z}k_j \geq \ord_p t =\sum_{j\in J}k_j$$ for some
subset $J$ of $\Z/a\Z$.
\end{enumerate}
\end{theorem}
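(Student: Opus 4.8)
The plan is to prove the equivalences in the order (i)$\Leftrightarrow$(ii), then (ii)$\Leftrightarrow$(iii), then (iii)$\Leftrightarrow$(iv)$\Leftrightarrow$(v). For the first one I would simply read off Proposition~\ref{P:irreducibility-2} in the case $d=2$, $d'=1$: a proper nonzero sub-object of the two-dimensional $V$ is exactly a crystalline sub-character, so $V$ is reducible (in the sense of (i)) iff it is not irreducible. For $\k_j=(k_j,0)$ the only length-one sub-polygons $\k'_j\subseteq\k_j$ are $(k_j)$ and $(0)$, so $\Delta_{\k'_j}$ equals $p^{k_j}$ or $1$, while $A'_j\in\GL_1(\cO_E)=\cO_E^*$ and $C_j=\binom{x_j}{y_j}$ has rank one iff it is nonzero. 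Setting $w_j:=A'_j\Delta_{\k'_j}$, the defining equation $C_{j-1}=(A_j\Delta_{k_j})C_j(A'_j\Delta_{\k'_j})^{-1}$ of Proposition~\ref{P:irreducibility-2}(ii) becomes precisely (\ref{E:graph}), with $w_j\in p^{k_j}\cO_E^*$ when $\k'_j=(k_j)$ and $w_j\in\cO_E^*$ when $\k'_j=(0)$. The resulting sub-object is $D':=\prod_j E C_j\subseteq D$ with induced filtration, so $\tilde\psi_1:=\vcris^*(D')$ has $\HP_{\Z/a\Z}(\tilde\psi_1)_j=\k'_j$, which is $k_j$ or $0$ exactly as stated. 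Negating ``$V$ irreducible'' then gives (ii).

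For (ii)$\Leftrightarrow$(iii), the forward direction is an iteration of (\ref{E:graph}) once around $\Z/a\Z$: writing $v_j=\binom{x_j}{y_j}$ and composing the $a$ relations $A_j\Delta_{k_j}v_j=w_jv_{j-1}$ produces $P_{(j),A}v_j=\bigl(\prod_{i\in\Z/a\Z}w_i\bigr)v_j$, so $\prod_iw_i$ is an eigenvalue of $P_{(j),A}$ of valuation $\sum_{i\in J}k_i$ with $J:=\{i:w_i\in p^{k_i}\cO_E^*\}$. Conversely, given an eigenvalue $\mu$ of $P_{(j),A}$ with $\ord_p\mu=\sum_{j\in J}k_j$ (enlarging $E$ by a finite extension if necessary so that $\mu\in E$), the eigenline of $P_{(0),A}$ propagates under the isomorphisms $P_j=A_j\Delta_{k_j}$ to a cycle of lines $Ev_j\subseteq D_j$; choosing nonzero $v_j$ in them yields $P_jv_j=\lambda_jv_{j-1}$ with $\prod_j\lambda_j=\mu$, hence $\sum_j\ord_p\lambda_j=\sum_Jk_j$. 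Rescaling $v_j\mapsto p^{m_j}v_j$ replaces $\lambda_j$ by $w_j:=p^{m_j-m_{j-1}}\lambda_j$; taking $n_j:=k_j$ for $j\in J$ and $n_j:=0$ otherwise, the telescoping system $m_j-m_{j-1}=n_j-\ord_p\lambda_j$ for $j\in\Z/a\Z$ is solvable because $\sum_j(n_j-\ord_p\lambda_j)=0$, and after a common shift all $v_j$ become integral and nonzero while $\ord_pw_j=n_j\in\{0,k_j\}$, i.e. $w_j\in\cO_E^*\cup p^{k_j}\cO_E^*$. That is (ii).

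For (iii)$\Leftrightarrow$(iv)$\Leftrightarrow$(v), note that the $p$-adic valuations of the two eigenvalues of $P_{(j),A}$ are the slopes of its Newton polygon, and since $\ord_p(\det P_{(j),A})=\sum_jk_j$ the two slopes $v_1\le v_2$ satisfy $v_1+v_2=\sum_jk_j$; as $\sum_{j\in J}k_j$ and $\sum_{j\notin J}k_j$ range over the same set of values, (iii) and (iv) are the same statement. For (iv)$\Leftrightarrow$(v), set $t=\tr P_{(j),A}$ and $K=\sum_jk_j$: the Newton polygon of $X^2-tX+\det P_{(j),A}$ has its vertices among $(0,K)$, $(1,\ord_pt)$, $(2,0)$, so its slopes are both $K/2$ when $\ord_pt\ge K/2$ and are $\ord_pt$ and $K-\ord_pt$ when $\ord_pt<K/2$; matching these against the two displayed alternatives in (v) gives the equivalence.

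The step I expect to be the main obstacle is the converse (iii)$\Rightarrow$(ii). Here one has to (a) pass to a finite extension of $E$ to make the eigenvalue rational---harmless, since it changes neither the matrices $A_j$ nor the Newton polygon---and (b) arrange the scalars $w_j$ to land in $\cO_E^*\cup p^{k_j}\cO_E^*$, which reduces exactly to the solvability of the telescoping system above and is forced by the numerical identity $\sum_j\ord_p\lambda_j=\ord_p\mu=\sum_{j\in J}k_j$. The secondary bookkeeping in the first step---keeping track of whether $\k'_j$ is $(k_j)$ or $(0)$ and the effect on $\HP_{\Z/a\Z}(\tilde\psi_1)$---is routine once Proposition~\ref{P:irreducibility-2} is in hand.
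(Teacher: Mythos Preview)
Your proposal is correct and follows the same route as the paper: (i)$\Leftrightarrow$(ii) via Proposition~\ref{P:irreducibility-2} specialized to $d=2$, $d'=1$; (ii)$\Leftrightarrow$(iii) by iterating (\ref{E:graph}) around the cycle $\Z/a\Z$; and (iii)$\Leftrightarrow$(iv)$\Leftrightarrow$(v) by the standard identification of eigenvalue valuations with Newton slopes of the characteristic polynomial $X^2-tX+\det P_{(j),A}$. Your handling of the converse (iii)$\Rightarrow$(ii)---the telescoping rescaling $v_j\mapsto c_jv_j$ solving $\ord_p c_j-\ord_p c_{j-1}=n_j-\ord_p\lambda_j$ to force $\ord_p w_j\in\{0,k_j\}$---is actually more complete than the paper's own proof, which simply asserts that the vectors ``can be chosen over $\cO_E$ without problem'' without justifying why the resulting $w_j$ land in $\cO_E^*\cup p^{k_j}\cO_E^*$.
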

\begin{proof}
The equivalence (i)$\Longleftrightarrow$(ii)
follows from the same equivalence in Proposition \ref{P:irreducibility-2}.
The equivalence (iii)$\Longleftrightarrow$(iv)$\Longleftrightarrow$(v)
follows from basic $p$-adic analysis and theory of $p$-adic Newton polygon, since roots
of the characteristic polynomial of $P_{(j),A}$ are precisely the eigenvalue.
The slopes  of the $p$-adic Newton polygon of $P_{(j),A}$
are precisely the $p$-adic valuation of the eigenvalues.

We show below that (ii)$\Longleftrightarrow$(iii).
Notice that these $P_{(j),A}$ are all conjugate to each other in $M_{2\times 2}(E)$
and hence have the same eigenvalue (and characteristic polynomial), one
may prove the statement for one and hence all $j\in\Z/a\Z$ in (iii).
Suppose (ii) holds. Write $w=w_0w_1\cdots w_{a-1}$,
we have $\ord_pw = \sum_{j\in J}k_j$ for some
subset $J$.
By applying the iteration in (ii), there are nonzero solution
$\left(
   \begin{array}{c}
     x_j \\
     y_j \\
   \end{array}
 \right)
 $
to
\begin{eqnarray}\label{E:1}
P_{(j),A}
\left(
   \begin{array}{c}
     x_j \\
     y_j \\
   \end{array}
 \right)
 &=&
 w\left(
                  \begin{array}{c}
                    x_j \\
                    y_j \\
                  \end{array}
                \right)
\end{eqnarray}
which shows that (iii) holds.
Conversely, suppose (iii) holds. That is
$P_{(j),A}$ has an eigenvalue $w$ with $p$-adic valuation
equal to $\sum_J k_j$ for some subset $J$ of $\Z/a\Z$.
Then we have a nonzero solution
for (\ref{E:1}) for one $j$. Let the rest of the
vectors $
\left(
  \begin{array}{c}
    x_j \\
    y_j \\
  \end{array}
\right)
$
over $E$ be computed via the iteration in (\ref{E:graph}), and
all can be chosen over $\cO_E$ without problem.
This proves (ii).
\end{proof}

\begin{remark}

Our irreducibility criterion above in $\Rep_{\cris/E}(G_K)$
is very similar to the existing one for $\Rep_{\cris/\Qp}(G_K)$,
compare this with Remark \ref{R:irreducible}. For $a=1$ we recover that $V$ is irreducible if and only if Newton polygon has only positive slopes.

By Theorem \ref{T:irreducible}, it is easy to see that, if $V$ is reducible then
$$ V \cong \left(
  \begin{array}{cc}
    \prod_{j\in J}\varepsilon_a^{k_j\sigma^j} & \star \\
    0 & \prod_{j\not\in J}\varepsilon_a^{k_j\sigma^j} \\
  \end{array}
\right)\otimes \eta
$$
for some unramified character $\eta$ and some subset $J$ of $\Z/a\Z$.
Compare this to Remark \ref{R:dim-2}(4).
\end{remark}

Let $V$ be a 2-dimensional $E$-linear crystalline representation
in $\Rep_{\cris/E}^\vk (G_K)$ with $\vk=(k_0,\ldots,k_{a-1})\in\Z_{\geq 0}^a$.
Let $\rho:=\bar{V}$ be the semisimplification of reduction mod $p$ of $V$.
The following theorem generalizes Breuil's classification
\cite[Proposition 6.1.2]{Br02} for the case $K=\Qp$, and proves Theorem E.

\begin{theorem}[Theorem E]\label{T:E}
(i) Let $0\leq k_j\leq p-1$ for all $j$. If $\rho$ is reducible, then
$$
\rho|_{I_K}
\cong
\left(
  \begin{array}{cc}
    \omega_a^{\sum_{j\in \Z/a\Z}k_jp^j} & \star \\
    0 & 1 \\
  \end{array}
\right)\otimes \bar\eta
$$
for some character $\bar\eta$ that extends to $G_K$.

(ii) Let $0\leq k_j\leq p-1$ for all $j$ and $\vk\neq 0$.
If $\rho$ is irreducible, then
$$
\rho\cong \ind(\omega_{2a}^{\sum_{j\in\Z/a\Z}k_jp^j})\otimes\bar\eta$$
for some character $\bar\eta$; i.e.,
$$
\rho|_{I_K}\cong
\left(
           \begin{array}{cc}
           \omega_{2a}^{\sum_{j\in \Z/a\Z}k_jp^j } & 0 \\
           0     & \omega_{2a}^{p^a \sum_{j\in \Z/a\Z}k_jp^j} \\
        \end{array}
      \right) \otimes \bar\eta.
$$
for some character $\bar\eta$ that extends to $G_K$.
If $\vk=\0$ then $\rho\cong \ind(\omega_{2a}^{p^a-1})\otimes\bar\eta$ for some
character.
\end{theorem}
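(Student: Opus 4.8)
The plan is to run the mod $p$ reduction argument through Wach modules, exploiting the hypothesis $0\le k_j\le p-1$, which puts us (after a harmless twist) in the range $\bar k_1\le p-1$ where Theorem~\ref{T:FL} and Theorem~\ref{T:B} apply. First I would normalize: by Remark~\ref{R:dim-2}(3) twist $V$ by a crystalline character $\varepsilon_{a,\vec w}$ so that the embedded Hodge polygon becomes $\k=(k_j,0)_{j\in\Z/a\Z}$ with $0\le k_j\le p-1$; this only changes $\rho$ by the (tamely ramified, hence $\omega_a$-power) reduction of that character, which is absorbed into $\bar\eta$. Then by Theorem~\ref{T:A-analog} write $\Theta^{-1}(V)=A=(A_j)_j\in\GL_2(\cO_E)^a$, and by Theorem~\ref{T:FL} lift the strongly divisible lattice $\dcris^*(T^*)$ to an integral embedded Wach module $\N(T^*)$ with $\Mat(\varphi|_{\N_j})=\hat A_j\hat\Delta_{\k_j}$, $\hat A_j\equiv A_j\bmod\pi$. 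Reducing mod $\m_E$ gives an étale $(\varphi,\Gamma)$-module over $\Fp[[\pi]]\otimes\F_{p^a}$ whose $\varphi$-matrices are $\bar A_j\,\diag(\pi^{(p-1)k_j},1)$, and by Fontaine's equivalence this computes $T^*/\m_E T^*$, hence $\bar V$ after semisimplification and dualizing.

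The case split is then governed by Theorem~\ref{T:irreducible}: $\rho$ irreducible forces $V$ irreducible, so the Newton polygon of $\Mat(\varphi^a)=P_{(j),A}$ has no slope of the form $\sum_{j\in J}k_j$; reducible $\rho$ is handled by the matrix equation~(\ref{E:graph}). In the reducible case, (\ref{E:graph}) exhibits $V$ as an extension of crystalline characters $\tilde\psi_2$ by $\tilde\psi_1$ with $\HP_{\Z/a\Z}(\tilde\psi_1)_j=k_j$ for $j\in J$ and $0$ otherwise (choosing $J$ appropriately, and by the Remark after Theorem~\ref{T:irreducible}, $\tilde\psi_1=\prod_{j\in J}\varepsilon_a^{k_j\sigma^j}\otimes\eta_1$, $\tilde\psi_2=\prod_{j\notin J}\varepsilon_a^{k_j\sigma^j}\otimes\eta_2$ for unramified $\eta_i$). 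By Proposition~\ref{P:omega_a} the reductions are $\bar{\tilde\psi_1}|_{I_K}=\omega_a^{\sum_{j\in J}k_jp^j}$ and $\bar{\tilde\psi_2}|_{I_K}=\omega_a^{\sum_{j\notin J}k_jp^j}$; semisimplifying, $\rho|_{I_K}\cong(\omega_a^{\sum_{j\in J}k_jp^j}\oplus\omega_a^{\sum_{j\notin J}k_jp^j})\otimes(\text{unram.})$. Factoring out $\omega_a^{\sum_{j\notin J}k_jp^j}$ and using $\omega_a^{p^a-1}=1$ on $I_K$ rewrites the first character as $\omega_a^{\sum_{j\in\Z/a\Z}k_jp^j}$, giving the stated form with $\bar\eta$ the (tamely ramified but $G_K$-extendable) remaining factor. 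In the irreducible case, I would instead pass to the unramified quadratic extension $\Q_{p^{2a}}$: there $V|_{G_{\Q_{p^{2a}}}}$ becomes reducible (its $\varphi^{2a}$-Newton polygon now has integer-length segments splitting the slope), so the same Wach/Proposition~\ref{P:omega_a} computation over $\Z/2a\Z$ yields $\rho|_{I_{\Q_{p^{2a}}}}\cong(\omega_{2a}^{\sum k_jp^j}\oplus\omega_{2a}^{\sum k_jp^j})$-type characters; since $\rho$ is irreducible over $I_K$ and $\omega_{2a}^{p^a+1}=\omega_a$, the two $I_{\Q_{p^{2a}}}$-characters must be Galois-conjugate, forcing $\rho|_{I_K}\cong\omega_{2a}^{m}\oplus\omega_{2a}^{p^am}$ with $m=\sum_{j\in\Z/a\Z}k_jp^j$, i.e.\ $\rho\cong\ind(\omega_{2a}^{m})\otimes\bar\eta$; the edge case $\vk=\0$ is exactly $\ind(\omega_{2a}^{p^a-1})\otimes\bar\eta$ since $m\equiv p^a-1$ is the unique irreducible choice.

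The main obstacle I expect is controlling the \emph{unramified} part of the reduction precisely enough to conclude the twisting character $\bar\eta$ "extends to $G_K$" — i.e.\ tracking how $\bar A_j$ and the residual Frobenius eigenvalues contribute, and verifying that after extracting the explicit $\omega_a$- (resp.\ $\omega_{2a}$-) power the leftover is genuinely a character of $G_K$ (not merely of $I_K$). A secondary technical point is justifying that the mod $\m_E$ reduction of $\N(T^*)$ really is the étale $(\varphi,\Gamma)$-module computing $\bar V$ with the claimed diagonal $\varphi$-matrices: this needs the weight bound $\bar k_1\le p-1$ so that $\hat\Delta_{\k_j}\equiv\diag(\pi^{(p-1)k_j},1)\bmod p$ survives reduction without the filtration jumping, together with the compatibility of Fontaine's $\D$ with reduction mod $\m_E$ and with duals (cited from \cite{FO91}, \cite{BE04}). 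Once these are in place, the identification of the semisimplification with the tame characters above is routine.
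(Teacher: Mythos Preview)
Your setup in the first paragraph---lifting a strongly divisible lattice to an integral embedded Wach module via Theorem~\ref{T:FL} and reducing mod $\m_E$ to obtain $\varphi$-matrices $\bar A_j\,\diag(\pi^{(p-1)k_j},1)$---is exactly what the paper does, and is the only genuine computation needed. The case analysis in your second paragraph, however, has a real gap.

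In part~(i) you invoke equation~(\ref{E:graph}) to exhibit $V$ as an extension of crystalline characters. But (\ref{E:graph}) and Theorem~\ref{T:irreducible} concern reducibility of $V$, not of $\rho$: an irreducible crystalline $V$ can have reducible $\rho$ (this is in fact the typical situation when the Newton slope is positive), so you cannot deduce that $V$ is reducible. Even granting reducibility of $V$, your last step is wrong: the two characters you obtain are $\omega_a^{\sum_{j\in J}k_jp^j}$ and $\omega_a^{\sum_{j\notin J}k_jp^j}$, whose ratio is $\omega_a^{\sum_{j\in J}k_jp^j-\sum_{j\notin J}k_jp^j}$, not $\omega_a^{\sum_j k_jp^j}$, unless $J=\emptyset$ or $J=\Z/a\Z$. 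In part~(ii) the claim that $V|_{G_{\Q_{p^{2a}}}}$ becomes reducible is similarly unjustified; Newton slopes are unchanged under unramified restriction, so your stated reason does not apply.

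The paper sidesteps all of this by a different and shorter route. It quotes Breuil's classification of $2$-dimensional mod~$p$ representations of $G_K$ \cite[Corollary~2.9]{Br07}, which already gives $\rho|_{I_K}$ the shape asserted in (i) or (ii) with \emph{some} exponents $r_j$, $0\le r_j\le p-1$. All that remains is to pin down $\sum r_jp^j\bmod(p^a-1)$, and for this only $\det\rho$ is needed: your Wach-module reduction gives $\det(\Mat(\varphi|_{\bar N}))=(u_j\pi^{(p-1)k_j})_j$ with $u_j\in\bar\F_p[[\pi]]^*$, hence $\det\rho\cong\omega_a^{\sum_j k_jp^j}$ up to an unramified twist by Proposition~\ref{P:omega_a}. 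Comparing with the determinant of Breuil's form forces $r_j=k_j$. Thus the missing ingredient in your argument is the external input from \cite{Br07}; once you have it, the determinant computation you already set up suffices, and you never need to analyse the reducibility of $V$ itself or pass to $\Q_{p^{2a}}$.
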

\begin{proof}
(i)
Since $\rho$ is reducible,
by Breuil's \cite[Corollary 2.9]{Br07} we have that
$$
\rho|_{I_K}\cong
\left(
  \begin{array}{cc}
    \omega_a^{\sum_{j\in \Z/a\Z}r_jp^j} & \star \\
    0 & 1 \\
  \end{array}
\right)\otimes \bar\eta
$$
for some character $\bar\eta$ that extends to $G_K$
and some $0\leq r_j\leq p-1$ and not all equal to $p-1$.
Thus we have upon a twist by a character
\begin{eqnarray}\label{E:compare2}
\det(\rho)\cong \omega_a^{\sum_{j\in\Z/a\Z}r_j p^j}.
\end{eqnarray}

On the other hand, let $T$ be a Galois stable lattice in $V$.
Write $(L=\prod_{j\in\Z/a\Z}L_j,\varphi):=\dcris^*(T)$
for a strongly divisible lattice in $D=\dcris^*(V)$. With respect to basis adapted to filtration, we have $\Mat(\varphi|_L)=(A_0\Delta_0,\ldots,A_{a-1}\Delta_{a-1})$
where $A_j\in \GL_2(\cO_E)$ and $\Delta_j=\diag(p^{k_j},1)$.
By Theorem \ref{T:FL}, we may lift $T$ to integral Wach module $N$
with $\Mat(\varphi|_N)=(\hA_0\hat\Delta_0,\ldots,\hA_{a-1}\hat\Delta_{a-1})$
where $\hA_j\equiv A_j\bmod \pi$ lies in $\GL_2(\cO_E[[\pi]])$ and
$\hat\Delta_j = \diag(q^{k_j},1)$ where $q=\varphi(\pi)/\pi$.
Notice that $q\equiv \pi^{p-1}\bmod p$, we have
the \'etale $(\varphi,\Gamma)$-module
$\N(\bar{V}):= \N(T/\m_E T) \cong N/\m_E N$ over $\bar\F_p$
given by
$$\Mat(\varphi|_{\bar{N}})=
\left(
\bar\hA_0
\left(
            \begin{array}{cc}
              \pi^{(p-1)k_0} & 0 \\
              0 & 1 \\
            \end{array}
\right),
          \ldots,
\bar\hA_{a-1}
\left(
  \begin{array}{cc}
    \pi^{(p-1)k_{a-1}} & 0 \\
    0 & 1 \\
  \end{array}
\right)
\right)
$$
that commutes with $\Gamma_K$-action.
Write $u_j:=\det(\bar\hA_j)=(\det(\hA_j)\bmod \m_E)$ for every $j$, it is clear that $u_j\in \bar\F_p[[\pi]]^*$. Thus $\D(\det(\rho))$ has
$$\Mat(\varphi)=\det(\Mat(\varphi|_{\bar{N}})) =(u_0\pi^{(p-1)k_0},\ldots,u_{a-1}\pi^{(p-1)k_{a-1}}).$$
Recall from Proposition \ref{P:omega_a} that
$\D(\omega_a^{\sum_{j\in\Z/a\Z}k_jp^j})$ has $\Mat(\varphi)=(\pi^{(p-1)k_0},\ldots,\pi^{(p-1)k_{a-1}})$,
we notice that upon an unramified twist
$\det(\rho)\cong\omega_a^{\sum_{j=0}^{a-1}k_jp^j}.$
Comparing this with (\ref{E:compare2}),
since we have $\omega_a^h=1$ if and only if
$(p^a-1)|h$, we have $k_j=r_i$ if $\vk\neq (p-1,\ldots,p-1)$. If $\vk=(p-1,\ldots,p-1)$
we have $\det(\rho)\cong \Id$ and hence
our assertion is not affected if we set $r_j=p-1=k_j$ for every $j$. This finishes our proof of part (i).

(ii)
This part follows a similar argument as part (i).
Since $\rho$ is irreducible,
by Breuil's \cite[Corollary 2.9]{Br07} we have that
$$
\rho|_{I_K}\cong
\left(
  \begin{array}{cc}
    \omega_{2a}^{\sum_{j\in \Z/a\Z}r_jp^j} & 0 \\
    0 & \omega_{2a}^{p^a\sum_{j\in\Z/a\Z}r_jp^j} \\
  \end{array}
\right)\otimes \bar\eta
$$
for some character $\bar\eta$ that extends to $G_K$
where $1\leq r_0\leq p$,  $0\leq r_j\leq p-1$ for $1\leq j\leq a-1$,
and $\vec{r}\neq (p,p-1,\ldots,p-1)$. For the case $r_0=p$
consider the $p$-adic round-up,  we may assume from now on that
$0\leq r_j\leq p-1$ for all $j$ and $r_0\neq 0$.
Upon a twist we have
\begin{eqnarray}\label{E:compare1}
\det(\rho)\cong \omega_a^{\sum_{j\in\Z/a\Z}r_j p^j}.
\end{eqnarray}
The rest of the argument in part (i) carries over here without any problem since the weight $\vk$ satisfies the same hypothesis of Theorem \ref{T:FL},
hence we may Wach lift $V$, then reduce mod $P$ and obtain that
$\det(\rho)\cong \omega_a^{\sum_{j\in\Z/a\Z}k_jp^j}\otimes \bar\eta$ for some
character $\bar\eta$. Now we compare this with (\ref{E:compare1}):
If $\vk\neq \0$, then we have that $k_j=r_j$ for all $j$.
If $\vk=\0$, then we have $r_j=p-1$ for all $j$. In this case
$\rho\cong \ind(\omega_{2a}^{p^a-1})\otimes \bar\eta$.
\end{proof}

\subsection{Some analytic families in $\Rep_{\cris/E}^\vk (G_K)$}
\label{S:5.2}

The purpose of this subsection is to construct some explicit
families of crystalline representations of dimension $2$.
This allows us to have some crystalline deformation of
prescribed reduction type. We are able to address on the boundary
weight $\vk=(p,\ldots,p)$ in these explicit constructions.
Let $\cO_\Cp$ be the ring of integers of $\Cp$, and
let $\tilde{\E}^+ = \lim_{x\mapsto x^p}\cO_{\Cp}$ be the
ring defined by Fontaine \cite{Fo88a}, and let $\tilde\E=\tilde{\E}^+[1/\pi]$.

We write $\Rep_{\cris/E}^{\vk}(\cdot)$
for the category of $E$-linear crystalline representations of $G_K$
with embedded Hodge polygon $\vk=(k_0,\ldots,k_{a-1})$. Let $\bar{k}_1:=\sum_{j\in\Z/a\Z}k_j/a$.
We define a 2-dimensional crystalline representations
$V_{\vk,\vv,\vu}$ in $\Rep_{\cris/E}^{\vk}(G_K)$.
Write $\Gal(K/\Qp)=\Z/a\Z=\cA\cup \cB$ as a partition.
Let $\vv=(v_0,\ldots,v_{a-1}) \in\cO_E^a$ and let
$\vu=(u_j)_{j\in\cB}$ with $u_j\in\cO_E^*$.
Let $A:=(A_j)_{j\in\Z/a\Z}$ where
$$
A_j=
\left\{
\begin{array}{ll}
\left(\begin{array}{cc}
      0 & -1 \\
      1 & v_j \\
      \end{array}
\right) & \mbox{if $j\in \cA$ or }\\
\left(
  \begin{array}{cc}
    1        & 0 \\
    v_j & u_j \\
  \end{array}
\right) &\mbox{if $j\in \cB$.}
\end{array}
\right.
$$
Recall the bijection $\Theta: \GL_2(\cO_E)^a/\sim_\vk \longrightarrow \Rep_{\cris/E}^{\vk}(G_K)$ per Theorem \ref{T:A-analog}.
Let $V_{\vk,\vv,\vu}:=\Theta(A)$ in $\Rep_{\cris/E}^{\vk}(G_K)$.

We have a remark on the parameters $\vv$ and $\vu$ in $V_{\vk,\vv,\vu}$.
If we define an equivalence $\sim_\vk$ on $\cO_E^a$,
denoted  by $\vv\sim_\vk \vv'\in \cO_E^a$,
by $v_j=w^{(-1)^j}v_j'$ for some $w\in\cO_E^*$ when $a$ is even; or $\vv=\vv'$ when $a$ is odd, then we have the following property:

\begin{proposition}\label{P:isom=2}
Let $\Z/a\Z=\cA$ and let $k_j>0$ for every $j$.
Then $V_{\vk,\vv} \cong V_{\vk,\vv'}$ if and only if
$\vv\sim_\vk \vv'$.
\end{proposition}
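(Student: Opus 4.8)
The plan is to reduce the statement, via the bijection $\Theta$ of Theorem~\ref{T:A-analog}, to a concrete piece of semilinear algebra. Since $\cA=\Z/a\Z$, the parameter $\vu$ is vacuous and $V_{\vk,\vv}=\Theta\bigl((A_j)_j\bigr)$ with $A_j=\left(\begin{smallmatrix}0&-1\\1&v_j\end{smallmatrix}\right)$; likewise $V_{\vk,\vv'}=\Theta\bigl((A'_j)_j\bigr)$ with $A'_j=\left(\begin{smallmatrix}0&-1\\1&v'_j\end{smallmatrix}\right)$. By Theorem~\ref{T:A-analog} the map $\Theta$ is a bijection onto isomorphism classes, so $V_{\vk,\vv}\cong V_{\vk,\vv'}$ if and only if $(A_j)_j\sim_\vk(A'_j)_j$ as tuples in $\GL_2(\cO_E)^a$. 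Because every $k_j>0$, the filtration on $D_j$ has a single jump, at level $k_j$, with $r_1=1$; hence $\cP_{\k_j}$ is the Borel subgroup of upper-triangular matrices in $\GL_2(\cO_E)$, and for $C_j=\left(\begin{smallmatrix}\alpha_j&\beta_j\\0&\delta_j\end{smallmatrix}\right)\in\cP_{\k_j}$ one has $C_j^\flat=\left(\begin{smallmatrix}\alpha_j&p^{k_j}\beta_j\\0&\delta_j\end{smallmatrix}\right)$.

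First I would write the defining relation $A'_j=C_{j-1}^{-1}A_jC_j^\flat$ in the equivalent form $C_{j-1}A'_j=A_jC_j^\flat$ and compare entries. The $(1,1)$-entry forces $\beta_{j-1}=0$, and since $j$ runs over all of $\Z/a\Z$ every $\beta_j$ vanishes, so each $C_j=\diag(\alpha_j,\delta_j)$ (and $C_j^\flat=C_j$). The $(2,1)$- and $(1,2)$-entries then give $\alpha_j=\delta_{j-1}$ and $\delta_j=\alpha_{j-1}$, while the $(2,2)$-entry gives $v'_j=\delta_{j-1}^{-1}\delta_j\,v_j$ for every $j$. Conversely, any family of units $\alpha_j,\delta_j\in\cO_E^*$ satisfying these constraints produces, through $C_j=\diag(\alpha_j,\delta_j)\in\cP_{\k_j}$, an explicit $\sim_\vk$-equivalence. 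Thus $(A_j)_j\sim_\vk(A'_j)_j$ holds exactly when such units exist.

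Next I would analyse the system $\alpha_j=\delta_{j-1}$, $\delta_j=\alpha_{j-1}$ on the cyclic index set $\Z/a\Z$: it yields $\alpha_j=\alpha_{j-2}$ and $\delta_j=\delta_{j-2}$, so both sequences are $2$-periodic in the index. If $a$ is odd then $2$ generates $\Z/a\Z$, so all $\alpha_j$ coincide and all $\delta_j=\alpha_{j-1}$ equal that same unit; hence $v'_j=v_j$, i.e.\ $\vv=\vv'$, and conversely $\vv=\vv'$ is realised by $C_j=\Id$. If $a$ is even, let $\alpha$ and $\alpha'$ be the common values of $\alpha_j$ on the even and odd residues; then $\delta_j=\alpha_{j-1}$ equals $\alpha'$ on even indices and $\alpha$ on odd ones, and with $w:=\alpha'/\alpha\in\cO_E^*$ one computes $v'_j=\delta_{j-1}^{-1}\delta_j v_j=w^{(-1)^j}v_j$, which is exactly the relation $\vv\sim_\vk\vv'$. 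For the converse in the even case, given $w$ one sets $\delta_j=w,\ \alpha_j=1$ on even residues and $\delta_j=1,\ \alpha_j=w$ on odd residues, and checks directly that $\alpha_j=\delta_{j-1}$, $\delta_j=\alpha_{j-1}$, $v'_j=\delta_{j-1}^{-1}\delta_j v_j$, and $\det C_j\neq 0$ all hold.

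The computation is routine; the one genuinely delicate point is the bookkeeping of the parity of $a$, which is forced by the swap $\alpha\leftrightarrow\delta$ at each step of the recursion, hence by $2$-periodicity around $\Z/a\Z$, and this is precisely why the definition of $\sim_\vk$ on $\cO_E^a$ has to split into two cases. I would also emphasise that the hypothesis $k_j>0$ for all $j$ is essential: it is what makes each $\cP_{\k_j}$ the full Borel (rather than all of $\GL_2(\cO_E)$), so that the $(1,1)$-relation annihilates the off-diagonal entries $\beta_j$; without it the extra freedom collapses the equivalence and the proposition fails.
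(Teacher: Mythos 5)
Your proof is correct and follows essentially the same route as the paper's: invoke the bijection of Theorem~\ref{T:A-analog} to translate the isomorphism into the parabolic equivalence $A'_j=C_{j-1}^{-1}A_jC_j^\flat$, then compare entries to force the $C_j$ diagonal with the swap $\alpha_j=\delta_{j-1}$, $\delta_j=\alpha_{j-1}$, and conclude by the resulting $2$-periodicity around $\Z/a\Z$. Your write-up is in fact slightly more complete than the paper's, since you also give the explicit converse construction of the $C_j$ from a given unit $w$, which the paper leaves implicit.
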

\begin{proof}
By Theorem \ref{T:A-analog}, we have $V_{\vk,\vv}\cong
V_{\vk,\vv'}$ if and only if $A\sim_\vk A'$ where
$A'=(\left(\begin{array}{cc}
      0 & -1 \\
      1 & v'_j \\
      \end{array}
\right))_j$.
Namely, there is $C_j =\left(%
\begin{array}{cc}
  c_{j1} &  c_{j3} \\
  0 & c_{j2} \\
\end{array}%
\right)
\in\GL_2(\cO_E)$
such that $C_{j-1} A_j = A'_j C_j^\flat$
for every $j\in\Z/a\Z$.
This implies that $C_j=\diag(c_{j1},c_{j2})$
and $v_j=\frac{c_{j2}}{c_{j-1,2}}v_j'$, where
$c_{j2}=c_{j-1,1}$ and $c_{j1}=c_{j-1,2}$.
Write $w=c_{02}/c_{a-1,2}$, then
we get $v_j=w^{(-1)^j}v_j'$ for every $j$.
When $a$ is odd this forces $w=1$ and hence
our proposition follows.
\end{proof}

Before our construction of Wach modules, we
retain notations from Lemma \ref{L:solution1}.
Let $b=a$ if $\#\cA$ is even or $b=2a$ if $\#\cA$ is odd.
If $k_j=0$ let $\delta_j=0$; if $k_j>0$ let
\begin{eqnarray}\label{E:delta_j}
\delta_j &:=& - \ord_p(\lambda_b^{(g_{j-1}-f_{j-1})(\varphi)}\bmod \pi^{\fk}).
\end{eqnarray}
We shall need the following lemma for the proof of Theorem \ref{T:family}.

\begin{lemma}\label{L:delta}
Let $\vk=(k_0,\ldots,k_{a-1})\in\Z_{\geq 0}^a$ and let $\bar{k}_1=\sum_{j\in\Z/a\Z}k_j/a$.
For $j\in\Z/a\Z$ then we have $0\leq \delta_j\leq \frac{\fk-1}{p-1}$.
If $\vk=(p,\ldots,p)$ then $\delta_j=0$ for every $j\in\Z/a\Z$.
\end{lemma}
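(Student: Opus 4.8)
The plan is to extract both assertions from the explicit description of the solution given in Lemma~\ref{L:solution1}(iii), which identifies $h_j:=\lambda_b^{(g_{j-1}-f_{j-1})(\varphi)}$ as the unique member of $\cS\cap(1+\pi E[[\pi]])$ and records that it lies in $\cR_{p-1,\Qp}$. I will assume $\vk\neq\0$ throughout (if $\vk=\0$ every $\delta_j=0$ and there is nothing to prove); then $\bar{k}_1>0$, so $\fk\geq 1$ and the claimed upper bound $\frac{\fk-1}{p-1}$ is nonnegative.

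First I would dispose of the general inequality. If $k_j=0$ then $\delta_j=0$, which lies in $[0,\frac{\fk-1}{p-1}]$. If $k_j>0$, write $h_j=\sum_{s\geq 0}a_s\pi^s$; by Lemma~\ref{L:solution1}(iii) we have $a_0=1$ and $\ord_p a_s\geq -\frac{s}{p-1}$ for all $s$. By the definition of $\ord_p$ on a truncation, $\delta_j=-\min_{0\leq s\leq \fk-1}\ord_p a_s$. Since $\ord_p a_0=0$ this minimum is $\leq 0$, so $\delta_j\geq 0$; since $\ord_p a_s\geq -\frac{s}{p-1}\geq -\frac{\fk-1}{p-1}$ for $0\leq s\leq \fk-1$, the minimum is $\geq -\frac{\fk-1}{p-1}$, so $\delta_j\leq \frac{\fk-1}{p-1}$.

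The substantive part is $\vk=(p,\ldots,p)$, where $\bar{k}_1=p$ and $\fk=p$, so the claim reduces to showing each $h_j\bmod\pi^p$ is $p$-integral. I would first invoke the cycle/path analysis in the proof of Lemma~\ref{L:solution1}(i): each coefficient of $f_{j-1}(\varphi)$ and $g_{j-1}(\varphi)$ is a sum of certain $k_m$'s accumulated along a directed path, hence a nonnegative multiple of $p$ once all $k_m=p$. Thus $f_{j-1}=pF_{j-1}(\varphi)$ and $g_{j-1}=pG_{j-1}(\varphi)$ with $F_{j-1},G_{j-1}\in\Z_{\geq 0}[\varphi]$, so $h_j=\lambda_b^{\,p(G_{j-1}-F_{j-1})(\varphi)}$ is a finite product of integer powers of the elements $\varphi^m(\lambda_b^p)$. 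Since $\varphi^m(\pi)\in\pi\Zp[[\pi]]$, $\varphi^m$ descends to $\Zp[\pi]/\pi^p$ and preserves integral $1$-units there; so it suffices to prove $\lambda_b^p\bmod\pi^p$ is an integral $1$-unit in $\Zp[\pi]/\pi^p$, for then $h_j\bmod\pi^p$ is one too and $\ord_p(h_j\bmod\pi^p)=0$.

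The remaining computation is the main obstacle — elementary but delicate. Starting from $\lambda_b=(q/p)\,\varphi^b(\lambda_b)$, I would show $\lambda_b\bmod\pi^p=W+\tfrac{1}{p}\pi^{p-1}$ for an integral $1$-unit $W$: on one hand $q/p=1+\sum_{i=1}^{p-2}\tfrac{1}{p}\binom{p}{i+1}\pi^i+\tfrac{1}{p}\pi^{p-1}$ has $p$-integral coefficients except at $\pi^{p-1}$; on the other, writing $\lambda_b=\sum_s c_s\pi^s\in\cR_{p-1,\Qp}$, the congruence $\varphi^b(\pi)\equiv\pi^{p^b}\bmod p$ forces $\varphi^b(\pi)^s\bmod\pi^p$ to be divisible by $p$ for $1\leq s\leq p-1$ and to vanish for $s\geq p$, which combined with $\ord_p c_s\geq -\tfrac{s}{p-1}$ makes $\varphi^b(\lambda_b)\bmod\pi^p$ $p$-integral, and multiplying by $q/p$ keeps only the extra summand $\tfrac{1}{p}\pi^{p-1}$. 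Finally, expanding $(W+\tfrac{1}{p}\pi^{p-1})^p$ modulo $\pi^p$: every $l\geq 2$ term carries $\pi^{l(p-1)}$ with $l(p-1)\geq p$ and so vanishes, the $l=1$ term $\binom{p}{1}W^{p-1}\tfrac{1}{p}\pi^{p-1}=W^{p-1}\pi^{p-1}\equiv\pi^{p-1}$ (as $W\equiv 1\bmod\pi$), and the $l=0$ term is $W^p$; hence $\lambda_b^p\bmod\pi^p=W^p+\pi^{p-1}$, an integral $1$-unit. This gives $\delta_j=0$ for every $j$ and finishes the proof.
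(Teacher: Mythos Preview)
Your proof is correct and follows essentially the same route as the paper. Both arguments derive the general bound from the membership $h_j\in\cR_{p-1,\Qp}$ of Lemma~\ref{L:solution1}(iii), and for $\vk=(p,\ldots,p)$ both observe that $f_{j-1},g_{j-1}\in p\Z[\varphi]$ and then show that the $p$-th power of a $1$-unit in $\cR_{p-1,\Qp}$ is integral modulo $\pi^p$ via the binomial expansion; the paper applies this directly to $C=\lambda_b^{H(\varphi)}$, while you reduce first to $\lambda_b^p$ and then propagate integrality through $\varphi^m$ and products, but this is only a minor reorganization of the same idea.
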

\begin{proof}
By Lemma \ref{L:banach-solution1},
we know $\lambda_b\in\cR_{p-1,\Qp}$ for any $b$ and hence
by the very definition we have
$0\leq \delta_j\leq \frac{\fk-1}{p-1}$.

It remains to show that
for $k_0=\ldots =k_{a-1}= p$, we have $\delta_j=0$ for all $j$.
By Lemma \ref{L:solution1} and its proof,
$f_j(\varphi),g_j(\varphi)\in p\Z[\varphi]$ for all $j$ since
they are algebraic combinations of $k_0,\ldots,k_{a-1}$. Thus
$\lambda_b^{(g_{j-1}-f_{j-1})(\varphi)}\in\lambda_b^{p\Z[\varphi]}$
and hence there exists
$C:=c_0+c_1\pi+\ldots+c_{p-2}\pi^{p-2}+p^{-1}c_{p-1}\pi^{p-1}$ in
$\cR_{p-1,\Qp}$ with $c_i\in\Zp$ such that
$(\lambda_b^{(g_{j-1}-f_{j-1})(\varphi)}\bmod \pi^p) =(C^p \bmod
\pi^p)$, which is clearly in $\Zp[\pi]$
by examining its binomial expansion. This shows $\delta_j=
-\ord_p(\lambda_b^{(g_{j-1}-f_{j-1})(\varphi)}\bmod \pi^p)=0$ by definition.
\end{proof}

Let
\begin{eqnarray}\label{E:z_j}
z_j &:=&
(p^{\delta_j}\lambda_b^{(g_{j-1}-f_{j-1})(\varphi)}\bmod
\pi^{\fk}),
\end{eqnarray}
notice that $z_j\in\Zp[\pi]$ by the very definition of $\delta_j$ above.

Let $u_j\in\cO_E^*$. Write $X_j$ for a variable on which $\gamma$
and $\varphi$ act trivially, we define for $k_j>0$
\begin{eqnarray}\label{E:P}
\hP_j(X_j) &:=&
\left\{
\begin{array}{ll}
\left(
              \begin{array}{cc}
                0 & -1 \\
                q^{k_j} & z_jX_j\\
              \end{array}
            \right), &\mbox{if $j\in\cA$}\\
\nonumber
\left(
\begin{array}{cc}
q^{k_j}     & 0 \\
q^{k_j}z_jX_j & u_j \\
\end{array}
\right), &\mbox{if $j\in\cB$};
\end{array}
\right.\\
\hG_{\gamma,j} &:= &\left(
          \begin{array}{cc}
            s_j & 0 \\
            0  & t_j \\
          \end{array}
        \right)
\end{eqnarray}
with $s_j,t_j\in 1+\pi \Zp[[\pi]],z_j\in \Zp[[\pi]]$
be as given in Lemma \ref{L:solution1} and above.
If $k_j=0$ then set $\hP_j(X_j):=\hP_j(0)$ and the same $\hG_{\gamma,j}$.

\begin{proposition}\label{P:3.1.2-analog}
Write $m=\fk$.
For any $\gamma \in \Gamma_K$ we have for every $j$
$$
\hG_{\gamma,j-1} - \hP_j(X_j)
\hG_{\gamma,j}^\varphi \hP_j(X_j)^{-\gamma}
\in
\pi^{m}M_{2\times 2}(\Zp[[\pi,X_j]]).$$
Moreover, the left-hand-side vanishes for $k_j=0$.
\end{proposition}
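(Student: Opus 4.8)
The plan is to reduce the matrix congruence to a single scalar congruence and then deduce that congruence from the $\gamma$-equation (\ref{E:h_j}) for the series used to build $z_j$. Throughout set $h_j := \lambda_b^{(g_{j-1}-f_{j-1})(\varphi)}$. By Lemma \ref{L:solution1}(iii) this $h_j$ is the unique element of $1+\pi E[[\pi]]$ with $h_j^{\gamma-1} = s_{j-1}/t_{j-1}$, equivalently $t_{j-1}\gamma(h_j) = h_j s_{j-1}$; by (\ref{E:delta_j}) and (\ref{E:z_j}) we have $z_j \equiv p^{\delta_j}h_j \bmod \pi^m$ with $z_j\in\Zp[\pi]$, while $s_j,t_j\in 1+\pi\Zp[[\pi]]$.

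First I would carry out, separately for $j\in\cA$ and $j\in\cB$, the $2\times 2$ computation of $\hP_j(X_j)\hG_{\gamma,j}^\varphi\hP_j(X_j)^{-\gamma}$, using $\det\hP_j(X_j)=q^{k_j}$ (resp.\ $q^{k_j}u_j$) to write $\hP_j(X_j)^{-\gamma}$ via the adjugate. When the triple product is multiplied out, the factors $q^{k_j}$ from $\hP_j(X_j)$ cancel the factors $\gamma(q)^{-k_j}$ from $\hP_j(X_j)^{-\gamma}$ (producing the unit $q^{(1-\gamma)k_j}$), and in case $\cB$ the factors $u_j,u_j^{-1}$ cancel as well, so all entries lie in $\Zp[[\pi,X_j]]$. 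Substituting the relations of Lemma \ref{L:solution1}(i) --- namely $s_{j-1}=t_j^\varphi$, $t_{j-1}=q^{(1-\gamma)k_j}s_j^\varphi$ for $j\in\cA$, and $s_{j-1}=q^{(1-\gamma)k_j}s_j^\varphi$, $t_{j-1}=t_j^\varphi$ for $j\in\cB$ --- one finds that the two diagonal entries come out exactly equal to $s_{j-1}$ and $t_{j-1}$ and that the upper-right entry is $0$. Hence
\[
\hG_{\gamma,j-1} - \hP_j(X_j)\hG_{\gamma,j}^\varphi\hP_j(X_j)^{-\gamma}
=\begin{pmatrix} 0 & 0 \\ \pm X_j\bigl(t_{j-1}\gamma(z_j) - z_j s_{j-1}\bigr) & 0\end{pmatrix},
\]
the sign depending only on whether $j\in\cA$ or $j\in\cB$. (Equivalently, one may compute $\hG_{\gamma,j-1}\hP_j(X_j)^\gamma - \hP_j(X_j)\hG_{\gamma,j}^\varphi$ first and observe that the apparent $\gamma(q)^{k_j}$-denominators disappear upon multiplying on the right by $\hP_j(X_j)^{-\gamma}$.) It therefore suffices to show $t_{j-1}\gamma(z_j) - z_j s_{j-1} \in \pi^m\Zp[[\pi]]$.

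For this, note first that the left-hand side manifestly lies in $\Zp[[\pi]]$, since $z_j\in\Zp[\pi]$, $\gamma$ preserves $\Zp[[\pi]]$, and $s_{j-1},t_{j-1}\in 1+\pi\Zp[[\pi]]$. On the other hand, because $\gamma(\pi)\in\pi\Zp[[\pi]]$ with $\gamma(\pi)/\pi$ a unit, the congruence $z_j\equiv p^{\delta_j}h_j \bmod \pi^m$ gives $\gamma(z_j)\equiv p^{\delta_j}\gamma(h_j) \bmod \pi^m E[[\pi]]$, and hence
\[
t_{j-1}\gamma(z_j) - z_j s_{j-1} \equiv p^{\delta_j}\bigl(t_{j-1}\gamma(h_j) - h_j s_{j-1}\bigr) \equiv 0 \pmod{\pi^m E[[\pi]]},
\]
the vanishing being exactly the relation $h_j^{\gamma-1}=s_{j-1}/t_{j-1}$. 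Thus all coefficients of degree $<m$ vanish, and combined with integrality this yields $t_{j-1}\gamma(z_j)-z_js_{j-1}\in \pi^m E[[\pi]]\cap\Zp[[\pi]]=\pi^m\Zp[[\pi]]$; multiplying by $\pm X_j$ puts the matrix difference in $\pi^m M_{2\times 2}(\Zp[[\pi,X_j]])$.

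Finally, for the ``moreover'' statement suppose $k_j=0$: by convention $\hP_j(X_j)=\hP_j(0)$, so the $z_jX_j$ (resp.\ $q^{k_j}z_jX_j$) entry becomes $0$ and $q^{k_j}=1$. Repeating the computation above with these substitutions and with Lemma \ref{L:solution1}(i) in its degenerate form ($q^{(1-\gamma)\cdot 0}=1$, so $s_{j-1}=t_j^\varphi,\ t_{j-1}=s_j^\varphi$ for $j\in\cA$ and $s_{j-1}=s_j^\varphi,\ t_{j-1}=t_j^\varphi$ for $j\in\cB$), the off-diagonal error term is absent and the triple product equals $\hG_{\gamma,j-1}$ on the nose. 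I expect the only genuinely delicate point to be the interplay in the third paragraph: $z_j$ is by design merely an \emph{integral truncation} of the non-integral series $p^{\delta_j}h_j\in\cR_{p-1,\Qp}$, and it is precisely the defining $\gamma$-relation for $h_j$ that kills the low-order part of $t_{j-1}\gamma(z_j)-z_js_{j-1}$, letting integrality upgrade ``divisible by $\pi^m$ over $E$'' to ``divisible by $\pi^m$ over $\Zp$''.
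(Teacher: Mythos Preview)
Your argument is correct and follows essentially the same route as the paper: reduce the matrix congruence, via the relations (\ref{E:s_j}) of Lemma~\ref{L:solution1}(i), to the single scalar quantity $t_{j-1}\gamma(z_j)-z_js_{j-1}$, and then use the $\gamma$-equation (\ref{E:h_j}) for $h_j$ together with the truncation $z_j\equiv p^{\delta_j}h_j\bmod\pi^m$ to see this scalar lies in $\pi^m\Zp[[\pi]]$. The only organizational difference is that the paper first computes $\hG_{\gamma,j-1}\hP_j(X_j)^\gamma-\hP_j(X_j)\hG_{\gamma,j}^\varphi$ and then right-multiplies by $\hP_j(X_j)^{-\gamma}$ (invoking \cite[Proposition~3.1.3]{BLZ04} to handle the $\gamma(q)^{-k_j}$ in the inverse), whereas you expand the triple product $\hP_j\hG_{\gamma,j}^\varphi\hP_j^{-\gamma}$ directly and let the $q^{k_j}\gamma(q)^{-k_j}$ factors combine into the unit $q^{(1-\gamma)k_j}$; you note this equivalence yourself in parentheses. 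Your explicit treatment of the scalar step---showing membership in $\Zp[[\pi]]$ and in $\pi^mE[[\pi]]$ separately and intersecting---is more transparent than the paper's ``by the very definition of $z_j$'', but the underlying content is the same.
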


\begin{proof}
Let $j\in\cA$.
A straightforward computation
shows that
$$\hG_{\gamma,j-1}\hP_j(X_j)^{\gamma}-\hP_j(X_j)\hG_{\gamma,j}^{\varphi}
= \left(
    \begin{array}{cc}
      0 & t_j^\varphi - s_{j-1} \\
      q^{k_j}(t_{j-1}q^{(\gamma-1)k_j}-s_j^\varphi)
      & (t_{j-1}z_j^\gamma-t_j^\varphi z_j)X_j \\
    \end{array}
  \right).
$$
Note that the two off-diagonal entries vanish if and only if
(\ref{E:s_j}) holds, which has a unique solution for $(s_j,t_j)_j$.
The diagonal entry vanishes if and only if (\ref{E:h_j}) holds by the
very definition of $z_j$ in (\ref{E:z_j}),
whose solution set is $\cS$ as defined in Lemma \ref{L:solution1}.
We have
$$\hG_{\gamma,j-1}\hP_j(X_j)^{\gamma}-\hP_j(X_j)\hG_{\gamma,j}^{\varphi}
=\left(
  \begin{array}{cc}
    0 & 0 \\
    0 & \pi^{m}\star\\
  \end{array}
\right)
\in
\pi^{m}M_{2\times 2}(\Zp[[\pi,X_j]]).$$
Following an identical argument as that in the proof of
\cite[Proposition 3.1.3]{BLZ04} we have
\begin{eqnarray*}
\hG_{\gamma,j-1} - \hP_j(X_j)
\hG_{\gamma,j}^\varphi \hP_j(X_j)^{-\gamma}
&=&
    \left(
    \begin{array}{cc}
    0 & 0 \\
    0 & \pi^{m}\star\\
    \end{array}
    \right) \hP_j(X_j)^{-\gamma}\\
&=& \left(
    \begin{array}{cc}
    0 & 0 \\
    0 & \pi^{m}\star\\
    \end{array}
    \right)\left(
                        \begin{array}{cc}
                          z_jX_jq^{-k_j} & q^{-k_j} \\
                          -1 & 0 \\
                        \end{array}
                      \right)^\gamma \\
&=&
\left(
    \begin{array}{cc}
    0 & 0 \\
    0 & \pi^{m}\star\\
    \end{array}
    \right)
\in \pi^{m}M_{2\times 2}(\Zp[[\pi,X_j]]).
\end{eqnarray*}
Now suppose $j\in\cB$. We have
$$\hG_{\gamma,j-1}\hP_j(X_j)^{\gamma}-\hP_j(X_j)\hG_{\gamma,j}^{\varphi}
= \left(
    \begin{array}{cc}
      q^{k_j}(q^{(\gamma-1)k_j}s_{j-1}- s_j^\varphi)  &  0 \\
      q^{k_j}z_j(q^{(\gamma-1)k_j}t_{j-1}z_j^{\gamma-1}-s_j^\varphi)X_j
          & (t_{j-1}-t_j^\varphi)u_j \\
          \end{array}
  \right).
$$
Then
\begin{eqnarray*}
\hG_{\gamma,j-1} - \hP_j(X_j)
\hG_{\gamma,j}^\varphi \hP_j(X_j)^{-\gamma}
&=&
\left(
  \begin{array}{cc}
    0 & 0 \\
    \pi^m\star & 0\\
  \end{array}
\right) \in \pi^m M_{2\times 2}(\Zp[[\pi,X_j]]).
\end{eqnarray*}
The case when $k_j=0$ is an easy computation.
\end{proof}

\begin{proposition}\label{P:continuity2}
Let $\hP_j(X_j)$ in $M_{2\times 2}(\Zp[\pi,X_j])$ be as in (\ref{E:P})
for all $j\in \Z/a\Z$
and write $\vX =(X_0,\ldots,X_{a-1})$.
There exist unique matrices $\hG_{\gamma,j}(\vX) \in \Id +
 \pi M_{2\times 2}(\Zp[[\pi,\vX]])$ such that
$$\hG_{\gamma,j-1}(\vX) \hP_{j}(X_{j})^\gamma
=\hP_{j}(X_{j}) \hG_{\gamma,j}(\vX)^\varphi$$
for all $j\in \Z/a\Z$.
\end{proposition}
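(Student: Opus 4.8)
The plan is to bootstrap the diagonal approximate solution furnished by Proposition \ref{P:3.1.2-analog} up to an exact solution, applying Proposition \ref{P:unique-split} with the coefficient ring enlarged to a $p$-adically complete unit disc as allowed by Remark \ref{R:addX}. Set $m=\fk$ and let $s_j,t_j\in 1+\pi\Zp[[\pi]]$ be the power series of Lemma \ref{L:solution1} (which do not involve the units $\vu$). By Proposition \ref{P:3.1.2-analog} the diagonal matrices $\hG_{\gamma,j}^{(0)}:=\diag(s_j,t_j)$ satisfy $\hG_{\gamma,j-1}^{(0)}-\hP_j(X_j)(\hG_{\gamma,j}^{(0)})^\varphi\hP_j(X_j)^{-\gamma}\in\pi^{m}M_{2\times 2}(\Zp[[\pi,\vX]])$ for every $j$; hence the truncations $\hG_{\gamma,j,m-1}:=(\hG_{\gamma,j}^{(0)}\bmod\pi^{m})$ lie in $\Id+\pi M_{2\times2}(\Zp[\pi])$ and provide an order-$\pi^{m}$ approximate solution to the system.

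Next I would write $\hP_j(X_j)=\hat A_j(X_j)\hat\Delta_{\k_j}$ with $\hat A_j(X_j)\in\GL_2(\cO_E[\pi,X_j])$ lifting $A_j\bmod\pi$ (for $j\in\cA$ one gets $\hat A_j=\left(\begin{smallmatrix}0&-1\\1&z_jX_j\end{smallmatrix}\right)$, of determinant $1$, and for $j\in\cB$ one gets $\hat A_j=\left(\begin{smallmatrix}1&0\\z_jX_j&u_j\end{smallmatrix}\right)$), so that we are precisely in the situation of Proposition \ref{P:unique-split} over the ring $\cO_E[[\vX]]$ (cf.\ Remark \ref{R:addX}), applied with $n=m$. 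One has to verify the inequality $n>\bar k_1$, or $n\ge\bar k_1$ in the endpoint case: if $\bar k_1\notin\Z$ then $m=\fk>\bar k_1$; if $\bar k_1\in\Z$, one first discards the indices $j$ with $k_j=0$ (for which $\hP_j(X_j)=\hP_j(0)$ carries no $X_j$ and the $j$-th equation is either already satisfied or imposes no condition on $X_j$), and then every $k_j>0$, which is exactly the hypothesis $k_{j1}>k_{j2}=0$ needed for the endpoint case in dimension $2$. Proposition \ref{P:unique-split} then produces, for the fixed $\gamma$, unique matrices $\hG_{\gamma,j}(\vX)\equiv\hG_{\gamma,j,m-1}\bmod\pi^{m}$ in $M_{2\times2}(\cO_E[[\pi,\vX]])$ with $\hG_{\gamma,j-1}(\vX)\hP_{j}(X_{j})^\gamma=\hP_{j}(X_{j})\hG_{\gamma,j}(\vX)^\varphi$ for all $j$; since they lift $\Id+\pi(\cdots)$ they lie in $\Id+\pi M_{2\times2}(\cO_E[[\pi,\vX]])$. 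Uniqueness in $\Id+\pi M_{2\times2}(\Zp[[\pi,\vX]])$ is then inherited, because the mod-$\pi^{m}$ part of any solution in $\Id+\pi M_{2\times2}(\cdots)$ is itself uniquely determined: each $\pi$-coefficient of degree $<m$ is pinned down by a solvable recursion of the type occurring in Lemma \ref{L:banach-solution1}(iv), using that $\gamma$ has infinite order.

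Finally I would upgrade the coefficients from $\cO_E$ to $\Zp$. Feeding the diagonal shape back into the recovery formulas $\hG_{\gamma,j-1}=\hP_j\hG_{\gamma,j}^\varphi\hP_j^{-\gamma}$ and the $\varphi^a$-equation $\hG_{\gamma,0}=\hP_{(0)}\hG_{\gamma,0}^{\varphi^a}\hP_{(0)}^{-\gamma}$ of Lemma \ref{L:commute}(4), one checks that in the $\cB$-blocks the unit $u_j$ multiplying the $(2,2)$-entry of $\hP_j$ cancels against the $u_j^{-1}$ coming from $\hP_j^{-\gamma}$ — this is exactly the cancellation already visible at the $\pi^{m}$-level in the proof of Proposition \ref{P:3.1.2-analog}, where the surviving off-diagonal entry is $u_j$-free. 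Consequently the linear equation determining each $\pi$-coefficient of $\hG_{\gamma,j}(\vX)$ has coefficients in $\Zp[[\vX]]$, so the $\pi$-adic limit lies in $\Id+\pi M_{2\times2}(\Zp[[\pi,\vX]])$; equivalently, one constructs a solution directly over $\Zp[[\pi,\vX]]$ by this $u_j$-free recursion and invokes the uniqueness established above. The main obstacle is precisely this last step: verifying that the cancellation of the units $u_j$ persists beyond the $\pi^{m-1}$-truncation to all higher $\pi$-degrees — that is, that the off-diagonal corrections of $\hG_{\gamma,j}$ do not reintroduce $u_j^{\pm1}$ — so that the exact solution genuinely descends to $\Zp[[\pi,\vX]]$ rather than only to $\cO_E[[\pi,\vX]]$; the bookkeeping of the degenerate subcase $\bar k_1\in\Z$ with some $k_j=0$ is a secondary nuisance.
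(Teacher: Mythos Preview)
Your core strategy—use Proposition~\ref{P:3.1.2-analog} to obtain an order-$\pi^m$ approximate solution, then invoke Proposition~\ref{P:unique-split} over an enlarged coefficient ring via Remark~\ref{R:addX}—is exactly the paper's argument. The handling of the $k_j=0$ indices and of the endpoint $m=\fk\ge\bar k_1$ is also the same.

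Where you diverge is in the last paragraph: you apply Proposition~\ref{P:unique-split} over $\cO_E[[\vX]]$ and then attempt a descent to $\Zp[[\vX]]$ by arguing that the units $u_j\in\cO_E^*$ cancel. This detour is unnecessary, and the ``main obstacle'' you flag is a non-issue. The hypothesis of the proposition already reads ``Let $\hP_j(X_j)$ in $M_{2\times 2}(\Zp[\pi,X_j])$'', so in particular each $u_j$ lies in $\Zp$. Consequently one applies Remark~\ref{R:addX} with the base ring $\Zp[[\vX]]$ (which is just the case $E=\Qp$ of $\cO_E[[\vX]]$) rather than $\cO_E[[\vX]]$, and the output of Proposition~\ref{P:unique-split} lands directly in $M_{2\times 2}(\Zp[[\pi,\vX]])$ with no descent required. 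The paper's proof does precisely this in one line. Your cancellation bookkeeping would presumably go through, but it is work you do not need to do once you read the hypothesis correctly.
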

\begin{proof}
By Proposition \ref{P:3.1.2-analog}, if $k_j=0$
then $\hG_{\gamma,j}(\vX) := \hG_{\gamma,j}$ will do the job.
So we may assume $k_j>0$ for the rest of the proof.
Write $f_{j,\ell} = \hG_{\gamma,j-1,\ell}-
\hP_j(\vX)\hG_{\gamma,j,\ell}^{\varphi}\hP_{j}(\vX)^{-\gamma}$
in $\Zp[[\pi,\vX]]/\pi^{\ell+1}$.
By Proposition \ref{P:3.1.2-analog}, we have
$\hP_j(\vX)\in M_{2\times 2}(\Zp[[\pi,\vX]])$ and
$\hG_{\gamma,j}$ in $M_{2\times 2}(\Zp[[\pi]])$
such that $f_{j,m-1}\in \pi^m M_{d\times d}(\Zp[[\pi,\vX]])$ for all $j$.
By applying a general version of Proposition \ref{P:unique-split} as
explained in Remark \ref{R:addX}, we obtain a unique
$\pi$-adic lifting $\hG_{\gamma,j} \in M_{2\times 2}(\Zp[[\pi,\vX]])$
such that $\hG_{\gamma,j} \equiv \hG_{\gamma,j,m-1}\bmod \pi^m$.
\end{proof}

We have the following lemma generalizing
\cite[Proposition 3.2.1]{BLZ04}.

\begin{lemma}\label{L:family-1}
For $\gamma,\eta\in \Gamma_K$  and for $\valpha=(\alpha_0,\ldots,\alpha_{a-1})
\in \cO_E^a$,
we have
$\hG_{\gamma\eta,j}(\valpha)
=\hG_{\gamma,j}(\valpha)\hG_{\eta,j}(\valpha)^\gamma$.
Moreover, for every $j\in\Z/a\Z$ we have
$$
\hG_{\gamma,j-1}(\valpha)\cdot \hP_j(\alpha_j)^\gamma
=\hP_j(\alpha_j)\cdot\hG_{\gamma,j}(\valpha)^\varphi
$$
so that one can use the matrices
$\hP_j(\alpha_j)$ and $\hG_{\gamma,j}(\valpha)$
to define an integral  embedded Wach
module $N(\valpha):=\prod_{j\in\Z/a\Z} N(\valpha)_j$
over $\cO_K[[\pi]]\otimes_\Zp \cO_E$.
\end{lemma}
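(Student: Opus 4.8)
The plan is to derive all three assertions from the uniqueness already built into Proposition \ref{P:continuity2} (equivalently Proposition \ref{P:unique-split}), using the explicit description of the $\gamma$-matrices modulo $\pi^{m}$, $m=\fk$, furnished by Lemma \ref{L:solution1}. I would first dispose of the displayed $(\varphi,\gamma)$-compatibility, which is essentially free: the identity $\hG_{\gamma,j-1}(\vX)\hP_j(X_j)^\gamma=\hP_j(X_j)\hG_{\gamma,j}(\vX)^\varphi$ of Proposition \ref{P:continuity2} is a power-series identity in $\pi$ and the variables $X_j$, on which $\varphi$ and $\gamma$ act trivially; specializing $\vX\mapsto\valpha\in\cO_E^{a}$ therefore yields $\hG_{\gamma,j-1}(\valpha)\hP_j(\alpha_j)^\gamma=\hP_j(\alpha_j)\hG_{\gamma,j}(\valpha)^\varphi$ for every $j$, which is the second displayed equation of the lemma.

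The bulk of the work is the cocycle identity. Fix $\valpha$ and set $H_j:=\hG_{\gamma,j}(\valpha)\hG_{\eta,j}(\valpha)^\gamma$. I would prove $H_j=\hG_{\gamma\eta,j}(\valpha)$ by verifying the three properties that characterize the latter via Proposition \ref{P:unique-split}: (a) $H_j\in\Id+\pi M_{2\times2}(\cO_E[[\pi]])$; (b) $H_{j-1}\hP_j(\alpha_j)^{\gamma\eta}=\hP_j(\alpha_j)H_j^\varphi$; (c) $H_j\equiv\hG_{\gamma\eta,j}(\valpha)\bmod\pi^{m}$. Property (a) is immediate because $\gamma$ preserves $\pi\cO_E[[\pi]]$ (as $\gamma(\pi)\in\pi+\pi^{2}\Zp[[\pi]]$), so each factor lies in $\Id+\pi M_{2\times2}(\cO_E[[\pi]])$. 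Property (b) is a short chain of the compatibility relation above together with $\varphi\gamma=\gamma\varphi$:
\begin{align*}
H_{j-1}\hP_j(\alpha_j)^{\gamma\eta}
&=\hG_{\gamma,j-1}(\valpha)(\hG_{\eta,j-1}(\valpha)\hP_j(\alpha_j)^{\eta})^\gamma\\
&=\hG_{\gamma,j-1}(\valpha)(\hP_j(\alpha_j)\hG_{\eta,j}(\valpha)^\varphi)^\gamma\\
&=\hG_{\gamma,j-1}(\valpha)\,\hP_j(\alpha_j)^\gamma(\hG_{\eta,j}(\valpha)^\gamma)^\varphi\\
&=\hP_j(\alpha_j)\,\hG_{\gamma,j}(\valpha)^\varphi(\hG_{\eta,j}(\valpha)^\gamma)^\varphi\\
&=\hP_j(\alpha_j)\,H_j^\varphi.
\end{align*}

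For (c) I would use that, by the proof of Proposition \ref{P:continuity2}, $\hG_{\gamma,j}(\valpha)\equiv\diag(s_j^{(\gamma)},t_j^{(\gamma)})\bmod\pi^{m}$, where for $\tau\in\Gamma_K$ I write $s_j^{(\tau)}=\lambda_b^{(1-\tau)f_j(\varphi)}$ and $t_j^{(\tau)}=\lambda_b^{(1-\tau)g_j(\varphi)}$ in the notation of Lemma \ref{L:solution1}; reducing $H_j$ modulo $\pi^{m}$ (again using that $\gamma$ preserves $\pi^{m}\Zp[[\pi]]$ and commutes with $\varphi$) gives $H_j\equiv\diag(s_j^{(\gamma)}\gamma(s_j^{(\eta)}),\,t_j^{(\gamma)}\gamma(t_j^{(\eta)}))\bmod\pi^{m}$, and the identity $(1-\gamma)+\gamma(1-\eta)=1-\gamma\eta$ in the commuting operator ring generated by $\Gamma_K$ and $\varphi$ acting on $\lambda_b$ yields $s_j^{(\gamma)}\gamma(s_j^{(\eta)})=\lambda_b^{[(1-\gamma)+\gamma(1-\eta)]f_j(\varphi)}=s_j^{(\gamma\eta)}$, and likewise for $t_j$; hence $H_j\equiv\diag(s_j^{(\gamma\eta)},t_j^{(\gamma\eta)})\equiv\hG_{\gamma\eta,j}(\valpha)\bmod\pi^{m}$, which is (c). With (a)--(c) in hand, the uniqueness in Proposition \ref{P:unique-split} forces $H_j=\hG_{\gamma\eta,j}(\valpha)$.

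Finally, to see that the matrices assemble into an integral embedded Wach module, I would take $N(\valpha)_j$ free of rank $2$ over $\aeplus$ on a fixed basis, with $\varphi\colon N(\valpha)_j\to N(\valpha)_{j-1}$ given by $\hP_j(\alpha_j)$ and $\gamma|_{N(\valpha)_j}$ by $\hG_{\gamma,j}(\valpha)$, and $N(\valpha)=\prod_{j\in\Z/a\Z}N(\valpha)_j$ over $\akplus\otimes_\Zp\cO_E$. The cocycle identity just proved makes $\gamma\mapsto(\hG_{\gamma,j}(\valpha))_j$ a genuine semilinear $\Gamma_K$-action, the compatibility relation makes $\varphi$ and $\Gamma_K$ commute, and continuity of $\gamma\mapsto\hG_{\gamma,j}(\valpha)$ follows from Proposition \ref{P:continuity}; since $\hP_j(\alpha_j)$ has entries in $\aeplus$ and $\hG_{\gamma,j}(\valpha)\in\Id+\pi M_{2\times2}(\aeplus)$, each $N(\valpha)_j$ is $\Gamma_K$-stable with $\varphi(N(\valpha)_j)\subseteq N(\valpha)_{j-1}$ and $\Gamma_K$ trivial on $N(\valpha)_j/\pi N(\valpha)_j$, while $\det\hP_j(\alpha_j)=\pm q^{k_j}u_j$ becomes a unit after inverting $\pi$, so the associated $(\varphi,\Gamma)$-module is étale; thus all defining conditions of an integral embedded Wach module are met. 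The one delicate point is (c): it needs both the mod-$\pi^{m}$ description from Lemma \ref{L:solution1} and the $\lambda_b$-exponent identity, and care that Proposition \ref{P:unique-split} is applied to lifts sharing the same diagonal seed modulo $\pi^{m}$; the rest is formal, mirroring \cite[Proposition 3.2.1]{BLZ04}.
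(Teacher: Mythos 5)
Your proof is correct and follows the paper's own route: the paper likewise deduces the cocycle identity from the uniqueness in Proposition \ref{P:continuity2} (observing that both $\hG_{\gamma\eta,j}$ and $\hG_{\gamma,j}\hG_{\eta,j}^{\gamma}$ satisfy its conditions) and then assembles the integral embedded Wach module exactly as you do. Your step (c) --- checking that the product agrees with $\hG_{\gamma\eta,j}$ modulo $\pi^{m}$ via the explicit $\lambda_b$-exponents of Lemma \ref{L:solution1} --- is a worthwhile extra precision the paper leaves implicit, since the uniqueness ultimately supplied by Proposition \ref{P:unique-split} is only among lifts of a fixed seed modulo $\pi^{m}$.
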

\begin{proof}
We know from Proposition \ref{P:continuity2} that
$$\hG_{\gamma,j-1}(\vX) \hP_j(X_j)^\gamma
=\hP_j(X_j) \hG_{\gamma,j}(\vX)^\varphi$$
and if $\gamma,\eta\in\Gamma_K$ then
$\hG_{\gamma\eta,j}(\vX)$ and
$\hG'_{\gamma\eta,j}(\vX) = \hG_{\gamma,j}(\vX)\cdot \hG_{\eta,j}(\vX)^\gamma$
both satisfy the conditions of Proposition \ref{P:continuity2} and hence they are
equal. We then define
$N(\valpha) =\prod_{j\in\Z/a\Z} N(\valpha)_j$ where $N(\valpha)_j$ is
2-dimensional free $\cO_E[[\pi]]$-module which we write
$N(\valpha)_j = \cO_E[[\pi]] n_{j,1} + \cO_E[[\pi]] n_{j,2}$
with basis $n_{j,1}, n_{j,2}$.
We endow it with action $\gamma: N(\valpha)_j\rightarrow N(\valpha)_j$
and $\varphi_j: N(\valpha)_j \rightarrow N(\valpha)_{j-1}$ given by
matrices $\hG_{\gamma,j}(\valpha)$ and $\hP_j(\alpha_j)$ respectively.
\end{proof}

Recall $\delta_j$ defined in (\ref{E:delta_j}).
Let $\vv=(v_0,\ldots,v_{a-1})$ with $v_j\in E$ and $\ord_pv_j \geq \delta_j$.
Let $\valpha:=(p^{-\delta_j}v_j)_j$  (in $\cO_E^a$).
Let $V_{\vk,\vv,\vu}$ be the $E$-linear crystalline representation
such that $E\otimes_{\cO_E} N(\valpha) = \N (V_{\vk,\vv,\vu}^*)$
and let $T_{\vk,\vv,\vu}$ be the $\cO_E$-lattice in
$V_{\vk,\vv,\vu}$ such that $N(\valpha) = \N (T_{\vk,\vv,\vu}^*)$.
The following result generalizes result of \cite{BLZ04}, in fact,
for $a=1$ we recovers \cite[Proposition 3.2.4]{BLZ04} with our
$V_{\vk,\vv,\vu}$ equal to $V_{k,a_p}$ in notation of \cite{BLZ04}.

\begin{proposition}\label{P:cris}
For $\valpha=(\alpha_0,\ldots,\alpha_{a-1})\in \cO_E^a$,
the filtered $\varphi$-module $E\otimes_{\cO_E}N(\valpha)/\pi N(\valpha)$
is isomorphic to a weakly admissible
filtered $\varphi$-module $D_{\vk,\vv,\vu}$ so that
we have $\dcris^*(V_{\vk,\vv,\vu}) = D_{\vk,\vv,\vu}$.
\end{proposition}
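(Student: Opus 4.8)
The plan is to reduce the explicit matrices defining $N(\valpha)$ modulo $\pi$, recognise the resulting filtered $\varphi$-module as the one attached to the tuple $A=(A_j)_{j\in\Z/a\Z}$ by Proposition \ref{P:parabolic-2}, and then invoke Berger's comparison theorem to identify it with $\dcris^*(V_{\vk,\vv,\vu})$. Since $q\equiv p\bmod\pi$ and $\lambda_b\in 1+\pi\Qp[[\pi]]$, the definition (\ref{E:z_j}) gives $z_j\equiv p^{\delta_j}\bmod\pi$; as $\valpha=(p^{-\delta_j}v_j)_j$, i.e. $v_j=p^{\delta_j}\alpha_j$, a direct inspection of (\ref{E:P}) shows that in both cases $j\in\cA$ and $j\in\cB$ (and trivially when $k_j=0$) one has $\hP_j(\alpha_j)\equiv A_j\Delta_{k_j}\bmod\pi$, where $\Delta_{k_j}=\diag(p^{k_j},1)$ and $A_j$ is the matrix prescribed at the start of this subsection. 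Thus $D:=E\otimes_{\cO_E}(N(\valpha)/\pi N(\valpha))$ is a filtered $\varphi$-module with $\Mat(\varphi|_{D_j})=A_j\Delta_{k_j}$ for every $j$.

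Next I would pin down the filtration on $D$. By condition (1) in the definition of an embedded Wach module, $\Fil^i N(\valpha)_j=\{x:\varphi(x)\in q^i N(\valpha)_{j-1}\}$; reducing mod $\pi$ and using that the first column of $\hP_j(\alpha_j)$ is divisible by exactly $q^{k_j}$ while its second column is a unit vector, one gets that $D_j$ has embedded Hodge polygon $(k_j,0)$ with $\Fil^{k_j}D_j$ spanned by the image of the first basis vector, so the chosen basis is adapted to the filtration. Hence $D$ is exactly $\Xi(A)$ in the notation of Proposition \ref{P:parabolic-2}; since each $A_j\in\GL_2(\cO_E)$, Proposition \ref{P:simultaneous} shows $D$ is weakly admissible, and we set $D_{\vk,\vv,\vu}:=D$. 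On the other hand, by Lemma \ref{L:family-1} the $(\varphi,\Gamma)$-module $E\otimes_{\cO_E}N(\valpha)$ is \'etale and equals the Wach module $\N(V_{\vk,\vv,\vu}^*)$ of the crystalline representation $V_{\vk,\vv,\vu}$; applying \cite[Theorem III.13]{BE04} exactly as in the proof of Theorem B yields $\dcris^*(V_{\vk,\vv,\vu})\cong \N(V_{\vk,\vv,\vu}^*)/\pi\N(V_{\vk,\vv,\vu}^*)=D$, compatibly with filtrations, which is the assertion. As a by-product this identifies the Wach-theoretic $V_{\vk,\vv,\vu}$ with $\Theta(A)$, reconciling the two definitions given in this subsection.

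The main obstacle I anticipate is that the weight is allowed to be as large as $\vk=(p,\ldots,p)$, so we are outside the Fontaine--Laffaille range and cannot merely quote Theorem B; the delicate point is checking that the filtration produced on $N(\valpha)/\pi N(\valpha)$ by condition (1) agrees, after inverting $p$, with the one induced from $\N(V^*)/\pi\N(V^*)=\dcris^*(V)$, so that Berger's comparison is filtration-compatible even at this boundary weight. This rests on the integrality of $\hP_j(\alpha_j)$, i.e. on $z_j\in\Zp[\pi]$, which is precisely what the shift by $p^{\delta_j}$ in (\ref{E:z_j}) together with the bound $0\le\delta_j\le\frac{\fk-1}{p-1}$ of Lemma \ref{L:delta} secures, and on Proposition \ref{P:3.1.2-analog} guaranteeing that the relevant $(\varphi,\gamma)$-congruences hold integrally. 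Once integrality is in hand, the mod-$\pi$ reduction is the clean matrix $A_j\Delta_{k_j}$ and the remaining filtration bookkeeping is the routine semilinear algebra of Lemma \ref{L:isogeny} and Corollary \ref{C:SDL}.
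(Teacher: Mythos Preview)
Your proposal is correct and follows essentially the same route as the paper: you compute $z_j\alpha_j\equiv v_j\bmod\pi$ (equivalently $z_j\equiv p^{\delta_j}\bmod\pi$ together with $\alpha_j=p^{-\delta_j}v_j$), deduce $\hP_j(\alpha_j)\equiv A_j\Delta_{k_j}\bmod\pi$ in both the $\cA$ and $\cB$ cases, and note $\hG_{\gamma,j}\equiv\Id\bmod\pi$. The paper then simply cites \cite[Prop.~3.2.4]{BLZ04} for the remaining filtration identification and comparison with $\dcris^*$, whereas you spell these steps out via Proposition~\ref{P:simultaneous}, Lemma~\ref{L:isogeny}/Corollary~\ref{C:SDL}, and \cite[Theorem III.13]{BE04}; this is exactly the content hidden behind the paper's citation.
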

\begin{proof}
By definition we have
$$z_j \alpha_j = (p^{-\delta_j}z_j)v_j
=(\lambda_b^{(g_{j-1}-f_{j-1})(\varphi)}
\bmod \pi^{\fk})v_j \equiv v_j\bmod \pi$$
since $\lambda_b\equiv 1\bmod \pi$. Then we find
$$\hP_j(\alpha_j)
=
\left\{
\begin{array}{ll}
\left(
                      \begin{array}{cc}
                        0 & -1 \\
                        q^{k_j} & z_j\alpha_j \\
                      \end{array}
                    \right)
\equiv \left(%
\begin{array}{cc}
  0 & -1 \\
  p^{k_j} & v_j \\
\end{array}%
\right) \bmod \pi
&\mbox{for $j\in\cA$,  and}\\
\left(
\begin{array}{cc}
q^{k_j} & 0\\
q^{k_j}z_j\alpha_j & u_j\\
\end{array}
\right) \equiv
\left(%
\begin{array}{cc}
  p^{k_j} & 0 \\
  p^{k_j}v_j & u_j \\
\end{array}%
\right)
\bmod \pi
&\mbox{for $j\in\cB$.}
\end{array}
\right.
$$
We have $G_{\gamma,j}\equiv \Id\bmod \pi$. Follow a similar
argument as that in \cite[Prop. 3.2.4]{BLZ04}, we find that
$E\otimes_{\cO_E}N(\valpha)/\pi N(\valpha)$ is isomorphic to $D_{\vk,\vv,\vu}$.
This concludes our proof.
\end{proof}

We consider the mod-$p$ reduction behavior of $V_{\vk,\vv,\vu}$.
By mod-$p$ reduction we mean $T_{\vk,\vv,\vu}/\m_E$
for any Galois stable lattice $T_{\vk,\vv,\vu}$ in $V_{\vk,\vv,\vu}$.
Note that the semisimplification of this reduction, denoted by $\bar{V}_{\vk,\vv,\vu}$,
is independent of the choice of lattice.

\begin{theorem}\label{T:family}
(i) For $t=1,2$ let $\vv^{(t)}$ be
such that $\ord_pv^{(t)}_j > \delta_j$ where $\delta_j$ is defined in (\ref{E:delta_j}) for every $j$.
Let $V_{\vk,\vv^{(t)},\vu}$
be the crystalline $E$-linear representation such that
$E\otimes_{\cO_E}N((p^{-\delta_j}v_j^{(t)})_j)=\N(V^*_{\vk,\vv^{(t)},\vu})$, and
let $T_{\vk,\vv^{(t)},\vu}$ be a Galois stable lattice in
$V_{\vk,\vv^{(t)},\vu}$ such that
$N((p^{-\delta_j}v_j^{(t)})_j)=\N(T_{\vk,\vv^{(t)},\vu})$. Suppose
$\ord_p(v_j^{(1)}-v_j^{(2)})\geq \delta_j+i$, then
$$
T_{\vk,\vv^{(1)},\vu} \equiv T_{\vk,\vv^{(2)},\vu} \bmod \m_E^i.
$$

(ii) For any $\vv$ with $\ord_p \vv > \lfloor \frac{\fk-1}{p-1}\rfloor$, or
with $\ord_p \vv > 0$ and $\vk=(p,\ldots,p)$,
we have $\bar{V}_{\vk,\vv,\vu} \cong \bar{V}_{\vk,\0,\vu}$.
\end{theorem}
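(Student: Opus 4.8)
The plan is to obtain both parts from the explicit construction of the integral embedded Wach modules $N(\valpha)$ in Lemma \ref{L:family-1}, by tracking $p$-adic congruences through that construction and then descending to Galois stable lattices.

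For part (i) I would first rewrite the hypothesis in terms of the parameters actually fed into the construction: put $\alpha_j^{(t)}:=p^{-\delta_j}v_j^{(t)}$ for $t=1,2$, so that $N((\alpha_j^{(t)})_j)=\N(T_{\vk,\vv^{(t)},\vu}^*)$, and note that $\ord_p(v_j^{(1)}-v_j^{(2)})\geq\delta_j+i$ is precisely $\alpha_j^{(1)}\equiv\alpha_j^{(2)}\bmod\m_E^i$ for every $j\in\Z/a\Z$. Next I would propagate this congruence through the structure matrices with respect to the common bases $n_{j,1},n_{j,2}$ of $N(\valpha)$: the matrices $\hP_j(X_j)$ of $\varphi_j$ in (\ref{E:P}) are affine in $X_j$ with coefficients in $\Zp[\pi]$, so $\hP_j(\alpha_j^{(1)})\equiv\hP_j(\alpha_j^{(2)})\bmod\m_E^i$ at once, while by Proposition \ref{P:continuity2} the matrices of $\gamma$ are power series $\hG_{\gamma,j}(\vX)\in\Id+\pi M_{2\times 2}(\Zp[[\pi,\vX]])$, whose evaluations at two points of $\cO_E^a$ that are congruent mod $\m_E^i$ are again congruent mod $\m_E^i$ (the difference is an $\cO_E[[\pi]]$-linear combination of the $\alpha_j^{(1)}-\alpha_j^{(2)}$). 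This yields $N(\valpha^{(1)})\equiv N(\valpha^{(2)})\bmod\m_E^i$.

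Finally I would descend this congruence to lattices: reducing the two integral embedded Wach modules modulo $\m_E^i$ gives the torsion Wach modules of $T_{\vk,\vv^{(t)},\vu}^*/\m_E^i$, and the equivalence between integral Wach modules and Galois stable lattices — compatible with $p$-power torsion, Berger \cite{BE04}, in the form used in the proof of Theorem \ref{T:B} — transports the congruence to $T_{\vk,\vv^{(1)},\vu}^*\equiv T_{\vk,\vv^{(2)},\vu}^*\bmod\m_E^i$; dualizing then gives the claim. I expect this last descent to be the main obstacle: everything before it is bookkeeping, but here one must use not only that $\N$ is an equivalence but that it is compatible with reduction, and that both the $\varphi$- and the $\gamma$-structures are pinned down by the construction — the latter being exactly what Proposition \ref{P:continuity2} (and Remark \ref{R:addX}) secures. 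Once that is granted the argument is formal.

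For part (ii) I would invoke Lemma \ref{L:delta}: each $\delta_j$ is a non-negative integer with $\delta_j\leq\frac{\fk-1}{p-1}$, hence $\delta_j\leq\lfloor\frac{\fk-1}{p-1}\rfloor$, and $\delta_j=0$ for all $j$ when $\vk=(p,\ldots,p)$. Under either hypothesis of (ii) one then has $\ord_p v_j\geq\ord_p\vv>\delta_j$, so $\alpha_j=p^{-\delta_j}v_j\in\m_E$, i.e. $\valpha\equiv\0\bmod\m_E$. Running the argument of part (i) with $\vv^{(1)}=\vv$ and $\vv^{(2)}=\0$ at the level $\m_E$ — where only the containment $\valpha\in\m_E^a$ is used — gives $N(\valpha)\equiv N(\0)\bmod\m_E$, hence $T_{\vk,\vv,\vu}/\m_E\cong T_{\vk,\0,\vu}/\m_E$, and passing to semisimplifications yields $\bar V_{\vk,\vv,\vu}\cong\bar V_{\vk,\0,\vu}$.
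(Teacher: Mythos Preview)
Your proposal is correct and follows essentially the same route as the paper's proof: set $\alpha_j^{(t)}=p^{-\delta_j}v_j^{(t)}$, read off the congruence $\hP_j(\alpha_j^{(1)})\equiv\hP_j(\alpha_j^{(2)})$ and $\hG_{\gamma,j}(\valpha^{(1)})\equiv\hG_{\gamma,j}(\valpha^{(2)})\bmod\m_E^i$ from the explicit formulas and Proposition~\ref{P:continuity2}, conclude $N(\valpha^{(1)})\equiv N(\valpha^{(2)})\bmod\m_E^i$, and then pass to lattices; for (ii) reduce to (i) via Lemma~\ref{L:delta}. Your write-up is in fact more detailed than the paper's (which states the congruences and the descent in one line each), and your observation that $\delta_j\in\Z$ is what makes $\delta_j\le\frac{\fk-1}{p-1}$ imply $\delta_j\le\lfloor\frac{\fk-1}{p-1}\rfloor$ is a point the paper leaves implicit.
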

\begin{proof}
(i) Write $\alpha_j = p^{-\delta_j}v_j$.
By hypothesis,
$\hP_j(\valpha^{(1)})\equiv \hP_j(\valpha^{(2)})$
and
$\hG_{\gamma,j}(\valpha^{(1)})\equiv \hG_{\gamma,j}(\valpha^{(2)})$
modula $\m_E^i$.
Thus $N(\valpha^{(1)})\equiv N(\valpha^{(2)})\bmod \m_E^i$.
Then we have
$T_{\vk,\vv^{(1)},\vu}\equiv T_{\vk,\vv^{(2)},\vu}\bmod \m_E^i$.

(ii) Observe that $\delta_j\leq \frac{\fk-1}{p-1}$,
so $\ord_p \vv> \lfloor\frac{\fk-1}{p-1} \rfloor$
implies that $\ord_p v_j > \delta_j$ for every $j$.
Hence we may apply part (i) to $\vv^{(2)}=\0$.
When $\vk=(p,\ldots,p)$, then $\delta_j=0$ for all $j$ by
Lemma \ref{L:delta}, hence our assertion follows from the same argument
as above.
\end{proof}

Below we shall construct some crystalline deformation of 2-dimensional reducible and irreducible representations in $\Rep_{/\bar{\F}_p}(G_K)$, whose complete classification can be found in Breuil's notes \cite{Br07}.

\begin{proposition}\label{P:reducible-deform}
Let $\vk=(k_0,\ldots,k_{a-1})$
with $0\leq k_j\leq p-1$, or let $\vk=(p,\ldots,p)$.
If $V_{\vk,\vv,\vu}=\Theta((A_j)_j)$ with $j\in\cB$ for all $j$
and $\ord_p\vv > 0$, then we have
$$
\bar{V}_{\vk,\vv,\vec{1}}
\cong
\bar{V}_{\vk,\0,\vec{1}}
\cong
\left(
  \begin{array}{cc}
  \omega_a^{\sum_{j\in\Z/a\Z} k_jp^j}    & 0 \\
    0                                    & 1 \\
  \end{array}
\right) \otimes \bar\eta
$$
for some unramified character $\bar\eta$.
\end{proposition}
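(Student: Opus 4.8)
The plan is to establish the two displayed isomorphisms in turn. For the first, $\bar{V}_{\vk,\vv,\vec 1}\cong\bar{V}_{\vk,\0,\vec 1}$, I would only need to check that the hypothesis $\ord_p\vv>0$ places us in the range of Theorem \ref{T:family}(ii). The assumption that $j\in\cB$ for all $j$ means $\cA=\emptyset$ and $\cB=\Z/a\Z$, so that $A_j=\left(\begin{smallmatrix}1&0\\v_j&1\end{smallmatrix}\right)$ for every $j$; that is, it is the family $V_{\vk,\vv,\vu}$ constructed above with $\vu=\vec 1$. If $0\le k_j\le p-1$ for all $j$, then $\bar{k}_1=\tfrac{1}{a}\sum_{j}k_j\le p-1$, whence $\fk\le p-1$ and $\frac{\fk-1}{p-1}<1$, so $\lfloor\frac{\fk-1}{p-1}\rfloor\le 0<\ord_p\vv$ and the first clause of Theorem \ref{T:family}(ii) applies; if instead $\vk=(p,\ldots,p)$, the second clause of that theorem applies directly, since $\ord_p\vv>0$. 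Either way $\bar{V}_{\vk,\vv,\vec 1}\cong\bar{V}_{\vk,\0,\vec 1}$.

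It then remains to identify $\bar{V}_{\vk,\0,\vec 1}$. With $\vv=\0$ and $\vu=\vec 1$ every $A_j$ equals the identity, so by Theorem \ref{T:A-analog} the module $D:=\dcris^*(V_{\vk,\0,\vec 1})=\prod_{j\in\Z/a\Z}D_j$ satisfies $\Mat(\varphi|_{D_j})=\diag(p^{k_j},1)$ with respect to a basis $\{e_{j,1},e_{j,2}\}$ of $D_j$ adapted to the filtration. Here $\varphi$ is diagonal in this basis, and the filtration on $D_j$ (of Hodge polygon $(k_j,0)$) is all of $D_j$ in degrees $\le 0$, the line $\langle e_{j,1}\rangle$ in degrees $1,\dots,k_j$, and $0$ in higher degrees; hence the filtered $\varphi$-module splits as $D=D'\oplus D''$ in $\MF{ad}(K\otimes_\Qp E)$, where $D'=\prod_j\langle e_{j,1}\rangle$ has $\Mat(\varphi|_{D'_j})=p^{k_j}$ and $\HP_{\Z/a\Z}(D')=(k_j)_j$, while $D''=\prod_j\langle e_{j,2}\rangle$ has $\Mat(\varphi|_{D''_j})=1$ and $\HP_{\Z/a\Z}(D'')=(0)_j$. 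Each summand, being of rank one, is weakly admissible, and applying the Tannakian equivalence $\vcris^*$ yields $V_{\vk,\0,\vec 1}\cong\vcris^*(D')\oplus\vcris^*(D'')=\varepsilon_{a,\vk}\oplus\varepsilon_{a,\0}$, a direct sum of the crystalline characters of Remark \ref{R:dim-2}(2), with $\varepsilon_{a,\0}=\mathbf 1$ the trivial character.

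Finally I would pass to the mod $\m_E$ reduction. Since $V_{\vk,\0,\vec 1}$ is already a direct sum of two characters, any Galois stable lattice reduces to a sum of two characters over $\cO_E/\m_E$, so the semisimplified reduction (which is independent of the lattice) is $\bar{V}_{\vk,\0,\vec 1}=\bar\varepsilon_{a,\vk}\oplus\bar{\mathbf 1}$. By Proposition \ref{P:omega_a}, $\bar\varepsilon_{a,\vk}=\omega_a^{\sum_{j\in\Z/a\Z}k_jp^j}$, and trivially $\bar{\mathbf 1}=\mathbf 1$; therefore
$$
\bar{V}_{\vk,\0,\vec 1}\cong
\left(\begin{array}{cc}\omega_a^{\sum_{j\in\Z/a\Z}k_jp^j}&0\\0&1\end{array}\right),
$$
which is the asserted form with the trivial (hence unramified) character $\bar\eta=\mathbf 1$. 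Combining this with the first step completes the proof.

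The only step that calls for any care is the weight bookkeeping at the outset: one must confirm that the cut-off $\lfloor\frac{\fk-1}{p-1}\rfloor$ in Theorem \ref{T:family}(ii) is nonpositive exactly under the range hypothesis $0\le k_j\le p-1$, and that the boundary weight $\vk=(p,\dots,p)$ is handled by the separate clause of that theorem; everything past that is a mechanical unwinding of Theorem \ref{T:A-analog} and Proposition \ref{P:omega_a}, with no further estimates required.
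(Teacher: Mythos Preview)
Your proof is correct and, for the identification of $\bar V_{\vk,\0,\vec 1}$, takes a cleaner route than the paper's. The paper computes the mod $p$ Wach module $\bar N$ explicitly (using the construction of Section \ref{S:5.2}), observes that $\Mat(\varphi|_{\bar N_j})=\diag(\pi^{(p-1)k_j},1)$, and then reads off the two rank-one subobjects as \'etale $(\varphi,\Gamma)$-modules, matching them to $\omega_a^{\sum k_jp^j}$ and the trivial character via Proposition \ref{P:omega_a}. You instead split already at the level of the filtered $\varphi$-module in characteristic zero: since each $A_j=\Id$, the diagonal $\varphi$-action and the flag filtration make $D=D'\oplus D''$ a genuine decomposition in $\MF{ad}(K\otimes_\Qp E)$, so $V_{\vk,\0,\vec 1}\cong\varepsilon_{a,\vk}\oplus\mathbf 1$ as crystalline representations, and the mod $p$ statement follows by reducing each character separately. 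This is in fact a stronger conclusion (the representation itself, not only its reduction, is split) and it sidesteps the paper's appeal to Theorem \ref{T:FL}, which is a mild advantage at the boundary weight $\vk=(p,\dots,p)$ where $\bar k_1=p$ lies outside the stated range of that theorem. Your first step, invoking Theorem \ref{T:family}(ii) via the bound $\lfloor(\fk-1)/(p-1)\rfloor\le 0$, is exactly what the paper does.
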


\begin{proof}
Let $\bar{N}=\prod_{j\in\Z/a\Z}\bar{N}_j$  denote the $(\varphi,\Gamma)$-module
of $\bar{V}_{\vk,\0,\1}$.
Then
$$\Mat(\varphi|_{\bar{N}}) = \left( \left(
                                 \begin{array}{cc}
                                   \pi^{(p-1)k_j} & 0 \\
                                   0              & 1 \\
                                 \end{array}
                               \right)_j\right)
                               $$
with respect to certain lifted basis adapted to filtration $\bar{N}=\pscal{\vec{e}_1,\vec{e}_2}$.
Thus
\begin{eqnarray*}
\Mat(\varphi^a|_{\bar{N}_{a-1}}) &=& \left(
                                 \begin{array}{cc}
                                   \pi^{(p-1)\sum_{j\in\Z/a\Z}k_jp^j} & 0 \\
                                   0              & 1 \\
                                 \end{array}
                               \right).
\end{eqnarray*}
This says that
$\varphi^a|_{\vec{e}_1}=\pi^{(p-1)\sum_{j\in\Z/a\Z}k_jp^j}$
and $\varphi^a|_{\vec{e}_2}=1$.
Under our hypothesis on embedded Hodge polygon $\vk$,
we find that $\bar{k}_1<p-1$ ($\bar{k}_1\leq p-1$ when
$k_j>0$ for all $j$) and hence
we may apply Fontaine-Laffaille type theorem in Theorem \ref{T:FL}.
The sub-object $\pscal{\vec{e}_1}=\N(\omega_a^{\sum_{j\in\Z/a\Z}k_jp^j})$
by Proposition \ref{P:omega_a}. Thus
$
\bar{V}_{\vk,\0,\vec{1}} \cong
\omega_a^{\sum_{j\in\Z/a\Z}k_jp^j}\oplus \Id.
$

Now by Theorem \ref{T:family}(ii),
for any $\ord_p(\vv)> \lfloor\frac{\fk-1}{p-1} \rfloor=0$
we have $\bar{V}_{\vk,\vv,\vec{1}}\cong \bar{V}_{\vk,\0,\vec{1}}$.
Combining these above, we have
$$
\bar{V}_{\vk,\vv,\vec{1}}
\cong
\bar{V}_{\vk,\0,\vec{1}}
\cong
\left(
  \begin{array}{cc}
  \omega_a^{\sum_{j\in\Z/a\Z}k_jp^j}    & 0 \\
    0                                    & 1 \\
  \end{array}
\right) \otimes \bar\eta
$$
for some unramified character $\bar\eta$.
\end{proof}

\begin{proposition}
\label{P:irreducible-deform}
Let $\vk=(k_0,\ldots,k_{a-1})$ such that $0\leq k_j\leq p-1$ for every $0\leq j\leq a-1$ or $\vk=(p,\ldots,p)$.
If $V_{\vk,\vv,\vu}=\Theta((A_j)_j)$ with $j\in \cB$ for all $0\leq j\leq a-2$
and $a-1\in \cA$ with $\ord_p(\vv)> 0$ then we have
\begin{eqnarray*}
\bar{V}_{\vk,\vv,\vec{1}} \cong \bar{V}_{\vk,\0,\vec{1}}
&\cong &
\ind(\omega_{2a}^{\sum_{j\in\Z/a\Z}k_j p^j})\otimes \bar\eta
\end{eqnarray*}
is irreducible.
\end{proposition}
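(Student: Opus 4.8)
The plan is to follow the proof of Proposition \ref{P:reducible-deform} almost verbatim; the only structural change is that the block at index $j=a-1$ now has type $\cA$, so the iterated Frobenius on the $(a-1)$-th component of the reduced $(\varphi,\Gamma)$-module comes out \emph{anti-diagonal} rather than diagonal, and this is exactly what makes the reduction irreducible (assuming $\vk\neq\0$; see the remark at the end of Step~3).

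\textbf{Step 1: reduce to $\vv=\0$.} When $0\leq k_j\leq p-1$ for all $j$ one has $\bar{k}_1\leq p-1$, hence $\lfloor\frac{\fk-1}{p-1}\rfloor\leq 0$, and when $\vk=(p,\dots,p)$, Lemma \ref{L:delta} gives $\delta_j=0$ for every $j$. In either case the hypothesis $\ord_p(\vv)>0$ meets the hypothesis of Theorem \ref{T:family}(ii) (via its second clause when $\vk=(p,\dots,p)$), so $\bar{V}_{\vk,\vv,\vec 1}\cong\bar{V}_{\vk,\0,\vec 1}$ and it suffices to analyze $\bar{V}_{\vk,\0,\vec 1}$. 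Next I would reduce the integral embedded Wach module $N(\0)$ of Lemma \ref{L:family-1} modulo $\m_E$: using $q\equiv\pi^{p-1}\bmod p$, $\vec u=\vec 1$, and $z_jX_j=0$ (since $\vv=\0$), the matrix $\hP_j(0)$ reduces to $\diag(\pi^{(p-1)k_j},1)$ for $0\leq j\leq a-2$ (type $\cB$) and to $\left(\begin{smallmatrix}0&-1\\\pi^{(p-1)k_{a-1}}&0\end{smallmatrix}\right)$ for $j=a-1$ (type $\cA$). Feeding these into the product formula $\hP_{(a-1)}=\hP_0\cdot\varphi(\hP_1)\cdots\varphi^{a-1}(\hP_{a-1})$ of Lemma \ref{L:commute}, and using $\varphi^{\ell}(\pi^{(p-1)m})\equiv\pi^{(p-1)mp^{\ell}}\bmod p$, I get
\[ \Mat(\varphi^a|_{\bar N_{a-1}})=\left(\begin{smallmatrix}0 & -\pi^{(p-1)\sum_{\ell=0}^{a-2}k_\ell p^\ell}\\ \pi^{(p-1)k_{a-1}p^{a-1}} & 0\end{smallmatrix}\right), \]
an anti-diagonal matrix whose two off-diagonal $\pi$-exponents sum to $(p-1)\sum_{\ell=0}^{a-1}k_\ell p^\ell$.

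\textbf{Step 2 (the one just above), Step 3: irreducibility and identification.} Since $\Mat(\varphi^{2a}|_{\bar N_{a-1}})$ is the diagonal matrix obtained by squaring, base-changing the $(\varphi,\Gamma)$-module to $\Q_{p^{2a}}$ splits $\bar{V}_{\vk,\0,\vec 1}|_{G_{\Q_{p^{2a}}}}$; by the same $(\varphi,\Gamma)$-module bookkeeping as in Proposition \ref{P:omega_a} the two resulting characters of $I_{\Q_{p^{2a}}}$ are $\omega_{2a}^{h}$ and $\omega_{2a}^{p^a h}$ with $h\equiv\sum_\ell k_\ell p^\ell$ (the exponent is read off the two off-diagonal $\pi$-exponents exactly as in the $p$-adic ``round-up'' step of the proof of Theorem \ref{T:E}(ii); when $0\leq k_\ell\leq p-1$ one even has $0\leq h\leq p^a-1$). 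When $\vk\neq\0$ one checks $h\not\equiv 0\bmod(p^a+1)$ (immediate from $1\leq h\leq p^a-1$ in the first case; in the boundary case $\vk=(p,\dots,p)$ from a one-line congruence, $(p-1)h\equiv-2p\not\equiv0$), so the two characters are distinct and are interchanged by $\Gal(\Q_{p^{2a}}/\Q_{p^a})$; hence $\bar{V}_{\vk,\0,\vec 1}$ admits no $G_K$-stable line and equals $\ind(\omega_{2a}^{\sum_\ell k_\ell p^\ell})\otimes\bar\eta$ for an unramified $\bar\eta$, the twist being pinned down by $\det(\bar{V}_{\vk,\0,\vec 1})$ computed from the determinants of the matrices above as in the proof of Theorem \ref{T:E}. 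Together with Step~1 this gives the asserted chain of isomorphisms. (If $\vk=\0$ then $\Mat(\varphi^a|_{\bar N_{a-1}})=\left(\begin{smallmatrix}0&-1\\1&0\end{smallmatrix}\right)$, $\varphi^{2a}$ is scalar, and the reduction is in fact \emph{reducible}; thus ``is irreducible'' in the statement is to be read with $\vk\neq\0$, consistently with the separate $\vk=\0$ clause of Theorem \ref{T:E}(ii).)

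The step I expect to be the main obstacle is Step~3: turning ``anti-diagonal iterated Frobenius'' into ``irreducible, and isomorphic to a specific $\ind(\omega_{2a}^h)$''. Concretely this needs (i) a clean argument --- most comfortably via base change to $\Q_{p^{2a}}$, where the module diagonalizes --- that the reduced $(\varphi,\Gamma)$-module has no rank-one sub-object, and (ii) correctly identifying $h$ modulo $p^{2a}-1$; in the boundary case $\vk=(p,\dots,p)$ the latter cannot simply be quoted from Theorem \ref{T:E} and must be traced through the base-$p$ carries by hand. Everything else --- the reduction to $\vv=\0$ and the mod $p$ matrix computations --- is routine given Theorem \ref{T:family}, Lemma \ref{L:family-1}, Lemma \ref{L:commute}, and Proposition \ref{P:omega_a}.
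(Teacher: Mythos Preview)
Your proof is correct and follows the paper's proof closely: the reduction to $\vv=\0$ via Theorem~\ref{T:family}, the mod~$p$ computation of $\Mat(\varphi|_{\bar N_j})$, and the anti-diagonal form of $\Mat(\varphi^a|_{\bar N_{a-1}})$ are all carried out exactly as in the paper, and the identification of $\bar V|_{I_K}$ by passing to the degree-$2$ unramified extension is the paper's own final step.

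The one genuine methodological difference is in how irreducibility is established. The paper argues directly on the $(\varphi,\Gamma)$-module: a rank-one subobject $xe_1+ye_2$ would force $(x/y)^{p^a+1}=-\pi^{(p-1)(\sum_{j=0}^{a-2}k_jp^j-k_{a-1}p^{a-1})}$, and one checks this has no solution in $\F_p((\pi))$ by looking at the $\pi$-valuation. You instead fold irreducibility into the base-change step: after restricting to $G_{\Q_{p^{2a}}}$ the module splits as $\omega_{2a}^h\oplus\omega_{2a}^{p^ah}$, and you check $h\not\equiv 0\bmod(p^a+1)$ so that the two characters are distinct and swapped by $\Gal(\Q_{p^{2a}}/\Q_{p^a})$, whence no $G_K$-stable line. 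Both arguments are valid; yours is a bit more conceptual and has the advantage of making the arithmetic condition transparent (your check $1\le h\le p^a-1$ for $0\le k_j\le p-1$ and the one-line congruence for $\vk=(p,\dots,p)$ are cleaner than verifying the paper's $\pi$-exponent is not divisible by $p^a+1$), while the paper's is more self-contained at the level of $(\varphi,\Gamma)$-modules. Your parenthetical on $\vk=\0$ is a reasonable caveat, though note that the paper's argument is equally silent on that edge case; whether the reduction is reducible there actually depends on whether $-1$ is a $(p^a+1)$-th power in the residue field, so ``is in fact reducible'' is a slight overstatement.
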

\begin{proof}
We note that our first congruence follows from Theorem \ref{T:family}.
Let $N$ be the integral Wach module of $V_{\vk,\0,\1}$
and let $\bar{N}=N/\m_E$ be its
mod $p$ reduction. We shall prove $\bar{N}$ is irreducible
as an \'etale $(\varphi,\Gamma)$-module over $\Fp((\pi))^{\rm sep}$.
Recall that
$$
\Mat(\varphi|_{\bar{N}})_j =
\left\{
\begin{array}{lll}
&\left(
                                 \begin{array}{cc}
                                   \pi^{(p-1)k_j} & 0 \\
                                   0              & 1 \\
                                 \end{array}
                               \right) &\mbox{if $j\in\cB$};\\
&\left(
                                 \begin{array}{cc}
                                   0  & -1 \\
                                   \pi^{(p-1)k_j} & 0 \\
                                 \end{array}
                               \right) &\mbox{if $j\in\cA$}.
\end{array}
\right.
$$
Then
$$\Mat(\varphi^a|_{\bar{N}}) = \left(
                                 \begin{array}{cc}
                                   0  & -\pi^{(p-1)\sum_{j=0}^{a-2}k_jp^j} \\
                                   \pi^{(p-1)k_{a-1}p^{a-1}}  & 0 \\
                                 \end{array}
                               \right).
$$
Suppose $\bar{N}$ contains a proper subobject $\bar{N}'$.
Then $\varphi^a$ acts on $\bar{N}'=x e_1+y e_2$ for some $x,y\in \Fp((\pi))$
by
$$
\left(
\begin{array}{cc}
0  & -\pi^{(p-1)\sum_{j=0}^{a-2}k_jp^j} \\
\pi^{(p-1)k_{a-1}p^{a-1}}  & 0 \\
\end{array}
\right)
\left(
  \begin{array}{c}
    x^{p^a} \\
    y^{p^a} \\
  \end{array}
\right)
=\lambda \left(
           \begin{array}{c}
             x \\
             y \\
           \end{array}
         \right)
$$
for some $\lambda\in\F_p[[\pi]]$.
From the equation we arrives at
$(x/y)^{p^a+1}=-\pi^{(p-1)(k_0+k_1p+\ldots-k_{a-1}p^{a-1})}$,
which has no solution to $x/y$ in $\Fp((\pi))$.
This proves irreducibility of $\bar{V}_{\vk,\0,\vec{1}}$.

Let $\bar{W}$ be the determinant of $\bar{N}$. We have that
$\Mat(\varphi|_{\bar{W}})=(\pi^{(p-1)k_0},\ldots,\pi^{(p-1)k_{a-1}})$.
It is clear that $\bar{W} = \D(\omega_a^{\sum_{j\in\Z/a\Z}k_jp^j})$
by comparison with Proposition \ref{P:omega_a}.

Finally we claim that
$$\bar{V}|_{I_K} \cong \omega_{2a}^{\sum_{j\in\Z/a\Z}k_jp^j}
\oplus \omega_{2a}^{p^a\sum_{j\in\Z/a\Z} k_jp^j}.$$
It is reduced to consider $\bar{N}$ over
$\tilde\E\otimes_{\F_{p^a}} {\F_{p^{2a}}}
\cong \prod_{i=0}^{1}\tilde{\E}$
to be isomorphic to
$\prod_{i=0}^{1}
\D(\omega_{2a}^{p^{ai}\sum_{j\in\Z/a\Z}k_jp^j})$.
Denote the basis adapted to filtration by
$\bar{N}_j=\pscal{e_{j1},e_{j2}}$ for every $j$.
Let
\begin{eqnarray*}
\vec{e}'_1:=(e_{0,1},e_{1,1},\ldots, e_{a-1,1}, e_{0,2},e_{1,2},\ldots,e_{a-1,2})\\
\vec{e}'_2:=(e_{0,2},e_{1,2},\ldots, e_{a-1,2}, e_{0,1},e_{1,1},\ldots,e_{a-1,1})
\end{eqnarray*}
Change basis to the matrix of $\varphi$ above to that
with respect to $\vec{e}'_1$ and $\vec{e}'_2$ we have
\begin{eqnarray*}
\Mat(\varphi|_{\vec{e}'_1})&=& (\pi^{(p-1)k_0}, \ldots,\pi^{(p-1)k_{a-1}}, 1,\ldots,1).\\
\Mat(\varphi|_{\vec{e}'_2})&=& (1,\ldots,1, \pi^{(p-1)k_0}, \ldots,\pi^{(p-1)k_{a-1}}).
\end{eqnarray*}
Then it is clear that
$\pscal{\vec{e}'_1} \cong  \D(\omega_{2a}^{\sum_{j\in\Z/a\Z}k_jp^j})$
and $\pscal{\vec{e}'_2} \cong  \D(\omega_{2a}^{p^a\sum_{j\in\Z/a\Z}k_jp^j})$.
This proves our claim about $\bar{V}_{I_K}$ above.
\end{proof}

We remark that our work in dimensional $2$ case (in particular, Section 5.2) overlaps with recent and current work of Dousmanis \cite{Do07,Do08}.
In particular, Dousmanis lists isomorphism classes
of 2-dimensional weakly admissible filtered $\varphi$-modules
with Galois descent data and coefficients.
His work allows him to describe the isomorphism classes of 2-dimensional potentially crystalline representations \cite[Section 4.1]{Do07} and potentially
semistable (noncrystalline) representations \cite[Section 4.2]{Do07}.
In \cite[Theorem 3]{Do08} Dousmanis has shown that if
$\ord_p v_j > \lfloor{(\max_{j\in\Z/a\Z}(k_j)-1)/(p-1)}\rfloor$
for all $j\in\Z/a\Z = \cA$, then $\bar{V}_{\vk,\vv} \cong \bar{V}_{\vk,\0}$.

\end{document}